\newcommand\testshape{family=\f@family; series=\f@series; shape=\f@shape.}
\def\myemphInternal#1{\if n\f@shape%
\begingroup\itshape #1\endgroup\/%
\else\begingroup\bfseries #1\endgroup%
\fi}
\def\myemph{\futurelet\testchar\MaybeOptArgmyemph}
\def\MaybeOptArgmyemph{\ifx[\testchar \let\next\OptArgmyemph
                 \else \let\next\NoOptArgmyemph \fi \next}
\def\OptArgmyemph[#1]#2{\index{#1}\myemphInternal{#2}}
\def\NoOptArgmyemph#1{\myemphInternal{#1}}
\newtheorem{theorem}[subsection]{Theorem}
\newtheorem{lemma}[subsection]{Lemma}
\newtheorem{proposition}[subsection]{Proposition}
\newtheorem{corollary}[subsection]{Corollary}
\newtheorem{claim}[subsection]{Claim}
\newtheorem{remark}[subsection]{Remark}
\newtheorem{example}[subsection]{Example}
\newtheorem{definition}[subsection]{Definition}
\newenvironment{axiom}[1]
{
\smallskip
{\bf Axiom #1.}\begin{it}
}
{
\end{it}\smallskip
 }
\providecommand\eqref[1]{(\ref{#1})}
\newcommand\CCC{{\mathbb C}}
\newcommand\DDD{{\mathbb D}}
\newcommand\NNN{{\mathbb N}}
\newcommand\RRR{{\mathbb R}}
\newcommand\ZZZ{{\mathbb Z}}
\newcommand\PathComp[2]{ {#1}_{#2} }
\newcommand\id{\mathrm{id}}
\newcommand\Int{\mathrm{Int}}
\newcommand\Per{\mathrm{Per}}
\newcommand\supp{\mathrm{supp\,}}
\newcommand\Orbit{\mathcal{O}}
\newcommand\Stab{\mathcal{S}}
\newcommand\Diff{\mathcal{D}}
\newcommand\Aut{\mathrm{Aut}}
\newcommand\tDiff{\widetilde{\Diff}}
\newcommand\tDiffId{\widetilde{\Diff}_{\id}}
\newcommand\Cinf{\mathcal{C}^{\infty}}
\newcommand\Unity[1]{ \PathComp{#1}{\id} }
\newcommand\StabId{\Unity{\Stab}}
\newcommand\fgrp{\mathcal{F}}
\newcommand\grp{\mathcal{G}}
\newcommand\hgrp{\mathcal{H}}
\newcommand\Agrp{\mathcal{A}}
\newcommand\eps{\varepsilon}
\newcommand\aCircle{S^1}
\newcommand\Psp{P}
\newcommand\Mman{M}
\newcommand\tMman{\widetilde{\Mman}} 
\newcommand\AFlow{\mathbf{\AFld}}
\newcommand\partit{\Delta}
\newcommand\func{f}
\newcommand\DiffM{\Diff(\Mman)}
\newcommand\dif{h}
\newcommand\singf{\Sigma_{\func}}
\newcommand\singtf{\Sigma_{\tfunc}}
\newcommand\smone{\Ci{\Mman}{\Psp}}
\newcommand\smr{\Ci{\Mman}{\RRR}}
\newcommand\Mcr{(\Mman,\singf)}
\newcommand\Orbf{\Orbit(\func)}
\newcommand\Orbff{\Orbit_{\func}(\func)}
\newcommand\DiffMcr{\Diff\Mcr}
\newcommand\DiffIdMcr{\Diff_{\id}\Mcr}
\newcommand\DiffIdM{\Diff_{\id}(\Mman)}
\newcommand\StabIdfr[1]{\StabId(\func)^{#1}}
\newcommand\Stabf{\Stab(\func)}
\newcommand\StabIdf{\Stab_{\id}(\func)}
\newcommand\tdif{\tilde{\dif}}
\newcommand\Kleinb{\mathbb{K}}
\newcommand\Dpartf{\Diff(\partf)}
\newcommand\Dpartfpl{\Diff^{+}(\partf)}
\newcommand\DpartfIdr[1]{\Diff_{\id}(\partf)^{#1}}
\newcommand\Stabtf{\Stab(\tfunc)}
\newcommand\tDifftM{\widetilde{\Diff}(\tMman)}
\newcommand\tDiffptM{\widetilde{\Diff}^{+}(\tMman)}
\newcommand\DiffId{\Diff_{\id}}
\newcommand\Reebf{\Gamma(\func)}
\newcommand\AutfR{\Aut(\Reebf)}
\newcommand\MobiusBand{M\text{\"o}}
\newcommand\prjplane{\RRR\mathrm{P}^2}
\newcommand\rankpiO{k}
\newcommand\crcomp{K}
\newcommand\Uman{U}
\newcommand\Vman{V}
\newcommand\Wman{W}
\newcommand\KR{KR}
\newcommand\Qman{Q}
\newcommand\KRfunc{\func_{\Gamma}}
\newcommand\prKR{p_{\func}}
\newcommand\AFld{F}
\newcommand\BFld{G}
\newcommand\Shift{\varphi}
\newcommand\partf{\Delta_{\func}}
\newcommand\afunc{\alpha}
\newcommand\bfunc{\beta}
\newcommand\Stop{\mathsf{S}}
\newcommand\Wr[1]{\mathcal{C}^{#1}}
\newcommand\Sr[1]{\Stop^{#1}}
\newcommand\EAFlow{\mathcal{E}(\AFld)}
\newcommand\EidAFlow[1]{\mathcal{E}_{\id}(\AFld)^{#1}}
\newcommand\EidAV[1]{\mathcal{E}_{\id}(\AFld,\Vman)^{#1}}
\newcommand\EAV{\mathcal{E}(\AFld,\Vman)}
\newcommand\DAV{\mathcal{D}(\AFld,\Vman)}
\newcommand\DAFlow{\mathcal{D}(\AFld)}
\newcommand\DidAV[1]{\mathcal{D}_{\id}(\AFld,\Vman)^{#1}}
\newcommand\DidAFlow[1]{\mathcal{D}_{\id}(\AFld)^{#1}}
\newcommand\Cont[1]{\mathcal{C}^{#1}}
\newcommand\Cr[3]{\Cont{#1}(#2,#3)}
\newcommand\Ci[2]{\Cr{\infty}{#1}{#2}}
\newcommand\orig{0}
\newcommand\ShA{\varphi}
\newcommand\imShA{Sh(\AFld)}
\newcommand\ShAV{\varphi_{\Vman}}
\newcommand\ShAW{\varphi_{\Wman}}
\newcommand\imShAV{Sh(\AFld,\Vman)}
\newcommand\imShAW{Sh(\AFld,\Wman)}
\newcommand\funcAV{\mathsf{func}(\AFld,\Vman)}
\newcommand\tfunc{\widetilde{\func}}
\newcommand\partittf{\partit_{\tfunc}}
\newcommand\AxBd{{\rm(A1)}}
\newcommand\AxSPN{{\rm(A2)}}
\newcommand\AxFld[1]{{\rm(AFLD)}}
\newcommand\AxFibr{{\rm(A3)}}
\newcommand\orb{o}
\newcommand\ADom{\mathsf{dom}(\AFld)}
\newcommand\VmanS{\widehat{\Vman}}
\newcommand\WmanS{\widehat{\Wman}}
\newcommand\gdif{g}
\newcommand\Dm{D}
\newcommand\smrz{\mathcal{A}}
\newcommand\partfreg{\partf^{\mathrm{reg}}}
\newcommand\partfcr{\partf^{\mathrm{cr}}}
\newcommand\zvfrm[2]{\mathsf{F}_{#1}(#2)}
\newcommand\zfrm[1]{\mathsf{F}_{#1}}
\newcommand\singfN{\Sigma_{\func}^{\NT}}
\newcommand\singfS{\Sigma_{\func}^{\ST}}
\newcommand\singfP{\Sigma_{\func}^{\PT}}
\newcommand\FReebf{\widehat{\Gamma}(\func)}
\newcommand\AutfFR{\Aut(\FReebf)}
\newcommand\dd[1]{\tfrac{\partial}{\partial #1}}
\newcommand\ddd[2]{\tfrac{\partial #1}{\partial #2}}
\newcommand\flatsign[1]{\bar{#1}}
\newcommand\dAx{\flatsign{X}}
\newcommand\dAy{\flatsign{Y}}
\newcommand\FixA{\Sigma_{\AFld}}
\newcommand\gD[2]{\widehat{\Diff}(#1,#2)}
\newcommand\gDAz{\gD{\AFld}{z}}
\newcommand\gDpartfz{\gD{\partf}{z}}
\newcommand\gDplus[2]{\widehat{\Diff}^{+}(#1,#2)}
\newcommand\gDpAz{\gDplus{\AFld}{z}}
\newcommand\tn[1]{J_{#1}}
\newcommand\tnz{J_{z}}
\newcommand\atnz{\mathsf{sh}_{z}}
\newcommand\GLR[1]{\mathrm{GL}(#1,\RRR)}
\newcommand\GLRp[1]{\mathrm{GL}^{+}(#1,\RRR)}
\newcommand\ST{\mathsf{S}}
\newcommand\PT{\mathsf{P}}
\newcommand\NT{\mathsf{N}}
\newcommand\NNT{\mathsf{NN}}
\newcommand\NZT{\mathsf{NZ}}
\newcommand\gimShAz{\widehat{Sh}(\AFld,z)}
\newcommand\gimShAzz{\widehat{Sh}(\AFld_z,z)}
\newcommand\gShAz{\widehat{\varphi}_{z}}
\newcommand\gCiMRz{\Cinf_{z}(\Mman)}
\newcommand\SPECA{{\rm(SP1)}}
\newcommand\SPECB{{\rm(SP2)}}
\newcommand\PTA{{\rm(P1)}}
\newcommand\PTB{{\rm(P2)}}
\newcommand\PTC{{\rm(P3)}}
\newcommand\PTD{{\rm(P4)}}
\newcommand\PTE{{\rm(P5)}}
\newcommand\PTF{{\rm(P6)}}
\newcommand\NA{{\rm(i)}}
\newcommand\NB{{\rm(ii)}}
\newcommand\NC{{\rm(iii)}}
\newcommand\NTA{{\rm(N1)}}
\newcommand\NTB{{\rm(N2)}}
\newcommand\NTC{{\rm(N3)}}
\newcommand\Dpn{\Diff^{\NT}(\partf)}
\newcommand\Didpn{\DiffId^{\NT}(\partf)}
\newcommand\Didpnr[1]{\DiffId^{\NT}(\partf)^{#1}}
\newcommand\DiffMcrid{\Diff(\Mman,\singf,\id)}
\newcommand\tDpnt{\widetilde{\Diff}^{\NT}(\partittf)}
\newcommand\Dpnt{\Diff^{\NT}(\partittf)}
\newcommand\DiffMn{\Diff^{\NT}(\Mman,\singf)}
\newcommand\DiffIdMn{\DiffId^{\NT}(\Mman,\singf)}
\newcommand\tDiffIdtMn{\widetilde{\Diff}_{\id}^{\NT}(\tMman,\singtf)}
\newcommand\DiffIdtMn{\Diff_{\id}^{\NT}(\tMman,\singtf)}
\newcommand\Ggrpz{\mathcal{G}_{z}}
\newcommand\tew[1]{\mathsf{T}_{#1}}
\newcommand\stongr{\mu}
\newcommand\stonkr{\lambda}
\newcommand\comp[1]{K_{#1}}
\newcommand\Orbfn{\Orbit^{\NT}(\func)}
\newcommand\Orbffn{\Orbit^{\NT}_{\func}(\func)}
\newcommand\Stabfn{\Stab^{\NT}(\func)}
\newcommand\StabIdfn{\StabId^{\NT}(\func)}
\newcommand\Jgrp{\mathcal{J}}
\newcommand\JgrpId{\Jgrp_{0}}
\newcommand\JgrpN{\Jgrp_{\NT}}
\newcommand\JgrpS{\Jgrp_{\ST}}
\newcommand\cond[1]{$(\mathsf{#1})$}
\newcommand\condShAVop{\cond{A}}
\newcommand\condBsShAVop{\cond{A1}}
\newcommand\condShAUVop{\cond{B}}
 \newcommand\condBsShAUVop{\cond{B1}}
\newcommand\condImShUopImSh{\cond{C}}
\newcommand\condBsimShimShUop{\cond{C1}}
\newcommand\condznpnrec{\cond{R1}}
\newcommand\condzpid{\cond{R2}}
\newcommand\condzUinv{\cond{S1}}
\newcommand\condzUpbi{\cond{S2}}
\newcommand\condShAop{\cond{Z}}
\newcommand\imShAUV{Sh(\AFld|_{\Uman},\Vman)}
\newcommand\funcAUV{\mathsf{func}(\AFld|_{\Uman},\Vman)}
\newcommand\ShAUV{\varphi_{\Uman,\Vman}}
\newcommand\AFlowU{\AFlow_{\Uman}}
\newcommand\domAU{\mathsf{dom}(\AFlowU)}
\newcommand\intk{{\mathbf k}}
\newcommand\intl{{\mathbf l}}
\newcommand\intv{{\mathbf v}}
\author{Sergiy Maksymenko}
\title{Functions with isolated singularities on surfaces}
\address{Topology dept., Institute of Mathematics of NAS of Ukraine, Te\-re\-shchenkivska st. 3, Kyiv, 01601 Ukraine}
\email{maks@imath.kiev.ua}
\urladdr{http://www.imath.kiev.ua/~maks}
\keywords{incompressible surface, diffeomorphisms group, cellular automorphism, homotopy type}
\subjclass[2000]{57S05, 57R45, 37C05}
\begin{document}
\begin{abstract}
Let $\Mman$ be a smooth connected compact surface, $\Psp$ be either the real line $\RRR$ or the circle $\aCircle$, and $\func:\Mman\to\Psp$ be a smooth mapping.
In a previous series of papers for the case when $\func$ is a Morse map the author calculated the homotopy types of stabilizers and orbits of $\func$ with respect to the right action of the diffeomorphisms group of $\Mman$.
The present paper extends those calculations to a large class of maps $\Mman\to\Psp$ with degenerate singularities satisfying certain set of axioms.
\end{abstract}

\maketitle

\section{Introduction}
Let $\Mman$ be a smooth compact connected surface and $\Psp$ be either the real line $\RRR$ or the circle $\aCircle$.
Then the group $\DiffM$ of diffeomorphisms of $\Mman$ naturally acts from the right on the space $\smone$ by the formula:
$$ 
\dif\cdot \func = \func \circ \dif, \qquad \dif\in\DiffM, \ \func\in\smone.
$$ 
This action is one of the main objects in singularities theory.
For the case of surfaces it was extensively studied in recent years, see e.g.~\cite{BolsinovMatveevFomenko:UMN:1990, BolsinovFomenko:1997,  Oshemkov:PSIM:1995,  Prishlyak:UMZ:2000, Prishlyak:TA:2002,  Prishlyak:MFAT:2002, Sharko:UMZ:2003, Sharko:Zb:2003, Maks:AGAG:2006, Maks:TrMath:2008, Kadubovskiy:UMZ:2006, Yurchuk:Zb:2006}.

For $\func\in\smone$ let $\singf$ be the set of critical points of $\func$ and
$$\Orbf=\{\func \circ \dif \ | \ \dif \in \DiffM \},$$
$$\Stabf=\{\dif \ | \ \func = \func \circ \dif, \ \dif\in\DiffM\}$$
be respectively the orbit and the stabilizer of $\func$.
We will endow $\DiffM$ and $\smone$ with the corresponding topologies $\Wr{\infty}$.
Then these topologies induce certain topologies on $\Orbf$ and $\Stabf$.
Let $\DiffIdM$ and $\StabIdf$ be the identity path-component of $\DiffM$ and $\Stabf$, and $\Orbff$ the path-component of $\func$ in $\Orbf$ with respect to topologies $\Wr{\infty}$.

In~\cite{Maks:AGAG:2006, Maks:TrMath:2008} the author calculated the homotopy types of $\StabIdf$ and $\Orbff$ for all Morse maps $\func:\Mman\to\Psp$.
These calculations are essentially based on the description of homotopy types of groups of orbits preserving diffeomorphisms for certain classes of vector fields obtained in~\cite{Maks:TA:2003, Maks:loc-inv-shifts}.
In a series of papers \cite{Maks:hamv2, Maks:CEJM:2009, Maks:MFAT:2009, Maks:ImSh} the classes of vector fields were extended and using these results it was then announced in \cite{Maks:DNANU:2009} that calculations of~\cite{Maks:AGAG:2006, Maks:TrMath:2008} can be done for a large class of smooth maps $\Mman\to\Psp$ with isolated ``homogeneous'' singularities.

The aim of this paper is to show that the technique used in~\cite{Maks:AGAG:2006, Maks:TrMath:2008} can be formalized and thus extended to classes of isolated singularities even larger than homogeneous ones, see Theorems\;\ref{th:hom-type-StabIdf} and\;\ref{th:hom-type-Orbits}.

We will introduce three types of isolated critical points $\ST$, $\PT$, and $\NT$ for a \myemph{germ} of smooth maps $\func:\Mman\to\Psp$.
These points will be discussed in \S\ref{sect:special-crit-points} and now we only note that $\ST$-points are \myemph{saddles} while $\PT$- and $\NT$-points are local extremes%
\footnote{The symbols $\PT$ and $\NT$ stand for \myemph{periodicity} and \myemph{non-periodicity} of shift map.}.
All these points can be degenerate however they satisfy certain ``non-degeneracy'' conditions formulated in the terms of shift map of the corresponding local Hamiltonian vector field of $\func$.
In particular, class of $\ST$-points ($\PT$-points) have properties similar to non-degenerate saddles (local extremes) of Morse functions and include such points, while $\NT$-points behave like degenerate local extremes of homogeneous polynomials, see Lemma\;\ref{exmp:hompoly-wmf}.
These $\NT$-points bring new effects in comparison with Morse functions.

Now we put following three axioms on $\func$:

\begin{axiom}{\AxBd}
$\func$ is constant at each connected component of $\partial\Mman$ and $\singf\subset\Int{\Mman}$.
\end{axiom}

\begin{axiom}{\AxSPN}
Every critical point of $\func$ is either an $\ST$- or a $\PT$- or an $\NT$-point.
\end{axiom}

\begin{axiom}{\AxFibr}
The natural map $p:\DiffM \to \Orbf$ defined by $p(\dif) = \func\circ \dif^{-1}$ 
is a Serre fibration with fiber $\Stabf$ in the corresponding topologies $\Wr{\infty}$.
\end{axiom}

The following theorem describes the homotopy types of $\StabIdf$ and $\Orbff$ for a generic situation.
Detailed formulations are given in Theorems~\ref{th:hom-type-StabIdf} and~\ref{th:hom-type-Orbits} below.

\begin{theorem}\label{th:summary}
Suppose $\func$ satisfies axioms \AxBd-\AxFibr\ and has at least one $\ST$-point.
Let also $n$ be the total number of critical points of $\func$.
Then $\StabIdf$ is contractible, $\Orbff$ is weakly homotopy equivalent to a CW-complex of dimension $\leq 2n-1$.
Moreover, $\pi_i\Orbff=\pi_i\Mman$ for $i\geq 3$, $\pi_2\Orbff=0$, 
and for $\pi_1\Orbff$ we have the following exact sequence:
$$
1 \to \pi_1\DiffM \oplus \ZZZ^{\rankpiO} \to \pi_1\Orbff \to  \grp \to 1,
$$
where $\grp$ is a certain finite group and $\rankpiO\geq0$.
\end{theorem}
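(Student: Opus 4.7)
The plan is to reduce the theorem to a contractibility statement for $\StabIdf$, from which everything else follows via the long exact sequence of the Serre fibration guaranteed by axiom \AxFibr. Explicitly, from $p:\DiffM\to\Orbf$ with fiber $\Stabf$ one obtains
$$
\cdots \to \pi_i\StabIdf \to \pi_i\DiffIdM \to \pi_i\Orbff \to \pi_{i-1}\StabIdf \to \cdots
$$
Once contractibility of $\StabIdf$ is established, $\pi_i\Orbff\cong\pi_i\DiffIdM$ for $i\geq 2$, and the classical results of Earle--Eells, Earle--Schatz and Gramain on $\DiffIdM$ for compact surfaces give $\pi_2\DiffIdM=0$ and $\pi_i\DiffIdM\cong\pi_i\Mman$ for $i\geq 3$ in every case ($\DiffIdM$ being contractible, or homotopy equivalent to $S^1$, $T^2$, or $SO(3)$). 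The CW-bound $\dim\leq 2n-1$ would be realized by the combinatorial Kronrod--Reeb model attached to $\func$ together with the Dehn-twist generators mentioned below.

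The main step is thus contractibility of $\StabIdf$, which I would approach following the local-to-global strategy of \cite{Maks:AGAG:2006,Maks:TrMath:2008} but with local Morse models replaced by the axiomatic structure of $\ST$-, $\PT$-, and $\NT$-points from \S\ref{sect:special-crit-points}. Concretely, one covers $\Mman$ by invariant neighborhoods $U_z$ of the critical points $z\in\singf$ together with strips around regular level sets, and on each piece exhibits every $h\in\StabIdf$ near $\id$ as a shift map $h=\varphi(\shsect)$ of the Hamiltonian field $\AFld$ for a shift function $\shsect$ lying in a convex subspace of an appropriate function space. The axioms defining the three types of singular points are designed exactly so as to ensure this local convex-parametrization property, and gluing across the Kronrod--Reeb decomposition yields a deformation retraction of $\StabIdf$ to a point. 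The hypothesis that at least one $\ST$-point exists is used to rigidify the top-dimensional combinatorics and prevent an $S^1$ factor from surviving in the gluing.

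The short exact sequence for $\pi_1\Orbff$ comes from the tail
$$
0 \to \pi_1\DiffM \to \pi_1\Orbff \to K \to 0,
$$
where $K=\ker(\pi_0\Stabf\to\pi_0\DiffM)$. A separate analysis of the action of $\pi_0\Stabf$ on the Kronrod--Reeb graph of $\func$ exhibits $K$ as an extension $0 \to \ZZZ^{\rankpiO} \to K \to \grp \to 0$ in which $\ZZZ^{\rankpiO}$ is generated by Dehn twists along boundaries of regular neighborhoods of the critical level-set components, while $\grp$ is the finite group of combinatorial graph automorphisms realized by self-diffeomorphisms of $\Mman$ preserving $\func$. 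Pulling this extension back through $\pi_1\Orbff\to K$ and observing that the Dehn-twist part splits off as a direct summand together with $\pi_1\DiffM$ yields the displayed exact sequence.

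The main technical obstacle is the contractibility analysis near $\NT$-points. For $\PT$-points the local shift map is periodic, so the construction parallels the Morse-extremum case; for $\ST$-points the saddle model is compatible with the classical saddle constructions of the author's earlier papers. An $\NT$-point, however, has a non-periodic local shift and its stabilizer is not of Morse type, so the local convex-parametrization of $\shsect$ requires the refined lifting and shift-function results from \cite{Maks:hamv2,Maks:CEJM:2009,Maks:MFAT:2009,Maks:ImSh}. Weaving these local deformations together with the global Dehn-twist bookkeeping, while simultaneously identifying the combinatorial invariants $\rankpiO$ and $\grp$, is where the detailed Theorems~\ref{th:hom-type-StabIdf} and~\ref{th:hom-type-Orbits} do the substantive work that this summary statement only records.
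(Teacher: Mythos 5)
Your skeleton matches the paper's: everything is routed through contractibility of $\StabIdf$ (Theorem~\ref{th:hom-type-StabIdf}) and the homotopy sequence of the fibration supplied by axiom \AxFibr, with the $\pi_1$-analysis carried by the Dehn twists and the action on the Kronrod--Reeb graph. But at the two points where the real work lies, you assert rather than prove. The first is the splitting: you say one obtains the displayed sequence by ``observing that the Dehn-twist part splits off as a direct summand together with $\pi_1\DiffM$.'' The relevant extension is the central extension $1\to\pi_1\DiffIdM\to\hgrp\to\JgrpId\to 1$ with $\JgrpId\approx\ZZZ^{\rankpiO}$, and central extensions of abelian groups by free abelian groups need not split -- the paper explicitly flags this as an unjustified claim in the earlier literature and devotes Theorem~\ref{th:splitting_pi1Orbf} and \S\ref{sect:proof:th:splitting_pi1Orbf} to it. The splitting is proved by constructing an explicit section of $\partial_1$: one chooses paths $\omega_{\alpha}$ in $\DiffIdM$ from $\id_{\Mman}$ to the generating Dehn twists which fix a suitable subset $\Qman\subset\Mman$ with more than $\chi(\Mman)$ points (or, for the Klein bottle, leave a separating curve invariant), and checks case by case the surfaces with $\chi(\Mman)\geq 0$. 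Without an argument of this kind the direct-sum form of the kernel in your sequence is not established.

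The second gap is the bound $\dim\leq 2n-1$. This does not come from ``the combinatorial Kronrod--Reeb model'': the paper obtains it from the map $k:\Orbff\to\mathcal{F}_{n}(\Int\Mman)$ to the configuration space of the $n$ critical points, which is a locally trivial fibration whose fiber components are copies of $\Orbffn$ and are \emph{weakly contractible}. That weak contractibility is itself a theorem (Theorem~\ref{th:Stabf_DiffIdMn_StabIdf}, i.e.\ $\Stabf\cap\DiffIdMn=\StabIdf$), and it rests on the computation $\pi_0\Dpn\approx\ZZZ^{\intk}$ (Theorem~\ref{th:pi0Dpn}) together with the framing machinery at $\NT$-points of \S\ref{sect:framings}, none of which appears in your sketch. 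Once this is in place, $\Orbff$ is weakly equivalent to a covering space of the open $2n$-manifold $\mathcal{F}_{n}(\Int\Mman)$, hence to a CW-complex of dimension $\leq 2n-1$. You would need to supply both of these missing chains of argument for the proof to be complete.
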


\subsection{Structure of the paper}
The exposition of the paper follows the line of\;\cite{Maks:AGAG:2006}.
The principal new feature of the paper is that we consider $\NT$-points.
This requires additional arguments almost everywhere, therefore in many places we repeat the arguments of\;\cite{Maks:AGAG:2006} with necessary modifications.

In\;\S\ref{sect:smooth_shifts} we recall some results concerning the shift map along the orbits of vector fields.
\S\ref{sect:func_on_surf} describes two constructions related to a smooth function $\func$ on a surface: foliation $\partf$ by connected components of level-sets of $\func$ and the Kronrod-Reeb graph $\Reebf$ of $\func$.
In \S\ref{sect:special-crit-points} we introduce three types of critical points $\ST$, $\PT$, and $\NT$.
Further in \S\ref{sect:main_results} we formulate main results of the paper Theorems\;\ref{th:hom-type-StabIdf} and\;\ref{th:hom-type-Orbits} and also discuss sufficiant conditions for axiom \AxFibr.
In\;\S\S\ref{sect:framings} we put on the Kronrod-Reeb graph of $\func$ additional data which describe combinatorial behavior of diffeomorphisms $\dif\in\Stabf$ near $\NT$-points.
\S\ref{sect:proof:th:hom-type-StabIdf} contains the proof of Theorems\;\ref{th:hom-type-StabIdf}.
The proof follows the line of\;\cite[Th.\;1.3]{Maks:AGAG:2006}.
The rest of the paper is devoted to the proof of Theorem\;\ref{th:hom-type-Orbits}.

\begin{remark}\rm
I must warn the reader that the paper\;\cite{Maks:AGAG:2006} contains the following ``dangerous'' places which are also corrected in the present and in previous papers by the author.

1) The calculation of homotopy types of stabilizers given in\;\cite[Th.\;1.3]{Maks:AGAG:2006} is essentially based on the principal result of another paper of mine\;\cite{Maks:TA:2003} which unfortunately contains some mistakes.
The corrections to\;\cite{Maks:TA:2003} are given in\;\cite{Maks:ImSh, Maks:loc-inv-shifts}, where it is also shown that for the case described in \cite{Maks:AGAG:2006} the results of\;\cite{Maks:TA:2003} holds true, see Theorem\;\ref{th:openness_of_shift_maps}.
Thus\;\cite[Th.\;1.3]{Maks:AGAG:2006} remains valid,

2) \cite[Eq.\;(8,6)]{Maks:AGAG:2006} is not true in general, see Remark\;\ref{rem:corrections} and Example\;\ref{exmp:func_on_prjplane} for details.
This changes the meaning of the group $G$ in\;\cite[Th.\;1.5]{Maks:AGAG:2006}: it remains finite however now it is a group of automorphisms of a more complicated object than the Kronrod-Reeb graph of $\func$, which takes to account orientations of level-sets of $\func$, see\;\S\ref{sect:framed-KR-graph}.
A correct formulation of \cite[Eq.\;(8,6)]{Maks:AGAG:2006} is given in Lemma\;\ref{lm:Dpn_and_ketrstong_cap_DiffIdM}.

3) In the proof of\;\cite[Th.\;1.5]{Maks:AGAG:2006} it was claimed without explanations that \myemph{a certain \myemph{central} extension of $\pi_1\DiffIdM$ with a free abelian group $\mathcal{J}_0$ is just a direct sum.}
In general, central extensions of abelian groups even with free abelian groups are not trivial.
We will show in\;Theorem\;\ref{th:splitting_pi1Orbf} that in our case that extension is trivial.
\end{remark}

\subsection{Notations}
If $\Mman$ is a non-orientable surface, then $p:\tMman\to\Mman$ will always denote the oriented double covering of $\Mman$ and $\xi:\tMman\to\tMman$ be a $\Cinf$ involution generating the group $\ZZZ_2$ of deck transformations.
For a function $\func:\Mman\to\Psp$ we put $\tfunc=p\circ\func:\tMman\to\Psp$.

\section{Shift maps along orbits of flows}\label{sect:smooth_shifts}

\subsection{$r$-homotopies}\label{sect:r-homotopy}
Let $M,N$ be smooth manifolds and $0\leq r \leq \infty$.
Say that a map $\Omega:M\times I \to N$ is an \myemph{$r$-homotopy} if the corresponding map $\omega:I \to \Cr{r}{M}{N}$ defined by $\omega(t)(x) = \Omega(x,t)$ is continuous from the standard topology of $I$ to the weak topology $\Wr{r}$ of $\Cr{r}{M}{N}$.
In other words, all partial derivatives of $\Omega$ in $x\in M$ up to order $r$ continuously depends on $t\in I$.
If in addition $\Omega_t:M\to N$ is an embedding for every $t\in I$, then $\Omega$ will be called an \myemph{$r$-isotopy}, see\;\cite{Maks:MFAT:2009}.

Thus a usual homotopy is a $0$-homotopy.
Moreover, every $\Cont{r}$-map $M\times I \to N$ is an $r$-homotopy, but not wise verse.

\subsection{Local flow}
Let $\AFld$ be a smooth vector field on a smooth manifold $\Mman$ tangent to $\partial\Mman$.
Then for every $x\in\Mman$ its \emph{integral trajectory} with respect to $\AFld$ is a unique mapping
$\orb_x: \RRR\supset(a_x,b_x) \to \Mman$ such that $\orb_x(0)=x$ and $\dot{\orb}_x = \AFld(\orb_x)$, where $(a_x,b_x) \subset\RRR$ is the maximal interval on which a map with the previous two properties can be defined.
Then the following set $\ADom = \mathop\cup\limits_{x\in\Mman} x \times (a_x, b_x)$, 
is an open neighbourhood of $\Mman\times0$ in $\Mman\times\RRR$, and the \myemph{local flow\/} of $\AFld$ is defined by
$$\AFlow: \Mman\times\RRR \; \supset \;\ADom\longrightarrow\Mman,
\qquad
\AFlow(x,t) = \orb_x(t).
$$
If $\Mman$ is compact, then $\AFlow$ is defined on all of $\Mman\times\RRR$, e.g.~\cite{PalisdeMelo:1982}.
The set of zeros of $\AFld$ will be denoted by $\FixA$.

\subsection{Shift map}
Let $\Vman \subset \Mman$ be a submanifold (possibly with boundary or even with corners) such that $\dim\Vman=\dim\Mman$.
Denote by $\funcAV$ the subset of $C^{\infty}(\Vman,\RRR)$ consisting of functions $\afunc$ whose graph $\Gamma_{\afunc}=\{(x,\afunc(x)) \ : \ x\in\Vman\}$ is included in $\ADom$.
Then we can define the following map:
\begin{equation}\label{equ:glob_shift_map}
\ShA(\afunc)(x) = \AFlow(x,\afunc(x)), \quad \afunc\in\funcAV, \ x\in\Vman.
\end{equation}
We will call $\ShA$ the \myemph{shift map along orbits of $\AFld$ on $\Vman$} and denote its image in $C^{\infty}(\Vman,\Mman)$ by $\imShAV$.
\begin{definition}
\it Let $\dif:\Vman\to\Mman$ be a smooth map, $\Vman'\subset\Vman$ a submanifold and $\afunc:\Vman'\to\RRR$ a smooth function.
We will say that $\afunc$ is a \myemph{shift function for $\dif$ on $\Vman'$} if $\dif(x)=\AFlow(x,\afunc(x))$ for all $x\in\Vman'$.
\end{definition}

\subsection{Shift map at a singular point}
Let $z\in\Vman\cap\FixA$.
Denote by $\gDAz$ the space of at $z$ germs of orbit preserving diffeomorphisms $\dif:(\Mman,z)\to(\Mman,z)$.
Thus if $\dif$ is defined on some neighbourhood $\Wman$ of $z$, then $\dif(\Wman \cap\orb) \subset\orb$ for every orbit $\orb$ of $\AFld$.

Let also $\gCiMRz$ be the space of germs of $\Cinf$ functions $\afunc:\Mman\to\RRR$ at $z$.
Since $z$ is a singular point for $\AFld$ we have a well-defined \myemph{shift map}
$$\gShAz:\gCiMRz\to\gDAz, \qquad \gShAz(\afunc)(x)=\AFlow(x,\afunc(x)).$$
Denote by $\gimShAz\subset\gDAz$ the image of $\gShAz$.
Then $\gimShAz$ is a \myemph{subgroup} of $\gDAz$, see\;\cite[Eqs.\;(8),(9)]{Maks:TA:2003} or\;\cite[Lm.\;3.1]{Maks:CEJM:2009}.

There is a natural homomorphism $\tnz(\dif):\gDAz\to\Aut(T_{z}\Mman)$ associating to each $\dif\in\gDAz$ the corresponding linear automorphism $T_{z}\dif$ of the tangent space at $z$.

Choose local coordinates $(x_1,\ldots,x_n)$ at $z$.
Then we can regard $\tnz$ as a map $\tnz:\gDAz\to\GLR{n}$ associating to each $\dif\in\gDAz$ its Jacobi matrix at $z$.

Let also $\AFld=(\AFld_1,\ldots,\AFld_{n})$ be the coordinate functions of $\AFld$.
Then the following matrix 
\begin{equation}\label{equ:j1Fz}
\nabla\AFld(z) = \left(\begin{smallmatrix}
\ddd{\AFld_1}{x_1}(z) & \cdots & \ddd{\AFld_1}{x_n}(z) \\ 
\cdots & \cdots & \cdots \\
\ddd{\AFld_n}{x_1}(z) & \cdots & \ddd{\AFld_n}{x_n}(z) 
\end{smallmatrix} \right)
\end{equation}
will be called the \myemph{linear part} of $\AFld$ at $z$.

It is easy to show, \cite[Lm.\;5.3]{Maks:CEJM:2009}, that if $\afunc\in\Ci{\Vman}{\RRR}$ then $\tnz(\ShAV(\afunc))=\tnz(\AFlow_{\afunc(z)}) = e^{\nabla\AFld(z)\cdot t}$,
whence
\begin{equation}\label{equ:im_j1}
\tnz(\gimShAz) = \tnz(\{\AFlow_{t}\}_{t\in\RRR}) = \{ e^{\nabla\AFld(z) t} \}_{t\in\RRR}.
\end{equation}

\subsection{Kernel of shift map}
The set $\ker(\ShAV) = \ShAV^{-1}(i_{\Vman})$ will be called the \myemph{kernel} of $\ShAV$.
It consists of all $\Cinf$ functions $\afunc:\Vman\to\RRR$ such that $\AFlow(x,\afunc(x))=x$ for all $x\in\Vman$.
\begin{lemma}\label{lm:ker_of_shift_map}{\rm\cite{Maks:TA:2003}}
\it Let $\afunc,\bfunc\in\funcAV$.
Then $\ShAV(\afunc) = \ShAV(\bfunc)$ iff $\afunc-\bfunc\in\ker(\ShAV)$.
In other words $$\AFlow(x,\afunc(x))\equiv\AFlow(x,\bfunc(x))\ \ \Leftrightarrow \ \ \AFlow(x,\afunc(x)-\bfunc(x))\equiv x.$$

Suppose $\Vman$ is connected and the set $\FixA$ of singular points of $\AFld$ is nowhere dense in $\Vman$.
Then one of the following conditions holds true:

{\bf Nonperiodic case:}
$\ker(\ShAV)=\{0\}$ and $\ShAV:\funcAV\to\imShAV$ is a bijection.
This holds for instance if $\AFld$ has at least one non-closed orbit, or for some singular point $z$ of $\AFld$ the linear part of $\AFld$ at $z$ vanishes, i.e. $\nabla\AFld(z)=0$; or

{\bf Periodic case:}
$\ker(\ShAV)=\{n\theta\}_{n\in\ZZZ}$ for some $\Cinf$ strictly positive function $\theta:\Vman\to(0,+\infty)$.
In this case 
\begin{itemize}
 \item 
every $x\in\Vman\setminus\FixA$ is periodic so $\funcAV=\Ci{\Vman}{\RRR}$, 
 \item
there exists an open and everywhere dense subset $Q\subset\Vman\setminus\FixA$ such that $\theta(x)=\Per(x)$ for all $x\in Q$;
 \item
$\ShAV^{-1}\circ\ShAV(\afunc) = \{ \afunc + n\theta\}$ for every $\afunc\in\Ci{\Vman}{\RRR}$.
\end{itemize}
\end{lemma}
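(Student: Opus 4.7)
I would prove the algebraic equivalence first and then analyze the kernel under the connectedness hypothesis to obtain the dichotomy.

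For the first assertion, fix $\afunc,\bfunc\in\funcAV$ and $x\in\Vman$. The idea is to apply the flow identity $\AFlow(\AFlow(y,s),t)=\AFlow(y,s+t)$ with $y=x$, $s=\bfunc(x)$ and $t=\afunc(x)-\bfunc(x)$; after checking that all intermediate points remain in $\ADom$ (which follows from $\afunc,\bfunc\in\funcAV$ together with uniqueness of integral trajectories), this yields $\AFlow(\ShAV(\bfunc)(x),\,\afunc(x)-\bfunc(x))=\ShAV(\afunc)(x)$. Applying $\AFlow(\cdot,-\bfunc(x))$ to both sides then shows that $\ShAV(\afunc)=\ShAV(\bfunc)$ is equivalent to $\AFlow(x,\afunc(x)-\bfunc(x))=x$ for all $x$, i.e.\ to $\afunc-\bfunc\in\ker(\ShAV)$.

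For the dichotomy, take $\theta\in\ker(\ShAV)$ with $\theta\not\equiv 0$. At every $x\in\Vman\setminus\FixA$ the value $\theta(x)$ belongs to the period subgroup of the orbit through $x$, so either that orbit is non-closed and $\theta(x)=0$, or it is periodic with $\theta(x)\in\Per(x)\ZZZ$. Since $\theta$ is continuous and takes values in a discrete subgroup on each orbit, $\theta$ is constant along every orbit in $\Vman\setminus\FixA$. I would then exclude the two nonperiodic hypotheses. If some nonsingular $x_0$ has a non-closed orbit, then $\theta\equiv 0$ on $\orb_{x_0}$; straightening $\AFld$ in a flow-box around a point of $\orb_{x_0}$ promotes this to $\theta\equiv 0$ on an open neighborhood (because any nearby closed orbits would have periods bounded away from zero, contradicting continuity with $\theta=0$ on the transversal unless $\theta$ vanishes there), and a flow-saturation argument together with continuity spreads $\theta\equiv 0$ to all of $\Vman$. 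The case $\nabla\AFld(z)=0$ at some $z\in\FixA$ is handled via \eqref{equ:im_j1}: the trivial linear part means $e^{\nabla\AFld(z)t}=\id$ for every $t$, so no $1$-jet obstruction restricts $\theta(z)$; combining this with the jet expansion of $\AFlow(x,\theta(x))=x$ at $z$ forces $\theta$ to vanish in a neighborhood of $z$, and the previous propagation argument gives $\theta\equiv 0$.

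In the periodic case every nonsingular orbit is closed and $\Per:\Vman\setminus\FixA\to(0,\infty)$ is defined. I would construct $\theta$ by selecting a nonsingular point $x_0$, setting $\theta(x_0)=\Per(x_0)$, and extending smoothly via the implicit function theorem applied to $\AFlow(x,\tau)=x$ in flow-box coordinates. Local period functions obtained in this way differ on overlaps by integer multiples (since the period group is discrete at each orbit), so they glue to a global smooth $\theta:\Vman\setminus\FixA\to(0,\infty)$ agreeing with $\Per$ on the open dense set $Q$ where the minimal period is attained. The identities $\ker(\ShAV)=\theta\ZZZ$ and $\ShAV^{-1}\circ\ShAV(\afunc)=\{\afunc+n\theta\}_{n\in\ZZZ}$ then follow directly from the first assertion. \emph{The main obstacle} will be extending $\theta$ smoothly across $\FixA$: near $z\in\FixA$ I would compare my locally constructed $\theta$ with a given nonzero kernel element on the open set where the latter does not vanish (the ratio being an integer-valued, and hence locally constant, function), and use the now-established nondegeneracy $\nabla\AFld(z)\ne 0$ together with the explicit form $\tnz(\AFlow_{t})=e^{\nabla\AFld(z)t}$ to control the period function across $z$ and secure smoothness.
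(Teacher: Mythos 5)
You should first be aware that this paper does not prove Lemma~\ref{lm:ker_of_shift_map} at all: it is imported verbatim from \cite{Maks:TA:2003}, so there is no in-paper proof to compare your argument against, and I can only judge the proposal on its own terms. Your first step — the group-law derivation of $\ShAV(\afunc)=\ShAV(\bfunc)\Leftrightarrow\afunc-\bfunc\in\ker(\ShAV)$ — is correct; the domain issue you flag is harmless, since whenever $\afunc(x)\neq\bfunc(x)$ but the two shifts agree at $x$ the orbit of $x$ is periodic or a fixed point, hence defined for all time. Likewise your observations that $\theta(x)$ lies in the (discrete) period group of $\orb_x$ and is therefore constant along orbits are sound.

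The dichotomy, however, contains genuine gaps. First, your treatment of the case $\nabla\AFld(z)=0$ is a non sequitur: you correctly note via \eqref{equ:im_j1} that $e^{\nabla\AFld(z)t}=\id$ imposes no first-order constraint on $\theta(z)$, but the \emph{absence} of a constraint cannot ``force $\theta$ to vanish near $z$'' — it removes an obstruction rather than creating one. The mechanism that actually kills the kernel here is period blow-up: when the linear part vanishes, the periods of closed orbits accumulating at $z$ tend to $+\infty$ (compare the last sentence of Lemma~\ref{lm:P-points}), so a continuous $\theta$ with $\theta(x)\in\Per(x)\,\ZZZ$ must vanish near $z$; you never invoke this. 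Second, your ``flow-saturation plus continuity'' propagation of $\theta\equiv0$ only shows that the zero set of $\theta$ is open away from $\FixA$; since $\FixA$ is merely nowhere dense, $\Vman\setminus\FixA$ need not be connected, and singular points impose no pointwise constraint whatsoever ($\AFlow(z,t)=z$ for every $t$), so crossing $\FixA$ needs a separate argument that you do not supply. Third, in the periodic case the implicit function theorem cannot be applied directly to $\AFlow(x,\tau)=x$ (this is $n$ scalar equations in one unknown $\tau$; one must pass to a transversal section and its return time), and two local smooth solutions need not differ by an integer multiple of a common generator — the minimal period is not continuous, as orbits may close up only after several returns — so the gluing into a single smooth positive generator, and its smooth extension across $\FixA$, is exactly the hard content of the cited theorem and remains open in your sketch.
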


Let $\EAV$ be the subset of $\Ci{\Vman}{\Mman}$ consisting of maps $\dif:\Vman\to\Mman$ such that 
\begin{enumerate}
\item[(i)]
$\dif(\omega) \subset \omega$ for every orbit $\omega$ of $\AFld$;
\item[(ii)]
$\dif$ is a local diffeomorphism at each singular points of $\AFld$.
\end{enumerate}
Let also $\DAV$ be the subset of $\EAV$ consisting of immersions $\Vman\to\Mman$.

For $0\leq r \leq \infty$ denote by $\EidAV{r}$ (resp. $\DidAV{r}$) the path component of the identity inclusion $i_{\Vman}:\Vman\subset\Mman$ in $\EidAV{r}$ (resp. $\DidAV{r}$) with respect to the topology $\Wr{r}$.
It consists of maps $\dif\in\EidAV{r}$ (resp. $\dif\in\DidAV{r}$) which are $r$-homotopic to $i_{\Vman}$ in $\EAV$ (resp. $\DAV$).

If $\Vman=\Mman$, we will omit $\Vman$ from notations.
Moreover, we will also often omit superscript $\infty$ and denote $\EidAV{\infty}$ and $\DidAV{\infty}$ simply by $\EidAV{}$ and $\DidAV{}$.

\begin{lemma}\label{lm:shift-func-on-reg}{\rm\cite{Maks:TA:2003,Maks:ImSh}}
\it Let $H_t:\Vman\times I\to \Mman$ be an $r$-homotopy such that $H_0=i_{\Vman}$ and $H_t\in\EAV$.
Then there exists a unique $r$-homotopy $\Lambda:(\Vman\setminus\FixA)\times I\to\RRR$ such that $\Lambda_{0}=0$, $\Lambda_t:\Vman\setminus\FixA\to\RRR$ is $\Cont{\infty}$, and $H_t(x)=\AFlow(x,\Lambda_{t}(x))$ for all $x\in\Vman\setminus\FixA$ and $t\in I$.

In particular, for every $\dif\in\EidAV{0}$ there exists a smooth shift function on $\Vman\setminus\FixA$.
\end{lemma}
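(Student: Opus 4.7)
The plan is to construct $\Lambda$ pointwise by lifting $t\mapsto H_t(x)$ along the flow, and then to bootstrap smoothness in $x$ and $\Wr{r}$-continuity in $t$ from flow-box coordinates via an openness/closedness argument in $t$.

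\textbf{Step 1 (pointwise lift and uniqueness).} For each $x\in\Vman\setminus\FixA$, the path $t\mapsto H_t(x)$ lies in the orbit $\omega_x$ of $x$ (because each $H_t\in\EAV$) and starts at $x$. The parameterization $\AFlow(x,\cdot):\RRR\to\omega_x$ is a covering map --- a bijection in the non-periodic case or a $\ZZZ$-covering with deck group generated by $\theta(x)$ in the periodic case, by Lemma~\ref{lm:ker_of_shift_map}. Standard path-lifting thus produces a unique continuous $\Lambda_\cdot(x):I\to\RRR$ with $\Lambda_0(x)=0$ and $\AFlow(x,\Lambda_t(x))=H_t(x)$. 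This defines $\Lambda:(\Vman\setminus\FixA)\times I\to\RRR$, and its uniqueness among maps satisfying the stated conditions is immediate from Lemma~\ref{lm:ker_of_shift_map}.

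\textbf{Step 2 (local flow-box formula).} Fix any $(x_0,t_0)$ with $x_0\in\Vman\setminus\FixA$ and choose flow-box coordinates $(u_1,\ldots,u_n)$ on a neighborhood $W$ of $H_{t_0}(x_0)$ in which $\AFld=\partial/\partial u_1$ and $\AFlow(u,s)=(u_1+s,u_2,\ldots,u_n)$. Continuity of $H:I\to\Cr{r}{\Vman}{\Mman}$ yields a neighborhood $U\subset\Vman\setminus\FixA$ of $x_0$ and an interval $J\subset I$ around $t_0$ such that $H_t(x),H_{t_0}(x)\in W$ for all $(x,t)\in U\times J$. Set
\begin{equation*}
\nu_t(x)=u_1(H_t(x))-u_1(H_{t_0}(x)),\qquad (x,t)\in U\times J.
\end{equation*}
Then $\nu_{t_0}\equiv 0$, each $\nu_t$ is $\Cont{\infty}$ in $x\in U$, and $t\mapsto\nu_t|_U$ is $\Wr{r}$-continuous on $J$ because $H$ is an $r$-homotopy. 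A direct computation in the flow-box coordinates using $H_{t_0}(x)=\AFlow(x,\Lambda_{t_0}(x))$ gives $\AFlow(x,\Lambda_{t_0}(x)+\nu_t(x))=H_t(x)$; by the uniqueness of the pointwise lift from Step~1 applied to the continuous $t$-path $t\mapsto\Lambda_{t_0}(x)+\nu_t(x)$, one obtains
\begin{equation*}
\Lambda_t(x)=\Lambda_{t_0}(x)+\nu_t(x)\qquad\text{on }U\times J.
\end{equation*}

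\textbf{Step 3 (bootstrap and conclusion).} Let $S\subset I$ be the set of $t_0$ such that $\Lambda_{t_0}$ is $\Cont{\infty}$ on $\Vman\setminus\FixA$. Since $\Lambda_0\equiv 0$, we have $0\in S$. The Step~2 identity shows that, on each chart $U$, smoothness of $\Lambda_t$ is equivalent to smoothness of $\Lambda_{t_0}$; hence $S$ is open (take any $t_0\in S$, then $J\subset S$) and closed (given $t_n\to t_*$ with $t_n\in S$, apply Step~2 at $(x_0,t_*)$ and choose $n$ with $t_n\in J$ to write $\Lambda_{t_*}=\Lambda_{t_n}-\nu_{t_n}$, a smooth function on $U$). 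Therefore $S=I$. The formula $\Lambda_t=\Lambda_{t_0}+\nu_t$ on each $U\times J$ also exhibits $\Lambda$ as an $r$-homotopy. Finally, any $\dif\in\EidAV{0}$ is joined to $i_\Vman$ by a $0$-homotopy $H_t$ in $\EAV$, and $\Lambda_1$ is the desired smooth shift function for $\dif$ on $\Vman\setminus\FixA$.

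\textbf{Main obstacle.} The delicate point is the periodic case, where locally the shift function is determined only modulo the period $\theta(x)$. One must use continuity in $t$ together with the initial condition $\Lambda_0=0$ to select a single globally consistent lift; the path-lifting formulation of Step~1 encodes this choice, after which Steps~2 and~3 reduce to a routine local computation.
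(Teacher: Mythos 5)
The paper itself does not prove this lemma --- it is imported from \cite{Maks:TA:2003,Maks:ImSh} --- and your overall strategy (pointwise lifting through the orbit parametrization $\AFlow(x,\cdot)$, a flow-box formula for local smoothness in $x$, and an open--closed argument in $t$) is indeed the route taken in those references. But as written there is one genuine gap, and it sits exactly at the point that makes the lemma nontrivial.

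In Step 2 the ``direct computation'' is valid only if $H_t(x)$ and $H_{t_0}(x)$ lie on the \emph{same plaque} of the flow box $W$. Indeed,
\begin{equation*}
\AFlow(x,\Lambda_{t_0}(x)+\nu_t(x))=\AFlow\bigl(H_{t_0}(x),\nu_t(x)\bigr)
=\bigl(u_1(H_t(x)),\,u_2(H_{t_0}(x)),\ldots,u_n(H_{t_0}(x))\bigr),
\end{equation*}
and this equals $H_t(x)$ only when $u_j(H_t(x))=u_j(H_{t_0}(x))$ for $j\geq 2$. Membership of both points in $W$ does not give this: an orbit can meet a flow box in infinitely many plaques, which may even be dense in $W$ (irrational flow on the torus), and this is precisely the phenomenon that makes shift functions delicate. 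The same issue is hidden in Step 1, where ``standard path-lifting'' through $\AFlow(x,\cdot)$ presupposes that $t\mapsto H_t(x)$ is continuous into $\omega_x$ with its \emph{leaf} topology, not merely the subspace topology (for a non-properly-embedded orbit these differ, and $\AFlow(x,\cdot)$ is a covering only for the former). Both gaps are closed by one observation that you should make explicit: $\omega_x\cap W\cong(-\eps,\eps)\times C_x$ with $C_x$ countable, hence totally disconnected, so the connected set $\{H_t(x):t\in J\}$ lies in a single plaque; this simultaneously gives the leaf-continuity needed for the lifting in Step 1 and the same-plaque identity needed in Step 2. A second, smaller repair: in Step 3 the neighbourhoods $U\times J$ depend on the base point, so ``$t_0\in S\Rightarrow J\subset S$'' does not follow as stated (no single $J$ works for all of $\Vman\setminus\FixA$, which need not be compact). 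Since smoothness is local, run the open--closed argument separately for each fixed $x_0$ on $S_{x_0}=\{t : \Lambda_t\ \text{is}\ \Cont{\infty}\ \text{near}\ x_0\}$; this suffices and costs nothing.
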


For $\afunc\in \funcAV$ and $z\in\Vman$ we will denote by $\AFld(\afunc)$ the Lie derivative of $\afunc$ along $\AFld$ at $z$.
Then, \cite[Theorem~19]{Maks:TA:2003}, $\ShAV(\afunc)$ is a local diffeomorphism at $z$ iff $\AFld(\afunc)(z)\not=-1$.
Put 
\begin{equation}\label{equ:gamma_plus}
\Gamma^{+}_{\Vman}=\{ \afunc \in \funcAV \ : \ \AFld(\afunc) > -1 \}.
\end{equation}
Evidently, $\Gamma^{+}_{\Vman}$ is $\Sr{1}$-open and convex subset of $\funcAV$.
It also follows from \cite[Theorem~25]{Maks:TA:2003} that 
\begin{equation}\label{equ:im_shift}
\Gamma^{+}_{\Vman} \;=\; \ShA^{-1}(\DidAV{\infty}).
\end{equation}
\begin{lemma}\label{lm:imShAV_EidAVr_implies_for_diff}{\cite{Maks:MFAT:2009}}
The following inclusions hold true:
$$
\imShAV \subset \EidAV{\infty} \subset \cdots 
\subset \EidAV{r}\subset \cdots \subset \EidAV{0},
$$
$$
\ShAV(\Gamma^{+}_{\Vman}) \subset  \DidAV{\infty} \subset \cdots 
\subset \DidAV{r}\subset \cdots \subset \DidAV{0}.
$$
If $\imShAV=\EidAV{r}$ for some $r\geq0$, then $\ShAV(\Gamma^{+}_{\Vman})=\DidAV{r}$.
\end{lemma}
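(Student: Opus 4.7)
The plan is to dispatch the chain inclusions first, then establish the two ``initial'' inclusions via the canonical affine deformation, and finally prove the equality in the last clause by combining Lemma~\ref{lm:shift-func-on-reg} with the Lie derivative criterion for a local diffeomorphism.

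The chains $\EidAV{\infty}\subset\cdots\subset\EidAV{r}\subset\cdots\subset\EidAV{0}$, and similarly for $\DidAV{}$, are automatic: a path continuous in the topology $\Wr{s}$ is a fortiori continuous in any coarser $\Wr{r}$ with $r\le s$. For the two leftmost inclusions, given $\afunc\in\funcAV$ I would use the single homotopy $H_t(x)=\AFlow(x,t\afunc(x))$, $t\in[0,1]$. It is jointly smooth in $(x,t)$, hence an $\infty$-homotopy from $i_{\Vman}$ to $\ShAV(\afunc)$; each $H_t=\ShAV(t\afunc)$ evidently preserves every orbit of $\AFld$, and at any $z\in\FixA$ its germ lies in $\gimShAz$, so by \eqref{equ:im_j1} the Jacobian $e^{\nabla\AFld(z)\cdot t\afunc(z)}$ is invertible, placing $H_t$ in $\EAV$. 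If additionally $\afunc\in\Gamma^{+}_{\Vman}$, then $\AFld(t\afunc)=t\AFld(\afunc)>-t\ge-1$ for every $t\in[0,1]$, so $t\afunc\in\Gamma^{+}_{\Vman}$; by the Lie derivative criterion of \cite[Th.\,19]{Maks:TA:2003} each $H_t$ is a local diffeomorphism everywhere, and being so between manifolds of equal dimension it is an immersion, whence $H_t\in\DAV$.

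The substantive part is the last clause. Under the hypothesis $\imShAV=\EidAV{r}$ only the inclusion $\DidAV{r}\subset\ShAV(\Gamma^{+}_{\Vman})$ requires work, the reverse one being already known. For $\dif\in\DidAV{r}\subset\EidAV{r}=\imShAV$ I would pick $\afunc\in\funcAV$ with $\dif=\ShAV(\afunc)$, choose an $r$-homotopy $H_t$ in $\DAV$ from $i_{\Vman}$ to $\dif$, and apply Lemma~\ref{lm:shift-func-on-reg} to obtain the induced $r$-homotopy $\Lambda_t:\Vman\setminus\FixA\to\RRR$ of smooth shift functions with $\Lambda_0\equiv0$ and $H_t(x)=\AFlow(x,\Lambda_t(x))$. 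Because each $H_t$ is a local diffeomorphism on $\Vman\setminus\FixA$, \cite[Th.\,19]{Maks:TA:2003} forces $\AFld(\Lambda_t)\ne-1$ throughout, and continuity in $t$ (starting from $\AFld(\Lambda_0)\equiv 0$) then gives $\AFld(\Lambda_1)>-1$ on $\Vman\setminus\FixA$.

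The main obstacle is the final step: transferring this strict inequality from $\Lambda_1$ to $\afunc$ and extending it across $\FixA$. On $\Vman\setminus\FixA$ both $\afunc$ and $\Lambda_1$ are shift functions for $\dif$, so by Lemma~\ref{lm:ker_of_shift_map} they differ by an element of $\ker(\ShAV)$: either $\afunc\equiv\Lambda_1$ in the nonperiodic case, or $\afunc-\Lambda_1$ is a locally constant integer multiple of the period function $\theta$ in the periodic case. The key observation is that in the periodic case $\theta$ is constant along every orbit of $\AFld$ (the period being an orbit invariant), so $\AFld(\theta)\equiv0$ and hence $\AFld(\afunc)=\AFld(\Lambda_1)>-1$ on $\Vman\setminus\FixA$ in both cases. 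At any $z\in\FixA$, $\AFld(z)=0$, so $\AFld(\afunc)(z)=0>-1$ trivially. Thus $\afunc\in\Gamma^{+}_{\Vman}$ and $\dif=\ShAV(\afunc)\in\ShAV(\Gamma^{+}_{\Vman})$, completing the proof.
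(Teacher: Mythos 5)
The paper does not prove this lemma itself --- it is imported from \cite{Maks:MFAT:2009} --- so your argument can only be judged on its own terms. Its overall architecture is sound: the chains are indeed formal consequences of the comparison of the topologies $\Wr{r}$; the affine homotopy $H_t=\ShAV(t\afunc)$ correctly yields $\imShAV\subset\EidAV{\infty}$ and $\ShAV(\Gamma^{+}_{\Vman})\subset\DidAV{\infty}$ (the latter also follows at once from \eqref{equ:im_shift}); and your reduction of the last clause to the inequality $\AFld(\afunc)>-1$, via Lemma~\ref{lm:shift-func-on-reg}, the criterion $\AFld(\Lambda_t)\neq-1$, the kernel dichotomy of Lemma~\ref{lm:ker_of_shift_map} applied componentwise, and the identity $\AFld(\theta)\equiv0$, is the right route; the trivial verification at points of $\FixA$ is also correctly included.

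There is, however, one genuine soft spot: the step ``continuity in $t$ \dots\ then gives $\AFld(\Lambda_1)>-1$''. Lemma~\ref{lm:shift-func-on-reg} only guarantees that $\Lambda$ is an \emph{$r$-homotopy}, i.e.\ $t\mapsto\Lambda_t$ is continuous into $\Wr{r}$. For $r\geq1$ this does make $t\mapsto\AFld(\Lambda_t)(x)$ continuous, but for $r=0$ no derivatives of $\Lambda_t$ are controlled in $t$, so a priori $\AFld(\Lambda_t)(x)$ could jump across $-1$. Since the lemma is asserted for all $r\geq0$, and the case $r=0$ is precisely the one exploited in (v) and (vi) of Lemma~\ref{lm:Hamilt_vf_and_prop_orient}, this case cannot be waved away. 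The gap is repairable using only $\Wr{0}$-data: on a connected component $J$ of the intersection of an orbit with $\Vman\setminus\FixA$, parametrize the orbit by flow time so that $H_t$ reads $s\mapsto s+\Lambda_t(s)$, with nowhere vanishing derivative $1+\AFld(\Lambda_t)$ of constant sign on $J$; that sign equals the sign of $(b-a)+\Lambda_t(b)-\Lambda_t(a)$ for fixed $a<b$ in $J$ (after lifting to the universal cover if the orbit is closed), a nonzero quantity depending continuously on $t$ already for a $0$-homotopy, hence of constant sign $+1$. This yields $\AFld(\Lambda_1)>-1$ on $\Vman\setminus\FixA$ for every $r\geq0$, and with this patch your proof is complete.
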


\subsection{Openness of shift map}
We recall here the principal results obtained in\;\cite{Maks:loc-inv-shifts}, see Theorem\;\ref{th:openness_of_shift_maps} below.
A subset $\Vman\subset\Mman$ will be called a \myemph{$\Dm$-submanifold}, if $\Vman$ is a \myemph{connected} submanifold with boundary (possibly with corners) of $\Mman$ and $\dim\Vman=\dim\Mman$.
We will also say that $\Vman$ is a \myemph{$\Dm$-neighbourhood} for each $z\in\Int{\Vman}$.

\begin{lemma}\label{lm:ShAV_open_reformulations}
Endow $\funcAV$ and $\imShAV$ with topologies $\Wr{\infty}$.
If $\FixA\cap\Vman=\varnothing$ then the shift map $\ShAV$ is locally injective, whence the following conditions are equivalent:
\begin{enumerate}
 \item 
$\ShAV:\funcAV\to\imShAV$ is an open map.
 \item
$\ShAV:\funcAV\to\imShAV$ is a local homeomorphism.
\end{enumerate}

Suppose these conditions hold true.
Consider the restriction 
$$\ShAV|_{\Gamma^{+}_{\Vman}}: \Gamma^{+}_{\Vman} \to \ShAV(\Gamma^{+}_{\Vman}).$$

If $\ShAV$ is \myemph{non-periodic}, then $\ShAV$ and $\ShAV|_{\Gamma^{+}_{\Vman}}$ are homeomorphisms onto their images.
In particular, $\imShAV$ and $\ShAV(\Gamma^{+}_{\Vman})$ are contractible.

Suppose $\ShAV$ is \myemph{periodic}.
Then the maps $\ShAV$ and $\ShAV|_{\Gamma^{+}_{\Vman}}$ are $\ZZZ$-covering maps onto their images, and $\imShAV$ and $\ShAV(\Gamma^{+}_{\Vman})$ are homotopy equivalent to the circle $S^1$.
\end{lemma}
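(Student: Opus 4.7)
The plan is to extract the statements from the description of $\ker(\ShAV)$ in Lemma~\ref{lm:ker_of_shift_map} together with convexity of the source spaces and the identity \eqref{equ:im_shift}. Under $\FixA \cap \Vman = \varnothing$, that lemma gives $\ker(\ShAV)$ as either $\{0\}$ (nonperiodic case) or the infinite cyclic group $\{n\theta\}_{n\in\ZZZ}$ generated by a strictly positive function $\theta \in \Cinf(\Vman,\RRR)$ (periodic case). Fixing any basepoint $x_0 \in \Vman$, the pointwise-evaluation set $U_\afunc = \{\bfunc : |\bfunc(x_0) - \afunc(x_0)| < \theta(x_0)/2\}$ is $\Wr{0}$-open (and hence $\Wr{\infty}$-open); by the triangle inequality any two elements of $U_\afunc$ differ at $x_0$ by strictly less than $\theta(x_0)$, so by Lemma~\ref{lm:ker_of_shift_map} they cannot have the same $\ShAV$-image unless they coincide, and $\ShAV|_{U_\afunc}$ is injective. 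Given this local injectivity, conditions (1) and (2) are equivalent by a purely point-set argument: restricting an open continuous locally injective map to an injective neighborhood yields a continuous open bijection, i.e., a homeomorphism; conversely local homeomorphisms are open.

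I would then observe that both $\funcAV$ and $\Gamma^{+}_{\Vman}$ are convex, and therefore contractible. Indeed, $\funcAV$ is cut out of $\Ci{\Vman}{\RRR}$ by the pointwise constraints $a_x < \afunc(x) < b_x$, while $\Gamma^{+}_{\Vman}$ is the further intersection with the convex open half-space $\AFld(\afunc) > -1$. This settles the nonperiodic case at once: with $\ker(\ShAV) = \{0\}$ the map $\ShAV$ is globally injective, hence a continuous open bijection — i.e., a homeomorphism — onto $\imShAV$, and its restriction to the open subset $\Gamma^{+}_{\Vman}$ is a homeomorphism onto $\ShAV(\Gamma^{+}_{\Vman})$; contractibility transfers to the images.

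In the periodic case $\funcAV = \Ci{\Vman}{\RRR}$ by Lemma~\ref{lm:ker_of_shift_map}, and $\ZZZ$ acts on $\funcAV$ by $n \cdot \afunc = \afunc + n\theta$, with orbits equal to the fibers of $\ShAV$. Using \eqref{equ:im_shift} and the identity $\ShAV(\afunc + n\theta) = \ShAV(\afunc)$, the action restricts to $\Gamma^{+}_{\Vman}$. It is free because $\theta > 0$ and properly discontinuous by the $U_\afunc$ construction above; combined with the local homeomorphism property, this identifies $\ShAV$ and $\ShAV|_{\Gamma^{+}_{\Vman}}$ as regular $\ZZZ$-coverings onto their images. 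Since the total spaces are contractible, the base spaces are Eilenberg--MacLane spaces of type $K(\ZZZ,1)$, and therefore homotopy equivalent to $S^1$. I expect the main obstacle to be the $\Wr{\infty}$ local-injectivity step — concretely, producing an explicit neighborhood separating $\afunc$ from all translates $\afunc + n\theta$ with $n \neq 0$. The pointwise-evaluation neighborhood $U_\afunc$ resolves it cleanly, and everything else is a routine application of convexity of the source together with the standard covering-space recognition principle for free, properly discontinuous discrete-group actions.
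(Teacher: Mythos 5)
The paper does not actually prove this lemma: it appears in the subsection that opens with ``We recall here the principal results obtained in [Maks:loc-inv-shifts]'', and no proof is given in the text, so there is nothing to compare your argument against line by line. On its own merits your proof is correct and is the natural argument. The local injectivity via the evaluation neighbourhood $U_{\afunc}$ (open in $\Wr{0}$, hence in $\Wr{\infty}$) is exactly what is needed, and it simultaneously gives the proper discontinuity of the translation action $\afunc\mapsto\afunc+n\theta$ in the periodic case; the identification of the fibers of $\ShAV$ with the $\ZZZ$-orbits comes straight from Lemma~\ref{lm:ker_of_shift_map}, and the invariance of $\Gamma^{+}_{\Vman}$ from \eqref{equ:im_shift}. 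Convexity of $\funcAV$ and of $\Gamma^{+}_{\Vman}$ (the latter is asserted in the paper right after \eqref{equ:gamma_plus}) settles contractibility of the sources.

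One small point deserves a patch. In the periodic case you pass from ``the base is a $K(\ZZZ,1)$'' to ``therefore homotopy equivalent to $S^1$''; Whitehead's theorem only upgrades a weak equivalence to a genuine one when the space has the homotopy type of a CW-complex. This is easily supplied here: either observe that $\imShAV$ is locally homeomorphic to an open convex subset of the Fr\'echet space $\Ci{\Vman}{\RRR}$, hence is an ANR and so has CW homotopy type; or, more directly, split $\Ci{\Vman}{\RRR}=\RRR\theta\oplus W$ with $W=\{\afunc:\afunc(x_0)=0\}$, so that the quotient by the $\ZZZ$-action is homeomorphic to $(\RRR/\ZZZ)\times W\simeq S^1$. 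With that addendum the argument is complete.
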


We will be interesting in establishing (1) and (2)  of Lemma\;\ref{lm:ShAV_open_reformulations} for the shift map $\ShA$, i.e. for the case $\Vman=\Mman$.
It is convenient to formulate this as the following condition.

\medskip

\begin{itemize}
\item[\condShAop]
\it The shift map $\ShA:\Ci{\Mman}{\RRR}\to\imShA$ is a local homeomorphism with respect to topologies $\Wr{\infty}$.
\end{itemize}

\medskip

We will now recall sufficient conditions for \condShAop\ obtained in \;\cite{Maks:loc-inv-shifts}.

Let $\Vman$ be a compact $\Dm$-submanifold, and $\Uman$ be an open neighbourhood of $\Vman$.
Then the restriction $\AFld|_{\Uman}$ of $\AFld$ to $\Uman$ generates a local flow $$\AFlowU:\Uman\times\RRR\supset\domAU\longrightarrow\Uman.$$

The corresponding shift map of $\AFld|_{\Uman}$ will be denoted by $\ShAUV$. 
Thus $$\ShAUV:\funcAUV\to\imShAUV.$$

Let us introduce the following conditions for $\Vman$ and $\Uman$.

\medskip

\begin{itemize}
 \item[\condShAVop]
\it The shift map $\ShAV:\funcAV\to\imShAV$ is $\Wr{\infty}$-open, that is open between the corresponding topologies $\Wr{\infty}$;

 \item[\condShAUVop]
\it The shift map $\ShAUV:\funcAUV\to\imShAUV$ is $\Wr{\infty}$-open.

 \item[\condImShUopImSh]
\it The set $\imShAUV$ is $\Wr{\infty}$-open in $\imShAV$.
\end{itemize}

\medskip

Finally for each point $z\in\Mman$ consider the following properties.

\medskip

\begin{itemize}
\item[\condBsShAVop]
\it There exists a base $\beta_{z}=\{\Vman_{j}\}_{j\in J}$ at $z$ consisting of compact $\Dm$-neighbour\-hoods of $z$ such that every $\Vman\in\beta_{z}$ satisfies \condShAVop.

\item[\condBsShAUVop]
\it There exist an open neighbourhood $\Uman$ of $z$ and a base $\beta_{z}=\{\Vman_{j}\}_{j\in J} \subset\Uman$ at $z$ consisting of compact $\Dm$-neighbour\-hoods such that every $\Vman\in\beta_{z}$ satisfies \condShAUVop.

 \item[\condBsimShimShUop]
\it There exists a neighbourhood $\Uman$ of $z$ every compact $\Dm$-subma\-nifold $\Vman\subset\Uman$ satisfies \condImShUopImSh.

 \item[\condznpnrec]
\it $z$ is non-periodic and non-recurrent.

 \item[\condzpid]
\it $z$ is periodic and the Poincar\'e return map of the orbit $\orb_{z}$ of $z$ is the identity.

 \item[\condzUinv]
\it $z\in\FixA$ and there exists an $\AFld$-invariant neighbourhood $\Wman$ of $z$.

 \item[\condzUpbi]
\it $z\in\FixA$ and there exists a neighbourhood $\Wman$ of $z$ with the following property:
if $x\in\partial\Wman=\overline{\Wman}\setminus\Wman$ then there exists a neighbourhood $\gamma\subset\orb_{x}$ of $x$ in the orbit $\orb_x$ such that $\gamma\cap\partial\Wman = \{x\}$.
\end{itemize}

\begin{theorem}\label{th:openness_of_shift_maps}\cite{Maks:loc-inv-shifts}
\it The following implications hold true:

{\rm1)}~~~ \condShAVop \ \ $\Leftrightarrow$ \ \ \condShAUVop\ \ $\&$  \condImShUopImSh;

{\rm2)}~~~ \condBsShAVop\ for every $z\in\Mman$ \ \ $\Rightarrow$ \ \ \condShAop;

{\rm3)}~~~ \condznpnrec\ \ $\vee$ \ \condzpid\  \ $\Rightarrow$ \ \ \condBsShAVop;

{\rm4)}~~~ \condzUinv\ \ $\vee$ \ \condzUpbi\  \ $\Rightarrow$ \ \ \condBsimShimShUop.

In particular, suppose that every regular point $z\in\Mman\setminus\FixA$ of $\AFld$ satisfies either of the conditions \condznpnrec, \condzpid, and every singular point $z\in\FixA$ of $\AFld$ satisfies  \condzUinv, \condzUpbi, and \condBsShAUVop.
Then the shift map $\ShA:\Ci{\Mman}{\RRR}\to\imShA$ is either a homeomorphism or a $\ZZZ$-covering map between topologies $\Wr{\infty}$.
\end{theorem}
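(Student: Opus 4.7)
The overall strategy is a local-to-global reduction: establish openness of $\ShAV$ on a base of $\Dm$-neighbourhoods around every $z\in\Mman$, and then assemble these local statements into the global openness \condShAop. For the equivalence (1), the key observation is that when $\Vman\subset\Uman$ with $\Uman$ open, $\funcAUV\subset\funcAV$ consists precisely of those shift functions whose trajectories remain inside $\Uman$, and on this subset $\ShAUV$ agrees with the restriction of $\ShAV$. Viewing $\ShAV$ as an extension of $\ShAUV$, I would check that openness of $\ShAV$ onto $\imShAV$ is equivalent to openness of $\ShAUV$ onto $\imShAUV$ together with openness of $\imShAUV$ inside $\imShAV$: the forward direction is restriction, and the reverse direction uses that sufficiently small $\Wr{\infty}$-perturbations of $\alpha\in\funcAUV$ still have trajectories staying in $\Uman$ by compactness of the graph.

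For (2), given \condBsShAVop at every $z\in\Mman$, I would cover $\Mman$ by a finite collection of base $\Dm$-neighbourhoods $\Vman_1,\dots,\Vman_N$ and assemble a global perturbation of a shift function by patching local perturbations using a partition of unity; compatibility on overlaps is controlled by Lemma\;\ref{lm:shift-func-on-reg}, which provides uniqueness of smooth shift functions on the regular part. For (3), the \condznpnrec\ case reduces to the flow-box theorem, which trivialises the flow locally and makes openness of $\ShAV$ tautological; in the \condzpid\ case, a tubular neighbourhood of the orbit of $z$ fibres into closed orbits of asymptotically the same period, reducing $\ShAV$ to a smooth family of rotations whose openness is direct.

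Implication (4) is the most delicate and is where I expect the main obstacle. In the \condzUinv\ case, an invariant neighbourhood $\Wman$ forces $\imShAUV$ to coincide with $\imShAV$ locally, so the openness condition \condImShUopImSh\ is automatic. The \condzUpbi\ case is technically harder: the ``boundary exit'' property must be used to show that any orbit-preserving map sufficiently close to the inclusion in the $\Wr{\infty}$-topology is realised by a shift whose trajectories stay inside $\Wman$. This requires a compactness-plus-transversality argument at $\partial\Wman$, ruling out small perturbations that force orbits to exit and re-enter, and this is where the geometry of singular points genuinely enters. Finally, the ``in particular'' statement is assembled as follows: at each regular $z$ either \condznpnrec\ or \condzpid\ holds, so (3) gives \condBsShAVop; at each singular $z$ we are given \condBsShAUVop\ together with \condzUinv\ or \condzUpbi, and (4) combined with (1) upgrades this to \condBsShAVop. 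Implication (2) then delivers \condShAop, and Lemma\;\ref{lm:ShAV_open_reformulations} applied with $\Vman=\Mman$ yields the stated dichotomy: $\ShA$ is a homeomorphism in the non-periodic case and a $\ZZZ$-covering map in the periodic case.
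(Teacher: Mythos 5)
First, a point of reference: this theorem is not proved in the present paper at all --- it is imported verbatim from \cite{Maks:loc-inv-shifts}, so there is no internal proof to compare your sketch against. Your overall architecture (openness on a base of $\Dm$-neighbourhoods, then globalization) does match the intended one, and your treatment of \condzUinv\ in part (4) is correct: an invariant $\Wman$ gives $\funcAUV=\funcAV$ and hence $\imShAUV=\imShAV$. But several steps contain genuine gaps.

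The most serious one affects the reverse direction of (1) and, implicitly, (2). Openness of $\ShAV$ is a condition at \emph{every} $\afunc\in\funcAV$, whereas your compactness-of-the-graph observation only shows that $\funcAUV$ is open in $\funcAV$ and hence only yields openness at points of $\funcAUV$ (respectively, at $\afunc=0$ in part (2)). The reduction from an arbitrary $\afunc$ to the zero function is not free: it requires the translation identity $\ShAV(\afunc+\bfunc\circ\ShAV(\afunc))=\ShAV(\bfunc)\circ\ShAV(\afunc)$ coming from the flow property (cf.\ \cite[Lm.\;3.1]{Maks:CEJM:2009}), which is the actual engine of the argument in \cite{Maks:loc-inv-shifts} and is absent from your sketch. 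Second, in (2) a partition of unity cannot be used to patch local shift functions: a convex combination of two shift functions for the same map $h$ is not a shift function for $h$ unless they already coincide, so the only admissible gluing mechanism is the uniqueness up to the kernel $\{n\theta\}$ that you also invoke via Lemma\;\ref{lm:shift-func-on-reg}; the partition of unity must simply be dropped. Third, in (3) openness in a flow box is not tautological: $\imShAV$ carries the subspace topology of $\Ci{\Vman}{\Mman}$, and if the orbit of $z$ returned arbitrarily close to $\Vman$, then maps $\ShAV(\bfunc)$ with $\bfunc$ far from $\afunc$ could $\Wr{\infty}$-approximate $\ShAV(\afunc)$, destroying openness; this is precisely what the non-recurrence hypothesis in \condznpnrec\ excludes, and your argument never uses it. Finally, for \condzUpbi\ you correctly locate the difficulty but do not resolve it, so part (4) remains unproved in that case. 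The assembly of the ``in particular'' clause at the end is correct in outline, modulo the caveat that Lemma\;\ref{lm:ShAV_open_reformulations} is stated under $\FixA\cap\Vman=\varnothing$ and so cannot be applied to $\Vman=\Mman$ without comment when $\FixA\neq\varnothing$.
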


\begin{remark}\label{rem:ShAUV_open_doesnotdependonU}\rm
Statement 1) of Theorem\;\ref{th:openness_of_shift_maps} in particular implies that if the map $\ShAV$ is open (condition \condShAVop), then the map $\ShAUV$ is open for \myemph{any} open neighbourhood $\Uman$ of $\Vman$ (condition \condShAUVop).
\end{remark}

\section{Functions on surfaces}\label{sect:func_on_surf}
In this section we will assume that $\Mman$ is a compact surface and $\func:\Mman\to\Psp$ a $\Cinf$ map satisfying the following conditions:
\begin{enumerate}
 \item[(i)]
$\func$ takes constant value on each connected component of $\partial\Mman$
 \item[(ii)]
all critical points of $\func$ are isolated and contained in $\Int{\Mman}$.
\end{enumerate}

Let $z\in\Int\Mman$.
Then in some local charts at $z$ in $\Mman$ and at $\func(z)$ in $\Psp$ we can regard $\func$ as a function 
\begin{equation}\label{equ:loc_pres_func}
(\Mman,z)\; \supset \;(\CCC,0) \xrightarrow{~~\bar\func~~} (\RRR,0)\; \subset \; (\Psp,\func(z))
\end{equation}
such that $z=0\in\CCC$ and $\bar\func(0)=0\in\RRR$.

We say that $z$ is a \myemph{local extreme} for $\func$ if it is a local extreme for $\bar\func$.
Moreover, if we fix an orientation of $\Psp$ and restrict ourselves to representations~\eqref{equ:loc_pres_func} in which the embedding $\RRR\subset\Psp$ preserves orientation, then $z$ will be called a \myemph{local maximum (minimum)} of $\func$ whenever so is $0\in\CCC$ for $\bar\func$.

\subsection{Isolated critical points.}\label{sect:isolated_cr_pt}
Suppose that $z\in\Int\Mman$ is an isolated critical point of $\func$.
Then there exists a germ of a \myemph{homeomorphism} $\dif:(\CCC,0)\to(\CCC,0)$ such that
$$
\bar\func\circ\dif(z) = 
\left\{
\begin{array}{ll}
\pm|z|^2, & \text{if $0$ is a local extremum, \cite{Dancer:2:JRAM:1987}},  \\
\text{Re}(z^n) & \text{for some $n\in\NNN$, otherwise, \cite{Prishlyak:TA:2002}}.
\end{array}
\right.
$$
If $0$ is not a local extreme for $\func$, then the number $n$ does not depend on a particular choice of $\dif$, and in this case $z$ will be called a \myemph{(generalized) $n$-saddle}.

The topological structure of the foliation $\partf$ near local extremes, $1$-, and $3$-saddles is illustrated in Figure~\ref{fig:isol_pt}.
The corresponding critical components of level-set of $\func$ are designed in bold.
\begin{figure}[ht]
\begin{center}
\begin{tabular}{ccccc}
\includegraphics[width=2cm]{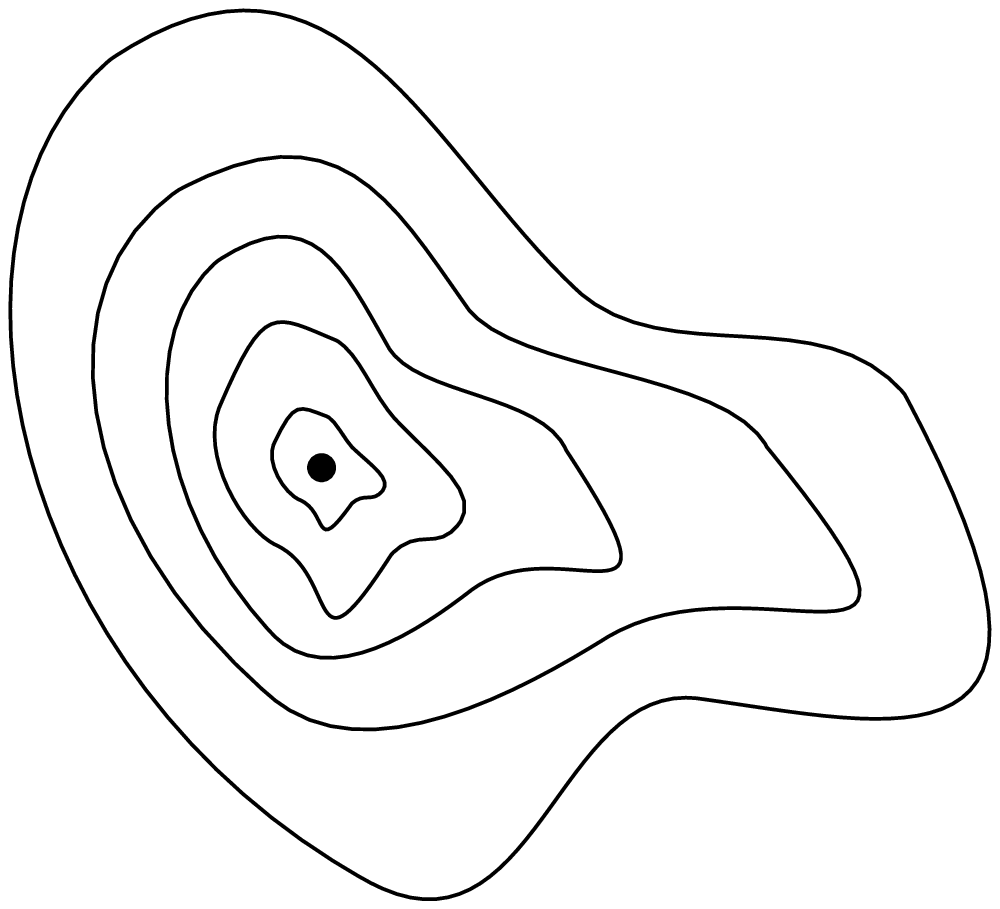} 
& \quad &
\includegraphics[width=3cm]{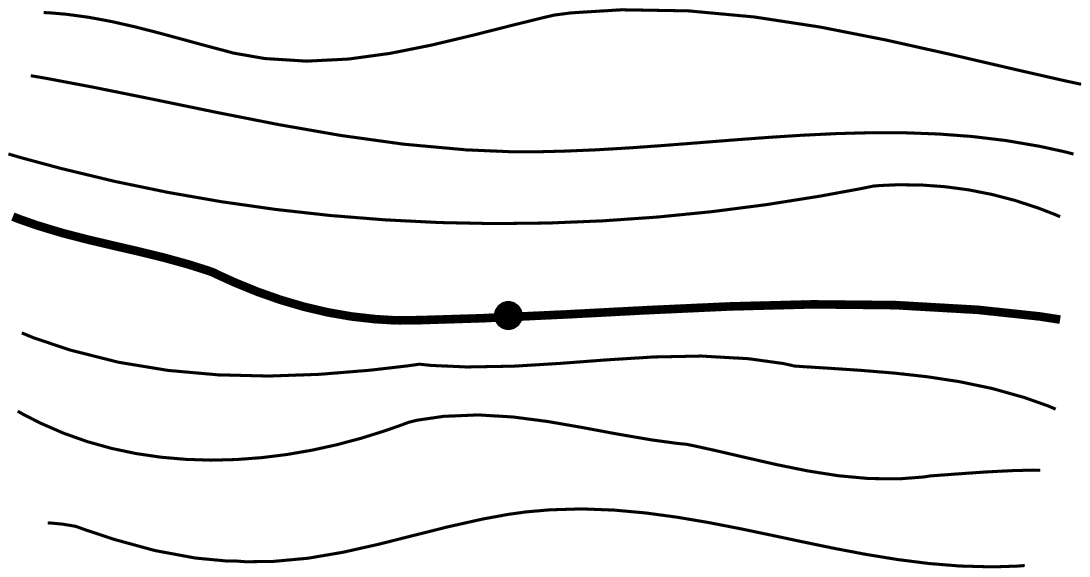} 
& \quad &
\includegraphics[width=3cm]{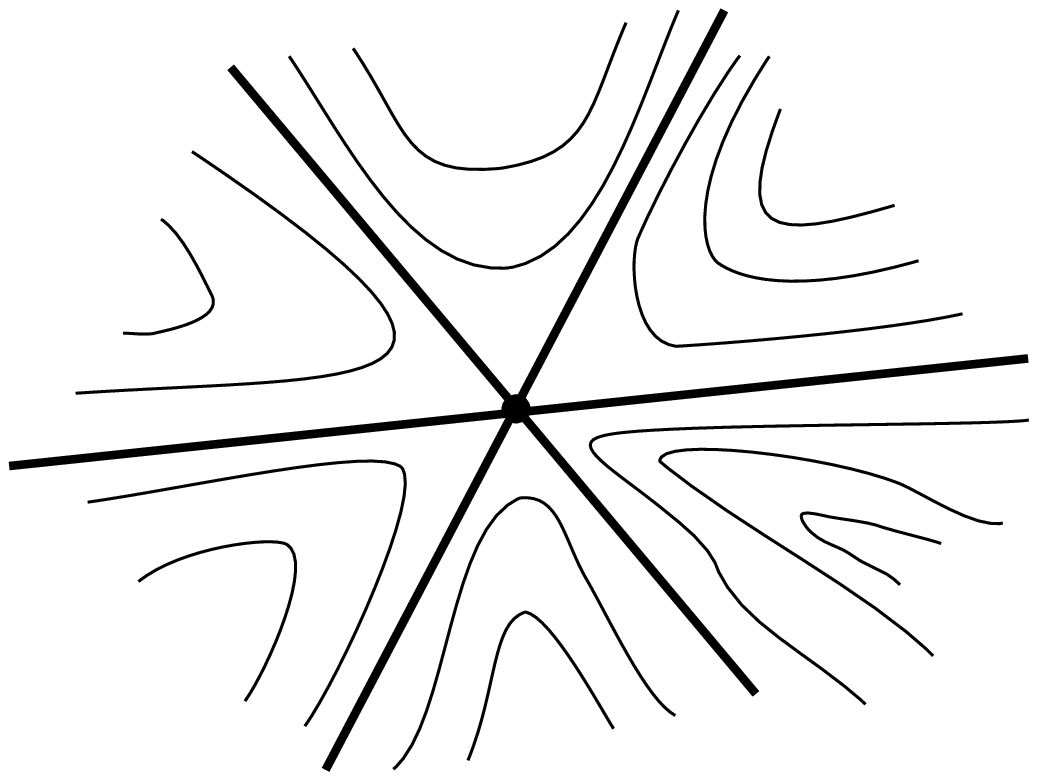} \\
a) local extreme & &
b) $1$-saddle & & 
c) $3$-saddle
\end{tabular}
\end{center}
\caption{Isolated critical points}\protect\label{fig:isol_pt}
\end{figure}

\subsection{Foliation $\partf$ of $\func$}\label{sect:Foliation-of-f}
Notice that $\func$ defines on $\Mman$ a certain one-dimensional foliation $\partf$ with singularities in the following way: \myemph{a subset $\omega\subset\Mman$ is a leaf of $\partf$ if and only if $\omega$ is either a critical point of $\func$ or a connected component of the set $\func^{-1}(c)\setminus\singf$ for some $c\in\Psp$}.
Thus the leaves of $\partf$ are $1$-dimensional submanifolds of $\Mman$ and singular points of $\func$.

Denote by $\partfreg$ the union of all leaves of $\partf$ homeomorphic to the circle and by $\partfcr$ the union of all other leaves.
It follows from (i) that $\partial\Mman \subset\partfreg$.

The leaves in $\partfreg$ (resp. $\partfcr$) will be called \myemph{regular} (resp. \myemph{critical}).
Similarly, connected components of $\partfreg$ (resp. $\partfcr$) will be called \myemph{regular} (resp. \myemph{critical}) components of $\partf$.

Evidently, every critical leaf of $\partfcr$ either homeomorphic to an open interval or is a singular point of $\func$.
If $\func$ has at least one critical point or $\partial\Mman=\varnothing$, then every regular component of $\partf$ is diffeomorphic with $S^1\times(0,1)$.

Denote by $\Dpartf$ the group of diffeomorphisms $\dif$ of $\Mman$ such that $\dif(\omega)=\omega$ for every leaf $\omega$ of $\partf$, and let $\Dpartfpl$ be its subgroup consisting of diffeomorphisms of $\Mman$ preserving orientations of all $1$-dimensional leaves of $\partf$.

For each critical point $z\in\singf$ we will denote by $\gDpartfz$ the group of germs of diffeomorphisms $\dif:(\Mman,z)\to(\Mman,z)$ at $z$ preserving leaves of $\partf$.
More precisely, let $\Vman$ be a neighbourhood of $z$ and $\dif:\Vman\to\Mman$ be an embedding such that $\dif(z)=z$.
Then $\dif\in\gDpartfz$ if and only if $\dif(\omega\cap\Vman)\subset\omega$ for each leaf $\omega$ of $\partf$.

\begin{lemma}\label{lm:Dpartf_Stab_identity_comp}{\rm\cite[Lm.\;3.4]{Maks:AGAG:2006}}
If $\func$ satisfies {\rm(i)} and {\rm(ii)}, then
$$\DiffId^{+}(\partf)^{r} \ = \ \DiffId(\partf)^{r} \ = \ \StabIdfr{r}, \qquad 0 \leq r \leq \infty.$$
\end{lemma}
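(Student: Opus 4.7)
The strategy is to establish the three-fold chain of inclusions
$$\DiffId^{+}(\partf)^{r} \;\subseteq\; \DiffId(\partf)^{r} \;\subseteq\; \StabIdfr{r} \;\subseteq\; \DiffId^{+}(\partf)^{r},$$
each of which I will treat separately. The first inclusion is immediate, since orientation-preservation on $1$-dimensional leaves is strictly stronger than mere leaf-invariance, and so the identity component of $\Dpartfpl$ sits inside that of $\Dpartf$.

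For the middle inclusion, the plan is to first prove the set-theoretic containment $\Dpartf\subseteq\Stabf$. A diffeomorphism $\dif\in\Dpartf$ sends each leaf $\omega$ of $\partf$ to itself; on a $1$-dimensional leaf, $\func$ is constant, and a $0$-dimensional leaf is a singular point sent to itself. Hence $\func\circ\dif=\func$ pointwise on each leaf, i.e.\ everywhere, giving $\dif\in\Stabf$. Once this inclusion is established at the level of groups, any $\Wr{r}$-continuous path in $\Dpartf$ starting at $\id$ is automatically such a path in $\Stabf$, so the identity components inject as well.

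The substantive step is the third inclusion $\StabIdfr{r}\subseteq\DiffId^{+}(\partf)^{r}$. Given $\dif\in\StabIdfr{r}$, fix a $\Wr{r}$-continuous path $H:I\to\Stabf$ with $H_{0}=\id$ and $H_{1}=\dif$. I would then show that each $H_{t}$ in fact lies in $\Dpartfpl$ via a standard connectedness argument: for every leaf $\omega$ of $\partf$ and every $x\in\omega$, the curve $t\mapsto H_{t}(x)$ is continuous, starts at $x$, and stays inside the level set $\func^{-1}(\func(x))$ because $H_{t}\in\Stabf$. Since the singular set $\singf$ is finite (hypothesis (ii)) and leaves of $\partf$ are exactly the connected components of $\func^{-1}(c)\setminus\singf$ for various $c$, together with the singular points themselves, this curve is trapped inside the single leaf $\omega$; hence $H_{t}(\omega)\subseteq\omega$, and equality follows from the same argument applied to $H_{t}^{-1}$. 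For the orientation condition on $1$-dimensional leaves, observe that $H_{t}|_{\omega}:\omega\to\omega$ is a $\Wr{r}$-continuous family of self-diffeomorphisms starting at the identity, and being orientation-preserving is a locally constant property of such a family, so $H_{t}\in\Dpartfpl$ for every $t$, whence $\dif\in\DiffId^{+}(\partf)^{r}$.

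The point requiring most care is the claim that the curve $t\mapsto H_{t}(x)$ cannot slip from one leaf of $\partf$ into another. For regular leaves in a regular level set this is standard (the components are separated in $\Mman$). The delicate case is a critical leaf $\omega$, which may limit onto singular points or other critical leaves inside the same critical level set; here I would invoke the local picture near isolated critical points recalled in \S\ref{sect:isolated_cr_pt} (Figure~\ref{fig:isol_pt}) to observe that distinct critical leaves are separated by singular points, and since $H_{t}$ fixes $\singf$ setwise with $H_{0}=\id$ and $\singf$ is finite, $H_{t}$ must fix each singular point individually for all $t$, which prevents the curve $H_{t}(x)$ from crossing any singularity and confines it to $\omega$.
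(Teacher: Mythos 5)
Your proof is correct, and it is essentially the intended argument: the paper itself gives no self-contained proof here but simply defers to \cite[Lm.\;3.4]{Maks:AGAG:2006} (the $r=0$ case), noting that the same reasoning goes through with ``isotopy'' replaced by ``$r$-isotopy'', and your connectedness argument --- trapping the continuous curve $t\mapsto H_t(x)$ in a single leaf because $H_t$ preserves regular points and fixes each of the finitely many critical points, then observing that the orientation type of $H_t|_{\omega}$ is locally constant in $t$ --- is exactly that reasoning, valid uniformly in $r$ since it only uses pointwise continuity.
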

\begin{proof}
In \cite[Lm.\;3.4]{Maks:AGAG:2006} the proof was given for the case $r=0$.
But literally the same arguments hold for all $r$ if we replace everywhere in \cite[Lm.\;3.4]{Maks:AGAG:2006} the word ``isotopy'' with ``$r$-isotopy''.
\end{proof}

\subsection{\KR-graph of $\func$}\label{sect:KR-graph-of-f}
Let $\Reebf$ be the \myemph{Kronrod-Reeb graph} (\myemph{\KR-graph\/}) of $\func$, e.g.~\cite{Kronrod:UMN:1950,BolsinovFomenko:1997,Sharko:UMZ:2003,Maks:AGAG:2006}.
This graph is obtained from $\Mman$ by shrinking every connected component of every level-set of $\func$ to a point, see~Figure~\ref{fig:KRgraph}.
Then we have the following decomposition of $\func$:
$$
\begin{CD}
 \func = \KRfunc \circ \prKR: \Mman @>{\prKR}>> \Reebf @>{\KRfunc}>>  \Psp,
\end{CD}
$$
where $\prKR$ is a factor-map and $\KRfunc$ is the induced function which will be caller \myemph{\KR-function} for $\func$.
Evidently, the edges (resp. vertexes) of $\Reebf$ correspond to regular (resp. critical) components of $\partf$.

\begin{figure}[ht]
\centerline{\includegraphics[height=3cm]{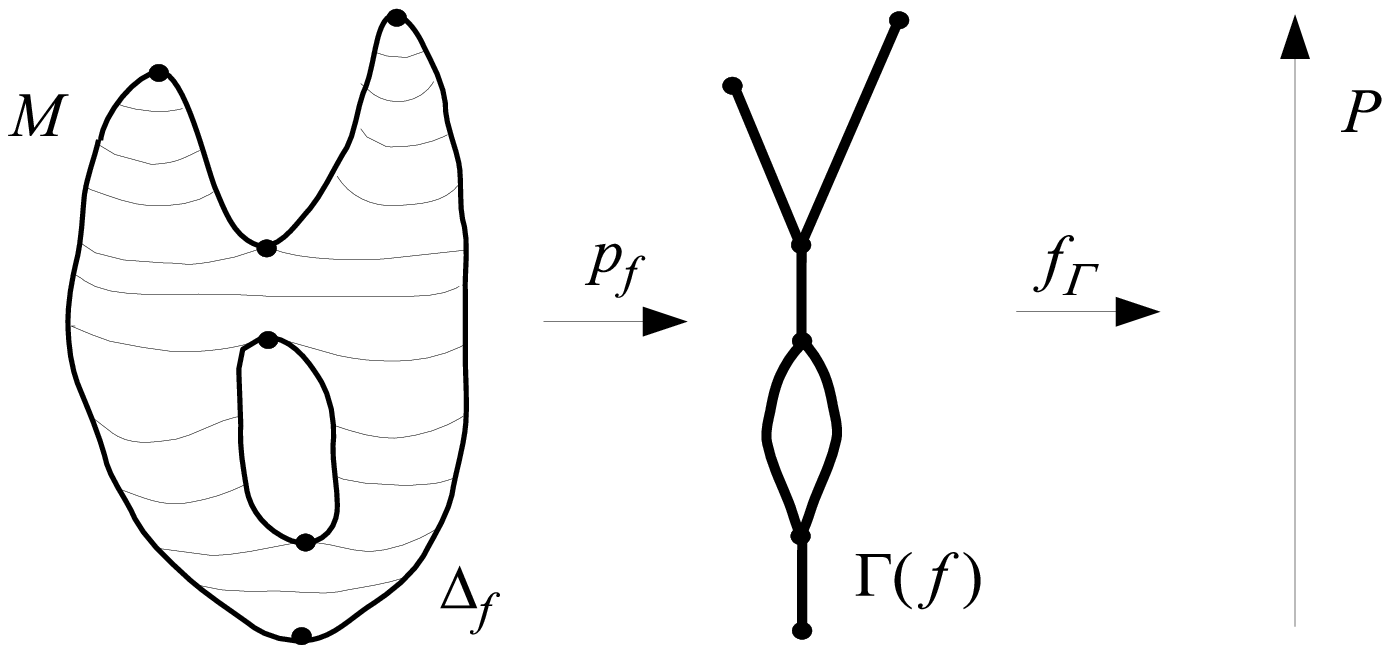}}
\caption{Foliation $\partf$ and \KR-graph $\Reebf$ of $\func$}\protect\label{fig:KRgraph}
\end{figure}

\subsection{The action of $\Stabf$ on $\Reebf$}
Notice that every $\dif\in\Stabf$ interchanges the leaves of $\partf$ and even yields a homeomorphism of $\Reebf$.
So we have a natural homomorphism $\stonkr:\Stabf\to\AutfR$.

Evidently, $\dif\in\ker(\stonkr)$ iff $\dif$ preserves every regular leaf of $\partfreg$.
On the other hand it is easy to construct examples when $\dif\in\ker(\stonkr)$ interchanges critical leaves of $\partf$.
It follows that 
$$
\Dpartfpl \; \subset \; \Dpartf \; \subset \; \ker(\stonkr).
$$

\begin{remark}\label{rem:corrections}\rm
I should warn the reader that \cite[Eq.\;(8.6)]{Maks:AGAG:2006} wrongly claims that $\Dpartfpl\cap\DiffIdM=\ker(\stonkr)\cap\DiffIdM$.
In fact, the proof contains a misprint and a mistake: it refers to\;\cite[Lm.\;3.6(2)]{Maks:AGAG:2006} which does not exist instead of \;\cite[Lm.\;3.5(2)]{Maks:AGAG:2006}.
Nonetheless, \cite[Lm.\;3.5(2)]{Maks:AGAG:2006} is not applicable since it claims about $\dif\in\StabIdf$ but not about $\dif\in\Dpartfpl\cap\DiffIdM$: the difference is that every $\dif\in\StabIdf$ is isotopic to $\id_{\Mman}$ via an $\func$-preserving isotopy, while $\dif\in\Dpartfpl\cap\DiffIdM$ also preserves $\func$ and is isotopic to $\id_{\Mman}$ but the isotopy is not assumed to be $\func$-preserving.
On the other hand, it follows from\;\cite[Lm.\;3.5(1)]{Maks:AGAG:2006} that \cite[Eq.\;(8.6)]{Maks:AGAG:2006} holds for orientable surfaces.
The following example shows that a difference between $\Dpartfpl\cap\DiffIdM$ and $\ker(\stonkr)\cap\DiffIdM$ appears indeed.
\end{remark}

\begin{example}\label{exmp:func_on_prjplane}\rm
Let $\prjplane$ be a real projective plane and $\func:\prjplane\to\RRR$ be a Morse function having exactly one minimum $x$, one saddle $y$ and one maximum $z$.
Regard $\prjplane$ as a space obtained from unit $2$-disk $D^2$ by identifying the opposite points on its boundary $\partial D^2$.
The foliation $\partf$ on $D^2$ and the \KR-graph $\Reebf$ of $\func$ are shown in Figure\;\ref{fig:rp1func}.
The vertexes of $\Reebf$ are denoted by the same letters as the corresponding critical points of $\func$ and the up-down arrows show how to glue the boundary of $D^2$.

Let $\dif:D^2\to D^2$ be a mirror symmetry with respect to the horizontal line passing through $x$, see Figure\;\ref{fig:rp1func}.
Then $\dif$ trivially acts on $\Reebf$ though it changes orientations of all regular leaves.
On the other hand, since $\Diff(\prjplane)$ is connected, it follows that $\dif$ is isotopic to $\id_{\prjplane}$.
Thus 
$ \dif \in (\ker(\stonkr) \setminus \Dpartfpl) \cap\DiffId(\prjplane).$
\begin{figure}[ht]
\centerline{\includegraphics[height=4cm]{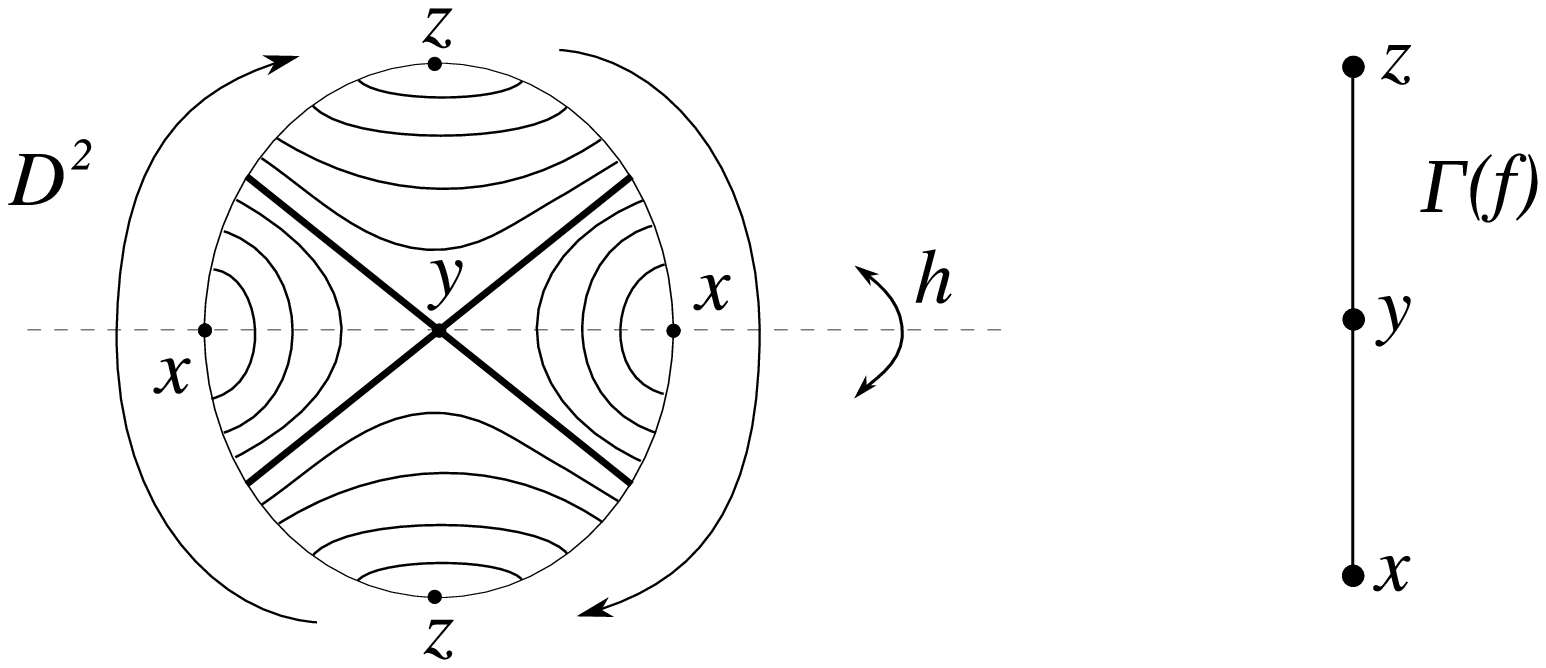}}
\caption{}\protect\label{fig:rp1func}
\end{figure}
\end{example}

\section{Special critical points}\label{sect:special-crit-points}
In this section we introduce three types of critical point which will play a key role throughout the paper.

\begin{definition}(Special critical points)\label{def:special-cr-pt}
\it Let $z\in\singf$ be an isolated critical point of $\func$.
We will say that $z$ is \myemph{special} for $\func$ if there exists a neighbourhood $\Uman$ of $z$ and a vector field $\AFld$ on $\Uman$ with the following properties:
\begin{itemize}
 \item[\SPECA] 
$d\func(\AFld)\equiv 0$ and $z$ is a unique singular point of $\AFld$;
 \item[\SPECB]
there is a base $\beta_{z}=\{\Vman_{j}\}_{j\in J}$ of compact connected $\Dm$-neigh\-bourhoods of $z$ such that for each $\Vman\in\beta_{z}$ the shift mapping
$\ShAV:\funcAV\to\imShAV$ is a local homeomorphism with respect to topologies $\Wr{\infty}$.
\end{itemize}
The corresponding vector field $\AFld$ will be called \myemph{special} as well.
\end{definition}

Let $z\in\singf$ be a special critical point of $\func$ and $\AFld$ be a vector field at $z$ satisfying \SPECA\ and \SPECB.
Then by \SPECA\ the orbits of $\AFld$ coincide with the level-curves of $\func$ near $z$, whence 
\begin{equation}\label{equ:gDaz_gDpartfz}
\gDAz = \gDpartfz.
\end{equation}
Moreover, it follows from Remark\;\ref{rem:ShAUV_open_doesnotdependonU} that in Definition\;\ref{def:special-cr-pt} a neighbourhood $\Uman$ can be taken arbitrary small.

\begin{definition}[$\ST$-point]\label{defn:S-point}\it
Say that $z$ is an $\ST$-point for $\func$ if $z$ is not a local extreme for $\func$, (i.e. a \myemph{saddle} point) and it has a vector filed $\AFld$ satisfying \SPECA\ and \SPECB\ and such that $\gimShAz=\gDAz$.

Such a vector field will be called \myemph{special} for $z$.
\end{definition}

Now let $z$ be a local extreme of $\func$.
Then we can assume that $\Uman$ is connected and $\AFld$-invariant, see\;\S\ref{sect:isolated_cr_pt}.
It also follows that all the orbits of $\AFld$ except for $z$ are periodic and wrap around $z$.
Let $\theta:\Uman\setminus z \to (0,+\infty)$ be the function associating to each $x\in\Uman\setminus z$ its period $\Per(x)$.
Then it easily follows from smoothness of the Poincar\'e's first return map that $\theta$ is $\Cinf$ on $\Uman\setminus z$ but it can be even discontinuous at $z$.
We will call $\theta$ the \myemph{period function} for $\AFld$.

It is shown in \cite{Maks:sym-nondeg-topcenter} that after some linear change of coordinates the linear part $\nabla\AFld(z)$ of $\AFld$ at $z$, see\;\eqref{equ:j1Fz}, can be reduced to one of the following forms:
\begin{equation}\label{equ:linear_part_j1Fz}
a)~
\left(\begin{smallmatrix}
 0 & \lambda \\ -\lambda & 0
\end{smallmatrix}\right),
\qquad\qquad
b)~
\left(\begin{smallmatrix}
 0 & \lambda \\ 0 & 0
\end{smallmatrix}\right),
\qquad\qquad
c)~
\left(\begin{smallmatrix}
 0 & 0 \\ 0 & 0
\end{smallmatrix}\right),
\end{equation}
for some $\lambda\in\RRR\setminus\{0\}$.
By\;\eqref{equ:im_j1} the image $\tnz(\gimShAz)$ of $\gimShAz$ under $\tnz$ coincides with $\{e^{\nabla\AFld(z)\cdot t}\}_{t\in\RRR}$, whence we obtain the following three possibilities for $\tnz(\gimShAz)$:
\begin{equation}\label{equ:tnz_giimShAz}
a)~
SO(2),
\qquad\qquad
b)~
\left\{ \left(\begin{smallmatrix}
 1 & t \\ 0 & 1
\end{smallmatrix}\right), t\in\RRR \right\},
\qquad\qquad
c)~
\left\{ \left(\begin{smallmatrix}
 1 & 0 \\ 0 & 1
\end{smallmatrix}\right) \right\}.
\end{equation}

\begin{definition}[$\PT$-point]\label{defn:P-point}
Let $z$ be a special local extreme of $\func$.
Say that $z$ is a \myemph{$\PT$-point} for $\func$ if there exists a vector field $\AFld$ on some neighbourhood $\Uman$ of $z$ satisfying \SPECA\ and \SPECB\ and such that the corresponding period function $\theta:\Uman\setminus z\to(0,+\infty)$ smoothly extends to all of $\Uman$.

In this case $\AFld$ will also be called \myemph{special}.
\end{definition}

\begin{lemma}\label{lm:P-points}\cite{Takens:AIF:1973,Maks:sym-nondeg-topcenter}
Let $z\in\singf$ be a local extreme of $\func$.
Choose some local coordinates $(x,y)\in\RRR^2$ at $z$ in which $z=\orig$.
Let $\AFld=(\AFld_1,\AFld_2)$ be a $\Cinf$ vector field near $z$ such that $\AFld(\func)\equiv0$ and $z$ is a unique singular point of $\AFld$.
Then the following conditions \PTA-\PTF\ are equivalent.
\begin{enumerate}
\item[\PTA]
$z$ is a $\PT$-point for $\func$ and $\AFld$ is the corresponding special vector field for $z$;

\item[\PTB]
The eigen values of $\nabla\AFld(z)$ are non-zero purely imaginary, so $\nabla\AFld(z)$ can be reduced to the form a) of\;\eqref{equ:linear_part_j1Fz}.

 \item[\PTC]
There exist germs at $z$ of $\Cinf$ functions $\bfunc, \dAx, \dAy:\Uman\to\RRR$ such that $\bfunc(z)\not=0$, and $\dAx$ and $\dAy$ are \myemph{flat} at $z$, and $\AFld$ is $\Cinf$ equivalent to the following vector field
$$
\bfunc(x^2+y^2) \left( -y \dd{x} + x \dd{y} \right) + \dAx\dd{x}+ \dAy\dd{y}.
$$
\item[\PTD]
There exist a connected smooth $2$-submanifold $\Vman\subset\Uman$ such that $z\in\Int{\Vman}$ and  the shift map $\ShAV$ is periodic, i.e. $\ker(\ShAV)\not=\{0\}$.

\item[\PTE]
For every connected smooth $2$-submanifold $\Vman\subset\Uman$ the shift map $\ShAV$ is periodic.

\item[\PTF]
$\tnz(\gimShAz)$ is conjugate in $\GLR{2}$ to $\mathrm{SO}(2)$.
\end{enumerate}

In these cases $\gimShAz = \gDpAz$, $\ker(\ShAV)=\{n\theta|_{\Vman}\}_{n\in\ZZZ}$ for any connected $2$-submanifold $\Vman\subset\Uman$.

Moreover, there are $\Cinf$ germs $g,\mu:\Uman\to\RRR$ at $z$ such that $\mu$ is flat at $z=\orig$ and $\func$ is $\Cinf$ equivalent to the function $g(x^2+y^2)+\mu(x,y)$.

If either of conditions \PTA-\PTF\ is violated, then $\lim\limits_{x\to z}\theta(z) = +\infty$.
\end{lemma}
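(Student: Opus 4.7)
The plan is to organise the equivalences around the trichotomy~\eqref{equ:linear_part_j1Fz} for the linear part $\nabla\AFld(z)$. The linear equivalence \PTB\ $\Leftrightarrow$ \PTF\ is immediate from~\eqref{equ:im_j1}: the one-parameter subgroup $\tnz(\gimShAz)=\{e^{\nabla\AFld(z)t}\}_{t\in\RRR}$ is conjugate in $\GLR{2}$ to $\mathrm{SO}(2)$ exactly when $\nabla\AFld(z)$ is of type~a) in~\eqref{equ:linear_part_j1Fz}, i.e.\ has two nonzero purely imaginary eigenvalues. The direction \PTC\ $\Rightarrow$ \PTB\ is by inspection: in the field displayed in \PTC, the flat summands $\dAx\,\dd{x}+\dAy\,\dd{y}$ contribute nothing to the linear part at $z$, which therefore equals $\bfunc(0)\bigl(-y\,\dd{x}+x\,\dd{y}\bigr)$ with eigenvalues $\pm i\bfunc(0)\neq 0$. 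For the converse \PTB\ $\Rightarrow$ \PTC, I would invoke Takens' smooth normal-form theorem for planar elliptic singular points: after a $\Cinf$ change of coordinates, $\AFld$ becomes a rotation of angular velocity $\bfunc(x^2+y^2)$ with $\bfunc(0)\neq 0$, plus a remainder that is flat at $z$.

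For the dynamical side, \PTD\ $\Rightarrow$ \PTB\ uses Lemma~\ref{lm:ker_of_shift_map}: nontriviality of $\ker(\ShAV)$ forces the periodic case, giving a smooth positive $\theta:\Vman\to(0,+\infty)$ with $\AFlow(x,\theta(x))=x$; differentiating this identity at the fixed point $z$ yields $e^{\nabla\AFld(z)\theta(z)}=I$ with $\theta(z)>0$, which excludes the nilpotent case~b) (whose exponential is never $I$ for $t>0$) and the vanishing case~c) (already forbidden by Lemma~\ref{lm:ker_of_shift_map}), leaving only case~a). For \PTC\ $\Rightarrow$ \PTE, the condition $\AFld(\func)=0$ confines each $\AFld$-orbit to a level set of $\func$; since $z$ is an isolated local extreme, every small regular level set is a topological circle on which $\AFld$ has no zeros, so every orbit is closed, hence periodic, and Lemma~\ref{lm:ker_of_shift_map} furnishes the smooth $\theta_\Vman$ for each $\Vman$; the implication \PTE\ $\Rightarrow$ \PTD\ is trivial. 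To close the cycle, Definition~\ref{defn:P-point} gives \PTA\ $\Rightarrow$ \PTE\ at once (the global $\theta$ on $\Uman$ restricts to a nontrivial kernel element on every $\Vman$); and for \PTE\ $\Rightarrow$ \PTA\ one glues the local $\theta_\Vman$---they agree on overlaps by minimality of period on the dense set $Q$ of Lemma~\ref{lm:ker_of_shift_map}, up to a $\ZZZ$-ambiguity killed by continuity---while the openness condition \SPECB\ follows from Theorem~\ref{th:openness_of_shift_maps} using the $\AFld$-invariant neighbourhoods of $z$ supplied by the normal form~\PTC.

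The remaining assertions come as byproducts. Periodicity of $\ShAV$ combined with the normal form \PTC\ yields $\gimShAz=\gDpAz$: any orientation-preserving germ preserving $\partf$ acts on each leaf (a topological circle) as a rotation that is realised as a shift $\gShAz(\afunc)$ for a smooth shift function $\afunc$ obtained via the Takens coordinates. Integrating along the rotational action of \PTC\ then writes $\func$ as a smooth function of $x^2+y^2$ up to a flat correction, yielding $\func=g(x^2+y^2)+\mu(x,y)$ with $\mu$ flat at $z$. Finally, if all of \PTA--\PTF\ fail then $\nabla\AFld(z)$ lies in case~b) or~c) of~\eqref{equ:linear_part_j1Fz}, whose linearised flow has no periodic orbits near $z$; a Poincar\'e-type return argument then shows $\theta(x)\to+\infty$ as $x\to z$. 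The principal obstacle is the smooth normal-form step \PTB\ $\Rightarrow$ \PTC: Takens' theorem only produces the form modulo a flat tail, and it is the management of this flat remainder---in the gluing of \PTE\ $\Rightarrow$ \PTA, in the derivation of the normal form for $\func$, and in the blow-up of $\theta$ in the exceptional cases---that is the most delicate part of the argument.
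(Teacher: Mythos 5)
Your cycle of implications has one load-bearing step that fails: \PTC\ $\Rightarrow$ \PTE. You argue that since $\AFld(\func)\equiv0$ and $z$ is an isolated local extreme, all nearby orbits are closed, ``hence periodic, and Lemma~\ref{lm:ker_of_shift_map} furnishes the smooth $\theta_{\Vman}$''. But Lemma~\ref{lm:ker_of_shift_map} does not say that closedness of all orbits implies $\ker(\ShAV)\neq\{0\}$; it only says that in the periodic case all orbits are closed (a non-closed orbit is listed as a \emph{sufficient} reason for non-periodicity, not the only one). The converse is exactly what separates $\PT$-points from $\NT$-points: at an $\NT$-point every nearby orbit is likewise periodic, yet $\theta(x)\to+\infty$ as $x\to z$, so no strictly positive smooth function lies in $\ker(\ShAV)$ and the shift map is non-periodic on every neighbourhood of $z$. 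If your argument were valid it would establish \PTE\ for every local extreme and the whole $\PT$/$\NT$ dichotomy would be empty. Since this is the only arrow in your scheme leading from the linear-algebraic conditions \PTB, \PTC, \PTF\ into the dynamical ones \PTA, \PTD, \PTE, the equivalence does not close without it. What is actually needed is smoothness (or at least boundedness) of the period function at $z$; the paper gets this from the implication \PTC\ $\Rightarrow$ \PTA\ (a main result of the cited reference: in the normal form the period is governed by $2\pi/\bfunc$ up to flat corrections) and only then deduces \PTE\ by restricting the globally smooth $\theta$, together with the elementary observation that a kernel generator must coincide with $\Per$ on a dense set for \PTD\ $\Rightarrow$ \PTA.

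A secondary point: in \PTB\ $\Rightarrow$ \PTC\ you assert that Takens' theorem directly yields the rotational normal form. Takens' classification of planar fields with purely imaginary linear part produces \emph{two} types of normal forms, only one of which is the form in \PTC; the paper excludes the second type by observing that it has non-closed orbits near $z$, which is incompatible with $\AFld(\func)\equiv0$ at a local extreme. On the positive side, your \PTD\ $\Rightarrow$ \PTB\ (differentiating $\AFlow(x,\theta(x))=x$ at $z$ to obtain $e^{\theta(z)\nabla\AFld(z)}=I$ with $\theta(z)>0$, and excluding the nilpotent case directly and the case $\nabla\AFld(z)=0$ via Lemma~\ref{lm:ker_of_shift_map}) is correct and is a clean alternative to the paper's proof of \PTD\ $\Rightarrow$ \PTA.
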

\begin{proof}
Equivalence \PTA$\Leftrightarrow$\PTB$\Leftrightarrow$\PTC\ was established in \cite{Maks:sym-nondeg-topcenter}.
In fact F.\;Takens\;\cite{Takens:AIF:1973} described normal forms for vector fields satisfying \PTB.
He proved that there are two types of such forms.
The first one is described by \PTC.
Moreover, it was observed in\;\cite{Maks:sym-nondeg-topcenter} that vector fields of the second type have non-closed orbits near $z$, whence in our case $\AFld$ can belong only to the first type.
This implies \PTB$\Rightarrow$\PTC.
The proof of \PTC$\Rightarrow$\PTA$\Rightarrow$\PTB\ is one of the main results of  \cite{Maks:sym-nondeg-topcenter}.

\PTD$\Rightarrow$\PTA.
Suppose $\ker(\ShAV)=\{n\nu\}_{n\in\ZZZ}$ for some $\Cinf$ function $\nu:\Vman\to(0,+\infty)$.
Then by Lemma\;\ref{lm:ker_of_shift_map} $\nu=\Per\equiv\theta$ on open and every where dense subset $Q\subset\Vman\setminus\{z\}$, whence $\nu=\theta$ on all of $\Vman\setminus\{z\}$, and thus $\theta$ extends to a $\Cinf$ function $\nu$ near $z$.

\PTA$\Rightarrow$\PTE.
Suppose $\theta$ is $\Cinf$ on all of $\Uman$ and let $\Vman\subset\Uman$ be a $2$-submanifold.
Then $\AFlow(x,\theta(x))=x$ for all $x\in\Vman$, whence $\theta|_{\Vman}\in\ker(\ShAV)\not=\{0\}$.
By the previous arguments, $\ker(\ShAV)=\{n\theta|_{\Vman}\}_{n\in\ZZZ}$.

\PTE$\Rightarrow$\PTD\ is evident, and \PTB$\Leftrightarrow$\PTF\ follows from\;\eqref{equ:tnz_giimShAz}.

All other statements are also proved in\;\cite{Maks:sym-nondeg-topcenter}.
\end{proof}

\begin{definition}[$\NT$-point]\label{defn:N-point}\it
Let $z$ be a special local extreme of $\func$.
Say that $z$ is an \myemph{$\NT$-point} for $\func$ if there exists a vector field $\AFld$ near $z$ satisfying \SPECA\ and \SPECB\ and such that
\begin{enumerate}
 \item[\NA]
the corresponding period function $\theta:\Uman\setminus\{z\}\to(0,+\infty)$ can not be continuously extended to all of $\Uman$, so by Lemma\;\ref{lm:P-points} $\lim\limits_{x\to z}\theta(z) = +\infty$;
 \item[\NB]
$\ker(\tnz) \subset \gimShAz$;
 \item[\NC]
if $\nabla\AFld(z)=0$, then $\tnz(\gDAz)$ is finite.
\end{enumerate}
Again, such a vector field will be called \myemph{special}.
\end{definition}
Condition \NB\ means that for each $\dif\in\gDAz$ with $\tnz(\dif)=\id$ there exists a germ of a $\Cinf$ function $\afunc\in\gCiMRz$ such that $\dif(x) = \AFlow(x,\afunc(x))$ for all $x$ sufficiently close to $z$.

Consider the following subsets of $\GLR{2}$:
$$
\begin{array}{rclcrcl}
\Agrp_{++} &=& \left\{ \left(\begin{smallmatrix}
 1 &  t \\ 0 & 1
\end{smallmatrix}\right) \ | \ t\in\RRR \right\}, &\qquad &
\Agrp_{--} &=& \left\{ \left(\begin{smallmatrix}
 -1 &  t \\ 0 & -1
\end{smallmatrix}\right) \ | \  t\in\RRR \right\}, 
\\ [2mm]
\Agrp_{+-} &=& \left\{ \left(\begin{smallmatrix}
 1 &  t \\ 0 & -1
\end{smallmatrix}\right) \ | \ t\in\RRR \right\}, & &
\Agrp_{-+} &=& \left\{ \left(\begin{smallmatrix}
 -1 &  t \\ 0 & 1
\end{smallmatrix}\right) \ | \  t\in\RRR \right\},
\end{array}
$$
$$
\Agrp_{+}=\Agrp_{++}\cup \Agrp_{--}, \qquad 
\Agrp=\Agrp_{++}\cup \Agrp_{--}\cup \Agrp_{-+} \cup \Agrp_{+-}.
$$
Then $\Agrp_{+} \subset \Agrp$ are subgroups of $\GLR{2}$, $\Agrp_{++}$ is the unity component of both $\Agrp$ and $\Agrp_{+}$, $\Agrp_{+}/\Agrp_{++}\approx\ZZZ_2$, and $\Agrp/\Agrp_{++}\approx\ZZZ_2\times\ZZZ_2$.

\begin{lemma}\label{lm:N-points}\cite{Maks:sym-deg-topcenter}
Let $z$ be an $\NT$-point of $\func$ and $\AFld$ be a special vector field for $z$.
Then the following statements hold.
\begin{enumerate}
 \item[\NTA]
$\nabla\AFld(z)$ is nilpotent, whence it is similar to one of the matrices {\rm b)} or {\rm c)} of\;\eqref{equ:linear_part_j1Fz}.
 \item[\NTB]
If $\nabla\AFld(z)=\left(\begin{smallmatrix}
0 & 1 \\ 0 & 0
\end{smallmatrix}\right)$, see {\rm b)} of\;\eqref{equ:linear_part_j1Fz}, then 
\begin{gather}
\tnz(\gimShAz)=\Agrp_{++}, \label{equ:tnz_gimShAz_App} \\
\gimShAz=\tnz^{-1}(\Agrp_{++}), \label{equ:gimShAz_inv_tnz_App} \\
\tnz(\gDpAz)\subseteq \Agrp_{+}, \label{equ:tnz_gpDAz} \\
\tnz(\gDAz)\subseteq \Agrp.  \label{equ:tnz_gDAz}
\end{gather}
Hence either $\gimShAz=\gDpAz$ or 
$\gDpAz/\gimShAz\approx\ZZZ_2$, and $\gDpAz/\gimShAz$ is isomorphic with a subgroup of $\ZZZ_2\times\ZZZ_2$.

\item[\NTC]
If $\nabla\AFld(z)=\left(\begin{smallmatrix}
0 & 0 \\ 0 & 0
\end{smallmatrix}\right)$,
see {\rm c)} of\;\eqref{equ:linear_part_j1Fz}, then $\ker(\tnz)=\gimShAz$, and the image $\tnz(\gDpAz)$ is a finite cyclic subgroup of $\GLR{2}$ of some order $n_z$, so $\gDpAz/\gimShAz\approx\ZZZ_{n_z}$.
If in addition $\gDAz\not=\gDpAz$, then $\gDpAz/\gimShAz$ is a dihedral group $\DDD_{n_z}$.
\end{enumerate}
\end{lemma}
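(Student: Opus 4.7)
The plan is to derive \NTA-\NTC\ from the classification of admissible linear parts in\;\eqref{equ:linear_part_j1Fz}, the exponential image formula\;\eqref{equ:im_j1}, the defining conditions \NA-\NC\ of an $\NT$-point, and the characterisation of $\PT$-points in Lemma\;\ref{lm:P-points}; the deeper parts ultimately rest on the normal-form machinery of\;\cite{Maks:sym-deg-topcenter}.

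Statement \NTA\ I would obtain by elimination. Since $z$ is a local extreme and $\AFld$ is special, \eqref{equ:linear_part_j1Fz} allows only three possibilities for $\nabla\AFld(z)$. Form\;a) has non-zero purely imaginary eigenvalues, which by the equivalence \PTA$\Leftrightarrow$\PTB\ of Lemma\;\ref{lm:P-points} characterises $\PT$-points and so contradicts the hypothesis that $z$ is an $\NT$-point. Hence $\nabla\AFld(z)$ is similar to form\;b) or\;c), both of which are nilpotent.

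For \NTB\ set $A:=\nabla\AFld(z)=\left(\begin{smallmatrix}0&1\\0&0\end{smallmatrix}\right)$. Since $A^{2}=0$, one has $e^{tA}=I+tA=\left(\begin{smallmatrix}1&t\\0&1\end{smallmatrix}\right)$, so\;\eqref{equ:im_j1} directly gives\;\eqref{equ:tnz_gimShAz_App}. For\;\eqref{equ:gimShAz_inv_tnz_App}, given $\dif\in\tnz^{-1}(\Agrp_{++})$ I would pick $t\in\RRR$ with $\tnz(\dif)=\tnz(\AFlow_{t})$, observe that $\dif\circ\AFlow_{-t}\in\ker(\tnz)\subseteq\gimShAz$ by \NB, and conclude $\dif=(\dif\circ\AFlow_{-t})\circ\AFlow_{t}\in\gimShAz$ using that $\gimShAz$ is a subgroup of $\gDAz$ containing $\AFlow_{t}$. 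For\;\eqref{equ:tnz_gpDAz} and\;\eqref{equ:tnz_gDAz}, any $\dif\in\gDAz$ preserves every orbit of $\AFld$ setwise, so $\dif_{*}\AFld=\mu\AFld$ for a smooth non-vanishing $\mu$; linearising at $z$ forces the commutator identity $LA=\mu(z)AL$ with $L:=\tnz(\dif)$, which a direct computation shows is equivalent to $L$ being upper triangular with $(1,1)$-entry equal to $\mu(z)$ times its $(2,2)$-entry. To pin these diagonal entries down to $\pm 1$ I would appeal to the normal form for $\AFld$ near $z$ from\;\cite{Maks:sym-deg-topcenter} (the nilpotent analogue of Takens' classification\;\cite{Takens:AIF:1973}) together with the fact that $\dif$ preserves a smooth first integral of $\AFld$ with an isolated extremum at $z$; the common sign of the two diagonal entries is controlled by whether $\dif$ preserves or reverses orbit orientation, giving $\Agrp_{+}$ respectively $\Agrp$. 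The isomorphisms $\Agrp_{+}/\Agrp_{++}\cong\ZZZ_{2}$ and $\Agrp/\Agrp_{++}\cong\ZZZ_{2}\times\ZZZ_{2}$ then deliver the claimed quotient structure.

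For \NTC\ assume $\nabla\AFld(z)=0$. Then $e^{0\cdot t}=I$ and\;\eqref{equ:im_j1} gives $\gimShAz\subseteq\ker(\tnz)$, while the reverse inclusion is exactly\;\NB, proving $\ker(\tnz)=\gimShAz$. By\;\NC\ the group $\tnz(\gDAz)$ is finite, hence so is $\tnz(\gDpAz)$. For the cyclic structure, note that because $z$ is a local extreme of $\func$ and $\AFld$ is tangent to its level sets, the orbits of $\AFld$ close to $z$ are Jordan curves surrounding $z$, and each $\dif\in\gDpAz$ preserves each such curve with its orientation. Averaging an inner product over the finite group $\tnz(\gDpAz)\subset\GLR{2}$ conjugates it into $\mathrm{O}(2)$, and the orientation constraint on the nested Jordan orbits then forces it into $\mathrm{SO}(2)$; any finite subgroup of $\mathrm{SO}(2)$ is cyclic of some order $n_{z}$, whence $\gDpAz/\gimShAz\cong\ZZZ_{n_{z}}$. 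When $\gDAz\ne\gDpAz$, the extra elements reverse orbit orientations and contribute a coset of reflections, extending $\tnz(\gDpAz)$ to a dihedral group $\DDD_{n_{z}}$.

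The main obstacle is the higher-order rigidity in \NTB: from $LA=\mu(z)AL$ alone one knows only that $L$ is upper triangular with proportional diagonal entries, so pinning those entries to $\pm 1$ demands genuine use of the orbit geometry through a Takens-type normal form in the nilpotent regime. In the same spirit, the cyclic/dihedral dichotomy in \NTC\ rests on the non-trivial fact that a finite group of germs of orbit-preserving diffeomorphisms at a degenerate centre really acts on $T_{z}\Mman$ through a maximal torus of $\GLR{2}$; here the hypothesis\;\SPECB\ (openness of the shift map) together with the normal-form results of\;\cite{Maks:sym-deg-topcenter} do most of the work.
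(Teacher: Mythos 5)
Your proposal is correct and follows essentially the same route as the paper: elimination via the $\PT$-characterisation for \NTA, the exponential formula \eqref{equ:im_j1} plus the kernel/coset argument for the first two equations of \NTB, deferral to the external normal-form results of \cite{Maks:sym-deg-topcenter} for \eqref{equ:tnz_gDAz} (exactly as the paper does), and the fact that finite subgroups of $\GLRp{2}$ are cyclic for \NTC. You merely spell out more of the standard details (the commutator relation $LA=\mu(z)AL$, the averaging argument conjugating into $\mathrm{O}(2)$) that the paper leaves implicit.
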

\begin{proof}
\NTA\ follows from \PTF\ and Eq.\;\eqref{equ:linear_part_j1Fz}.

\NTB. 
Eq.\;\eqref{equ:tnz_gimShAz_App} follows from\;\eqref{equ:im_j1}.
To establish Eq.\;\eqref{equ:gimShAz_inv_tnz_App} consider $\dif\in\tnz^{-1}(\Agrp_{++})$.
Since $\tnz(\gimShAz)=\Agrp_{++}$, there exists $\gdif\in\gimShAz$ such that $\tnz(\gdif)=\tnz(\dif)$.
Hence $\gdif^{-1}\circ\dif\in \ker(\tnz) \subset\gimShAz$, whence $\dif\in\gimShAz$ as well.

Eq.\;\eqref{equ:tnz_gDAz} is proved in\;\cite{Maks:sym-nondeg-topcenter}, whence  $\tnz(\gDpAz)\subset \Agrp \cap \GLRp{2}=\Agrp_{+}.$
This proves Eq.\;\eqref{equ:tnz_gpDAz}.

\NTC\ follows from the well-known fact that every finite subgroup of $\GLRp{n}$ is cyclic.
\end{proof}

Due to \NTB\ and \NTC\ of Lemma\;\ref{lm:N-points} we will distinguish two types of $\NT$-points.

\begin{definition}
An $\NT$-point $z$ of $\func$ will be called an \myemph{$\NNT$-point} if $\nabla\AFld(z)$ is non-zero nilpotent.
Otherwise, $\nabla\AFld(z)=0$ and we will call $z$ an \myemph{$\NZT$-point}.
\end{definition}

\subsection{Examples}\label{sect:examples_of_SPN_points}
Let $\func:\RRR^2\to\RRR$ be a homogeneous polynomial \myemph{without multiple linear factors}, that is
\begin{equation}\label{equ:g_homog_poly}
\func=L_1 \cdots L_{a} \cdot Q_1^{q_1} \cdots Q_{b}^{q_b}, 
\end{equation}
where $L_i$, $(i=1,\ldots,a)$, is a non-zero linear function, $Q_j$, $(j=1,\ldots,b)$, is an irreducible over $\RRR$ (definite) quadratic form, $q_j\geq 1$, $L_i/L_{i'}\not=\mathrm{const}$ for $i\not=i'$, and $Q_j/Q_{j'}\not=\mathrm{const}$ for $j\not=j'$.
Put
$$
D = Q_1^{q_1-1} \cdots Q_{b}^{q_b-1}.
$$
Then $\func=L_1 \cdots L_{a} \cdot Q_1 \cdots Q_{b}\cdot D$ and it is easy to see that \myemph{$D$ is the greatest common divisor of the partial derivatives $\func'_{x}$ and $\func'_{y}$}.
The following \myemph{polynomial} vector field on $\RRR^2$:
$$
\AFld(x,y)=-(\func'_{y}/D)\,\tfrac{\partial}{\partial x} + (\func'_{x}/D)\, \tfrac{\partial}{\partial y}
$$ 
will be called the \myemph{reduced Hamiltonian} vector field of $\func$.
In particular, if $\func$ has no multiple factors, i.e. $l_i=q_j=1$ for all $i,j$, then $D\equiv1$ and $\AFld$ is the usual \myemph{Hamiltonian} vector field of $g$.

Notice that if $\func$ had multiple linear factors, then $0\in\RRR^2$ would not be an isolated critical point.

\begin{lemma}
The origin $0\in\RRR^2$ is a \myemph{special} critical point of $\func$ belonging to one of the types $\ST$, $\PT$, or $\NT$, and $\AFld$ is the corresponding special vector field.
More precisely,
\begin{enumerate}
 \item 
if $a>0$, then $0$ is an $\ST$-point for $\func$;
 \item
if $a=0$ and $b=1$, i.e. $\func=Q_1^{q_1}$, and thus in some local coordinates $\func(x,y)=(x^2+y^2)^{q_1}$, then $0\in\RRR^2$ is a $\PT$-point of $\func$;
 \item
otherwise, when $a=0$ and $b\geq2$, so $\func=Q_1^{q_1}\cdots Q_b^{q_b}$, then $0\in\RRR^2$ is an $\NT$-point (even an $\NZT$-point) of $\func$.
\end{enumerate}
\end{lemma}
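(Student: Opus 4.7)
The plan is to check axioms \SPECA\ and \SPECB\ of Definition~\ref{def:special-cr-pt} for the reduced Hamiltonian $\AFld$, and then verify the extra condition required by Definition~\ref{defn:S-point}, \ref{defn:P-point}, or \ref{defn:N-point} in the appropriate case. First I would observe that \SPECA\ is essentially a computation: the identity $d\func(\AFld)\equiv 0$ is immediate from the formula for $\AFld$, and since by construction $D=\gcd(\func'_x,\func'_y)$ the components $\func'_y/D$ and $-\func'_x/D$ are coprime homogeneous polynomials in two variables, so their common zero set near $0\in\RRR^2$ reduces to $\{0\}$. The verification of \SPECB\ will in every case be reduced to Theorem~\ref{th:openness_of_shift_maps}: regular orbits of $\AFld$ near $0$ will turn out to be either non-recurrent and non-periodic or periodic with trivial Poincar\'e map, and $0$ itself admits arbitrarily small $\AFld$-invariant neighbourhoods bounded by level sets of $\func$, giving \condzUinv.

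\textbf{Case $a>0$.} The line $\{L_1=0\}$ is contained in $\{\func=0\}$, so $\func$ assumes both signs near $0$ and is not a local extremum; more precisely the critical level consists of the $a$ distinct lines $L_i=0$, giving a standard $(a+2b)$-saddle picture as in Figure~\ref{fig:isol_pt}. Regular orbits of $\AFld$ are non-closed and non-recurrent arcs inside the open sectors cut out by these lines, so condition \condznpnrec\ holds at regular points; combined with \condzUinv\ at $0$, this gives \SPECB\ via Theorem~\ref{th:openness_of_shift_maps}. For the equality $\gimShAz=\gDAz$ demanded by Definition~\ref{defn:S-point}, I would take $h\in\gDAz$, apply Lemma~\ref{lm:shift-func-on-reg} on the complement of the separatrices to produce a smooth shift function there, and then use the homogeneous structure inside each sector to show that this shift function extends smoothly across each separatrix and across $0$.

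\textbf{Case $a=0$, $b=1$.} A linear coordinate change brings $Q_1$ to $x^2+y^2$, whence $\func=(x^2+y^2)^{q_1}$ and a direct computation gives $\AFld=2q_1(-y\,\partial_x+x\,\partial_y)$, with linear part at $0$ having purely imaginary eigenvalues $\pm 2q_1 i$. Condition \PTB\ of Lemma~\ref{lm:P-points} is satisfied, and the implication \PTB$\Rightarrow$\PTA\ immediately identifies $0$ as a $\PT$-point with special field $\AFld$.

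\textbf{Case $a=0$, $b\geq 2$.} A degree count shows that each component of $\AFld$ is homogeneous of degree $2b-1\geq 3$, so $\nabla\AFld(0)=0$; by Lemma~\ref{lm:P-points} we are not in the $\PT$ situation, which both establishes \NA\ of Definition~\ref{defn:N-point} and forces $0$ to be an $\NZT$-point. For \NC, the key observation is that any $h\in\gDAz$ permutes the $2b$ distinct real tangent directions of the irreducible components $Q_j=0$ at the origin, and $\tnz(h)$ is determined by its action on this finite configuration up to a reflection, so $\tnz(\gDAz)$ embeds into a finite group. Condition \NB — that every $h\in\gDAz$ with $\tnz(h)=\id$ is a shift of $\AFld$ — is where I would spend most of the work: Lemma~\ref{lm:shift-func-on-reg} produces a smooth shift function $\alpha$ on the punctured neighbourhood $\Uman\setminus\{0\}$, and the homogeneity of $\AFld$ together with the hypothesis $\tnz(h)=\id$ should yield flatness estimates on $\alpha$ and its derivatives as $x\to 0$, allowing one to conclude that $\alpha$ extends smoothly across $0$.

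The main obstacle is precisely this last step, the smooth extension of the shift function across the degenerate singular point $0$ in Case 3. Everything else reduces to unwinding the definitions, routine degree computations, and invocations of Theorem~\ref{th:openness_of_shift_maps} and Lemma~\ref{lm:P-points}; but the flatness estimates underlying \NB\ are where the homogeneity of $\func$ must be exploited in a non-formal way.
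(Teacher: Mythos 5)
Your overall route differs from the paper's: the paper proves almost nothing here in place.
It declares \SPECA\ evident, and delegates \SPECB, the identity $\gimShAz=\gDAz$ in case (1), and conditions \NB, \NC\ in case (3) entirely to the author's earlier papers (the one on local inverses of shift maps for \SPECB, and Theorems 7.1 and 11.1 of the CEJM paper for the $\ST$- and $\NT$-conditions); only case (2) is argued on the spot via \PTB\ of Lemma\;\ref{lm:P-points}, exactly as you do.
Attempting self-contained proofs is legitimate in principle, but your sketch has concrete gaps where those citations carry the real weight.

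First, your reduction of \SPECB\ to Theorem\;\ref{th:openness_of_shift_maps} is incomplete to the point of circularity.
The conditions \condznpnrec\ or \condzpid\ at regular points and \condzUinv\ at $0$ give you \condBsShAVop\ away from the singularity and \condBsimShimShUop\ at it; but by implication 1) of that theorem you still need \condBsShAUVop\ at $0$, i.e.\ openness of the local shift map of $\AFld$ on a base of neighbourhoods of the \emph{degenerate} singular point --- and that is essentially \SPECB\ itself.
This is the hard analytic input, and it does not follow from the dynamical conditions you list.

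Second, your argument for \NC\ in case (3) fails outright: when $a=0$ every $Q_j$ is a definite quadratic form, so $\{Q_j=0\}=\{0\}$ in $\RRR^2$ and there are no real tangent directions for $h\in\gDAz$ to permute; the level curves of $\func$ near $0$ are smooth ovals without separatrices.
Finiteness of $\tnz(\gDAz)$ needs a different mechanism (roughly, that the linear part of an orbit-preserving germ must multiply the homogeneous polynomial $\func$ by a constant, after which $b\ge 2$ forces finiteness), which is precisely what the cited theorem supplies.
Likewise, the ``flatness estimates'' you defer for \NB, and the smooth extension of shift functions across separatrices needed for $\gimShAz=\gDAz$ in case (1), are the main theorems of entire separate papers; as written they are placeholders rather than proofs.
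Case (2) is correct and matches the paper (your computation even restores the factor $2q_1$ that the paper drops, which is harmless for \PTB).
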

\begin{proof}
The assumption \SPECA\ of the Definition\;\ref{def:special-cr-pt} that $\AFld(\func)\equiv0$ is evident, while \SPECB\ is proved in\;\cite{Maks:loc-inv-shifts}.
Hence $0\in\RRR^2$ is special.

(1) If $a>0$, then the identity $\gimShAz=\gDpAz$ follows from\;\cite{Maks:hamv2,Maks:CEJM:2009}, see\;\cite[Th.\;11.1]{Maks:CEJM:2009}.

(2) Suppose $a=0$ and $b=1$.
Then we can assume that $\func(x,y)=(x^2+y^2)^{q_1}$, whence $\AFld(x,y)=-y\dd{x}+x\dd{y}$.
Hence by \PTB\ of Lemma\;\ref{lm:P-points} $0$ is a $\PT$ point for $\func$.
Moreover, the assumption \SPECB\ of the Definition\;\ref{def:special-cr-pt} is independently reproved in\;\cite{Maks:sym-nondeg-topcenter}.

(3) Suppose $a=0$ and $b\geq2$.
Then (i) of Definition\;\ref{defn:N-point} is established in\;\cite{Maks:sym-nondeg-topcenter},
and (ii) and (iii) in\;\cite{Maks:hamv2,Maks:CEJM:2009}, see\;\cite[Th.\;7.1\;\&\;11.1]{Maks:CEJM:2009}.
It is also easy to see that $\nabla\AFld(0)=0$, so $0$ is an $\NZT$-point.
\end{proof}

\subsection{Extension of shift functions}
In this paragraph we prove two lemmas about extensions of shift functions.
 
\begin{lemma}\label{lm:ext_shift_func_for_PN_points}
Let $z$ be a special critical point of $\func$ being a local extreme, $\AFld$ be the corresponding special vector field defined on some neighbourhood $\Uman$ of $z$, $\Wman\subset\Vman$ be two open connected $\AFld$-invariant neighbourhoods of $z$ such that $\overline{\Wman}\subset\Vman$, and $\dif:\Vman\to\Vman$ be a diffeomorphism preserving orientation and orbits of $\AFld$.

{\rm(1)}~Let $Y\subset\Vman\setminus\{z\}$ be any open connected subset and $\afunc:Y\to\RRR$ be any shift function for $\dif$ on $Y$.
If $z$ is a $\PT$-point for $\func$, then $\afunc$ uniquely extends to a unique $\Cinf$ shift function for $\dif$ on all of $\Vman$.

{\rm(2)}~Any shift function $\afunc:\Wman\to\RRR$ for $\dif$ on $\Wman$ uniquely extends to a $\Cinf$ shift function for $\dif$ on all of $\Vman$.
\end{lemma}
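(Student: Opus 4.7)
My plan is to handle part~(2) first, using the kernel structure of $\ShAV$ together with the local-homeomorphism property of the shift map, and then to deduce part~(1) from part~(2) after first extending the germ at $z$.

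I will begin by computing $\ker(\ShAV)$ via Lemma~\ref{lm:ker_of_shift_map}. Since $z$ is a local extreme, every orbit in $\Vman\setminus\{z\}$ is periodic, so applying the lemma to the connected set $\Vman\setminus\{z\}$ places us in the periodic case, with kernel $\ZZZ\cdot\Per$, where $\Per$ is the smooth period function on $\Vman\setminus\{z\}$. A smooth element $\nu\in\ker(\ShAV)$ on all of $\Vman$ restricts on $\Vman\setminus\{z\}$ to $n\Per$ for some $n\in\ZZZ$. At a $\PT$-point $\Per$ extends smoothly to $\theta$ on $\Vman$ by Lemma~\ref{lm:P-points}, so $\ker(\ShAV)=\ZZZ\theta$; at an $\NT$-point condition \NA\ prevents such an extension unless $n=0$, forcing $\ker(\ShAV)=\{0\}$. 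Uniqueness in both parts then follows: the difference $\delta=\afunc_1-\afunc_2$ of two extensions lies in $\ker(\ShAV)$ and vanishes on a non-empty open set, and since $\theta>0$ in the $\PT$-case, $\delta\equiv 0$ in every case.

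For existence in part~(2), I will extend $\afunc$ from $\Wman$ to $\Vman$ by path continuation. At each $x\in\Vman$ the shift map admits a local inverse near any given solution: for $x\neq z$ this is the flow-box theorem, and for $x=z$ it is the content of \SPECB\ applied to a $\Dm$-neighborhood $\Vman_j\in\beta_z$. Thus, starting from the smooth germ of $\afunc$ at some point of $\Wman$, one can smoothly continue it along any smooth path $\gamma:[0,1]\to\Vman$ by a standard compactness argument. Because $\Vman$ is a connected $\AFld$-invariant neighborhood of the isolated local extreme $z$ whose non-central orbits are nested circles around $z$, it is homeomorphic to an open disk and hence simply connected; consequently two continuations along homotopic paths coincide (any discrepancy lies in $\ker(\ShAV)$ and vanishes at an endpoint). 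This yields a well-defined smooth extension $\afunc_{\mathrm{ext}}:\Vman\to\RRR$.

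Part~(1) I will reduce to part~(2). By Lemma~\ref{lm:P-points} at the $\PT$-point $z$ we have $\gimShAz=\gDpAz$, so the orientation- and orbit-preserving diffeomorphism $\dif$ carries a germ shift function $\bfunc_0$ at $z$, represented on some small connected $\AFld$-invariant neighborhood $\Uman_0\subset\Vman$ of $z$. Applying part~(2) with $\Wman:=\Uman_0$ extends $\bfunc_0$ to a global smooth shift function $\bfunc:\Vman\to\RRR$. On the connected set $Y\subset\Vman\setminus\{z\}$ the two shift functions $\afunc$ and $\bfunc|_Y$ differ by an element of $\ker(\ShA_Y)=\ZZZ\cdot\theta|_Y$; hence $\bfunc|_Y-\afunc=n\theta|_Y$ for some $n\in\ZZZ$, and $\afunc_{\mathrm{ext}}:=\bfunc-n\theta$ is the desired smooth extension to all of $\Vman$.

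The main obstacle I expect is in the existence step of part~(2): one must promote the $\Wr{\infty}$-local-homeomorphism property of \SPECB\ (a statement about the global shift map $\ShAV$) to a pointwise local-inverse statement suitable for continuation, and one must justify the simple-connectedness of $\Vman$ from the local foliation structure at $z$. Once these two technical points are in place, the rest of the argument is the covering-theoretic path continuation sketched above.
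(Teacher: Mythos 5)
Your overall architecture differs from the paper's. The paper's proof rests on a single external input: since $\dif$ preserves orientation, it already admits a global $\Cinf$ shift function $\bfunc$ on the punctured neighbourhood $\Vman\setminus\{z\}$ (cited from the author's paper on symmetries of topological centers), unique up to a summand $n\theta$. Both parts then become one-line patchings: for (2) one writes $\afunc=\bfunc+n\theta$ on $\Wman\setminus\{z\}$ and extends by the same formula to $\Vman\setminus\Wman$; for (1) one uses $\gDpAz=\gimShAz$ at a $\PT$-point to get a smooth germ $\bfunc'$ at $z$, concludes that $\bfunc$ (hence $\bfunc+n\theta$) is smooth across $z$ because $\theta$ is, and extends $\afunc$ as $\bfunc+n\theta$. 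Your treatment of part (1) is essentially this argument run in the opposite direction (germ first, then outward), and your uniqueness arguments via $\ker(\ShAV)$ and the positivity of the period are fine.

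The genuine gap is in your existence step for part (2), and it sits exactly where you flag it. Condition \SPECB\ asserts that $\ShAV:\funcAV\to\imShAV$ is a local homeomorphism between $\Wr{\infty}$ topologies; this is a statement about perturbing a function--diffeomorphism pair, and it neither supplies a ``local inverse at $z$'' in the germ-continuation sense nor tells you that the germ of $\dif$ at $z$ lies in $\gimShAz$. So the monodromy argument cannot legitimately be run over the simply connected disk $\Vman$; it can only be run over the annulus $\Vman\setminus\{z\}$, which is \emph{not} simply connected, and a priori the continuation could pick up a multiple of $\Per$ around the puncture. The fix is available but must be said: in part (2) the germ at $z$ is already given by $\afunc|_{\Wman}$, and the monodromy of the continuation around the generator of $\pi_1(\Vman\setminus\{z\})$ is computed on a circle inside $\Wman\setminus\{z\}$, where the single-valued function $\afunc$ already exists, hence is trivial; one then glues the continuation on $\Vman\setminus\{z\}$ with $\afunc$ on $\Wman$. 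As written, your appeal to \SPECB\ and to simple connectedness of $\Vman$ does not close this step, whereas the paper's citation of the global $\bfunc$ on $\Vman\setminus\{z\}$ disposes of it at the outset.
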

\begin{proof}
Since $\dif$ preserves orientation, it is not hard to show that $\dif$ has a $\Cinf$ shift function $\bfunc:\Vman\setminus\{z\}\to\RRR$, see\;\cite{Maks:sym-nondeg-topcenter}.
Such a function is defined up to a summand $n\theta$, $(n\in\ZZZ)$.
In particular, any shift function for $\dif$ defined on $Y\subset\Vman\setminus\{z\}$ coincides with $\bfunc+n\theta$ for some $n\in\ZZZ$.

(1) Suppose $z$ is a $\PT$-point, so $\theta$ extends to $\Cinf$ function on $\Vman$.
As just noted $\afunc=\bfunc+n\theta$ on $Y$ for some $n$.

We claim that \myemph{$\bfunc$ extends to a $\Cinf$ function on $\Vman$.}
Indeed, by Lemma\;\ref{lm:P-points} $\dif\in\gDAz=\gimShAz$, so $\dif$ has a $\Cinf$ shift function $\bfunc'$ on some neighbourhood of $z$.
Then $\bfunc'=\bfunc+k\theta$ for some $k\in\ZZZ$.
Since $\bfunc'$ and $\theta$ are $\Cinf$ near $z$, so is $\bfunc$.

Hence $\afunc$ also uniquely extends to a $\Cinf$ function $\bfunc+n\theta$ on $\Vman$.

(2) We have that $\afunc=\bfunc+n\theta$ on $\Wman\setminus\{z\}$ for some $n$, though $\bfunc$ and $\theta$ are even not defined at $z$.
On the other hand they are $\Cinf$ on $\Vman\setminus\{z\}$, so we define $\afunc$ on $\Vman\setminus\Wman$ by $\afunc=\bfunc+n\theta$.
Then $\afunc$ is $\Cinf$ on all of $\Vman$.
\end{proof}

\begin{lemma}\label{lm:ext_shfunc_under_homotopies}
Let $z\in\singf$ be a critical point of $\func$ of either of types $\ST$ or $\PT$ or $\NT$, $\AFld$ be a special vector field for $z$ defined on some neighbourhood $\Uman$ of $z$ containing no other critical points of $\func$, $\Vman\subset\Uman$ be an open neighbourhood of $z$, and $\VmanS=\Vman\setminus\{z\}$.
Let also $H:\Vman\times I\to\Uman$ be an $r$-homotopy in $\EAV$ such that $H_0=i_{\Vman}:\Vman\subset\Uman$, and $\Lambda:\VmanS\times I\to\RRR$ be a unique $r$-homotopy such that $\Lambda_0=0$ and $\Lambda_t$ is a smooth shift function for $H_t$ on $\VmanS$ for every $t\in I$, see Lemma~\ref{lm:shift-func-on-reg}.
If $z$ is an $\NT$-point suppose in addition that $r\geq1$.
Then for each $t \in I$ the function $\Lambda_t$ extends to a $\Cinf$ function on all of $\Vman$.
\end{lemma}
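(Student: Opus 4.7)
My plan is to reduce the statement to a purely local question at $z$ by shrinking $\Vman$ to a connected $\AFld$-invariant $\Dm$-neighborhood of $z$ satisfying \SPECB; this costs nothing since smooth extension of $\Lambda_t$ is asked only at the single point $z$, and on $\Vman\setminus W$ for any smaller $W\ni z$ the function $\Lambda_t$ is already smooth. For each $t\in I$, the strategy will be to exhibit a $\Cinf$ shift function $\bfunc_t$ for $H_t$ on a (smaller) neighborhood $W\subset\Vman$ of $z$, compare it with $\Lambda_t$ on $W\setminus\{z\}$ via Lemma~\ref{lm:ker_of_shift_map}, so that $\Lambda_t-\bfunc_t\in\ker(\ShA_{W\setminus\{z\}})$, and then exploit the structure of this kernel (trivial, or generated by the period function $\theta$) to assemble a smooth extension of $\Lambda_t$ at $z$.

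If $z$ is an $\ST$-point, then by \eqref{equ:gDaz_gDpartfz} and Definition~\ref{defn:S-point} the germ of $H_t$ at $z$ belongs to $\gDpartfz=\gDAz=\gimShAz$, which immediately supplies $\bfunc_t$ near $z$. Moreover, the saddle structure at $z$ guarantees that $W\setminus\{z\}$ contains pieces of separatrices and hence non-closed orbits of $\AFld$, so by Lemma~\ref{lm:ker_of_shift_map} the shift map $\ShA_{W\setminus\{z\}}$ is nonperiodic. Therefore $\Lambda_t\equiv\bfunc_t$ on the overlap, and $\bfunc_t$ itself delivers the extension; no hypothesis on $r$ is used.

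If $z$ is a $\PT$-point, I will first argue that the germ of $H_t$ at $z$ is orientation-preserving. On each closed orbit $\omega\subset\VmanS$ around $z$, the smooth real-valued function $\Lambda_t|_\omega$ lifts $H_t|_\omega$ to a self-map of $\omega\simeq S^1$ of winding number $1$, so $H_t|_\omega$ is orientation-preserving; combined with the $\PT$-normal form of $\AFld$ (Lemma~\ref{lm:P-points}\PTC) this forces $\det\tnz(H_t)>0$. By Lemma~\ref{lm:P-points} the germ of $H_t$ at $z$ then lies in $\gDpAz=\gimShAz$, which provides a $\Cinf$ shift function $\bfunc_t$ near $z$. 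The residual difference $\Lambda_t-\bfunc_t=n\theta$ on $W\setminus\{z\}$ (for some $n\in\ZZZ$) extends smoothly to $z$ because for a $\PT$-point $\theta$ itself extends smoothly (Lemma~\ref{lm:P-points}), and $\bfunc_t+n\theta$ furnishes the required $\Cinf$ extension of $\Lambda_t$.

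The $\NT$ case is the main obstacle, and it is precisely here that the assumption $r\geq 1$ enters. That hypothesis makes $t\mapsto\tnz(H_t)\in\GLR{2}$ a continuous path with $\tnz(H_0)=I$. For an $\NNT$-point, Lemma~\ref{lm:N-points}\NTB\ identifies $\tnz(\gimShAz)=\Agrp_{++}$ as the identity component of $\Agrp\supseteq\tnz(\gDAz)$, while for an $\NZT$-point condition \NC\ of Definition~\ref{defn:N-point} makes $\tnz(\gDAz)$ finite with $\tnz(\gimShAz)=\{I\}$; in either sub-case continuity confines $\tnz(H_t)$ to $\tnz(\gimShAz)$ for every $t$. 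Condition \NB\ of Definition~\ref{defn:N-point} then yields $H_t\in\gimShAz$, so a $\Cinf$ local shift function $\bfunc_t$ exists on some $W\ni z$. The remaining and genuinely delicate step is to show that the integer $n(t)$ in $\Lambda_t-\bfunc_t=n(t)\theta$ on $W\setminus\{z\}$ vanishes for every $t\in I$: by condition \NA\ the period function $\theta$ does not extend continuously at $z$, so a nonzero multiple of $\theta$ obstructs any smooth extension. To prove $n\equiv 0$ I will invoke the openness of $\ShA_W$ granted by \SPECB: around any $t_0\in I$ it permits $\bfunc_t$ to be chosen continuously in $t$ in the $\Wr{\infty}$-topology, so that $n(t)=(\Lambda_t-\bfunc_t)/\theta$, evaluated at any fixed point of $W\setminus\{z\}$, becomes a continuous $\ZZZ$-valued, hence locally constant, function on $I$; combined with $n(0)=0$ and connectedness of $I$ this forces $n\equiv 0$, so $\Lambda_t=\bfunc_t$ on $W\setminus\{z\}$ for every $t$, and the extension is obtained from $\bfunc_t$.
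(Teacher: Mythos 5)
Your reduction to a local statement and your treatment of the $\ST$ and $\PT$ cases coincide with the paper's proof (your winding-number argument that $H_t$ preserves orientation at a $\PT$-point is a legitimate filling-in of a step the paper only asserts), and your derivation of $H_t\in\gimShAz$ in the $\NT$ case from $r\geq1$, Lemma~\ref{lm:N-points} and condition \NB\ is also the paper's argument. The problem is the last step of the $\NT$ case, where you must rule out a jump of the integer $n(t)$ in $\Lambda_t-\bfunc_t=n(t)\theta$.

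You deduce local constancy of $n(t)$ from the local-homeomorphism property of the shift map supplied by \SPECB. But that property is a statement about the topologies $\Wr{\infty}$ on $\funcAV$ and $\imShAV$, whereas $H$ is only assumed to be an $r$-homotopy with $r\geq 1$; in the application (part (iv) of Lemma~\ref{lm:Hamilt_vf_and_prop_orient}, where one must identify $\imShA$ with $\EidAFlow{1}$) the case that matters is exactly $r=1$. A path $t\mapsto H_t|_{\Wman}$ that is continuous only in $\Wr{1}$ need not be continuous in $\Wr{\infty}$, so the $\Wr{\infty}$-homeomorphism $\ShAW^{-1}$ cannot be applied to conclude that $t\mapsto\bfunc_t$, hence $t\mapsto\bfunc_t(x_0)$, is continuous. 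Thus the ``smoothly extendable branch'' $\bfunc_t=\Lambda_t-n(t)\theta$ could a priori jump as $t$ varies, and since $\theta(x)\to+\infty$ as $x\to z$ such a jump is precisely what would destroy the extension. This is the delicate point for which the paper invokes the main result of \cite{Maks:ImSh}, a theorem specifically about $1$-homotopies in the image of a non-periodic shift map, which asserts that $\Lambda_t=\afunc_t$ on $\WmanS$ for all $t$. Your argument does not reprove that result, so the $\NT$ case has a genuine gap for finite $r$ (it would go through as written only for $r=\infty$).
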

\begin{proof}
First we show that $H_t\in\gimShAz$ for each $t\in I$, i.e. $H_t$ has a $\Cinf$ shift function $\afunc_t$ on some neighbourhood $\Wman$ of $z$ in $\Vman$.

Indeed, since $H_0=i_{\Vman}$, it follows the germ of $H_t$ at $z$ belongs to $\gDpAz$.
Hence if $z$ is either an $\ST$- or a $\PT$-point, then by Definitions\;\ref{defn:S-point} and \ref{defn:P-point} $H_t\in\gimShAz$.

Suppose $z$ is an $\NT$-point.
Then by assumption $r\geq1$, so the matrix $\tnz(H_t)$ continuously depends on $t$.
If $\nabla\AFld(z)$ is non-zero nilpotent, then we get from \NTC\ of Lemma\;\ref{lm:N-points} that $\tnz(H_t)$ belongs to the unity component $\Agrp_{++}$ of the group $A$ as well as $\tnz(H_0)=\id$.
Hence $H_t \in \tnz^{-1}(\Agrp_{++})=\gimShAz$.

Similarly, if $\nabla\AFld(z)=0$, then the set of possible values for $\tnz(H_t)$ is finite, whence $\tnz(H_t)=\tnz(H_0)=\id$ for all $t\in I$.
Therefore by (ii) of Definition\;\ref{defn:N-point} $H_t\in\ker(\tnz)\subset\gimShAz$ as well.

Thus $\Lambda_t$ and $\afunc_t$ are shift functions for $H_t$ on some neighbourhood connected neighbourhood $\Wman$ of $z$, whence $\Lambda_t-\afunc_t\in\ker(\Shift_{\WmanS})$, where $\WmanS=\Wman\setminus\{z\}$.

If $z$ is an $\ST$-point, then $\WmanS$ contains non-closed orbits of $\AFld$, whence $\ker(\Shift_{\WmanS})=\{0\}$.
Therefore $\Lambda_t=\afunc_t$ is $\Cinf$ on $\WmanS$.
Thus if we put $\Lambda_t(z)=\afunc_t(z)$ then $\Lambda_{t}$ will become $\Cinf$ on all of $\Vman$.

Suppose that $z$ is a $\PT$-point.
Then $\ker(\Shift_{\WmanS})=\{n\theta\}_{n\in\ZZZ}$, so $\Lambda_t-\afunc_t=n\theta$ for some $n\in\ZZZ$.
But by Definition\;\ref{defn:P-point} $\theta$ smoothly extends to all of $\Wman$, whence so does $\Lambda_t=\afunc_t+n\theta$.

Suppose that $z$ is an $\NT$-point, so $\lim\limits_{x\to z}\theta(x)=+\infty$.
This implies that the shift map $\ShAW$ is non-periodic, so $\afunc_t$ is a unique shift function for $H_t$ on $\Wman$.
Notice that by assumption $r\geq1$, so the restriction $H:\Wman\times I\to\Uman$ is a $1$-homotopy in $\imShAW$ having a shift function $\Lambda:\WmanS\times I\to\RRR$ such that $\Lambda_0=0$ is $\Cinf$ on $\Wman$.
Then the main result of\;\cite{Maks:ImSh} is applicable to this situation and claims that $\Lambda_t=\afunc_t$ on $\WmanS$ for all other $t\in I$.
Hence $\Lambda_t$ smoothly extends to all of $\Vman$.
\end{proof}

\section{Main results}\label{sect:main_results}
The sets of $\ST$-, $\PT$-, and $\ST$-points of $\func$ will be denoted by $\singfS$, $\singfP$, and $\singfN$ respectively.
The following Theorems~\ref{th:hom-type-StabIdf} and~\ref{th:hom-type-Orbits} extend the results of~\cite{Maks:AGAG:2006, Maks:TrMath:2008}.
\begin{theorem}\label{th:hom-type-StabIdf}{\rm c.f.\;\cite[Th.\;1.3]{Maks:AGAG:2006}.}
\it Suppose that $\func:\Mman\to\Psp$ satisfies axioms {\rm\AxBd} and {\rm\AxSPN}.
Then 
\begin{equation}\label{equ:Stab_Stab1}
\StabIdfr{\infty}=\cdots=\StabIdfr{2}=\StabIdfr{1}.
\end{equation}
Moreover, each of the following conditions implies $\StabIdfr{1}=\StabIdfr{0}$:
\begin{enumerate}
 \item[\rm(a)]
$\func$ has no critical points of type $\NT$, i.e. $\singfN=\varnothing$;
 \item[\rm(b)] 
$\Mman$ is a $2$-disk, $\singf$ consists of a unique critical point $z$ which is of type $\NT$ with $n_z=1$;
 \item[\rm(c)] 
$\Mman$ is a $2$-sphere, $\singf$ consists of exactly two critical points $z'$ and $z$ such that $z'$ is of type $\PT$, and $z$ is of type $\NT$ with $n_{z}=1$.
\end{enumerate}

The space $\StabIdf:=\StabIdfr{\infty}$ is contractible if and only if one of the following conditions holds true:
\begin{itemize}
\item
$\func$ has at least one critical point of type $\ST$ or $\NT$;
\item
$\Mman$ is non-orientable.
\end{itemize}
In all other cases $\StabIdf$ is homotopy equivalent to $S^1$.
\end{theorem}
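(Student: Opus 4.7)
The argument will proceed in close parallel with \cite[Th.\;1.3]{Maks:AGAG:2006}, with new ingredients needed to cope with \NT-points. The first step is to build a global smooth vector field $\AFld$ on $\Mman$ that is tangent to the level-sets of $\func$, vanishes exactly on $\singf$, and near every $z\in\singf$ coincides with the local special vector field supplied by axiom \AxSPN\ and Definition\;\ref{def:special-cr-pt}. Such an $\AFld$ is obtained by gluing the local special fields (appropriately rescaled) to a Hamiltonian-like field on $\Mman\setminus\singf$ via a partition of unity; it inherits property \SPECB\ at every critical point, and thanks to Theorem\;\ref{th:openness_of_shift_maps} together with the regular-point alternatives \condznpnrec/\condzpid, the associated global shift map $\ShA:\Ci{\Mman}{\RRR}\to\imShA$ is a $\Wr{\infty}$-local homeomorphism. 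The whole analysis of $\StabIdf$ will be carried out through $\ShA$ and its restriction to the $\Sr{1}$-open convex subset $\Gamma^+_{\Mman}$ of \eqref{equ:gamma_plus}.

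For the chain of equalities \eqref{equ:Stab_Stab1}, I take $\dif\in\StabIdfr{1}$ and any $1$-isotopy $H_t$ in $\StabIdfr{1}$ with $H_0=i_{\Mman}$, $H_1=\dif$. By Lemma\;\ref{lm:Dpartf_Stab_identity_comp} each $H_t$ lies in $\DpartfIdpl$, so it preserves orbits of $\AFld$, and Lemma\;\ref{lm:shift-func-on-reg} produces a unique $1$-homotopy of smooth shift functions $\Lambda_t$ on $\Mman\setminus\singf$ with $\Lambda_0=0$. The crucial local step is Lemma\;\ref{lm:ext_shfunc_under_homotopies}, whose hypothesis $r\geq1$ at \NT-points is precisely what is available, and which extends each $\Lambda_t$ to a smooth function on all of $\Mman$. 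In particular $\dif=\ShA(\Lambda_1)$ with $\Lambda_1\in\Gamma^+_{\Mman}$ by \eqref{equ:im_shift}, and the straight segment $s\mapsto s\Lambda_1$ inside the convex set $\Gamma^+_{\Mman}$ gives a $\Wr{\infty}$-continuous path $s\mapsto\ShA(s\Lambda_1)$ from $i_{\Mman}$ to $\dif$ inside $\StabIdfr{\infty}$, proving $\StabIdfr{1}\subseteq\StabIdfr{\infty}$.

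For the refinements (a), (b), (c) to $\StabIdfr{1}=\StabIdfr{0}$, the only obstruction in the preceding argument when only a $0$-isotopy is given is the local extension at \NT-points, where Lemma\;\ref{lm:ext_shfunc_under_homotopies} needs continuous Jacobians. In case (a) the hypothesis $\singfN=\varnothing$ makes the lemma applicable at $r=0$ directly. In cases (b) and (c) there is a single \NT-point $z$ with $n_z=1$, so by \NTC\ of Lemma\;\ref{lm:N-points} the image $\tnz(\gDpAz)$ is trivial; hence every $H_t\in\gDpAz$ automatically lies in $\ker(\tnz)\subset\gimShAz$ and the smoothness of $\Lambda_t$ near $z$ can be recovered without continuity of the Jacobian in $t$, so the rest of the argument goes through unchanged.

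For the homotopy type of $\StabIdf=\StabIdfr{\infty}$, the previous paragraphs identify it with $\ShA(\Gamma^+_{\Mman})$, and by the opening paragraph $\ShA|_{\Gamma^+_{\Mman}}$ is a covering map onto $\StabIdf$. Since $\Gamma^+_{\Mman}$ is convex, the homotopy type of $\StabIdf$ is controlled by $\ker(\ShA)$: it is contractible when $\ker(\ShA)=0$ and equivalent to $S^1$ when $\ker(\ShA)=\ZZZ\cdot\theta$ for a positive smooth period $\theta$. The existence of even one \ST- or \NT-point rules out such a global $\theta$: at an \ST-point non-closed orbits force $\ker(\Shift_{\Vman})=\{0\}$ on a neighbourhood (Lemma\;\ref{lm:ker_of_shift_map}), while at an \NT-point $\lim_{x\to z}\theta(x)=+\infty$ by Definition\;\ref{defn:N-point}\;\NA, preventing any smooth extension. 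When $\Mman$ is orientable and all critical points are of type \PT, the local period functions assemble into a smooth global $\theta:\Mman\to(0,+\infty)$ generating $\ker(\ShA)$, giving $\StabIdf\simeq S^1$. The main technical obstacle, and the only place where the Morse proof of \cite{Maks:AGAG:2006} must be genuinely modified, is the non-orientable case with only \PT-points: the plan is to pass to the orientable double cover $p:\tMman\to\Mman$ with involution $\xi$ and lifted field $\tAFld$, observe that $\xi$ conjugates $\tAFld$ to $-\tAFld$ on a neighbourhood of each fixed point (since the orientation of the leaves is reversed by $\xi$), deduce $\widetilde{\theta}\circ\xi=-\widetilde{\theta}$, and conclude that no positive $\xi$-invariant smooth period can exist; hence $\ker(\ShA)=0$ on $\Mman$ and $\StabIdf$ is contractible also in this case.
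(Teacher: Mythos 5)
Your overall strategy is the paper's: glue the local special vector fields into a global field $\AFld$ tangent to $\partf$, reduce everything to the shift map $\ShA$ and the convex set $\Gamma^{+}$, use Lemma\;\ref{lm:ext_shfunc_under_homotopies} for \eqref{equ:Stab_Stab1}, and read off the homotopy type from periodicity of $\ShA$. The orientable case of your argument matches Lemma\;\ref{lm:Hamilt_vf_and_prop_orient} closely. There are, however, two genuine gaps. First, the non-orientable case. A global field $\AFld$ on $\Mman$ tangent to the leaves of $\partf$ need not exist when $\Mman$ is non-orientable, because the line field tangent to $\partf$ can be non-orientable (e.g.\ the fibration of $\Kleinb$ over $\aCircle$); so the \emph{entire} proof of \eqref{equ:Stab_Stab1} and of cases (a)--(c), not just the periodicity question, must be transplanted to the oriented double cover, using a skew-symmetric field and the identification of $\Diff(\partf)$ with the symmetric group $\tDiff(\partittf)$ (Lemmas\;\ref{lm:lift_symm_diff} and\;\ref{lm:Hamilt_vf_and_prop_nonorient}). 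Moreover your key deduction $\widetilde\theta\circ\xi=-\widetilde\theta$ is false: a skew-symmetric flow satisfies $\AFlow_t\circ\xi=\xi\circ\AFlow_{-t}$, so periodic points are carried to periodic points with the \emph{same} positive period and the period function is $\xi$-\emph{invariant}. What is anti-invariant is the shift function of a symmetric diffeomorphism. The correct mechanism is that $\ShA$ restricted to the convex set $E_0\cap\Gamma^{+}$ of anti-invariant functions is a bijection onto $\tDiff(\partittf)$, injectivity coming from $E_0\cap\ker(\ShA)=\{0\}$ (a nonzero $n\widetilde\theta$ is invariant, not anti-invariant); contractibility then follows from convexity of $E_0\cap\Gamma^{+}$.

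Second, cases (b) and (c). The assertion that after noting $n_z=1$ ``the rest of the argument goes through unchanged'' is not justified. At an $\NT$-point the orbits near $z$ are closed, so on a punctured neighbourhood the globally continued shift function $\Lambda_t$ (normalized by $\Lambda_0=0$) and the local smooth shift function differ by $n\theta$ with $\theta$ blowing up at $z$; for $r\geq1$ one forces $n=0$ by the result of the reference used in Lemma\;\ref{lm:ext_shfunc_under_homotopies}, but for $r=0$ there is no control and $\Lambda_t$ itself may fail to extend. The working argument is global, not local: under (b) or (c) all orbits off $\singf$ are closed and the period function $\theta$ is $\Cinf$ on all of $\Mman\setminus\{z\}$ (it extends across the $\PT$-point by Definition\;\ref{defn:P-point}), so for a single $\dif\in\EidAFlow{0}$ one takes any smooth shift function $\afunc$ on $\Mman\setminus\{z\}$ and replaces it by $\afunc+n\theta$ to match the local smooth shift function supplied by $n_z=1$; this exhibits $\dif\in\imShA$ directly. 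This correction uses the global structure of $\Mman$ ($D^2$ or $S^2$ with at most one additional $\PT$-point), which is exactly why (b) and (c) are stated as global hypotheses rather than as a condition at $z$ alone.
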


\begin{theorem}\label{th:hom-type-Orbits}{\rm c.f.\;\cite[Th.\;1.5]{Maks:AGAG:2006}, \cite{Maks:TrMath:2008}.}
Suppose that $\func:\Mman\to\Psp$ satisfies axioms \AxBd-\AxFibr\ and $\func$ has at least $\ST$-point.
Let $n$ be the total number of critical points of $\func$.
Then $\Orbff$ is weakly homotopy equivalent to a CW-complex of dimension $\leq 2n-1$.
Moreover, $\pi_i\Orbff=\pi_i\Mman$ for $i\geq 3$, $\pi_2\Orbff=0$, 
and for $\pi_1\Orbff$ we have the following exact sequence:
\begin{equation}\label{equ:pi1_Of}
1 \to \pi_1\DiffM \oplus \ZZZ^{\rankpiO} \to \pi_1\Orbff \to  \grp \to 1,
\end{equation}
where $\grp$ is a certain finite group and $\rankpiO\geq0$.
\end{theorem}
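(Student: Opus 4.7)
The plan is to exploit axiom \AxFibr\ combined with the contractibility of $\StabIdf$ proved in Theorem~\ref{th:hom-type-StabIdf}. Since $\func$ is assumed to have at least one $\ST$-point, that theorem guarantees $\StabIdf$ is contractible, so $\pi_i\StabIdf = 0$ for all $i\geq 1$. Axiom \AxFibr\ provides the Serre fibration
\begin{equation*}
\Stabf \;\hookrightarrow\; \DiffM \;\xrightarrow{~p~}\; \Orbf, \qquad p(\dif)=\func\circ\dif^{-1},
\end{equation*}
with fiber $\Stabf$ over $\func$. Taking the associated long exact sequence of homotopy groups based at $\id$ on the fiber and total space, and at $\func$ on the base, then restricting to the path-component of $\func$, I obtain isomorphisms $\pi_i\Orbff\cong \pi_i\DiffM$ for $i\geq 2$. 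The classical results of Earle--Eells, Earle--Schatz and Hatcher on the homotopy type of $\DiffM$ for compact surfaces then yield $\pi_2\Orbff=\pi_2\DiffM=0$ and $\pi_i\Orbff=\pi_i\DiffM=\pi_i\Mman$ for $i\geq 3$.

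For $\pi_1$ the same long exact sequence, truncated by $\pi_1\StabIdf=0$, gives a short exact sequence
\begin{equation*}
1 \;\longrightarrow\; \pi_1\DiffM \;\longrightarrow\; \pi_1\Orbff \;\longrightarrow\; \Qgrp \;\longrightarrow\; 1,
\end{equation*}
where $\Qgrp=\ker\bigl(\pi_0\Stabf\to\pi_0\DiffM\bigr)=(\Stabf\cap\DiffIdM)/\StabIdf$. The combinatorial step is to analyze $\Qgrp$ via the framed \KR-graph $\FReebf$ from~\S\ref{sect:framings} and the homomorphism $\actSR$ into $\Aut(\FReebf)$: this fits $\Qgrp$ into a further short exact sequence
\begin{equation*}
1 \;\longrightarrow\; \keractSRallId \;\longrightarrow\; \Qgrp \;\longrightarrow\; \grp \;\longrightarrow\; 1,
\end{equation*}
in which $\grp$ is a finite subgroup of $\Aut(\FReebf)$ (finiteness follows because $\Reebf$ has finitely many edges and vertices) and $\keractSRallId$ is a free abelian group generated by Dehn twists along regular leaves of $\partf$ representing independent cycles of $\Reebf$ compatible with the framings. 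Following~\cite{Maks:AGAG:2006} with the correction provided by Lemma~\ref{lm:Dpn_and_ketrstong_cap_DiffIdM} (which replaces the incorrect \cite[Eq.~(8.6)]{Maks:AGAG:2006} flagged in Remark~1.2), I conclude $\keractSRallId\cong\ZZZ^{\rankpiO}$ for some $\rankpiO\geq 0$.

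The main new point, and the principal obstacle, is showing that the resulting extension of $\pi_1\DiffM$ by $\ZZZ^{\rankpiO}$ is \emph{trivial}, so that $\pi_1\DiffM\oplus\ZZZ^{\rankpiO}$ embeds in $\pi_1\Orbff$. As emphasized in Remark~1.2(3), this splitting was tacitly assumed in~\cite[Th.~1.5]{Maks:AGAG:2006}; in general, central extensions of abelian groups by free abelian groups need not split. The plan is to build an explicit section. For each of the $\rankpiO$ Dehn-twist generators of $\keractSRallId$ I realize the corresponding class in $\pi_1\Orbff$ by integrating a compactly supported vector field along orbits of a special vector field $\AFld$ in a tubular neighborhood of the associated regular leaf, using Theorem~\ref{th:openness_of_shift_maps} to guarantee that the shift map produces a genuinely smooth loop lying in $\imShA$. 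The supports of the generators can be chosen pairwise disjoint and disjoint from a chosen representative of each class in $\pi_1\DiffM$, so these loops commute with the image of $\pi_1\DiffM$ inside $\pi_1\Orbff$ and are free abelian of rank $\rankpiO$. This realization is what Theorem~\ref{th:splitting_pi1Orbf} formalizes. Finally, the weak equivalence of $\Orbff$ with a CW-complex of dimension $\leq 2n-1$ is obtained by assembling the above data into an explicit CW model: one builds a classifying space for $\Qgrp$ using the finite graph $\Reebf$ (whose first Betti number is linearly bounded in $n$), twists by the homotopy type of $\DiffM$, and verifies by a cell-counting argument — each critical point contributing at most two cells via its framing and cycle data — that the total dimension does not exceed $2n-1$.
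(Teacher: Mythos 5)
Your reduction of the theorem to the fibration sequence, the contractibility of $\StabIdf$, the two short exact sequences involving $\grp$ and $\JgrpId\cong\ZZZ^{\rankpiO}$, and the use of Lemma\;\ref{lm:Dpn_and_ketrstong_cap_DiffIdM} all match the paper. But your argument for the key splitting step is broken. A path of the form $t\mapsto\ShA(t\sigma)$ obtained from the shift map along a special vector field $\AFld$ preserves every leaf of $\partf$, hence every level set of $\func$; the induced loop $t\mapsto\func\circ\ShA(t\sigma)$ in $\Orbff$ is therefore the \emph{constant} loop, and its image under $\partial_1$ is trivial rather than the intended generator of $\JgrpId$. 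More fundamentally, no nontrivial element of $\JgrpId\subset\pi_0\Dpn$ can be joined to $\id_\Mman$ by a path of leaf-preserving diffeomorphisms supported near the internal leaves: that is exactly the content of Theorem\;\ref{th:pi0Dpn}. The paths $\omega_{\alpha}$ from $\id_\Mman$ to the generators $g_{\alpha}$ (which are products of Dehn twists isotopic to the identity, not single twists) necessarily leave $\Stabf$ and in general cannot be compactly supported near the leaves --- e.g.\ on the torus $\tau_{\gamma}\circ\tau_{\gamma'}^{-1}$ for parallel non-separating leaves is trivialized only by an isotopy sweeping the whole surface. So your ``pairwise disjoint supports, hence commuting loops'' step is unavailable, and verifying that the relators of $\JgrpId$ map to null-homotopic loops (equivalently $\ker\eta\subset\ker\psi$) is precisely the nontrivial content of Theorem\;\ref{th:splitting_pi1Orbf}. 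The paper proves it in \S\ref{sect:proof:th:splitting_pi1Orbf} by a case-by-case analysis over the surfaces with $\chi(\Mman)\geq0$, producing point sets $\Qman$ with more than $\chi(\Mman)$ points fixed (or invariant) under suitable choices of $\omega_{\alpha}$ and invoking contractibility of $\DiffId(\Mman,k)$, with a separate rotation argument for the Klein bottle. Commutation with $\pi_1\DiffM$, which you emphasize, is automatic (the extension is central by Lemma\;\ref{lm:partial_1}) and is not the obstruction.

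The dimension bound is also not established by your sketch. The paper does not build a classifying space from $\Reebf$; it considers the map $k:\Orbff\to\mathcal{F}_{n}(\Int\Mman)$ to the configuration space of the $n$ critical points, shows it is a locally trivial fibration whose fiber components are copies of $\Orbffn$, and proves these are weakly contractible (Theorem\;\ref{th:Stabf_DiffIdMn_StabIdf}, which in turn rests on the computation $\pi_0\Dpn\cong\ZZZ^{\intk}$ of \S\ref{sect:group_pi0Dpn} and Proposition\;\ref{pr:intersections_with_ker_lambda}). Hence $\Orbff$ is weakly equivalent to a covering of the open $2n$-manifold $\mathcal{F}_{n}(\Int\Mman)$, which gives the bound $2n-1$. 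Your ``each critical point contributes at most two cells'' count has no justification as stated, and you omit the essential input $\pi_i\Orbffn=0$. You should replace both of these steps by the paper's arguments or supply genuinely new proofs of Theorems\;\ref{th:splitting_pi1Orbf} and\;\ref{th:Stabf_DiffIdMn_StabIdf}.
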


\subsection{Discussion of axioms}\label{sect:discussion_of_axioms}
Axioms \AxBd\ and \AxSPN\ put restrictions only on the foliation $\partf$ of $\func$.
For instance suppose that $\func:\Mman\to\RRR$ satisfies them and has a global minimum $\min\func=0$.
Then for every $n\geq2$ the function $\func^n$ also satisfies these axioms.
Replacing $\func$ by its $n$-th degree makes that global minimum of $\func$ ``more degenerate'' but does not changes the foliation $\partf$.

On the other hand, if $\func$ satisfies \AxFibr, then usually the orbit $\Orbit(\func^n)$ of $\func^n$ for $n\geq2$ has \myemph{infinite codimension} in $\smr$ and the verification of \AxFibr{} for $\func^n$ is not an easy problem, see~\cite{Poenaru:PMIHES:1970}.

\subsection{Sufficient condition for \AxFibr.}
Let $\func:\RRR^2\to\RRR$ be a smooth function, $z\in\RRR^2$, and $\func(z)=0$.
Denote by $\smrz$ the algebra of germs of smooth functions $\RRR^2\to\RRR$.
Then the \myemph{Jacobi ideal} of $\func$ at $z$ is the ideal $\Delta(\func,z)$ in $\smrz$ generated by germs partial derivatives $\func'_{x}(z)$ and $\func'_{y}(z)$ of $\func$ at $z$.

The \myemph{codimension} $\mu_{\RRR}(\func,z)$ of a $\func$ at $z$ (or a \myemph{real Milnor number} of $z$) is the codimension of the Jacobi ideal $\Delta(\func,z)$ in $\smrz$:
$$
\mu_{\RRR}(\func,z)= \dim_{\RRR} [ \smrz /\Delta(\func,z) ],
$$
see\;\cite{Sergeraert:ASENS:1972}.
\begin{lemma}\label{lm:suffcond_AxFibr}{\rm c.f.\;\cite{Sergeraert:ASENS:1972}, \cite[\S\;11.30]{Maks:AGAG:2006}.}
Let $\Cinf_{\partial}(\Mman,\Psp)$ be the subspace of $\Ci{\Mman}{\RRR}$ consisting of maps satisfying axiom \AxBd, and let $\func\in\Cinf_{\partial}(\Mman,\Psp)$.
Suppose that $\mu_{\RRR}(\func,z)<\infty$ for each critical point $z\in\singf$.
Then $\Orbf$ is a Fr\'echet submanifold of $\Cinf_{\partial}(\Mman,\Psp)$ of finite codimension and the map $p:\DiffM\to\Orbf$ is a principal locally trivial $\Stabf$-fibration.
In particular, $p$ is a Serre fibration, so $\func$ satisfies \AxFibr.
\end{lemma}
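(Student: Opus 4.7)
\smallskip
\noindent\textit{Proof proposal.}
The plan is to apply Sergeraert's infinite-dimensional implicit function theorem \cite{Sergeraert:ASENS:1972} for smooth right actions of diffeomorphism groups on Fr\'echet spaces of smooth functions, essentially repeating the argument of \cite[\S 11.30]{Maks:AGAG:2006} but for the more general class of maps considered here. The key point is that the finiteness of the real Milnor number at every critical point provides the algebraic input needed to identify the tangent space to $\Orbf$ at $\func$ as a finite-codimensional closed subspace, and to produce a smooth local transversal to the $\DiffM$-action.

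First I would identify the infinitesimal action. The derivative at $\id$ of the orbit map $\DiffM \to \smone$, $\dif \mapsto \func\circ\dif^{-1}$, sends a vector field $\xi\in\Gamma_{\partial}(T\Mman)$ tangent to $\partial\Mman$ to $-d\func(\xi) = -\xi(\func)$. Its image is therefore the subspace $\mathcal{J}(\func)\subset\Cinf_{\partial}(\Mman,\Psp)$ of functions that lie in the Jacobi ideal of $\func$ near each critical point and vanish to appropriate order along $\partial\Mman$. Localising at $z\in\singf$ via partitions of unity, the quotient $\Cinf_{\partial}(\Mman,\Psp)/\mathcal{J}(\func)$ is isomorphic to $\bigoplus_{z\in\singf}\smrz/\Delta(\func,z)$ up to a finite-dimensional contribution from boundary data, and by hypothesis each summand has dimension $\mu_{\RRR}(\func,z)<\infty$. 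Hence $\mathcal{J}(\func)$ has finite codimension and is closed in $\Cinf_{\partial}(\Mman,\Psp)$.

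Next, using the Malgrange--Mather preparation/division theorem for smooth germs (which is the technical heart of Sergeraert's method), I would construct, in a Fr\'echet neighbourhood $\mathcal{U}$ of $\func$ in $\Cinf_{\partial}(\Mman,\Psp)$, a smooth section $\sigma:\mathcal{U}\cap\Orbf\to\DiffM$ of the map $p$ with $\sigma(\func)=\id$. Concretely, one picks a finite-dimensional complement $T\subset\Cinf_{\partial}(\Mman,\Psp)$ to $\mathcal{J}(\func)$, and uses the smooth solvability of $d\func(\xi)=\homf-\func$ (with tame estimates supplied by Sergeraert, based on the finite real Milnor number) to produce for each nearby $\homf\in\Orbf$ an isotopy whose time-one map $\dif$ satisfies $\func\circ\dif^{-1}=\homf$ and depends smoothly on $\homf$. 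This exhibits $\Orbf$ as a Fr\'echet submanifold of codimension $\sum_{z\in\singf}\mu_{\RRR}(\func,z)$, and gives a smooth local trivialisation $\dif\mapsto(\sigma(p(\dif))^{-1}\dif,\;p(\dif))$ of $p$ over $\mathcal{U}\cap\Orbf$ with fibre $\Stabf$. Translating this trivialisation around $\Orbf$ by the action of $\DiffM$ produces a principal $\Stabf$-bundle structure on $p$; in particular $p$ is a Serre fibration and axiom \AxFibr{} holds.

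The only real obstacle is the analytic step of solving the cohomological equation $\xi(\func)=\homf-\func$ with a tame smooth dependence on $\homf$; this is precisely where finiteness of $\mu_{\RRR}(\func,z)$ enters, via Malgrange's theorem applied at each critical point together with a straightforward extension across regular points where $d\func\neq 0$. Everything else is a formal adaptation of \cite[\S 11.30]{Maks:AGAG:2006}, whose Morse-type hypotheses are only used to guarantee $\mu_{\RRR}(\func,z)<\infty$.
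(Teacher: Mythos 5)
Your proposal follows exactly the route the paper intends: the paper gives no proof of this lemma at all, deferring entirely to Sergeraert \cite{Sergeraert:ASENS:1972} and to \cite[\S\;11.30]{Maks:AGAG:2006}, and your sketch (infinitesimal action onto the Jacobi-ideal subspace, finite codimension from $\mu_{\RRR}(\func,z)<\infty$, Malgrange--Mather division plus Sergeraert's tame implicit function theorem to build a smooth local section of $p$, then translation by the action to get the principal $\Stabf$-bundle structure) is a faithful outline of precisely that argument. The only caveat is that your exact codimension count $\sum_{z\in\singf}\mu_{\RRR}(\func,z)$ may need a finite-dimensional correction from the boundary constraints in $\Cinf_{\partial}(\Mman,\Psp)$, but the lemma only asserts finiteness, so this does not affect the conclusion.
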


\begin{lemma}\label{exmp:hompoly-wmf}{\rm c.f.~\cite{Maks:DNANU:2009}}.
Suppose that $\func:\Mman\to\Psp$ satisfies \AxBd\ and has the following property:
for every critical point $z$ of $\func$ there exists a local presentation $\func_{z}:\RRR^2\to\RRR$ of $\func$ in which $z=(0,0)$ and $\func_{z}$ is a homogeneous polynomial without multiple factors.
Then $\func$ satisfies all other axioms \AxSPN, \AxFibr.
\end{lemma}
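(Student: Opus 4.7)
The plan is to verify axioms \AxSPN\ and \AxFibr\ separately, one critical point at a time, using the assumed local presentations.

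For \AxSPN, fix $z\in\singf$ with local presentation $\func_z=L_1\cdots L_a\cdot Q_1^{q_1}\cdots Q_b^{q_b}$. The hypothesis that $\func_z$ has no multiple factors forces $q_1=\cdots=q_b=1$; in particular $\func_z$ has no multiple linear factors and so lies in the class of reduced homogeneous polynomials analyzed in \S\ref{sect:examples_of_SPN_points}. There it is shown that the origin is a special critical point of type determined by $(a,b)$: an $\ST$-point if $a>0$; a $\PT$-point if $a=0$ and $b=1$; and an $\NZT$-point (hence an $\NT$-point) if $a=0$ and $b\geq2$. Since all three types are properties of the germ, $z$ itself is of one of the types $\ST$, $\PT$, $\NT$, verifying \AxSPN.

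For \AxFibr\ I would appeal to Lemma~\ref{lm:suffcond_AxFibr}, which reduces matters to checking $\mu_{\RRR}(\func,z)<\infty$ at every $z\in\singf$. Writing $g=\func_z$, the task is to show that the Jacobi ideal $\Delta(g,0)=(g'_x,g'_y)$ is $\maxidz$-primary in $\smrz$. I would first argue at the polynomial level that for a reduced homogeneous $g\in\CCC[x,y]$ of degree $d\geq2$ the components $g'_x,g'_y$ share no common factor. Indeed, if some linear form $L$ divided both, then at every point $p$ of the line $L=0$ Euler's identity $d\cdot g = x g'_x + y g'_y$ would force $g(p)=0$, whence $L\mid g$; writing $g=L\cdot h$ with $L\nmid h$, the identity $g'_x=L'_x h + L h'_x$ together with $L\mid g'_x$ and the constancy of $L'_x$ forces either $L'_x=0$ or $L\mid h$, and similarly for $g'_y$, leading to $L=0$, a contradiction. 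By the Hilbert Nullstellensatz the Jacobi ideal is $\maxidz$-primary in the polynomial (and hence the convergent power series) local ring at the origin, and by Malgrange's preparation theorem the same holds in $\smrz$. Thus $\mu_{\RRR}(g,0)<\infty$, and Lemma~\ref{lm:suffcond_AxFibr} yields \AxFibr.

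The only step that is not essentially bookkeeping is the final passage from finite codimension in the analytic local ring to finite codimension in the smooth one; this rests on Malgrange's preparation theorem, which is precisely the classical tool underlying the general principle invoked in Lemma~\ref{lm:suffcond_AxFibr}.
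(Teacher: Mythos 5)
Your proposal is correct and follows essentially the same route as the paper: \AxSPN\ via the classification of reduced homogeneous singularities in \S\ref{sect:examples_of_SPN_points}, and \AxFibr\ via Lemma\;\ref{lm:suffcond_AxFibr} by showing the complexified germ has an isolated critical point and transferring finiteness of the Milnor number to the smooth local ring. The only difference is that you spell out the two steps the paper delegates (the Euler-identity argument that $g'_x,g'_y$ share no factor, and the analytic-to-smooth transfer via Nullstellensatz and Malgrange, where the paper simply asserts $\mu_{\RRR}(\func_z,z)\leq 2\mu_{\CCC}(\func_z,z)$), and both of those details check out.
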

\begin{proof}
Axiom \AxSPN\ follows from results of\;\cite{Maks:hamv2, Maks:CEJM:2009}, see also\;\cite{Maks:loc-inv-shifts}.

For the axiom \AxFibr\ it suffices establish finiteness of $\mu_{\RRR}(\func_{z},z)$.
Regard $\func_{z}$ as a polynomial two complex variables with real coefficients.
Since $\func_z$ has no multiple factors, it follows that $z=(0,0)$ is an isolated critical point of $\func_z$, whence the \myemph{complex Milnor number} defined analogously with respect to the algebra of germs of analytical functions $(\CCC^2,0)\to\CCC$ is finite: $\mu_{\CCC}(\func_z,z)<\infty$, see\;\cite{ArnoldGuseinZadeVarchenko:Sing1}.
It remains to note the following inequaltiy $\mu_{\RRR}(\func_z,z)\leq 2\mu_{\CCC}(\func_z,z)$, which can be easily proved.
\end{proof}

\section{Framings}\label{sect:framings}
In this section we will assume again that $\func$ satisfies conditions (i) and (ii) at the beginning of \S\ref{sect:Foliation-of-f}.
The results of this section will be used for the proof of Theorem\;\ref{th:hom-type-Orbits} only.

\subsection{Framings for $\NT$-points}\label{sect:framed-N-points}
Suppose $z$ is an $\NT$-point of $\func$ and let $\AFld$ be the corresponding special vector field defined on some neighbourhood $\Uman$ of $z$.
Recall that the group $\Ggrpz=\gDpAz/\gimShAz$ is cyclic and we have denoted its order by $n_z$.

For each non-zero tangent vector $v\in T_{z}\Mman$ put
$$
\zvfrm{z}{v} \ := \ \{\pm T_{z}\dif(v) \ | \ \dif\in\gDpAz \}.
$$
Thus $\zvfrm{z}{v}$ is the union of orbits of $v$ and $-v$ under the natural action of $\gDpAz$ on $T_{z}\Mman$.
Equivalently, we may put
$$
\zvfrm{z}{v} \ := \ \{\pm \xi(v) \ | \ \xi\in\tnz(\gDpAz) \}.
$$

\begin{definition}
The set $\zvfrm{z}{v}$ will be called a \myemph{framing at $z$} if it is finite and invariant with respect to the whole group $\gDAz$.
\end{definition}

\begin{lemma}\label{lm:framing_prop}
\begin{enumerate}
 \item 
Framings exist.
 \item
Let $\zvfrm{z}{v}$ be any framing.
If $n_z$ is odd (resp. even), then $\zvfrm{z}{v}$ consists of $2n_z$ (resp. $n_z$) elements.
 \item
The kernel of the action of $\gDpAz$ on any framing $\zvfrm{z}{v}$ is $\gimShAz$.
Hence $\gDpAz$ induces a \myemph{free} action of $\Ggrpz$ on $\zvfrm{z}{v}$.
 \item 
Let $\zvfrm{z}{v}$ be a framing at $z$ and $\dif\in\Stabf$.
Then $z':=\dif(z)\in \singfN$ and
$$\zvfrm{z'}{v}\,:=\, T_{z}\dif(\zvfrm{z}{v}) \subset T_{z'}\Mman$$
is a framing at $z'$.
 \item
If $\gdif\in\Stabf$ is such that $\gdif(z)=\dif(z)$, then 
$$T_{z}\gdif(\zvfrm{z}{v}) =T_{z}\dif(\zvfrm{z}{v}).$$
\end{enumerate}
\end{lemma}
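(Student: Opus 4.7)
\proof[Proof sketch]
The whole argument works through the image $\tnz(\gDAz)\subseteq\GLR{2}$ and uses the structure provided by Lemma~\ref{lm:N-points}. I would split into the two subcases \NNT\ and \NZT, choose $v$ carefully in each, and then read off everything from the linear action on $T_z\Mman\cong\RRR^2$.

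For existence (1), in the \NNT\ case the linear part $\nabla\AFld(z)$ is conjugate to the nilpotent matrix in b) of \eqref{equ:linear_part_j1Fz}, so after a linear change of coordinates $\tnz(\gimShAz)=\Agrp_{++}$, $\tnz(\gDpAz)\subseteq\Agrp_{+}$, and $\tnz(\gDAz)\subseteq\Agrp$. All matrices in $\Agrp_{++}$ fix every vector of the form $v=(a,0)$, and all matrices in $\Agrp$ send such a $v$ to $\pm v$. Hence taking any $v$ along the kernel direction of $\nabla\AFld(z)$ yields $\zvfrm{z}{v}=\{\pm v\}$, manifestly finite and $\gDAz$-invariant. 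In the \NZT\ case $\tnz(\gDpAz)$ is a finite cyclic subgroup of $\GLRp{2}$ of order $n_z$, conjugate to a rotation group, and $\tnz(\gDAz)$ is either this same cyclic group or the dihedral extension $\DDD_{n_z}$. Choose $v\neq 0$ along one of the reflection axes of that dihedral group (if no orientation-reversing element exists, any $v\neq0$ works). Then $\tnz(\gDAz)\cdot v=\tnz(\gDpAz)\cdot v$, so $\zvfrm{z}{v}$ is finite and $\gDAz$-invariant.

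For the cardinality count (2), in the \NNT\ subcase $n_z\in\{1,2\}$ and one checks directly from the description of $\Agrp_{+}$ that $|\zvfrm{z}{v}|=2$, which matches both $2n_z=2$ for $n_z=1$ and $n_z=2$ for $n_z=2$. In the \NZT\ subcase $\tnz(\gDpAz)$ is generated by a rotation $R$ of order $n_z$; if $n_z$ is odd then $-I\notin\tnz(\gDpAz)$, the orbits $\tnz(\gDpAz)\cdot v$ and $\tnz(\gDpAz)\cdot(-v)$ are disjoint, and each has exactly $n_z$ elements, giving $2n_z$ in total; if $n_z$ is even then $-I=R^{n_z/2}\in\tnz(\gDpAz)$, so the two orbits coincide and $|\zvfrm{z}{v}|=n_z$. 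For (3), the chosen $v$ has the property that the kernel of $\tnz$ on $\gDpAz$ acts trivially on $\zvfrm{z}{v}$: this kernel is $\gimShAz$ by \NTB\ and \NTC\ of Lemma~\ref{lm:N-points} (in the \NNT\ case $\gimShAz=\tnz^{-1}(\Agrp_{++})$, and $\Agrp_{++}$ fixes $v$; in the \NZT\ case $\gimShAz=\ker(\tnz)$). Conversely, any non-trivial element of $\Ggrpz=\gDpAz/\gimShAz$ acts by a non-identity rotation or by $\Agrp_{--}$, both of which move every element of $\zvfrm{z}{v}$ (this is where the choice of $v$ off the fixed set of each non-trivial rotation enters). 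Hence the induced action of $\Ggrpz$ on $\zvfrm{z}{v}$ is free.

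For (4), let $\dif\in\Stabf$ and set $z'=\dif(z)$. Since $\dif$ preserves $\func$ and hence the foliation $\partf$ and the critical set $\singf$, it sends isolated critical points to isolated critical points of the same topological type (local extremum vs.~saddle). The push-forward $\dif_{*}\AFld$ is defined on the neighbourhood $\dif(\Uman)$ of $z'$, has $z'$ as its unique singular point, satisfies $d\func(\dif_{*}\AFld)=d\func(\AFld)\circ\dif^{-1}=0$, and its shift map on any $\Vman\subset\dif(\Uman)$ is conjugate by $\dif$ to the shift map of $\AFld$ on $\dif^{-1}(\Vman)$; therefore \SPECA, \SPECB, periodicity/non-periodicity of the period function and the three axioms \NA-\NC\ all transfer, so $z'\in\singfN$ and $\dif_{*}\AFld$ is special for $z'$. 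Moreover, conjugation by $T_{z}\dif$ intertwines $\tnz(\gD{\AFld}{z})$ and $\tn{z'}(\gD{\dif_{*}\AFld}{z'})$ and similarly for the $+$-subgroups, which is exactly what is needed to see that $T_{z}\dif(\zvfrm{z}{v})$ is a framing at $z'$. Finally (5) is immediate from (4): if $\gdif(z)=\dif(z)$ then $\dif^{-1}\circ\gdif$ fixes $z$ and lies in $\Stabf$, so its germ at $z$ belongs to $\gDpartfz=\gDAz$ by \eqref{equ:gDaz_gDpartfz}; the defining invariance of the framing $\zvfrm{z}{v}$ under $\gDAz$ yields $T_{z}(\dif^{-1}\circ\gdif)(\zvfrm{z}{v})=\zvfrm{z}{v}$, and applying $T_{z}\dif$ to both sides gives the claimed equality.

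The only non-routine part is the existence of $v$ in (1) satisfying the full $\gDAz$-invariance, which in the \NZT\ subcase really uses that even when $\tnz(\gDAz)$ is properly larger than $\tnz(\gDpAz)$ it is at most the dihedral $\DDD_{n_z}$, so one can arrange $v$ to lie on a reflection axis; the remaining items are then bookkeeping with the matrix groups $\Agrp_{\pm\pm}$ and with cyclic rotation groups.
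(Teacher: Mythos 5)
Your proof is correct and follows essentially the same route as the paper: the $\NNT$/$\NZT$ case split, the choice of $v$ along the kernel direction of $\nabla\AFld(z)$ (resp.\ along a reflection axis of the dihedral image), and reading off (1)--(3) from the matrix groups $\Agrp_{++}\subset\Agrp_{+}\subset\Agrp$ and the finite cyclic/dihedral image of $\tnz$. You additionally spell out the $\NZT$ cardinality count and items (4)--(5), which the paper leaves to the reader; those details are sound.
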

\begin{proof}
(1)-(3).
First suppose that $z$ is an $\NNT$-point, so we can choose local coordinates at $z$ in which $\nabla\AFld(z)=\left(\begin{smallmatrix}
0 & 1 \\ 0 & 0
\end{smallmatrix}\right)$.
Then by Lemma\;\ref{lm:N-points} 
$$
\tnz(\gimShAz)=\Agrp_{++},
\qquad 
\tnz(\gDpAz) \subset \Agrp_{+},
\qquad 
\tnz(\gDAz) \subset \Agrp.
$$
Let $v=(a,0)\in T_{z}\Mman$ be any vector with respect to these coordinates such that $a\not=0$.
Evidently, the orbit of $v$ with respect to each of the groups $\Agrp_{+}$ and $\Agrp$ is $\{\pm v\}$, see Figure\;\ref{fig:framings}a), while $\Agrp_{++}$ is the stabilizer of $v$ with respect to $\Agrp_{+}$.
This implies that $\zvfrm{z}{v}=\{\pm v\}$, this set is invariant with respect to $\gDAz$, and $\gimShAz$ is the kernel of the action of $\gDpAz$.

If $\gimShAz=\gDpAz$, i.e. $n_z=1$, then $|\zvfrm{z}{v}|=2=2n_z$.
Otherwise, $\gDpAz/\gimShAz=\ZZZ_2$, so $n_z=2=|\zvfrm{z}{v}|$.

On the other hand, if $w\in T_{z}\Mman$ is another tangent vector which is not collinear to $v$, then $w=(b,c)$ with $c\not=0$ and $\zvfrm{z}{w}=\{(t, \pm c) \ | \ t\in\RRR\}$ is a pair of lines.
Thus $\zvfrm{z}{w}$ is not a framing, see Figure\;\ref{fig:framings}b).

\medskip 

Suppose now that $z$ is an $\NZT$-point, i.e. $\nabla\AFld(z)=0$.
Then by Lemma\;\ref{lm:N-points} $\gimShAz$ is the kernel of $\tnz$, 
$$\tnz(\gDpAz)=\gDpAz/\gimShAz=\Ggrpz$$ is a finite cyclic group of order $n_z$.
Whence for any non-zero $v\in T_{z}\Mman$ the set $\zvfrm{z}{v}$ is finite and $\gimShAz$ is the kernel of the action of $\gDpAz$ on $\zvfrm{z}{v}$.

It remains to choose $v$ for which $\zvfrm{z}{v}$ is invariant with respect to $\gDAz$.
We can assume that $\gDAz\not=\gDpAz$, so $\tnz(\gDAz)$ is a dihedral group.
Let $\dif\in\gDAz\setminus\gDpAz$.
Then $\dif$ changes orientation at $z$, whence the linear map $T_{z}\dif$ is a reflection with respect to a line $\{tv \ | \ t\in\RRR\}$ determined by some non-zero vector $v\in T_{z}\Mman$.
In particular, $T_z\dif(v)=v$.
Now it is evident that $\zvfrm{z}{v}$ is invariant with respect to $\gDAz$, so $\zvfrm{z}{v}$ is a framing, see Figure\;\ref{fig:framings}c).

Statements (4) and (5) are easy and we leave them for the reader.
\end{proof}
\begin{figure}[ht]
\begin{center}
\begin{tabular}{ccccc}
\includegraphics[height=2cm]{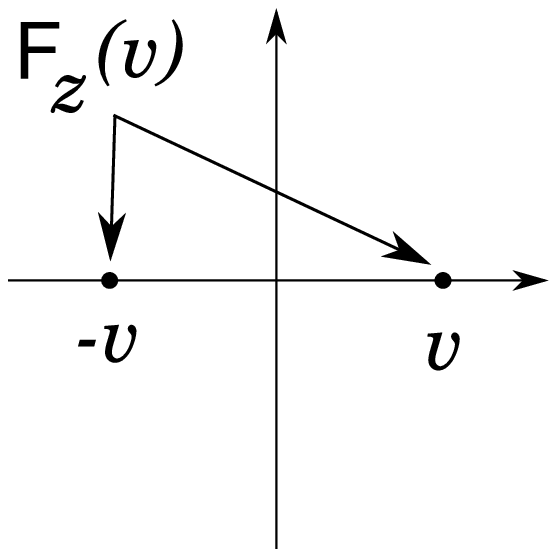} 
& \qquad\qquad &
\includegraphics[height=2cm]{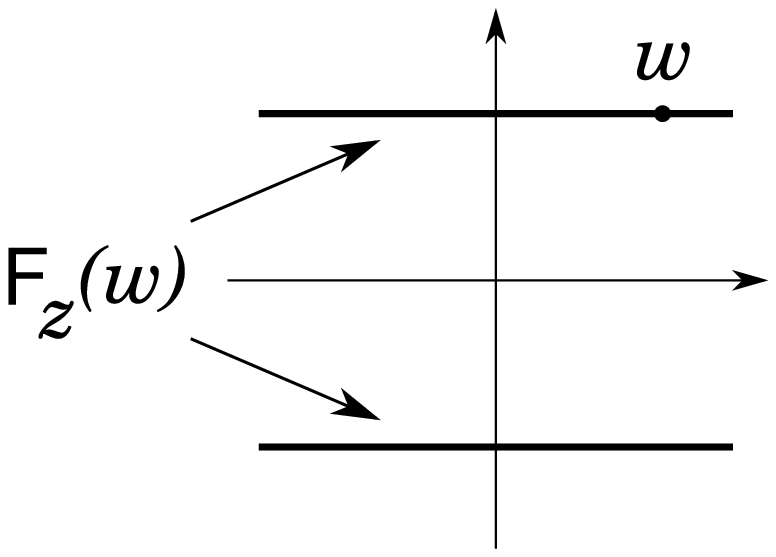} 
& \qquad\qquad &
\includegraphics[height=2cm]{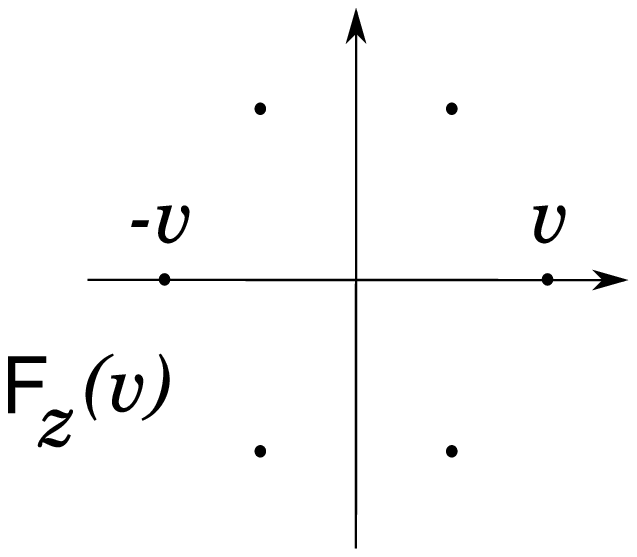} \\
a) framing & &
b) not a framing & & 
c) framing 
\end{tabular}
\end{center}
\caption{}
\protect\label{fig:framings}
\end{figure}

\begin{definition}\label{defn:compat_N_framing}
Say that a family of framings $\{\zvfrm{z}{v_z}\ : \ z\in\singfN\}$ is \myemph{compatible} if it is invariant with respect to $\Stabf$, i.e. 
$$T_{z}\dif(\zfrm{z})=\zfrm{\dif(z)}, \qquad \forall\,\dif\in\Stabf.$$
\end{definition}

\begin{corollary}
Compatible framings exist.
\end{corollary}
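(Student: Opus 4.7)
The plan is to build a compatible family by choosing a framing on a single representative of each $\Stabf$-orbit in $\singfN$ and then transporting it across the orbit by the tangent maps of elements of $\Stabf$. Well-definedness and compatibility will then be handed to us by parts (4) and (5) of Lemma~\ref{lm:framing_prop}.

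First I would record that $\Stabf$ acts on $\singfN$. If $\dif\in\Stabf$ then $\func\circ\dif=\func$, so $\dif$ carries $\singf$ to itself. Moreover, given $z\in\singfN$ with special vector field $\AFld$ on a neighbourhood $\Uman$, the push-forward $\dif_{*}\AFld$ is a special vector field near $\dif(z)$ of the same $\NT$-type (the conditions \SPECA, \SPECB, \NA--\NC\ being diffeomorphism-invariant), so $\dif(z)\in\singfN$. Then I would pick, by the Axiom of Choice, one representative $z_{0}$ in each $\Stabf$-orbit of $\singfN$, and apply Lemma~\ref{lm:framing_prop}(1) to fix a framing $\zfrm{z_{0}}=\zvfrm{z_{0}}{v_{z_{0}}}$ at each such $z_{0}$. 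For an arbitrary $z\in\singfN$ lying in the orbit of $z_{0}$, I would select some $\dif_{z}\in\Stabf$ with $\dif_{z}(z_{0})=z$ and set
\[
\zfrm{z} \;:=\; T_{z_{0}}\dif_{z}\bigl(\zfrm{z_{0}}\bigr) \;\subset\; T_{z}\Mman.
\]
Lemma~\ref{lm:framing_prop}(4) guarantees that $\zfrm{z}$ is a framing at $z$, while Lemma~\ref{lm:framing_prop}(5) ensures independence from the particular choice of $\dif_{z}$.

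Finally, compatibility in the sense of Definition~\ref{defn:compat_N_framing} would be a direct computation: for any $\dif\in\Stabf$ and $z=\dif_{z}(z_{0})$ as above, the composition $\dif\circ\dif_{z}$ again belongs to $\Stabf$ and sends $z_{0}$ to $\dif(z)$, whence
\[
\zfrm{\dif(z)} \;=\; T_{z_{0}}(\dif\circ\dif_{z})\bigl(\zfrm{z_{0}}\bigr) \;=\; T_{z}\dif\Bigl(T_{z_{0}}\dif_{z}\bigl(\zfrm{z_{0}}\bigr)\Bigr) \;=\; T_{z}\dif\bigl(\zfrm{z}\bigr),
\]
which is exactly the required $\Stabf$-equivariance. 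The only delicate point is the well-definedness of $\zfrm{z}$, and this is already absorbed into Lemma~\ref{lm:framing_prop}(5); the rest of the argument is bookkeeping, so I expect no substantial obstacle.
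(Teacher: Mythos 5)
Your proposal is correct and follows essentially the same route as the paper: decompose $\singfN$ into $\Stabf$-orbits, fix a framing at one representative per orbit, transport it by tangent maps, and invoke parts (4) and (5) of Lemma\;\ref{lm:framing_prop} for well-definedness. The explicit equivariance computation you add at the end is a harmless elaboration of what the paper leaves implicit.
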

\begin{proof}
Since $\dif(\singfN)=\singfN$ for every $\dif\in\Stabf$, we can divide $\singfN$ by orbits with respect to $\Stabf$.
Let $O=\{z_1,\ldots,z_l\} \subset \singfN$ be one of these orbits.
It suffices to define a compatible framing on $O$.

Fix any framing $\zfrm{z_1}$ for $z_1$.
Let $z_i\in O$ and $\dif\in\Stabf$ be such that $\dif(z_1)=z_i$.
Then we set $\zfrm{z_i}=T_{z_1}\dif(\zfrm{z_1})$.
By Lemma~\ref{lm:framing_prop} this definition does not depend on a particular choice of such $\dif$.
\end{proof}

\subsection{Framed \KR-graph $\FReebf$ of $\func$.}\label{sect:framed-KR-graph}
Let $\Reebf$ be the \KR-graph of $\func$, $\prKR:\Mman\to\Reebf$ be the factor map, and $w$ be a vertex of $\Reebf$.
Say that 
\begin{itemize}
 \item 
$w$ is a \myemph{$\partial$-vertex} if $\prKR^{-1}(w)$ is a connected component of $\partial\Mman$;
 \item
$w$ is an \myemph{$\ST$-vertex} if $\prKR^{-1}(w)$ contains $\ST$-points of $\func$ (in this case $\prKR^{-1}(w)$ is a critical component of $\partf$ being not local extreme of $\func$);
 \item
$w$ is \myemph{$\PT$-vertex} (resp. \myemph{$\NT$-vertex}) if $\prKR^{-1}(w)$ is a $\PT$-point (resp. $\NT$-point) of $\func$.
\end{itemize}

Let $w$ be an $\NT$-vertex of $\Reebf$ and $z=\prKR^{-1}(w)$ be the corresponding $\NT$-point of $\func$.
Then by \myemph{cyclic order} of $w$ we will call the corresponding cyclic order $n_z$ of $z$ and denote it by $n_{w}$, thus $n_{w}:=n_{\prKR^{-1}(w)}=n_z$.

Let $k_z$ be the total number of vectors in (any) framing at $z$, see Lemma\;\ref{lm:framing_prop}.
We will denote this number by $k_z$ and $k_{w}$ as well.
Thus $k_z=n_z$ for even $n_z$ and $k_z=2n_z$ for odd $n_z$.

To each $\NT$-vertex $w$ let us glue $k_{w}$ edges $e_{w}(1),\ldots,e_{w}(k_{w})$ as shown in Figure\;\ref{fig:tang_edges}.
We will call them \myemph{edges tangent to $w$}.
The set of tangent edges to $w$ will be denoted by $\tew{w}$.
They should be thought as ``\myemph{lying in the plane orthogonal to the edge incident to $w$}''.
\begin{figure}[ht]
\centerline{\includegraphics[height=2cm]{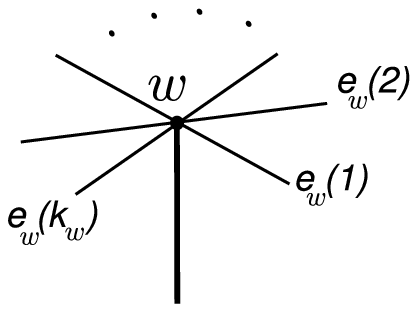}}
\caption{Tangent edges to a vertex $w$}\protect\label{fig:tang_edges}
\end{figure}

Denote the obtained graph by $\FReebf$ and call it the \myemph{framed \KR-graph} of $\func$.
Thus $\Reebf$ is a subgraph of $\FReebf$ and we can extend the KR-function $\KRfunc:\Reebf\to\Psp$ to all of $\FReebf$ to be constant on tangent edges:
$\KRfunc(e_{w}(i))=\KRfunc(w)$.

\subsection{Automorphisms of $\FReebf$}\label{sect:Aut_framed_KR_graph}
Let $E(\Reebf)$ be the set of edges of $\Reebf$.
By an \myemph{automorphism} of $\FReebf$ we will mean a pair $(\nu, o)$, where 
\begin{itemize}
 \item 
$\nu:\FReebf\to\FReebf$ is a homeomorphism which preserves types of vertexes and the \KR-function i.e. $\KRfunc\circ\nu=\KRfunc:\FReebf\to\Psp$, (in particular, $\nu$ sends tangent edges to tangent edges), and
 \item
$o:E(\Reebf)\to\ZZZ_2$ is any function.
\end{itemize}

Define the composition of automorphisms by:
$$(\nu_1, \, o_1)\circ (\nu_2, \, o_2) \ = \ (\nu_1\circ \nu_2, \, o_1\circ\nu_2 \;\cdot\; o_2),$$
where $\cdot$ is a multiplication in $\ZZZ_2=\{\pm1\}$.

Then it is easy to see that all the automorphisms of $\FReebf$ constitute a group.
We will denote this group by $\AutfFR$.
The unit of $\AutfFR$ is $(\id_{\FReebf},1)$, where $1:E(\FReebf)\to\ZZZ_2$ is a constant map to $1 \in\ZZZ_2$, and the inverse of $(\nu, \, o)$ is $(\nu^{-1}, -o\circ\nu^{-1})$.

\subsection{Action of $\Stabf$ on $\FReebf$}\label{sect:action_Stabf_on_FKR}
Suppose $\func$ has at least one critical point.
We will now define a certain homomorphism 
$$
  \stongr:\Stabf\to\AutfFR.
$$
Fix 
\begin{itemize}
 \item 
a compatible framing $\{\zfrm{z}\ : \ z\in\singfN\}$ for $\NT$-points of $\func$,
 \item
for each $z\in\singfN$ a bijection $\xi_{z}:\zfrm{z}\to\tew{\prKR(z)}$ between the framing $\zfrm{z}$ at $z$ and the set of tangent edges at the corresponding vertex $\prKR(z)$ of $\FReebf$ (see Figure\;\ref{fig:frm-tng}, and
 \item
orientation of connected components of $\partfreg$ (this is possible since $\singf\not=\varnothing$, so all regular components of $\partf$ are diffeomorphic to $S^1\times(0,1)$ and thus they are orientable surfaces).
\end{itemize}

\begin{figure}[ht]
\centerline{\includegraphics[height=2cm]{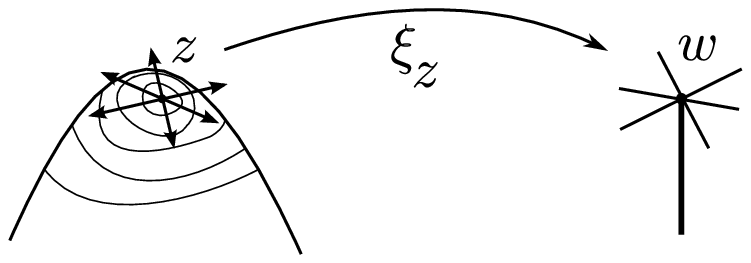}}
\caption{}\protect\label{fig:frm-tng}
\end{figure}

Now let $\dif\in\Stabf$.
We have to associate to $\dif$ a pair $(\nu,o)$.

Notice that $\dif$ yields a certain automorphism $\nu=\stonkr(\dif)$ of $\Reebf$.
To extend it to the set of tangent edges.
Let $\dif\in\Stabf$, $z\in\singfN$, $z'=\dif(z)$, $w=\prKR(z)$ and $w'=\prKR(z')$ be the corresponding vertexes on $\FReebf$.
Then $z'\in\singfN$ as well and $\dif$ yields a bijection between the framings $\zfrm{z}$ and $\zfrm{z'}$.
So we define $\nu:\tew{\omega}\to\tew{\omega'}$ by 
$$\xi_{z'}\circ T_{z}\dif \circ \xi_{z}^{-1}:
\tew{\omega}
\xrightarrow{~\xi_{z}^{-1}~}
\zfrm{z} 
\xrightarrow{~T_{z}\dif~}
\zfrm{z'} 
\xrightarrow{~\xi_{z'}}
\tew{\omega'}.
$$

It remains to define a function $o:E(\Reebf)\to\ZZZ_2$.
Let $e$ be an edge of $\Reebf$ and $\gamma=\prKR(e)$ be the corresponding connected component of $\partfreg$.
Then $\gamma'=\dif(\gamma)$ is also a connected component of $\partfreg$.
Notice that we fixed orientations of $\gamma$ and $\gamma'$.
Therefore we define $o(e)=+1$ if $\dif:\gamma\to\gamma'$ preserves orientations and $o(e)=-1$ otherwise.

A direct verification shows that the correspondence $\dif\mapsto(\nu,o)$ is a well-defined homomorphism $\stongr:\Stabf\to\AutfFR$.

\subsection{The kernel of $\stongr$ and the group $\Dpn$}
Denote by $\DiffMn$ the subgroup of $\DiffM$ consisting of all diffeomorphisms $\dif$ of $\Mman$ such that 
\begin{itemize}
 \item
$\dif(\singf)=\singf$;
 \item
$\dif\in\gimShAzz$ for each $\NT$-point $z\in\singfN$ and (any) special vector field $\AFld_{z}$ for $z$.
In other words, $\tnz(\dif)\in\Agrp_{++}$ if $\nabla\AFld(z)=\left(\begin{smallmatrix}
0 & 1 \\ 0 & 0
\end{smallmatrix}\right)$, and $\tnz(\dif)=\id$ if $\nabla\AFld(z)=0$.
\end{itemize}
Let $\Orbfn$ the orbit of $\func$ with respect to the action of $\DiffMn$.
Put 
$$
\Dpn = \Dpartfpl \cap \DiffMn.
$$

Then it is easy to see that $\Dpn$ is a normal subgroup of $\Stabf$ and 
$$\Dpn \ \subset \ \ker(\stongr).$$
However in general $\Dpn \not= \ker(\stongr)$.
The difference is that each $\dif\in\Dpn$ is required to preserve \myemph{all $1$-dimensional} leaves and their orientations, while each $\gdif\in\ker(\stongr)$ must only preserve \myemph{all regular} leaves and their orientation.
In fact, it is easy to construct an example of $\func$ and $\dif\in\ker(\stongr)\setminus\Dpn$ which interchanges critical leaves of $\partf$, see e.g.\;\cite[Lm.\;6.9 \& Fig.\;6.1]{Maks:CMH:2005}.

The following lemma repairs\;\cite[Eq.\;(8.6)]{Maks:AGAG:2006}, see Remark\;\ref{rem:corrections}.
\begin{lemma}\label{lm:Dpn_and_ketrstong_cap_DiffIdM}
$\Dpn\cap\DiffIdM = \ker(\stongr) \cap \DiffIdM$.
\end{lemma}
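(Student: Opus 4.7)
The inclusion $\Dpn \cap \DiffIdM \subset \ker(\stongr) \cap \DiffIdM$ is immediate from the already noted containment $\Dpn \subset \ker(\stongr)$; the content lies in the reverse inclusion. Given $\dif \in \ker(\stongr) \cap \DiffIdM$, the plan is to verify separately that $\dif \in \Dpartfpl$ and $\dif \in \DiffMn$, whose intersection is $\Dpn$ by definition.

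First, I would unpack the condition $\stongr(\dif)=(\id_{\FReebf},1)$ into three facts. Since $\stongr$ refines $\stonkr$ via the forgetful projection $\AutfFR \to \Aut(\Reebf)$, one has $\stonkr(\dif)=\id$, so $\dif$ preserves every regular leaf and every critical component of $\partf$. The orientation datum $o\equiv 1$ forces $\dif$ to preserve the orientation of every regular leaf. Finally, the triviality of the action on tangent edges at every $\NT$-vertex $w$, together with the chosen bijection $\xi_z$, means that $T_z\dif$ fixes every vector of the framing $\zfrm{z}$ at the corresponding $\NT$-point $z$.

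To establish $\dif \in \Dpartfpl$, I proceed in two sub-steps. For critical points: the $\PT$- and $\NT$-vertices of $\Reebf$ correspond to single points, which are automatically fixed; for an $\ST$-point $z$ the hypothesis $\dif\in\DiffIdM$ makes $T_z\dif$ orientation-preserving, and combined with the preservation by $\dif$ of every regular component adjacent to $z$ this forces $\dif(z)=z$. For each $1$-dimensional critical leaf $\ell$, preservation follows by arguing that $\ell$ is distinguished within its critical component by the regular component(s) accumulating on its two sides (both preserved by $\dif$) and by its endpoints (already fixed); orientation preservation of $\ell$ is then obtained via a flow-box neighbourhood at any regular point $x\in\ell$, in which the orientation of $\ell$ is the continuous extension of the preserved orientations of nearby regular leaves.

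Finally, $\dif \in \DiffMn$ is verified point-wise: $\dif(\singf)=\singf$ is immediate from $\dif\in\Stabf$, while at each $\NT$-point $z$ the germ of $\dif$ at $z$ lies in $\gDpAz$ (since $\dif(z)=z$ and orientations of the $\AFld$-orbits agree locally with those of regular leaves of $\partf$). Applying Lemma~\ref{lm:framing_prop}(3) to the trivial action of $T_z\dif$ on $\zfrm{z}$ then yields $\dif\in\gimShAz$, which is precisely the local defining condition of $\DiffMn$ at $z$. The principal technical obstacle is the step showing that $\dif$ preserves each \emph{individual} $1$-dim critical leaf rather than merely the critical component containing it; exactly here both the orientation refinement encoded by $\stongr$ relative to $\stonkr$ and the isotopy-to-identity hypothesis $\DiffIdM$ play essential roles, as illustrated by the mirror symmetry of Example~\ref{exmp:func_on_prjplane}.
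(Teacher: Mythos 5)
There is a genuine gap, and it sits exactly at the step you yourself flag as the crux. Your argument that $\dif$ preserves each individual $1$-dimensional critical leaf $\ell$ is that $\ell$ is ``distinguished within its critical component by the regular component(s) accumulating on its two sides and by its endpoints.'' This is false in general, and it is purely combinatorial: it makes no use of the hypothesis $\dif\in\DiffIdM$. If it were valid it would prove $\ker(\stongr)\subset\Dpartfpl$ outright, which contradicts the discussion immediately preceding the lemma: the paper explicitly notes that one can construct $\dif\in\ker(\stongr)\setminus\Dpn$ interchanging critical leaves of $\partf$ (citing Lm.~6.9 and Fig.~6.1 of Maks:CMH:2005). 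A concrete failure mode: take a critical component which is a circle carrying two saddle points $z_1,z_2$, divided by them into two arcs $\ell_1,\ell_2$. Both arcs have the same two endpoints and the same two adjacent regular components, and a ``half-rotation'' preserving every nearby regular leaf (as a set) swaps $\ell_1$ and $\ell_2$ — and may also swap $z_1$ and $z_2$, so your companion claim that preservation of the adjacent regular components forces $\dif(z)=z$ at an $\ST$-point fails for the same reason. The paper closes this gap by invoking Theorem~7.1 of Maks:AGAG:2006, which is precisely the statement that a diffeomorphism preserving all regular leaves with their orientations \emph{and isotopic to the identity} also preserves all critical leaves (and critical points) with their orientations; the isotopy hypothesis enters that proof substantively, not merely as a remark.

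The remaining parts of your proposal are sound and match the paper's (much shorter) argument: the forward inclusion from $\Dpn\subset\ker(\stongr)$, and the verification at $\NT$-points that the trivial action on tangent edges means $T_z\dif$ fixes the framing $\zfrm{z}$, so that Lemma~\ref{lm:framing_prop}(3) gives $\dif\in\gimShAz$ and hence membership in $\DiffMn$. To repair the proof you should either cite the external theorem as the paper does, or supply an argument for critical-leaf preservation that genuinely exploits $\dif\in\DiffIdM$ (e.g.\ via the induced cellular automorphism of an $\func$-adapted CW-decomposition being the identity).
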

\begin{proof}
Let $\dif\in\ker(\stongr) \cap \DiffIdM$.
Then, $\dif$ preserves all regular leaves of $\partf$ with their orientation and is isotopic to $\id_{\Mman}$.
Now by \cite[Th.\;7.1]{Maks:AGAG:2006} $\dif$ also preserves all critical leaves with their orientation.
Moreover, $\dif\in\gimShAz$ for each $z\in\singfN$, whence $\dif\in\Dpn$.
\end{proof}

\begin{lemma}\label{lm:intersections_with_DiffIdMn}
The intersections of the identity component $\DiffIdMn$ of $\DiffMn$ with each of $\Dpn$, $\ker(\stongr)$, and $\Stabf$ coincide:
$$
\Dpn\cap\DiffIdMn = \ker(\stongr)\cap\DiffIdMn = \Stabf\cap\DiffIdMn.
$$
\end{lemma}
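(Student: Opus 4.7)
My plan is to establish the cyclic chain of inclusions
$$
\Dpn\cap\DiffIdMn \;\subset\; \ker(\stongr)\cap\DiffIdMn \;\subset\; \Stabf\cap\DiffIdMn \;\subset\; \Dpn\cap\DiffIdMn,
$$
which forces the three sets to coincide. The first inclusion is the observation $\Dpn\subset\ker(\stongr)$ made in the discussion preceding the lemma, and the second is just $\ker(\stongr)\subset\Stabf$. For the first equality alone, one can also argue directly: any continuous path in $\DiffMn$ joining $\id_{\Mman}$ to $\dif$ is a fortiori a path in $\DiffM$, so $\DiffIdMn\subset\DiffIdM$, and intersecting Lemma~\ref{lm:Dpn_and_ketrstong_cap_DiffIdM} with $\DiffIdMn$ immediately yields $\Dpn\cap\DiffIdMn=\ker(\stongr)\cap\DiffIdMn$. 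The substantive content is the third inclusion: every $\dif\in\Stabf\cap\DiffIdMn$ must in fact lie in $\Dpn=\Dpartfpl\cap\DiffMn$. Since $\dif\in\DiffMn$ is given, this reduces to showing $\dif\in\Dpartfpl$, i.e., that $\dif$ preserves every one-dimensional leaf of $\partf$ together with its chosen orientation.

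To attack this, fix an isotopy $\{\dif_t\}_{t\in[0,1]}\subset\DiffMn$ with $\dif_0=\id_{\Mman}$ and $\dif_1=\dif$. Two preliminary observations are routine. On the finite set $\singf$ the assignment $t\mapsto\dif_t|_{\singf}$ is a continuous family of permutations starting at the identity, so each $\dif_t$ fixes $\singf$ pointwise; the whole isotopy therefore lives in the subgroup of diffeomorphisms fixing $\singf$ pointwise. Combined with $\dif\in\Stabf$, this forces $\dif$ to send every critical component of $\partf$ (each of which contains at least one critical point) to itself. At each $\NT$-point $z\in\singfN$, the hypothesis $\dif\in\DiffMn$ gives $\dif\in\gimShAzz$, and Lemma~\ref{lm:N-points} identifies $\tnz(\gimShAzz)$ with either $\Agrp_{++}$ or $\{\id\}$; both of these subgroups pointwise fix every vector of the framing $\zfrm{z}$ built in Lemma~\ref{lm:framing_prop}, so $\dif$ preserves the oriented germ at $z$ of every regular leaf emerging from $z$.

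The main obstacle is upgrading these local preservations to the global claim that $\dif$ preserves \emph{every} regular component $R$ of $\partfreg$, because once this is known, uniqueness of the leaf of a prescribed $\func$-value inside the annulus $R$ forces $\dif(\gamma)=\gamma$ for every regular leaf $\gamma\subset R$, and the orientation claim follows by combining the germ-level information at $\NT$-points with the continuity of the induced $\ZZZ_2$-valued orientation datum along $\dif_t$. I plan to prove this regular-component preservation by adapting \cite[Th.~7.1]{Maks:AGAG:2006} to the $\ST/\PT/\NT$-setting: each regular component $R$ is distinguished, combinatorially, by which local ``side'' of each adjacent critical component it occupies, and these local sides are encoded by the discrete image of $\tnz(\dif_t)$ at each critical point together with the already-established preservation of critical components; since this action is continuous in $t$ with values in a discrete set and equals the identity at $t=0$, it remains the identity, forcing $\dif(R)=R$. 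The hard part is precisely this generalisation of \cite[Th.~7.1]{Maks:AGAG:2006} to the present class of singularities, and this is the only place where the explicit condition ``$\dif\in\gimShAzz$ at every $\NT$-point'' appearing in the definition of $\DiffMn$ is used in an essential way rather than a weaker requirement on $1$-jets.
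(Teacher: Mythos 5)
Your reduction is sound: the two easy inclusions, the use of Lemma~\ref{lm:Dpn_and_ketrstong_cap_DiffIdM} for the first equality, the observation that the isotopy in $\DiffMn$ fixes $\singf$ pointwise, and the local analysis at $\NT$-points via $\Agrp_{++}$ all match what is needed. But the step you yourself flag as ``the hard part'' --- that $\dif$ preserves every regular component of $\partf^{\mathrm{reg}}$ --- is genuinely not proved, and the mechanism you sketch would fail. Your argument tracks ``which local side of each adjacent critical component a regular component occupies'' continuously along $\dif_t$ and concludes by discreteness. The obstruction is that the intermediate diffeomorphisms $\dif_t$ lie only in $\DiffMn$, not in $\Stabf$, so for $0<t<1$ they do not preserve the foliation and there is no well-defined action of $\dif_t$ on leaves, sectors, or regular components to which a continuity-plus-discreteness argument could apply. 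Moreover $\tnz(\dif_t)$ does not take values in a discrete set (at an $\NNT$-point it ranges over $\Agrp_{++}\cong\RRR$, and at an $\ST$-point the definition of $\DiffMn$ imposes no jet condition whatsoever), and in any case the $1$-jet at a critical point cannot detect how $\dif$ permutes regular components attached to a critical component containing several saddles.

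The paper closes this gap by a homological argument rather than a continuity argument. Using a section $s_{\func}:\Reebf\to\Mman$ of $\prKR$ one gets a split short exact sequence for $H_1(\Mman,\singf)$; since $\dif$ is isotopic to $\id_{\Mman}$ relative to $\singf$, it acts as the identity on $H_1(\Mman,\singf)$, hence $\stonkr(\dif)$ acts as the identity on $H_1(\Reebf)$, and together with the fact that $\dif$ fixes every vertex of $\Reebf$ this forces $\stonkr(\dif)=\id_{\Reebf}$, i.e.\ preservation of every regular component. The orientation statement is then obtained from orientation-preservation at a single $\NT$-point (propagated by \cite[Lm.\;3.5(1)]{Maks:AGAG:2006} in the orientable case) and, for non-orientable $\Mman$, by lifting $\dif$ and the isotopy to the oriented double covering --- a case your proposal does not address. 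So the overall architecture of your proof is correct, but its central step needs to be replaced by (or supplemented with) an argument of this homological type.
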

\begin{proof}
The relation $\Dpn\cap\DiffIdMn=\ker(\stongr)\cap\DiffIdMn$ follows from Lemma\;\ref{lm:Dpn_and_ketrstong_cap_DiffIdM}.
For the second equality it suffices to establish that 
$$\ker(\stongr)\cap\DiffIdMn \ \ \supset \ \ \Stabf\cap\DiffIdMn.$$

Let $\dif\in\Stabf\cap\DiffIdMn$ and $\stonkr(\dif)$ be the automorphism of $\Reebf$ induced by $\dif$.
We have to show that $\stonkr(\dif)=\id_{\Reebf}$ and that $\dif$ preserves orientation of all regular leaves on $\partf$.

First we show that $\stonkr(\dif)=\id_{\Reebf}$.
The arguments are similar to the proof of\;\cite[Eq.\;(8.8)]{Maks:AGAG:2006}.
Notice that $\dif$ yields the identity automorphism of the first homology group $H_1(\Reebf)$.
Indeed, there is a map $s_{\func}:\Reebf\to\Mman$ such that $s_{\func}\circ \prKR=\id_{\Reebf}$, see e.g.\;\cite{Kudryavtseva:MatSb:1999}.
So we have the following commutative diagram:
$$
\xymatrix{
\Reebf \ar[r]_{s_{\func}} \ar@/^/[rr]^{\id_{\Reebf}} \ar[d]_{\stonkr(\dif)} & \Mman \ar[d]^{\dif} \ar[r]_{\prKR} & \Reebf \ar[d]^{\stonkr(\dif)} \\
\Reebf \ar[r]^{s_{\func}} \ar@/_/[rr]_{\id_{\Reebf}} & \Mman \ar[r]^{\prKR} & \Reebf
}
$$
Then for the first homology groups we have another commutative diagram in which rows are exact:
$$
\begin{CD}
0 @>{}>> H_1(\Reebf) @>{s_{\func}}>> H_1(\Mman,\singf) @>{\prKR}>> H_1(\Reebf) @>{}>> 0\\
& & @V{\stonkr(\dif)_{1}}VV @VV{\dif_{1}=\id}V @VV{\stonkr(\dif)_{1}}V \\
0 @>{}>> H_1(\Reebf) @>{s_{\func}}>> H_1(\Mman,\singf) @>{\prKR}>> H_1(\Reebf) @>{}>> 0
\end{CD}
$$
Since $\dif$ is isotopic to $\id_{\Mman}$ relatively $\singf$, we obtain that $\dif_1=\id_{H_1(\Mman,\singf)}$ whence $\stonkr(\dif)_{1}=\id_{H_1(\Reebf)}$, and $\dif$ preserves vertexes of the \KR-graph $\Reebf$ of $\func$.
This implies that $\stonkr(\dif)=\id_{\Reebf}$.

\smallskip 

It remains to show that \myemph{$\dif$ preserves orientation of regular leaves of $\partf$}.
Indeed let $z$ be any $\NT$-point.
Then $\dif$ is isotopic to $\id_{\Mman}$ via an isotopy fixed at $z$, whence $\dif$ preserves orientation at $z$ and therefore it also preserves orientations of all leaves in a neighbourhood of $z$.
If $\Mman$ is orientable, the same is true for all $1$-dimensional leaves of $\partf$ due to\;\cite[Lm.\;3.5(1)]{Maks:AGAG:2006}.

Suppose that $\Mman$ is non-orientable.
Then $\dif$ lifts to a unique symmetric orientation preserving diffeomorphism $\tdif$ of $\tMman$, that is $p\circ\tdif=\dif\circ p$.
It easily follows that $\tdif\in\Stabtf\cap\DiffIdtMn$.
Indeed, 
$$\tfunc\circ\tdif=\func\circ p\circ\tdif=\func\circ \dif\circ p =\func \circ p = \tfunc,$$
so $\dif\in\Stabtf$.
Moreover, any isotopy $\dif_t:\Mman\to\Mman$ between $\dif_0=\id_{\Mman}$ and $\dif_1=\dif$ in $\DiffIdMn$ lifts to an isotopy $\tdif_t:\Mman\to\Mman$ between $\tdif_0=\id_{\Mman}$ and some $\tdif=\tdif_1$ in $\DiffIdtMn$.
Then $\tdif$ is a desired lifting of $\dif$.
Its uniqueness follows from Lemma\;\ref{lm:lift_symm_diff}. 

Now by the orientable case $\tdif$ preserves orientation of all regular leaves of $\partittf$, whence so does $\dif$ for $\partf$.
\end{proof}

\begin{lemma}\label{lm:DMcr_contr}{\rm c.f.~\cite[Lm.~2.7]{Maks:AGAG:2006}}
If $\func$ has at least one $\ST$-point and satisfies axiom \AxSPN, then $\DiffIdMcr$ and $\DiffIdMn$ are contractible.
\end{lemma}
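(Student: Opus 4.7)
The plan is to extend the argument of \cite[Lm.\;2.7]{Maks:AGAG:2006}, which treats the Morse case, by addressing the additional subtleties introduced by $\NT$-points. I would proceed in two stages: first establish contractibility of the larger group $\DiffIdMcr$, then deduce contractibility of $\DiffIdMn$ as a ``fiber'' over an auxiliary space of $1$-jets at $\NT$-points.

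For $\DiffIdMcr$, the plan is to use the Serre fibration (justified by the isotopy extension theorem)
$$\DiffIdMcr \hookrightarrow \DiffIdM \xrightarrow{\mathrm{ev}} \mathcal{O}(\singf) \subset \ConfSp{|\singf|}{\Int\Mman},$$
where $\mathcal{O}(\singf)$ denotes the path component of $\singf$ in the unordered configuration space. The classical results of Earle--Eells, Earle--Schatz and Gramain determine $\DiffIdM$ up to homotopy (contractible, or $\simeq S^1$, or $\simeq T^2$, or $\simeq SO(3)$, depending on $\Mman$), and the homotopy type of $\mathcal{O}(\singf)$ is well understood in terms of surface braid groups. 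The existence of at least one $\ST$-point, which has strictly negative Poincar\'e--Hopf index, combined with axiom \AxSPN\ and the Euler characteristic formula, rules out the exceptional low-complexity cases (for example $S^2$ with at most two marked points, $T^2$ with none) in which $\DiffId(\Mman,\singf)$ fails to be contractible; a short case analysis on the topological type of $\Mman$ then yields contractibility of $\DiffIdMcr$.

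For $\DiffIdMn$, I would introduce the $1$-jet evaluation map (after fixing local coordinates at each $\NT$-point)
$$J:\DiffIdMcr \;\longrightarrow\; \prod_{z\in\singfN}\GLRp{2},\qquad J(\dif)=\bigl(\tnz(\dif)\bigr)_{z\in\singfN},$$
and verify that $J$ is a Serre fibration through a standard construction of local sections, obtained by composing with suitable diffeomorphisms supported in small neighbourhoods of each $z$ that realize any prescribed $1$-jet. By Lemmas~\ref{lm:P-points} and \ref{lm:N-points}, the subgroup $\DiffIdMn$ is the path component of $\id$ in the preimage $J^{-1}\bigl(\prod_{z\in\singfN} H_z\bigr)$, where $H_z=\Agrp_{++}\cong\RRR$ for $\NNT$-points and $H_z=\{\id\}$ for $\NZT$-points, so each factor is contractible. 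Combining the contractibility of $\DiffIdMcr$ from the first stage with the long exact sequence of homotopy groups for the restricted fibration then forces $\DiffIdMn$ to be weakly contractible; being a Fr\'echet manifold, it is genuinely contractible.

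The hardest step is the case analysis in the first stage for the exceptional low-$\chi$ surfaces: the Klein bottle ($\DiffIdM\simeq S^1$), the real projective plane ($\DiffIdM\simeq SO(3)$), and the disk, annulus and M\"obius band. For each of these one must carefully enumerate the constraints that even a single $\ST$-point places on $|\singf|$ via Poincar\'e--Hopf, and then invoke the appropriate variant of the marked-point evaluation fibration to kill the nontrivial homotopy groups of $\DiffIdM$. The non-orientable cases additionally require lifting diffeomorphisms to the orientation double cover $\tMman$, in the style of the proof of Lemma~\ref{lm:intersections_with_DiffIdMn}.
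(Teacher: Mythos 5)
Your proposal is correct and follows essentially the same route as the paper: contractibility of $\DiffIdMcr$ is reduced to the classical criterion $\chi(\Mman)<n+b$ for diffeomorphism groups of surfaces with marked points (the paper settles the exceptional cases $S^2$ and $\prjplane$ by noting $\func$ is null-homotopic there and hence has a maximum and a minimum, while you use Poincar\'e--Hopf; both give $n\geq 3$), and contractibility of $\DiffIdMn$ is deduced from a $1$-jet evaluation fibration. The only cosmetic difference in the second step is that you evaluate jets only at the $\NT$-points, so your base $\prod_z H_z$ is contractible and the conclusion is immediate from the long exact sequence, whereas the paper evaluates at all critical points, identifies $\DiffMn$ as the preimage of $Q\simeq (S^1)^{s+p}$, and finishes with a $\pi_1$-injectivity argument; your organization is, if anything, slightly cleaner.
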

\begin{proof}
{\it Contractibility of  $\DiffIdMcr$.}
Let $b$ be the total number of connected components of $\partial\Mman$ and $n$ be the total number of critical points of $\func$.
It follows from the description of homotopy types of the groups of diffeomorphisms of compact surfaces, see~\cite{EarleEells:BAMS:1967, EarleSchatz:DG:1970, Gramain:ASENS:1973}, that the group $\DiffIdMcr$ is contractible if and only if $\chi(\Mman)<n+b$.

By the assumption $n\geq 1$.
Therefore if $\chi(\Mman)\leq0< 1 \leq n$, our statement is evident.
Suppose that $\Mman=S^2$ or $\prjplane$.
Then $b=0$.
Moreover, in these cases every smooth map $\func:\Mman\to\Psp$ is null-homotopic, whence $\func$ has also maximum and minimum.
Therefore $n \geq 3 > 2 \geq \chi(\Mman)$.

\medskip 

{\it Contractibility of  $\DiffIdMn$.}
Denote by $\DiffMcrid$ the subgroup of $\DiffMcr$ consisting of all diffeomorphisms $\dif$ such that $\tnz(\dif)=\id$ for each $z\in\singf$.
Denote by $s$, $p$, $a$, and $b$ respectively the total number of critical points of $\func$ of types $\ST$, $\PT$, $\NNT$, and $\NZT$, and put $n=s+p+a+b$.
Then we can order all critical points $z_1,\ldots,z_{n}$ of $\func$ in such a way that the first $s$ points are of type $\ST$, the next $p$ points are of type $\PT$, the next $a$ points are of type $\NNT$, and the last $b$ ones are of type $\NZT$.
For every $z_i\in\singf$ fix local coordinates $(x,y)$ in which $z_i=0$.
Then we have a natural map 
$$
\xi:\DiffMcr\to\prod_{i=1}^{n} \GLR{2},
\qquad
\xi(\dif)=(\tn{z_1}(\dif),\ldots,\tn{z_n}(\dif)).
$$

It is easy to show that $\xi$ is a locally trivial fibration with fiber $\DiffMcrid$.
Notice that $\GLR{2}$ has the homotopy type of $O(2)$ being a disjoint union of two circles.
Moreover, as just proved $\DiffIdMcr$ is contractible.
Then from exact sequence of homotopy groups we obtain $\pi_{i}\DiffMcrid=0$ for $i\geq1$, and thus the boundary homomorphism $\partial_1$ is an isomorphism:
\begin{equation}\label{equ:pi1_prod_GLR2__pi0_DiffMcrid}
0 \to \pi_{1} \prod_{i=1}^{n} \GLR{2} \xrightarrow{~\partial_1~}  \pi_{0}\DiffMcrid \to 0,
\end{equation}
whence $\pi_{0}\DiffMcrid=\ZZZ^{n}$.
Consider the following subset
$$Q:=(\GLR{2})^{s+p} \times (\Agrp_{++})^{a} \times (\id)^{b}  \subset \prod_{i=1}^{n} \GLR{2}.$$
Then $\DiffMn = \xi^{-1}(Q)$, whence the restriction $\xi:\DiffMn\to Q$ is fibration with the same fiber $\DiffMn$.
Since $\Agrp_{++}$ is diffeomorphic to $\RRR$, we see that $Q$ is homotopy equivalent to $(S^1)^{s+p}$.
Therefore from exact sequence of homotopy groups we get $\pi_i\DiffMn =\pi_i Q =0$ for $i\geq2$ and also obtain the following exact sequence:
\begin{equation}\label{equ:pi1_Q__pi0_DiffMcrid}
0=\pi_1\DiffMcr \to \pi_1\DiffMn \to \pi_1 Q  \xrightarrow{~\partial_1~} \pi_0\DiffMcr.
\end{equation}
Evidently, the inclusion $Q\subset\prod_{i=1}^{n}\GLR{2}$ induces a monomorphism on $\pi_1$-groups, whence  by\;\eqref{equ:pi1_prod_GLR2__pi0_DiffMcrid} we have that in\;\eqref{equ:pi1_Q__pi0_DiffMcrid} the map $\partial_1$ is a monomorphism.
Therefore $\pi_1\DiffMn=0$.
Thus all the homotopy groups of $\pi_i\DiffIdMn$ vanish.
\end{proof}

\begin{lemma}\label{lm:partial_1}
If $\func$ satisfies \AxFibr, then 
\begin{enumerate}
\item
$p^{-1}(\Orbfn)=\DiffMn$, whence the map 
$$ 
p:\DiffMn \to \Orbfn, \qquad p(\dif)=\func\circ\dif^{-1}
$$
is a Serre fibration as well;
\item
$\Orbff$ (resp. $\Orbffn$) is the orbit of $\func$ with respect to the action of $\DiffIdM$ (resp. $\DiffIdMn$), {\rm c.f.\;\cite{Maks:TrMath:2008}};
\item
the boundary map $\partial_1:\pi_1\Orbff\to\pi_0\Stabf$ is a homomorphism and the image of the induced map $p_1:\pi_1\DiffM \to \pi_1\Orbf$ is included in the center of $\pi_1\Orbf$, {\rm\cite[Lemma~2.2.]{Maks:AGAG:2006}}.
\end{enumerate}
\end{lemma}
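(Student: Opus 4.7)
My plan is to handle the three claims in order, leveraging axiom \AxFibr{} and standard fibration-theoretic arguments.

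For claim (1), I would first observe that $\DiffMn$ is a subgroup of $\DiffM$: the condition $\dif(\singf)=\singf$ is preserved by composition and inversion, and $\gimShAzz$ is a subgroup of $\gDAz$ (as noted in \cite[Eqs.\;(8),(9)]{Maks:TA:2003}), so the germ condition at each $\NT$-point is also stable. The key intermediate step is to establish the inclusion $\Stabf\subset\DiffMn$. For $\dif\in\Stabf$ and an $\NT$-point $z$ fixed by $\dif$, the germ of $\dif$ at $z$ lies in $\gDpartfz=\gDAz$ by~\eqref{equ:gDaz_gDpartfz}; combining with Lemma~\ref{lm:N-points}\,\NTB,\,\NTC\ and the fact that $\dif$ preserves $\func$ near $z$ (hence the local orientation of orbits of a special vector field $\AFld_z$), one deduces that $\tnz(\dif)\in\Agrp_{++}$ in the $\NNT$ case and $\tnz(\dif)=\id$ in the $\NZT$ case, which places the germ in $\gimShAzz$. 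With $\Stabf\subset\DiffMn$ in hand, the equality $p^{-1}(\Orbfn)=\DiffMn$ is formal: if $\dif\in p^{-1}(\Orbfn)$ and $\gdif\in\DiffMn$ satisfies $p(\gdif)=p(\dif)$, then $\gdif^{-1}\dif\in\Stabf\subset\DiffMn$, so $\dif=\gdif\,(\gdif^{-1}\dif)\in\DiffMn$; the reverse inclusion is trivial. The second assertion follows because the restriction of a Serre fibration to the preimage of any subspace of the base is again a Serre fibration.

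For claim (2), I would apply path-lifting in the corresponding Serre fibrations. Given $\gel\in\Orbff$, pick a path $\gamma:I\to\Orbff$ from $\func$ to $\gel$ and lift it via \AxFibr{} to a path $\tilde\gamma:I\to\DiffM$ with $\tilde\gamma(0)=\id$; then $\tilde\gamma(1)\in\DiffIdM$ and $\func\circ\tilde\gamma(1)^{-1}=\gel$. Conversely, any path in $\DiffIdM$ from $\id$ to $\dif$ projects via $p$ to a path in $\Orbff$ from $\func$ to $\func\circ\dif^{-1}$, exhibiting the latter as an element of $\Orbff$. The same argument, applied to the restricted fibration $p:\DiffMn\to\Orbfn$ produced in part (1), handles $\Orbffn$.

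For claim (3), the boundary map $\partial_1$ is defined in the standard way: a loop $\alpha$ in $\Orbf$ at $\func$ lifts to a path $\tilde\alpha$ in $\DiffM$ with $\tilde\alpha(0)=\id$, and $\partial_1[\alpha]:=[\tilde\alpha(1)]\in\pi_0\Stabf$. The group structure on the fiber $\Stabf$ lets one concatenate lifts via $\tilde\alpha_1*(\tilde\alpha_1(1)\cdot\tilde\alpha_2)$, which lifts $\alpha_1*\alpha_2$ and terminates at $\tilde\alpha_1(1)\tilde\alpha_2(1)$, so $\partial_1$ is a homomorphism. For the centrality of $p_*(\pi_1\DiffM)\subset\pi_1\Orbf$, I would exploit the right $\DiffM$-action on $\Orbf$ by composition: given loops $\alpha$ in $\DiffM$ at $\id$ and $\beta$ in $\Orbf$ at $\func$, define the continuous map $H:I\times I\to\Orbf$ by $H(s,t)=\beta(s)\circ\alpha(t)^{-1}$. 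One checks $H(s,0)=H(s,1)=\beta(s)$ and $H(0,t)=H(1,t)=\func\circ\alpha(t)^{-1}=(p\circ\alpha)(t)$, so traversing the boundary of the square realizes the commutator $[\beta]\cdot p_*[\alpha]\cdot[\beta]^{-1}\cdot p_*[\alpha]^{-1}$ as a null-homotopic loop in $\Orbf$, giving $p_*[\alpha]\cdot[\beta]=[\beta]\cdot p_*[\alpha]$.

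The main obstacle will be the verification of $\Stabf\subset\DiffMn$ in part (1): this is exactly where the novel $\NT$-behavior enters, and it rests on the finer analysis of the local germ groups $\gDAz$ and $\gimShAzz$ from Lemma~\ref{lm:N-points}; in the Morse setting of \cite{Maks:AGAG:2006} the analogous inclusion is automatic because $\singfN=\varnothing$ makes the germ condition vacuous.
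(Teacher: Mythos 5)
Parts (2) and (3) of your argument are correct and standard: the path--lifting characterization of $\Orbff$ and $\Orbffn$, the verification that $\partial_1$ is a homomorphism via the group structure on the fibre, and the square $H(s,t)=\beta(s)\circ\alpha(t)^{-1}$ establishing centrality of $p_1(\pi_1\DiffM)$ all work (the paper itself only cites \cite[Lemma~2.2]{Maks:AGAG:2006} for the last point and gives no proof of the lemma).

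The proof of (1), however, has a genuine gap: the inclusion $\Stabf\subset\DiffMn$ on which everything rests is false in general, and the deduction you offer misreads Lemma~\ref{lm:N-points}. For $\dif\in\Stabf$ fixing an $\NT$-point $z$, statement \NTB\ only gives $\tnz(\dif)\in\Agrp_{+}=\Agrp_{++}\cup\Agrp_{--}$; preserving $\func$ and the orientation of the closed orbits of $\AFld_z$ does \emph{not} force $\tnz(\dif)\in\Agrp_{++}$ (the rotation by $\pi$ lies in $\Agrp_{--}$ yet preserves every periodic orbit together with its orientation). In the $\NZT$ case \NTC\ gives $\tnz(\gDpAz)\cong\ZZZ_{n_z}$ with $n_z$ possibly $>1$, so $\tnz(\dif)=\id$ need not hold either; and a $\dif\in\Stabf$ may in addition permute the points of $\singfN$, in which case the germ condition in the definition of $\DiffMn$ fails outright. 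The whole framing apparatus of \S\ref{sect:framings} --- the free action of $\Ggrpz=\gDpAz/\gimShAz$ on framings and the nontrivial action of $\stongr(\dif)$ on tangent edges --- exists precisely because $\Stabf\not\subset\DiffMn$. Note moreover that $p^{-1}(\Orbfn)=\{\gdif\circ s: \gdif\in\DiffMn,\ s\in\Stabf\}\supset\Stabf$, so the set equality $p^{-1}(\Orbfn)=\DiffMn$ is actually \emph{equivalent} to $\Stabf\subset\DiffMn$; no correct argument can establish it when $\Stabf\neq\Stabfn$. What the remainder of the paper uses, and what is true, is only that $p:\DiffMn\to\Orbfn$ is a Serre fibration with fibre $\Stabfn=\Stabf\cap\DiffMn$. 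The way to get this is to show that $\DiffMn$ is open and closed in $p^{-1}(\Orbfn)$: every $\dif\in p^{-1}(\Orbfn)$ factors as $\gdif\circ s$ as above, hence preserves $\singf$, and at each $\NT$-point it fixes its germ lies in $\gDAz$, so that $\tnz(\dif)$ ranges over $\Agrp$ (resp.\ over a finite group), of which $\Agrp_{++}$ (resp.\ $\{\id\}$) is an open and closed subset; a lifted homotopy of a cube starting in a clopen subset of the total space remains in it, so the restriction of $p$ to $\DiffMn$ inherits the homotopy lifting property.
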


\section{Proof of Theorem\;\ref{th:hom-type-StabIdf}}\label{sect:proof:th:hom-type-StabIdf}
\subsection*{Orientable case}
Suppose $\Mman$ is orientable and $\func$ satisfies axioms \AxBd\ and \AxSPN.
For each $z\in\singf$ let $\AFld_{z}$ be the corresponding special vector field defined on some neighbourhood $\Uman_{z}$ of $z$.
\begin{lemma}\label{lm:Hamilt_vf_and_prop_orient}
There exists a vector field $\AFld$ on $\Mman$ such that 

{\rm(i)} $d\func(\AFld)\equiv 0$ and $\AFld(z)=0$ iff $z\in\singf$.

{\rm(ii)} $\AFld =\pm \AFld_{z}$ near $z$ for every $z\in\singf$.

Moreover, for any vector field $\AFld$ satisfying {\rm(i)} and {\rm(ii)} the following conditions holds true:

{\rm(iii)} The shift map $\ShA:\Ci{\Mman}{\RRR}\to\imShA$ is either a homeomorphism or a $\ZZZ$-covering map with respect to topologies $\Wr{\infty}$.
Hence by Lemma\;\ref{lm:ShAV_open_reformulations} so is the restriction $\ShA|_{\Gamma^{+}}: \Gamma^{+} \to \ShA(\Gamma^{+})$, and both spaces $\imShA$ and $\ShA(\Gamma^{+})$ are either contractible or homotopy equivalent to $S^1$.

{\rm(iv)} $\imShA=\EidAFlow{1}$ and, by Lemma\;\ref{lm:imShAV_EidAVr_implies_for_diff}, $\ShA(\Gamma^{+})=\DidAFlow{1}$.

{\rm(v)}
If $\func$ has no $\NT$-points then $\imShA=\EidAFlow{0}$ and $\ShA(\Gamma^{+})=\DidAFlow{0}$.

{\rm(vi)}
Suppose $\func$ has no $\ST$-points and only one $\NT$-point $z$, though it may have $\PT$-points.
If in addition $n_z=1$, then $\imShA=\EidAFlow{0}$ and $\ShA(\Gamma^{+})=\DidAFlow{0}$.
In this case $\func$ satisfies one of the conditions {\rm(b)} or {\rm(c)} of Theorem~\ref{th:hom-type-StabIdf}.

{\rm(vii)} $\DpartfIdr{r}=\DidAFlow{r}=\StabIdfr{r}$ for all $r=0,\ldots,\infty$.

{\rm(viii)} If $\imShA=\EidAFlow{r}$, then $\Didpnr{r}=\StabIdfr{r}$.
\end{lemma}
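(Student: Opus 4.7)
The argument splits naturally into three blocks: (A) construction of $\AFld$, items (i)--(ii); (B) analytic properties of the associated shift map, items (iii)--(vi); (C) identification of $\imShA$ and $\ShA(\Gamma^+)$ with subgroups of $\DiffM$, items (vii)--(viii).

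\textbf{Construction of $\AFld$.} Since $\Mman$ is orientable, fix a smooth area form $\omega$ and define the global Hamiltonian field $\BFld$ by $\iota_{\BFld}\omega = d\func$. Then $d\func(\BFld)\equiv 0$ and $\BFld$ vanishes exactly on $\singf$. On $\Uman_z\setminus\{z\}$, both $\BFld$ and $\AFld_z$ are nonzero and tangent to the level curves of $\func$, hence $\AFld_z = \lambda_z \BFld$ for a nowhere-zero smooth scalar $\lambda_z$ on $\Uman_z\setminus\{z\}$. Shrinking $\Uman_z$ we may assume $\lambda_z$ keeps a fixed sign $\eps_z\in\{\pm1\}$. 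Choose a small relatively compact neighbourhood $\Vman_z \Subset \Uman_z$ for each $z$, and a bump function $\rho_z$ supported in $\Uman_z$ with $\rho_z\equiv 1$ on $\Vman_z$; then set
\[
\AFld \;=\; \sum_{z\in\singf} \rho_z\,\eps_z\AFld_z \;+\; \Bigl(1-\sum_{z}\rho_z\Bigr)\mu\,\BFld,
\]
where $\mu>0$ is a strictly positive function chosen so that on each annular overlap $\Uman_z\setminus\Vman_z$ the two summands, both of the form (positive)$\cdot\BFld$, do not cancel. This gives (i) and (ii) at once. Any such $\AFld$ has the same foliation as $\BFld$.

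\textbf{The shift map (items (iii)--(vi)).} For (iii) we invoke Theorem~\ref{th:openness_of_shift_maps}. Each regular point $z\in\Mman\setminus\singf$ lies on an orbit which is either a non-closed interval (then the orbit has no recurrence on a compact surface with Hamiltonian flow, verifying \condznpnrec) or a closed loop: in the closed case the level curve is a $1$-manifold and the Poincar\'e first-return map along a transversal preserves $\func$, hence is the identity, giving \condzpid. For a critical $z\in\singf$, condition \SPECB\ of Definition~\ref{def:special-cr-pt} is exactly \condBsShAUVop, while \condzUinv\ holds at local extrema (take an $\AFld$-invariant disc) and \condzUpbi\ holds at saddles (each orbit leaves any small disc in finite time). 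Thus Theorem~\ref{th:openness_of_shift_maps} applies, yielding (iii). For (iv), given $\dif\in\EidAFlow{1}$, Lemma~\ref{lm:shift-func-on-reg} produces a smooth shift function $\Lambda$ on $\Mman\setminus\singf$; Lemma~\ref{lm:ext_shfunc_under_homotopies} (which requires $r\ge 1$ precisely because of $\NT$-points) extends $\Lambda$ smoothly across every critical point, so $\dif=\ShA(\Lambda)\in\imShA$. For (v), when $\singfN=\varnothing$ Lemma~\ref{lm:ext_shfunc_under_homotopies} is available with $r=0$, so the same argument gives $\imShA=\EidAFlow{0}$. For (vi) one verifies by hand: in cases (b) and (c) the single $\NT$-point $z$ has $n_z=1$, i.e.\ $\gDpAz=\gimShAz$, so every $\dif\in\EidAFlow{0}$ automatically satisfies $\dif\in\gimShAz$ near $z$ and no higher-regularity hypothesis is needed to force the extension. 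The main obstacle throughout (iii)--(vi) is correctly handling the $\NT$-points, where the period function blows up and the shift map is non-periodic near $z$: the uniqueness of the shift function on $\Vman\setminus\{z\}$ together with the main result of \cite{Maks:ImSh} (used inside Lemma~\ref{lm:ext_shfunc_under_homotopies}) is what makes the extension legitimate.

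\textbf{Diffeomorphism groups (items (vii)--(viii)).} Since the orbits of $\AFld$ are exactly the leaves of $\partf$, the inclusions $\DidAFlow{r}\subseteq\DpartfIdr{r}\subseteq\StabIdfr{r}$ are immediate. The opposite inclusion $\StabIdfr{r}\subseteq\DpartfIdr{r}$ is Lemma~\ref{lm:Dpartf_Stab_identity_comp}, and $\DpartfIdr{r}\subseteq\DidAFlow{r}$ follows because any $r$-isotopy $\dif_t\in\Dpartf$ from $\id$ moves each point along its own leaf, hence along its own $\AFld$-orbit, giving $\dif_t\in\DAFlow$; this proves (vii). For (viii), Lemma~\ref{lm:imShAV_EidAVr_implies_for_diff} gives $\ShA(\Gamma^+)=\DidAFlow{r}$ under the hypothesis $\imShA=\EidAFlow{r}$; it remains to identify this with $\Didpnr{r}$. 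By the definition of $\Dpn$ in \S\ref{sect:framings}, a diffeomorphism lies in $\Dpn$ iff it preserves $\partf$ with orientations of leaves and lies in $\gimShAzz$ at every $\NT$-point; but any element of $\ShA(\Gamma^+)$ has a global shift function and so satisfies both conditions automatically, while conversely Lemma~\ref{lm:Dpn_and_ketrstong_cap_DiffIdM} together with part (vii) shows every element of $\Didpnr{r}$ lies in $\StabIdfr{r}=\DidAFlow{r}=\ShA(\Gamma^+)$. The delicate step here is ensuring the shift function witnessing membership in $\imShA$ really lies in $\Gamma^+$, which uses the identity~\eqref{equ:im_shift}.
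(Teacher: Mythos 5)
Your plan follows the paper's own proof essentially step for step: the Hamiltonian field of $\func$ glued to the local special fields $\pm\AFld_{z}$ by a partition of unity for (i)--(ii); the case-by-case verification of \condznpnrec, \condzpid, \condzUinv, \condzUpbi\ and \condBsShAUVop\ so that Theorem~\ref{th:openness_of_shift_maps} yields (iii); Lemmas~\ref{lm:shift-func-on-reg} and~\ref{lm:ext_shfunc_under_homotopies} for (iv)--(v); and the identification $\Dpartf=\DAFlow$ together with Lemma~\ref{lm:Dpartf_Stab_identity_comp} for (vii)--(viii). All of these parts are sound and match the paper's argument.

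Item (vi), however, is not yet a proof. From $n_z=1$ you correctly conclude $\gDpAz=\gimShAz$, hence a local $\Cinf$ shift function $\bfunc$ for a given $\dif\in\EidAFlow{0}$ on a neighbourhood $\Wman$ of the $\NT$-point $z$, and Lemma~\ref{lm:shift-func-on-reg} gives a $\Cinf$ shift function $\afunc$ on $\Mman\setminus\singf$. But nothing ``forces the extension'' at this stage: $\afunc$ and $\bfunc$ need not agree on $\Wman\setminus\{z\}$, so one does not yet have a single smooth shift function on all of $\Mman$. The missing step is the reconciliation: in the situation of (vi) all orbits on $\Mman\setminus\singf$ are closed, so by Lemma~\ref{lm:ker_of_shift_map} one has $\bfunc-\afunc=n\theta$ on $\Wman\setminus\{z\}$ for the period function $\theta$ and some $n\in\ZZZ$, and $\theta$ is $\Cinf$ on all of $\Mman\setminus\{z\}$ (for the second extremum, if present, one must use that it is a $\PT$-point, so $\theta$ extends smoothly across it by Definition~\ref{defn:P-point}); then $\afunc+n\theta$ is the desired global shift function. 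This is exactly where the hypotheses ``no $\ST$-points and a single $\NT$-point'' enter --- near a saddle the kernel of the local shift map is trivial and no correction by $n\theta$ is available --- so the step cannot be dismissed as automatic. You also leave unproved the final assertion of (vi), that $\func$ then satisfies (b) or (c) of Theorem~\ref{th:hom-type-StabIdf}: this requires observing that $\func$ has only local extremes, hence at most two critical points on a connected surface, which forces $\Mman$ to be $D^2$ or $S^2$.
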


Suppose Lemma\;\ref{lm:Hamilt_vf_and_prop_orient} is proved.
Then by (iv) and (vii) we get
$$
\ShA(\Gamma^{+})=\StabIdfr{\infty}=\cdots = \StabIdfr{1},
$$
which proves\;\eqref{equ:Stab_Stab1}.
Moreover, the case (a) of Theorem\;\ref{th:hom-type-StabIdf} corresponds to (v), while (b) and (c) correspond to (vii) of Lemma\;\ref{lm:Hamilt_vf_and_prop_orient}, and in these cases 
$\ShA(\Gamma^{+})=\StabIdfr{1}=\StabIdfr{0}$.

Further, by (iii) of Lemma\;\ref{lm:Hamilt_vf_and_prop_orient} $\ShA(\Gamma^{+})=\StabIdfr{\infty}$ is either contractible or homotopy equivalent to the circle.

If $\func$ has a critical point $z$ of type $\ST$ or $\NT$, then there exists a neighbourhood $\Vman$ of $z$ such that $\ShAV$ is non-periodic, hence for each $\dif\in\StabIdf$ there may exists at most one $\Cinf$ shift function on $\Vman$.
This implies that $\ShA$ is non-periodic as well, whence $\StabIdf$ is contractible.

On the other hand, suppose all critical points of $\func$ are of type $\PT$.
Then $\func$ has at most two critical points and it is not hard to prove that $\ShA$ is periodic, whence $\StabIdf$ is homotopy equivalent to the circle.
This case is analogous to\;\cite[Th.\;1.9]{Maks:AGAG:2006}.
Orientable case of Theorem\;\ref{th:hom-type-StabIdf} is completed modulo Lemma\;\ref{lm:Hamilt_vf_and_prop_orient}

\begin{proof}[Proof of Lemma\;\ref{lm:Hamilt_vf_and_prop_orient}]
(i), (ii).
Since $\Mman$ is orientable, it has a symplectic structure and we can construct the corresponding Hamiltonian vector field $\AFld'$ of $\func$.
By definition this vector field satisfies (i) and also is parallel to $\AFld_z$ near each $z\in\singf$.
Changing the sign of $\AFld_z$ (if necessary) we may assume that $\AFld'$ and $\AFld_{z}$ has the same directions near $z$.
Then using partition unity technique we can glue $\AFld'$ with all of $\AFld_{z}$ so that the resulting vector field $\AFld$ on $\Mman$ would satisfy (i) and (ii).

\medskip 

(iii).
We have to show that the map $\ShA:\Ci{\Mman}{\RRR}\to\imShA$ is either a homeomorphism or a $\ZZZ$-covering map.
Due to Theorem\;\ref{th:openness_of_shift_maps} it suffices to prove that every regular point $z\in\Mman\setminus\FixA$ satisfies either of the conditions \condznpnrec, \condzpid, and every singular point $z\in\FixA$ of $\AFld$ satisfies \condzUinv, \condzUpbi, and \condBsShAUVop.

Let $z$ be a regular point of $\func$.
Then $z$ is also non-singular for $\AFld$.
Denote by $\omega$ the connected component of a level-set $\func^{-1}\func(z)$ containing $z$.

If $\omega$ contains critical points of $\func$, then the orbit $\orb_z$ of $z$ is either ``an arc connecting two critical points'' or a ``loop at some critical point'' of $\func$, see points $z_1$ and $z_2$ in Figure~\ref{fig:reg_nonrec_pmap}.
In both cases the limit sets of the orbit $\orb_{z}$ of $z$ is finite, whence $z$ is non-recurrent, and thus it satisfies \condznpnrec.

Otherwise $\omega$ contains no critical points and therefore is diffeomorphic to $S^1$.
Hence $\omega$ is a periodic orbit of $\AFld$, see Figure~\ref{fig:reg_nonrec_pmap}.
Then it is easy to see that the first return map of such orbit is the identity, whence $z$ has property \condzpid.

\begin{figure}[ht]
\centerline{\includegraphics[height=3cm]{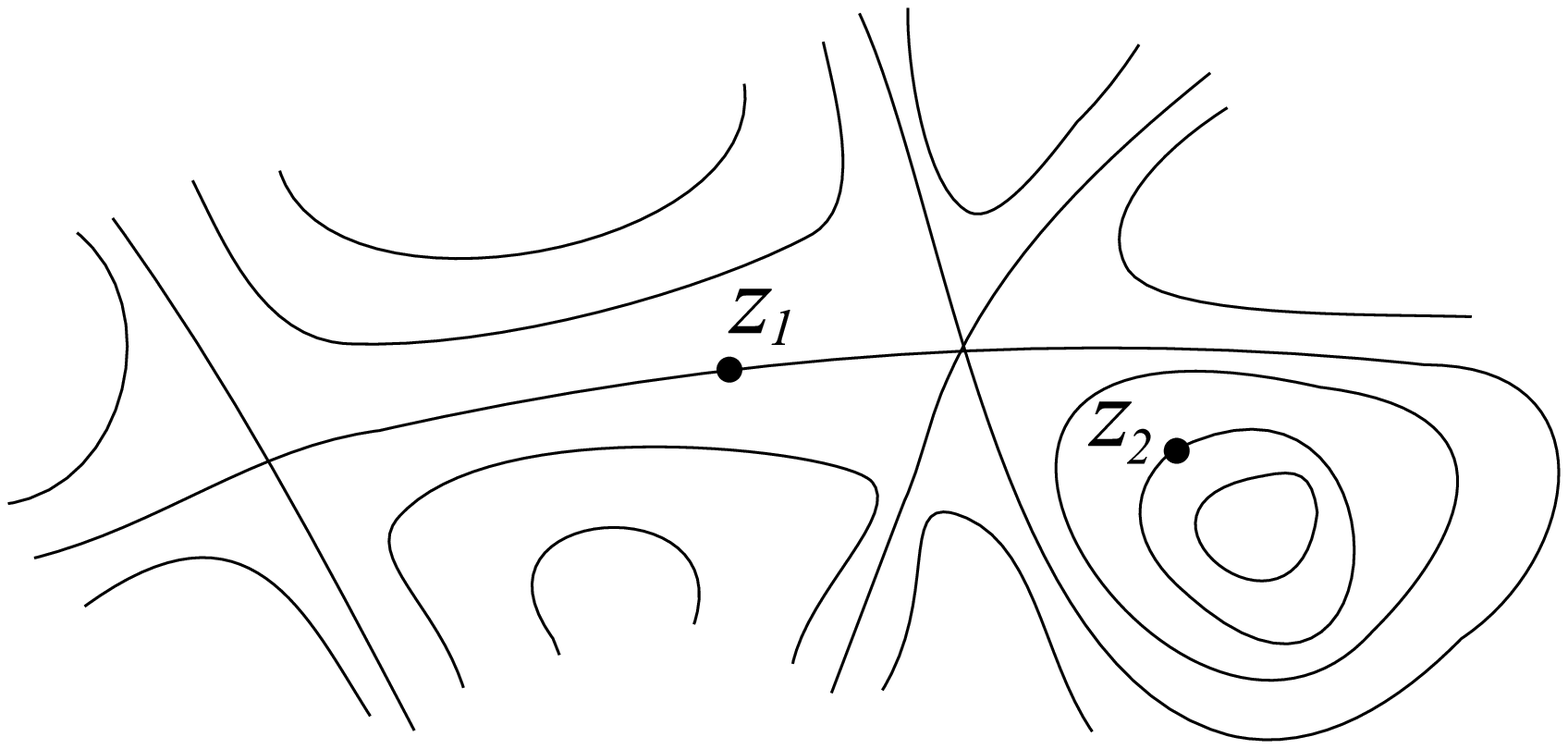}}
\caption{}\protect\label{fig:reg_nonrec_pmap}
\end{figure}

Let $z$ be a critical point of $\func$ and $\Uman$ be a neighbourhood of $z$ on which $\AFld= \AFld_{z}$. 
Then by condition \SPECB\ of Definition\;\ref{def:special-cr-pt} there exists a base $\beta_{z}=\{\Vman_j\}_{j\in J}\subset\Uman$ of $\Dm$-neighbourhoods of $z$ such that for each $\Vman\in\beta_{z}$ the shift map of $\AFld_z$ 
\begin{equation}\label{equ:ShAUV}
\ShAUV:\mathsf{func}(\AFld_z|_{\Uman}, \Vman) \to Sh(\AFld_z|_{\Uman}, \Vman) 
\end{equation}
is a local homeomorphism between the corresponding topologies $\Wr{\infty}$.
Since $\AFld=\AFld_{z}$ on $\Uman$, the map\;\eqref{equ:ShAUV} is the same as the following one:
$$
\ShAUV:\funcAUV \to \imShAUV.
$$
In particular, $\ShAUV$ is open as well.
This implies \condBsShAUVop\ for $z$.

Suppose $z$ is a \myemph{local extreme} of $\func$, then it has arbitrary small $\AFld$-invariant neighbourhoods, see Figure~\ref{fig:Uprime}a), i.e. satisfies \condzUinv.

Suppose $z$ is a \myemph{saddle}.
Then there exists arbitrary small $2$-disk $\Uman$ such that $\partial\Uman$ is smooth and transversal to orbits everywhere except for finitely many points $x_1,\ldots,x_k$, and for each $x_i$ there exists an open arc $\gamma_i$ on $\orb_{x_i}$ containing $x_i$ such that $\gamma_i\cap\partial\Uman=\{x_i\}$, see Figure~\ref{fig:Uprime}b).
This implies property \condzUpbi\ for $z$. 

\begin{figure}[ht]
\begin{center}
\begin{tabular}{ccc}
\includegraphics[height=2cm]{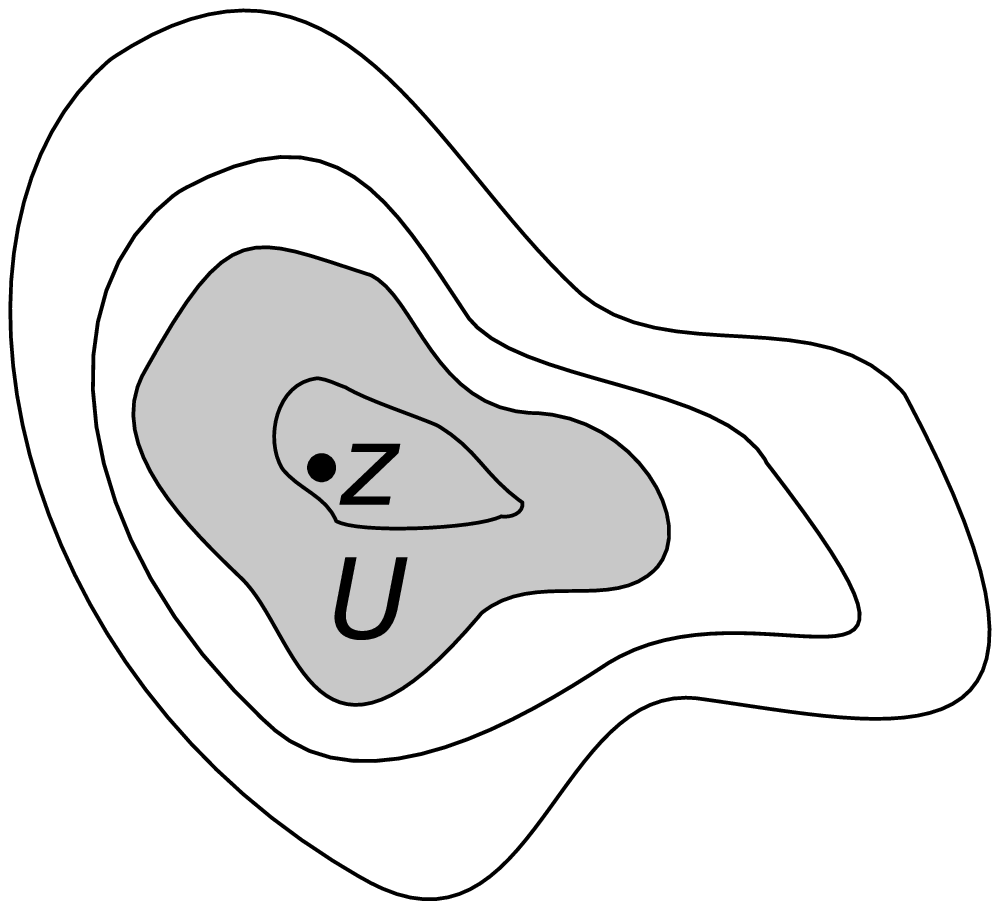} 
& \quad &
\includegraphics[height=2cm]{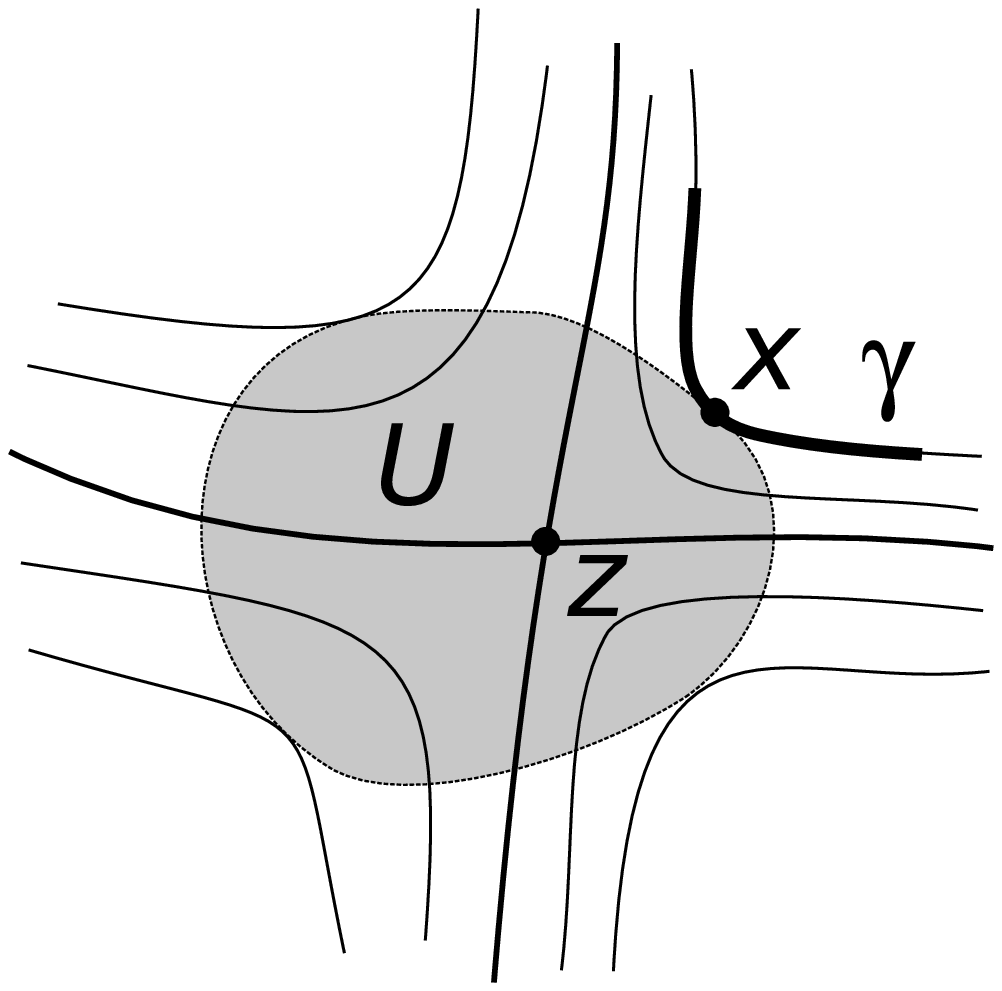} 
\\
 (a) & &
 (b)
\end{tabular}
\caption{A neighbourhood $\Uman$}\protect\label{fig:Uprime}
\end{center}
\end{figure}

(iv), (v). 
We have to identify $\imShA$ with $\EidAFlow{r}$ for $r=0$ or $1$.
Let $\dif\in\EidAFlow{r}$, so there exists an $r$-homotopy $H:\Mman\times I\to\Mman$ such that $H_0=\id_{\Mman}$, $H_1=\dif$, and $H_t\in\EAFlow$ for all $t\in I$.
Then by Lemma\;\ref{lm:shift-func-on-reg} there exists an $r$-homotopy $\Lambda:(\Mman\setminus\FixA) \times I \to \RRR$ such that $\Lambda_{0}\equiv0$ and $H_t(x)=\AFlow(x,\Lambda_t(x))$ for all $x\in\Mman\setminus\FixA$.

If $r=1$ or if $r=0$ but $\func$ has no $\NT$-points, then by Lemma\;\ref{lm:ext_shfunc_under_homotopies} (applied to each $z\in\singf$) the function $\Lambda_t$, $t\in(0,1]$, extends to a $\Cinf$ function on all of $\Mman$.
Hence $H_t\in\imShA$.
In particular, $\dif=H_1\in\imShA$, and so $\imShA=\EidAFlow{1}$.

\medskip 

(vi).
Suppose that $\func$ has no $\ST$-points and only one $\NT$-point $z$ which also satisfies $n_z=1$.
Thus $\func$ has only local extremes and one of them is $z$.
Since $\Mman$ is connected there may exist at most two such points.
Thus either $\singf=\{z,z'\}$, where $z'$ is a $\PT$-point, $\Mman=S^2$, and $\func$ satisfies (b) of Theorem~\ref{th:hom-type-StabIdf}, or $\singf=\{z\}$, $\Mman=D^2$, and $\func$ satisfies (c) of Theorem~\ref{th:hom-type-StabIdf}.

Notice that the orbits of $\AFld$ on $\Mman\setminus\singf$ are closed, the shift map $\Shift_{\Mman\setminus\singf}$ is periodic, and the function $\theta:\Mman\setminus\singf\to(0,+\infty)$ associating to every $x\in\Mman\setminus\singf$ its period $\Per(x)$ generates the kernel $\ker(\Shift_{\Mman\setminus\singf})$.
Moreover, if $\singf=\{z,z'\}$ and $z'$ is a $\PT$-point then by Definition\;\ref{defn:P-point} $\theta$ smoothly extends to some neighbourhood of $z'$.
Thus we can assume that \myemph{$\theta$ is $\Cinf$ on $\Mman\setminus\{z\}$.}

Now let $\dif\in\EidAFlow{0}$.
We have to verify that $\dif\in\imShA$.

By Lemma~\ref{lm:shift-func-on-reg} there exists a smooth shift function $\afunc:\Mman\setminus\singf\to\RRR$ for $\dif$ on $\Mman\setminus\singf$.
Moreover, if $\singf=\{z,z'\}$ where $z'$ is a $\PT$-point, then by Lemma\;\ref{lm:ext_shfunc_under_homotopies} $\afunc$ smoothly extends to a $\Cinf$ function near $z'$.
Therefore we can assume that $\afunc$ is $\Cinf$ on $\Mman\setminus\{z\}$ as well as $\theta$.
Then for each $k\in\ZZZ$ the function $\afunc+k\theta$ is also a $\Cinf$ shift function for $\dif$ on $\Mman\setminus\{z\}$.

By definition $\dif$ is a local diffeomorphism at $z$ and since $\dif\in\EidAFlow{0}$ it follows that $\dif$ preserves orientation at $z$, so $\dif\in\gDpAz$.
Then the assumption $n_z=1$, means that $\tnz(\gDpAz)=\id$, so
$$\dif \ \in \ \gDpAz \ \subset \ \ker(\tnz) \ \subset  \ \gimShAz.$$
Hence there exists a $\Cinf$ shift function $\bfunc$ for $\dif$ on some neighbourhood $\Wman$ of $z$.

Therefore $\afunc$ and $\bfunc$ are $\Cinf$ shift functions for $\dif$ on $\Wman\setminus\{z\}$, whence $\bfunc-\afunc=n\theta$ for some $k\in\ZZZ$.
Hence $\afunc+n\theta$ is $\Cinf$ shift function for $\dif$ on all of $\Mman$, so $\dif\in\imShA$.

\medskip 

(vii).
It follows from (i) that the foliation $\partf$ coincides with the foliation by orbits of $\AFld$, whence
$\Dpartf=\DAFlow$, and by Lemma\;\ref{lm:Dpartf_Stab_identity_comp}
$\DpartfIdr{r}=\DidAFlow{r}\subset\StabIdfr{r}$.

\medskip 

(viii).
Suppose $\imShA=\EidAFlow{r}$ for some $r\geq0$.
Then
$$ \imShA \ = \ \EidAFlow{r} \ \supset \ \DidAFlow{r} \ = \ \StabIdfr{r} \ = \ \DpartfIdr{r} \ \supset \ \Didpnr{r}.$$ 

In particular, each $\dif\in\StabIdfr{r}$ has a $\Cinf$ shift function on $\Mman$ with respect to $\AFld$.
This implies that $\tnz(\dif)\in\gimShAz$ for each $\ST$-point $z$ of $\func$, whence by definition
$\dif\in\Dpn$.
Thus $\StabIdfr{r}\subset\Dpn$, and therefore $\StabIdfr{r}\subset\Didpnr{r}$.
\end{proof}

\subsection*{Non-orientable case}

Suppose $\Mman$ is non-orientable.
Let $p:\tMman\to\Mman$ be the oriented double covering of $\Mman$ and $\xi:\tMman\to\tMman$ be a $\Cinf$ involution generating the group $\ZZZ_2$ of deck transformations.

A vector field $\AFld$ on $\tMman$ tangent to $\partial\tMman$ (as well as its flow $\AFlow$) will be called \myemph{skew-symmetric} if $\xi^{*}(\AFld)=-\AFld$, i.e., $\AFld \circ \xi=-T\xi \circ \AFld$.
This is equivalent to the requirement that $\AFlow_{t} \circ \xi=\xi\circ\AFlow_{-t}$ for all $t\in\RRR$.

A smooth map $\tdif:\tMman\to\tMman$ will be called \myemph{symmetric\/} if $\tdif\circ\xi = \xi \circ \tdif$.
Denote by $\tDiffptM$ the group of all orientation preserving symmetric diffeomorphisms of $\tMman$.

\begin{lemma}\label{lm:lift_symm_diff}
Every $\dif\in\DiffM$ has a unique lifting $\tdif\in\tDiffptM$, so $p\circ\tdif=\dif\circ p$.
Moreover, the correspondence $\dif\mapsto\tdif$ is a homeomorphism $\eta:\DiffM\to\tDiffptM$ with respect to each topology $\Wr{r}$, $(0\leq r\leq \infty)$.
\end{lemma}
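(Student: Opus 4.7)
The plan is to first construct a lift via the covering-space lifting criterion, then isolate the orientation-preserving lift to get uniqueness, deduce symmetry by applying uniqueness to a conjugated lift, build the inverse of $\eta$ by descent, and finally verify bicontinuity in $\Wr{r}$ using local trivializations of $p$.

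For existence and uniqueness of $\tdif$: the subgroup $p_{*}(\pi_1\tMman)\subset \pi_1\Mman$ is exactly the kernel of the orientation character $w_1:\pi_1\Mman\to\ZZZ_2$, hence is the unique index-$2$ subgroup corresponding to the orientation double cover. Any diffeomorphism $\dif$ preserves $w_1$ (it is intrinsic to $\Mman$) and therefore preserves this subgroup. The standard lifting criterion applied to $\dif\circ p:\tMman\to\Mman$ then yields a smooth lift $\tdif:\tMman\to\tMman$ with $p\circ\tdif=\dif\circ p$. The two possible lifts are $\tdif$ and $\xi\circ\tdif$, and since $\xi$ reverses orientation exactly one of them is orientation-preserving. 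This gives existence and uniqueness within the orientation-preserving lifts.

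For symmetry: set $\tdif':=\xi\circ\tdif\circ\xi^{-1}$. Then $p\circ\tdif'=p\circ\tdif\circ\xi^{-1}=\dif\circ p\circ\xi^{-1}=\dif\circ p$, so $\tdif'$ is also a lift of $\dif$. Moreover $\tdif'$ preserves orientation since $\xi$ reverses it twice and $\tdif$ preserves it. By the uniqueness just established, $\tdif=\tdif'$, i.e.\ $\tdif\circ\xi=\xi\circ\tdif$, so $\tdif\in\tDiffptM$. Conversely, given any $\tdif\in\tDiffptM$, the symmetry relation ensures that $p\circ\tdif$ descends to a well-defined smooth map $\dif:\Mman\to\Mman$ by $\dif(p(x))=p(\tdif(x))$; applying the same construction to $\tdif^{-1}$ yields a two-sided inverse, so $\dif\in\DiffM$. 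These two assignments are mutually inverse, so $\eta:\DiffM\to\tDiffptM$ is a bijection.

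For the homeomorphism property in each $\Wr{r}$: cover $\Mman$ by evenly covered open sets $U_\alpha$ with $p^{-1}(U_\alpha)=\tU_\alpha^{+}\sqcup\tU_\alpha^{-}$ and local inverses $s_\alpha^{\pm}:U_\alpha\to\tU_\alpha^{\pm}$. On each $\tU_\alpha^{\pm}$ one has the local formula
\[
\tdif\bigr|_{\tU_\alpha^{\pm}} \;=\; s_\beta^{\epsilon}\circ\dif\circ p,
\]
where the choice of $\beta$ and $\epsilon\in\{+,-\}$ is locally constant and determined by $\dif$ in a $\Wr{0}$-continuous way. Since $p$ and each $s_\beta^{\epsilon}$ are fixed local diffeomorphisms, composition with them is continuous in every $\Wr{r}$ topology, hence $\eta$ is continuous. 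The analogous local formula $\dif|_{U_\alpha}=p\circ\tdif\circ s_\alpha^{\pm}$ shows that $\eta^{-1}$ is continuous as well. The main (mild) technical point is the locally-constant choice of the target sheet: one must argue that for $\dif$ in a small $\Wr{0}$-neighborhood of a given $\dif_0$, the sheet assignment agrees with that of $\dif_0$ over each compact piece of a chart, which follows from connectedness of the sheets $\tU_\alpha^{\pm}$ and continuity of $\dif\mapsto\tdif$ in the $\Wr{0}$-topology.
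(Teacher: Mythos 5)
Your overall route is the same as the paper's: pick one of the two liftings of $\dif$, normalize by requiring it to preserve orientation, observe the correspondence is a bijection, and check continuity chartwise. The paper's own proof is only a sketch (it asserts that every $\dif$ has exactly two symmetric liftings and ``leaves to the reader'' the homeomorphism claim), so your added details are welcome: deriving existence of the lift from the lifting criterion via invariance of the orientation character $w_1$, and deducing the symmetry $\tdif\circ\xi=\xi\circ\tdif$ by applying uniqueness of the orientation-preserving lift to $\xi\circ\tdif\circ\xi^{-1}$, are both correct and make explicit what the paper takes for granted. (One small omission: you should note why the lift $\tdif$ of $\dif\circ p$ is actually a diffeomorphism of $\tMman$ — lift $\dif^{-1}\circ p$ as well and observe that the composite is a deck transformation — but this is routine.)

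There is one genuine defect, in the continuity argument. To show that the sheet assignment $\epsilon\in\{+,-\}$ is locally constant in $\dif$ you appeal to ``continuity of $\dif\mapsto\tdif$ in the $\Wr{0}$-topology'' — but that continuity is exactly what you are in the middle of proving, so as written the step is circular. The point does need an independent argument, because the orientation-preserving normalization is what selects between $s_\beta^{+}$ and $s_\beta^{-}$, and orientation behavior is not visible from $\Wr{0}$-proximity pointwise. A correct repair: first produce, for $\dif$ in a small $\Wr{0}$-neighborhood of $\dif_0$, \emph{some} continuously varying lift $\tdif_1$ of $\dif\circ p$ by anchoring it at a single point (choose $\tdif_1(\tilde x_0)$ to be the preimage of $\dif(x_0)$ lying in a fixed evenly covered neighborhood of $\tdif_0(\tilde x_0)$; this is well defined once $\dif(x_0)$ is close to $\dif_0(x_0)$). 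Then the desired lift is either $\tdif_1$ or $\xi\circ\tdif_1$, and which of the two it is depends only on whether $\tdif_1$ preserves orientation of $\tMman$; since $\Wr{0}$-close diffeomorphisms of the compact surface $\tMman$ are homotopic and orientation behavior is a homotopy invariant (degree), this choice is locally constant. With that substitution the rest of your chartwise formulas give continuity of $\eta$ and $\eta^{-1}$ in every $\Wr{r}$.
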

\begin{proof}
Notice that each $\dif\in\DiffM$ has exactly two symmetric liftings.
If $\tdif\in\tDifftM$ is a lifting of $\dif$, then another one is given by $\xi\circ\tdif$.
Since $\xi$ reverses orientation of $\tMman$, we can assume that $\tdif$ preserves orientation of $\tMman$, i.e. $\tdif\in\tDiffptM$.
Then it is easy to see that the correspondence $\eta:\dif\mapsto\tdif$ is a bijection between $\DiffM$ and $\tDiffptM$.
The verification that $\eta$ is a homeomorphism with respect to the topology $\Wr{r}$ for each $0\leq r\leq \infty$ is direct and we left it for the reader.
\end{proof}

Suppose $\func$ satisfies axioms \AxBd\ and \AxSPN.
Since $p$ is a local diffeomorphism, it follows that the map $\tfunc=\func\circ p:\tMman\to\Psp$ also satifies these axioms.

Let $y\in\singf$,  $\BFld_y$ be a special vector field near $y$, and $z,z'\in\singtf$ be critical points of $\tfunc$ such that $p^{-1}(y)=\{z,z'\}$.
Define vector fields $\AFld_z$ and $\AFld_{z'}$ near $z$ and $z'$ respectively as pullbacks of $\BFld_y$ via $p$:
$$
\AFld_z=p^{*}\BFld_y, \qquad 
\AFld_{z'}=-p^{*}\BFld_y.
$$
Let $\AFld'$ be any vector field on $\tMman$ satisfying (i) and (ii).
Then the vector field $\AFld = \tfrac{1}{2}(\AFld' -\xi^{*}\AFld')$ is skew-symmetric and also satisfies (i) and (ii) and therefore all other statements of Lemma\;\ref{lm:Hamilt_vf_and_prop_orient}, see\;\cite[Lm.\;5.1]{Maks:AGAG:2006} for details.

\begin{lemma}\label{lm:Hamilt_vf_and_prop_nonorient}{\rm c.f. \cite[Lm.\;4.9\;\&\;5.1]{Maks:AGAG:2006}.}
The following conditions hold true:

{\rm(ix)}
Let $\tDiff(\partittf)=\Diff(\partittf)\cap\tDiffptM$ be the group of symmetric diffeomorphisms preserving foliation $\partittf$.
Then $\eta(\Diff(\partf))=\tDiff(\partittf)$, see Lemma\;\ref{lm:lift_symm_diff}.
In particular, for all $r=0,\ldots,\infty$ we have homeomorphisms $\DiffId(\partf)^{r}\cong\tDiffId(\partittf)^{r}$

{\rm(x)}
Put $E_0=\{\afunc\in\Ci{\tMman}{\RRR} \ | \ \afunc\circ\xi=-\afunc\}.$
Then the shift map $\ShA$ of $\AFld$ yields a homeomorhism $\ShA: E_0\cap\Gamma^{+} \to \tDiff(\partittf)$ with respect to topologies $\Wr{\infty}$.
Since $E_0\cap\Gamma^{+}$ is convex, $\tDiff(\partittf)$ is contractible.
\end{lemma}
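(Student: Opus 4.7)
\emph{Plan.} For part~(ix), since Lemma~\ref{lm:lift_symm_diff} already gives that $\eta\colon\DiffM\to\tDiffptM$ is a $\Wr{r}$-homeomorphism, only the set-level identification $\eta(\Diff(\partf))=\tDiff(\partittf)$ remains. For $\dif\in\Diff(\partf)$ with lift $\tdif=\eta(\dif)$ and any leaf $\tilde\omega\in\partittf$, the image $p(\tilde\omega)$ is connected and lies in a level set of $\tfunc=\func\circ p$, hence in a single leaf $\omega\in\partf$; then $p\bigl(\tdif(\tilde\omega)\bigr)=\dif(\omega)=\omega$ forces $\tdif(\tilde\omega)\subset p^{-1}(\omega)$. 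When $p^{-1}(\omega)$ is connected this yields $\tdif(\tilde\omega)=\tilde\omega$ immediately. When $p^{-1}(\omega)=\tilde\omega_1\sqcup\tilde\omega_2$ with $\xi$ exchanging the two components, I would handle the identity component by path-lifting: an $r$-isotopy $\dif_t$ through $\Diff(\partf)$ from $\id_\Mman$ to $\dif$ lifts to an $r$-isotopy $\tdif_t$ in $\tDiffptM$ from $\id_{\tMman}$ to $\tdif$, and the assertion ``$\tdif_t(\tilde\omega_i)=\tilde\omega_i$'' is a clopen condition along this continuous path that holds at $t=0$, so it persists for all $t$. The reverse inclusion is easier: a symmetric $\tdif$ preserving each leaf of $\partittf$ descends under $p$ to a diffeomorphism preserving each leaf of $\partf$, and the continuity of $\eta^{\pm1}$ promotes this to the asserted $\Wr{r}$-homeomorphism of identity components.

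For part~(x), the core computation uses the skew-symmetry relation $\xi\circ\AFlow_t=\AFlow_{-t}\circ\xi$: for any $\afunc\in\Ci{\tMman}{\RRR}$,
\[
\xi\bigl(\ShA(\afunc)(x)\bigr)=\xi\bigl(\AFlow(x,\afunc(x))\bigr)=\AFlow\bigl(\xi(x),-\afunc(x)\bigr).
\]
Comparing this with $\ShA(\afunc)(\xi(x))=\AFlow(\xi(x),\afunc(\xi(x)))$ and substituting $y=\xi(x)$, we find that $\ShA(\afunc)$ commutes with $\xi$ if and only if $\afunc+\afunc\circ\xi\in\ker\ShA$. When $\afunc\in E_0$ the left side vanishes identically, so $\ShA(E_0\cap\Gamma^{+})\subseteq\tDiff(\partittf)$ (together with Lemma~\ref{lm:Hamilt_vf_and_prop_orient}(iv) applied to the oriented cover $\tMman$, which already places the image inside the identity component). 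For surjectivity, given such a $\tdif$, Lemma~\ref{lm:Hamilt_vf_and_prop_orient}(iv) yields $\afunc\in\Gamma^+$ with $\ShA(\afunc)=\tdif$; symmetry of $\tdif$ and the displayed identity then force $\afunc+\afunc\circ\xi\in\ker\ShA$. By Lemma~\ref{lm:ker_of_shift_map} this kernel is either trivial or generated by the positive period function $\theta$, and skew-symmetry of $\AFld$ makes $\xi$ send each periodic orbit to a periodic orbit of the same period, so $\theta\circ\xi=\theta$. Hence after correcting $\afunc$ by a multiple of $\theta$, its skew-symmetrization $\widetilde\afunc=\tfrac12(\afunc-\afunc\circ\xi)$ lies in $E_0$ and still satisfies $\ShA(\widetilde\afunc)=\tdif$; the identity $\AFld(\afunc\circ\xi)(x)=-\AFld(\afunc)(\xi(x))$ coming from skew-symmetry moreover ensures $\widetilde\afunc\in\Gamma^+$ since $\AFld(\widetilde\afunc)(x)=\tfrac12\bigl(\AFld(\afunc)(x)+\AFld(\afunc)(\xi(x))\bigr)>-1$.

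For the homeomorphism and contractibility conclusions, injectivity follows from $\ker\ShA\cap E_0=\{0\}$: $\theta\in E_0$ would demand $\theta=-\theta$, impossible since $\theta>0$. Thus $\ShA|_{E_0\cap\Gamma^+}$ is the restriction of the local homeomorphism of Lemma~\ref{lm:Hamilt_vf_and_prop_orient}(iii) to a closed subspace meeting each fiber in a single point, which makes it a homeomorphism onto its image. Convexity of $E_0\cap\Gamma^+$ is immediate: $E_0$ is a linear subspace of $\Ci{\tMman}{\RRR}$ and $\Gamma^+=\{\afunc:\AFld(\afunc)>-1\}$ is cut out by an open linear inequality. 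The main obstacle I anticipate is the path-lifting argument in part~(ix) that rules out symmetric lifts from exchanging the two components of $p^{-1}(\omega)$; the clopen-continuity argument suffices for the identity component (which is all part~(x) requires), but promoting it to arbitrary elements of $\Diff(\partf)$ would demand a more careful local orientation analysis at leaves whose preimage in $\tMman$ is disconnected.
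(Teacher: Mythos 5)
The paper's own ``proof'' is a bare citation (to Lemma~\ref{lm:lift_symm_diff} and to \cite[Lm.\;4.9]{Maks:AGAG:2006}), so your self-contained skew-symmetry computation $\ShA(\afunc)\circ\xi=\xi\circ\ShA(\afunc)\Leftrightarrow\afunc+\afunc\circ\xi\in\ker(\ShA)$, the injectivity via $\ker(\ShA)\cap E_0=\{0\}$, and the convexity observation are welcome and correct. However, your surjectivity step for (x) contains a genuine gap. If $\ShA$ is periodic and $\afunc+\afunc\circ\xi=n\theta$ with $n$ \emph{odd}, then replacing $\afunc$ by $\afunc+k\theta$ changes $n$ only by $2k$, so the skew-symmetrization $\widetilde\afunc=\tfrac12(\afunc-\afunc\circ\xi)=\afunc-\tfrac{n+2k}{2}\theta$ never differs from $\afunc$ by an \emph{integer} multiple of $\theta$, and hence $\ShA(\widetilde\afunc)\neq\tdif$. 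This is not a repairable slip in the argument: the full-group statement is actually false. Take $\Mman=\Kleinb$ with $\tMman=T^2=\RRR^2/\ZZZ^2$, $\xi(s,t)=(s+\tfrac12,-t)$, $\tfunc(s,t)=2s\bmod 1$, $\AFld=\partial/\partial t$, so $\theta\equiv1$; then $\tdif(s,t)=(s,t+\tfrac12)$ is symmetric, orientation-preserving and preserves every leaf, so $\tdif\in\tDiff(\partittf)$, yet its shift functions are exactly the constants $\tfrac12+k$, none of which lies in $E_0$. (The full-group claim of (ix) fails for the same kind of reason, via diffeomorphisms reversing the local orientation along a leaf with disconnected preimage --- the phenomenon behind the paper's own Example~\ref{exmp:func_on_prjplane} --- so your hesitation there was justified.)

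What is true, and what the paper actually invokes afterwards, is the identity-component version: $\ShA(E_0\cap\Gamma^{+})=\tDiffId(\partittf)^{1}$. Your own continuity idea from part (ix) is precisely what closes the gap: for $\tdif$ joined to $\id_{\tMman}$ by a symmetric $1$-isotopy $\tdif_\tau$ in $\tDiff(\partittf)$, the shift functions $\Lambda_\tau$ of Lemma~\ref{lm:shift-func-on-reg} satisfy $\Lambda_\tau+\Lambda_\tau\circ\xi=n(\tau)\theta$ with $n(\tau)\in\ZZZ$ depending continuously on $\tau$ and $n(0)=0$, so $n\equiv0$ and $\Lambda_1\in E_0$ directly, with no skew-symmetrization needed; note also that an element of $\tDiff(\partittf)$ outside the identity component need not lie in $\ShA(\Gamma^{+})=\DidAFlow{1}$ at all, so Lemma~\ref{lm:Hamilt_vf_and_prop_orient}(iv) only furnishes the shift function $\afunc$ after this restriction. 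Finally, your one-line justification that an injective restriction of a local homeomorphism (or $\ZZZ$-covering) is a homeomorphism onto its image is not automatic for arbitrary subsets; here it does hold because $\inf\theta>0$ separates $E_0$ from its translates $E_0+n\theta$, $n\neq0$, but that deserves a sentence.
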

\begin{proof}
(ix) follows from Lemma\;\ref{lm:lift_symm_diff}, and (x) from\;\cite[Lm.\;4.9]{Maks:AGAG:2006}.
\end{proof}

Now we can complete non-orientable case of Theorem\;\ref{th:hom-type-StabIdf}.
By (x) of Lemma\;\ref{lm:Hamilt_vf_and_prop_nonorient} 
$\ShA(\Gamma^{+}\cap E_0) = \tDiffId(\partittf)^{1}$, and 
$\ShA(\Gamma^{+}\cap E_0) = \tDiffId(\partittf)^{0}$ if $\func$ and therefore $\tfunc$ have no $\NT$-points.
Moreover for all $r=0,\ldots,\infty$ we have the following identifications
$$
\tDiffId(\partittf)^{r}
 \ \stackrel{\text{Lm.\;\ref{lm:lift_symm_diff}}}{\cong} \ 
\DiffId(\partf)^{r}
\ \ \stackrel{\text{Lm.\;\ref{lm:Dpartf_Stab_identity_comp}}}{=\!=\!=\!=\!=} \ \
\StabIdfr{r}.
$$
This implies that $\StabIdfr{\infty}=\cdots=\StabIdfr{1}$ and this space is contractible.
Moreover, $\StabIdfr{1}=\StabIdfr{0}$ whenever $\func$ has no $\NT$-points.

Theorem\;\ref{th:hom-type-StabIdf} is proved.
\qed

\section{Proof of Theorem\;\ref{th:hom-type-Orbits}} \label{sect:proof:th:hom-type-Orbits}
Suppose $\func:\Mman\to\Psp$ satisfies \AxBd-\AxFibr\ and has at least one $\ST$-point.
Statement about higher homotopy groups of $\Orbf$ is a simple consequence of Theorem\;\ref{th:hom-type-StabIdf} and \AxFibr.

Indeed, choose $\id_{\Mman}$ to be a base point in $\Stabf$ and $\DiffM$, and $\func$ to be a base point in $\Orbf$.
Then axiom \AxFibr\ implies that there exists an exact sequence of homotopy groups of the fibration $p$:
$$
\cdots \to \;
\pi_k \Stabf
\;\xrightarrow{~i_k~}\;
\pi_k \DiffM
\;\xrightarrow{~p_k~}\;
\pi_k \Orbf
\;\xrightarrow{~\partial_k~}\;
\pi_{k-1}\Stabf
\;\to \cdots,
$$
where $i_{k}$ is induced by the inclusion $i:\Stabf\subset\DiffM$, and $\partial_k$ is the boundary homomorphism.

Recall that $\pi_i\DiffM = \pi_i \Mman$ for $i\geq3$, and $\pi_2\DiffM=0$, see\;\cite{EarleEells:BAMS:1967, EarleEells:DG:1970, EarleSchatz:DG:1970, Gramain:ASENS:1973}.
Since by Theorem\;\ref{th:hom-type-StabIdf} $\StabIdf$ is contractible, we obtain isomorphisms $\pi_k\Orbf\approx\pi_k\DiffM$ for $k\geq2$ and also the following exact sequence:
$$
1 \to \pi_1\DiffM \;\xrightarrow{~p_1~}\;
 \pi_1\Orbf \;\xrightarrow{~\partial_1~}\;
 \pi_0(\Stabf\cap\DiffIdM) \to 1,
$$
where
$\pi_0(\Stabf\cap\DiffIdM)$ can be regarded as the kernel of the induced map
$i_0:\pi_0\Stabf \to \pi_0\DiffM.$

It remains to establish the short exact sequence\;\eqref{equ:pi1_Of} for $\pi_1\Orbf$ and show that $\Orbff$ is weakly homotopy equivalent to some finite dimensional CW-complex.

{\em Proof of\;\eqref{equ:pi1_Of}.}
The arguments are similar to\;\cite[\S9]{Maks:AGAG:2006}.
Recall that we have a homomorphism $\stongr:\pi_0\Stabf\to\AutfFR$.
Let $G = \stongr(\ker i_0)$ be the image of the subgroup $\ker i_0=\pi_0(\Stabf\cap\DiffIdM)$ of $\pi_0\Stabf$ under $\stongr$, and $\JgrpId$ be the kernel of the restriction of $\stongr$ to $\ker i_0$.
Thus 
$$
\JgrpId:=\pi_0(\ker(\stongr)\cap\DiffIdM) 
\ \stackrel{\text{Lemma\;\ref{lm:Dpn_and_ketrstong_cap_DiffIdM}}}{=\!=\!=\!=\!=\!=\!=} \
\pi_0(\Dpn\cap\DiffIdM).
$$
Since $\Jgrp=\pi_0(\Dpn)\approx\ZZZ^{\intk}$ is a free abelian group, we see that so is its subgroup $\JgrpId$.
Thus $\JgrpId\approx\ZZZ^{\intl}$ for some $\intl\geq0$.

Then we have the following commutative diagram in which all vertical and horizontal lines are exact:
$$
\begin{array}{ccccccccc}
  &     &                &     &  1         &     &  1 \\ 
  &     &                &     & \downarrow &     & \downarrow \\ 
1 & \to & \pi_1\DiffIdM  & \xrightarrow{~p~} & \hgrp          & \xrightarrow{~\partial_1~} & \JgrpId\approx\ZZZ^{\intl} & \to & 1 \\
  &     &  ||            &     & \downarrow &     &  \downarrow \\ 
1 & \to & \pi_1\DiffIdM  & \xrightarrow{~p~} & \pi_1\Orbf & \xrightarrow{~\partial_1~} & \ker(i_0)        & \to & 1 \\
  &     &                &     & \downarrow &           & \downarrow \\ 
  &     &                &     &  G         &  =\!=\!=  &  G \\ 
  &     &                &     & \downarrow &           & \downarrow \\ 
  &     &                &     &  1         &           &  1 \\ 
\end{array}
$$
where $\hgrp=\partial_1^{-1}(\JgrpId)$.
To show that the left vertical sequence coincides with\;\eqref{equ:pi1_Of} we have to prove the following:
\begin{theorem}\label{th:splitting_pi1Orbf}
A short exact sequence 
\begin{equation}\label{lm:pi1DidM_in_the_center_of_H}
1\to \pi_1\DiffIdM \ \xrightarrow{~~p_1~~} \ \hgrp \xrightarrow{~~\partial_1~~} \JgrpId \to 1
\end{equation}
admits a section $s:\hgrp \to\JgrpId$ such that $\partial_1 \circ s = \id_{\JgrpId}$.
Since this sequence is a \myemph{central} extension, see {\rm(3)} of Lemma\;\ref{lm:partial_1}, it splits, so $\hgrp\approx\pi_1\DiffIdM\oplus\ZZZ^{l}$.
\end{theorem}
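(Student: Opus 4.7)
The plan is to produce a section $s:\JgrpId\to\hgrp$ by constructing explicit commuting lifts in $\hgrp$ of a $\ZZZ$-basis of $\JgrpId\cong\ZZZ^{\intl}$. Once such commuting lifts are in hand, the assignment extends to a homomorphism splitting\;\eqref{lm:pi1DidM_in_the_center_of_H}, which is already known to be central by (3) of Lemma\;\ref{lm:partial_1}.

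\medskip

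First, I would describe $\Jgrp=\pi_0(\Dpn)$ explicitly. Using the shift map $\ShA$ of the Hamiltonian vector field $\AFld$ built in Lemma\;\ref{lm:Hamilt_vf_and_prop_orient} (or its symmetric analogue on $\tMman$ in the non-orientable case, Lemma\;\ref{lm:Hamilt_vf_and_prop_nonorient}), together with the description of $\ker\ShAV$ given by Lemma\;\ref{lm:ker_of_shift_map}, every class in $\Jgrp$ is represented by $\ShA(\theta)$ for some $\theta\in\Ci{\Mman}{\RRR}$ assembled from period functions. On each regular component $B$ of $\partf$ (diffeomorphic to $S^1\times(0,1)$) the period function $\theta_B$ of $\AFld|_B$ extends smoothly across adjacent $\PT$-vertices by Lemma\;\ref{lm:ext_shift_func_for_PN_points}, and the extension across $\NT$- and $\ST$-vertices is controlled by Lemma\;\ref{lm:ext_shfunc_under_homotopies}. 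This produces a family of elementary twists $\tau_B:=\ShA(\theta_B^{\sharp})$ generating $\Jgrp$, where $\theta_B^{\sharp}$ is a bump version of $\theta_B$ supported in an arbitrarily thin tubular neighbourhood of a chosen regular leaf inside $B$.

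\medskip

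Second, I would identify $\JgrpId$ inside $\Jgrp$ as the kernel of the natural map $\Jgrp\to\pi_0\DiffM$, so that a $\ZZZ$-basis of $\JgrpId$ consists of those integer combinations of the $\tau_B$ whose underlying mapping class is trivial. Choose such a basis and realise each basis element by a diffeomorphism $g_i\in\Dpn\cap\DiffIdM$ whose support is contained in a thin tubular neighbourhood $U_i$ of a finite disjoint union of regular leaves; by shrinking these tubes we arrange that $U_1,\dots,U_{\intl}$ are pairwise disjoint. The resulting $g_i$ still represent the chosen basis classes because shrinking the bump $\theta_B^{\sharp}$ changes $\ShA(\theta_B^{\sharp})$ only by an element of $\StabIdf$, which is trivial in $\JgrpId$.

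\medskip

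Third, for each $g_i$ I pick an isotopy $\{g_i^t\}_{t\in I}$ in $\DiffIdM$ from $\id_{\Mman}$ to $g_i$, chosen supported in a slight enlargement $\widetilde U_i\supset U_i$ with $\widetilde U_i\cap\widetilde U_j=\varnothing$ for $i\neq j$. The loop $\tilde h_i(t):=\func\circ(g_i^t)^{-1}$ represents a class in $\pi_1\Orbff$, and $\partial_1(\tilde h_i)=[g_i]\in\JgrpId$, so $\tilde h_i\in\hgrp$. For $i\neq j$ the diffeomorphisms $g_i^s$ and $g_j^t$ commute for all $s,t\in I$ because they have disjoint supports; hence $(s,t)\mapsto\func\circ(g_i^s g_j^t)^{-1}$ is a continuous map $I^2\to\Orbff$ which realises a homotopy between the concatenated loops $\tilde h_i\cdot\tilde h_j$ and $\tilde h_j\cdot\tilde h_i$. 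Thus $[\tilde h_i,\tilde h_j]=1$ in $\hgrp$, and setting $s([g_i])=\tilde h_i$ extends uniquely to a homomorphism $s:\JgrpId\to\hgrp$ with $\partial_1\circ s=\id_{\JgrpId}$.

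\medskip

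The main obstacle is the second step: exhibiting a $\ZZZ$-basis of $\JgrpId$ whose generators can simultaneously be realised by diffeomorphisms with pairwise disjoint supports. This requires a combinatorial analysis on the Kronrod--Reeb graph $\Reebf$ showing that the relations defining $\JgrpId\subset\Jgrp$ can be solved independently near each vertex, so that contributions from distinct regions of $\Mman$ may be separated; $\NT$-vertices demand particular care because the matrix constraints from Lemma\;\ref{lm:N-points} restrict which integer combinations of the $\tau_B$ actually land in $\Dpn\cap\DiffIdM$. In the non-orientable case the whole construction must be carried out for skew-symmetric $\theta$ on the oriented double cover $\tMman$ in the sense of Lemma\;\ref{lm:Hamilt_vf_and_prop_nonorient} and then pushed down via Lemma\;\ref{lm:lift_symm_diff}.
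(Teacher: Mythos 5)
There is a genuine gap, and it sits in your third step. You assume each basis element $g_i\in\JgrpId$ admits an isotopy $\{g_i^t\}$ from $\id_{\Mman}$ to $g_i$ \emph{supported in a thin neighbourhood $\widetilde U_i$ of the annuli carrying $g_i$}. No nontrivial element of $\JgrpId$ admits such an isotopy: writing $g_i=\tau_{1}^{m_1}\circ\cdots\circ\tau_{\intk}^{m_{\intk}}$, an isotopy to the identity supported in the disjoint annuli $U_j$ would restrict on each $U_j$ to an isotopy of $\tau_j^{m_j}$ to $\id$ relative to a neighbourhood of $\partial U_j$, forcing every $m_j=0$ — this is exactly the injectivity argument in the proof of Theorem\;\ref{th:pi0Dpn}. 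So the only element of $\JgrpId$ you can realise this way is the identity. The entire difficulty of the theorem is that the paths $\omega_{\alpha}$ from $\id_{\Mman}$ to $g_{\alpha}$ must sweep across the surface, hence the loops $\func\circ\omega_{\alpha}$ in $\Orbf$ are not localizable and their commutation (more precisely, the vanishing of the relevant relators) in $\pi_1\Orbf$ is not automatic. A secondary problem: even the $g_i$ themselves cannot in general be chosen with pairwise disjoint supports, since distinct basis vectors of the sublattice $\JgrpId\subset\Jgrp\cong\ZZZ^{\intk}$ typically involve the same twists $\tau_j$ (e.g.\ $\tau_1\circ\tau_2^{-1}$ and $\tau_2\circ\tau_3^{-1}$ on a torus with parallel non-separating internal curves).

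Your reduction of the splitting to ``find commuting lifts of a basis'' is sound for a central extension by a free abelian group, but the commutation has to be proved by other means. The paper first observes the statement is vacuous unless $\chi(\Mman)\geq0$ (otherwise $\pi_1\DiffIdM=0$), then presents $\JgrpId$ by generators and relations and shows every relator maps to a null-homotopic loop in $\DiffIdM$: case by case over $S^2$, $\prjplane$, $D^2$, the M\"obius band, $S^1\times I$, $T^2$ and $\Kleinb$, one exhibits a subset $\Qman$ (a point set with $|\Qman|>\chi(\Mman)$, a curve, or an invariant leaf) which all the chosen isotopies fix or preserve, so that the relator loops live in a contractible group $\DiffId(\Mman,k)$ (Lemma\;\ref{lm:suff_cond_triv_centr_ext}). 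Some version of this global, surface-dependent argument is unavoidable; your construction bypasses it by an assumption that cannot be met.
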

This statement was claimed without explanations in the proof of\;\cite[Th.\;1.5]{Maks:AGAG:2006}.
But in general there are central extension that do not split.
Therefore we will present a proof of Theorem\;\ref{th:splitting_pi1Orbf} in\;\S\ref{sect:proof:th:splitting_pi1Orbf}.
Of course, the statement of Theorem\;\ref{th:splitting_pi1Orbf} is non-trivial only for the surfaces with $\pi_1\DiffIdM\not=0$ or (which is equivalent) with $\chi(\Mman)\geq0$.

\medskip

To show finiteness of homotopy dimension of $\Orbff$ we have to calculate the homotopy type of $\Orbffn$.
The statement here is the following theorem
\begin{theorem}\label{th:Stabf_DiffIdMn_StabIdf}{c.f.\;\cite[Eq.\;(8.8)]{Maks:AGAG:2006}.}
If $\func$ satisfies \AxBd\ and \AxSPN, then 
$$\Stabf\;\cap\;\DiffIdMn \ = \ \StabIdf.$$
Hence $\Stabfn\cap\DiffIdMn=\StabIdf$, and thus $$\pi_0\bigl(\Stabfn\cap\DiffIdMn\bigr)=0.$$
\end{theorem}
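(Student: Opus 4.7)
The containment $\StabIdf \subseteq \Stabf\cap\DiffIdMn$ is immediate. For $\dif\in\StabIdf$ and any $\func$-preserving isotopy $\dif_t$ from $\id$ to $\dif$, continuity and finiteness of $\singf$ force $\dif_t$ to fix every critical point pointwise, and the continuous path $\tnz(\dif_t)$ in $\tnz(\gDpAz)$ starts at the identity. By Lemma~\ref{lm:N-points} its identity component equals $\Agrp_{++}$ at each $\NNT$-point and $\{\id\}$ at each $\NZT$-point, which is precisely the linearization constraint defining $\DiffMn$, so $\dif_t\in\DiffMn$ for every $t$ and $\dif\in\DiffIdMn$.

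For the reverse inclusion, Lemma~\ref{lm:Dpn_and_ketrstong_cap_DiffIdM} reduces the problem to showing $\Dpn\cap\DiffIdMn\subseteq\StabIdf$. I would handle the orientable case first with the vector field $\AFld$ of Lemma~\ref{lm:Hamilt_vf_and_prop_orient}. Combining items (iv), (vii), (viii) of that lemma with~\eqref{equ:Stab_Stab1} gives $\imShA=\EidAFlow{1}$ and $\Didpnr{1}=\StabIdfr{1}=\StabIdf$, so it suffices to exhibit every $\dif\in\Dpn\cap\DiffIdMn$ as a member of $\EidAFlow{1}$. Each such $\dif$ lies in $\EAFlow$ (it preserves orbits of $\AFld$ together with their orientation) and admits a smooth local shift function at every point of $\Mman$: at regular points by Lemma~\ref{lm:shift-func-on-reg}; at $\ST$-points by $\gimShAz=\gDAz$ (Definition~\ref{defn:S-point}); at $\PT$-points by $\gimShAz=\gDpAz$ (Lemma~\ref{lm:P-points}); and at $\NT$-points because the very definition of $\DiffMn$ forces $\dif\in\gimShAzz$.

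Globalising is the core of the argument. My plan is to take a smooth isotopy $H_t$ in $\DiffIdMn$ from $\id$ to $\dif$, note that $H_t$ fixes $\singf$ pointwise by continuity, deform $H_t$ on a neighbourhood of $\singf$ into an orbit-preserving family by interpolating against the local shift representations produced above, and then use Lemma~\ref{lm:shift-func-on-reg} on the regular part together with Lemma~\ref{lm:ext_shfunc_under_homotopies} (applied at $r=1$, which is why the higher regularity hypothesis is essential for $\NT$-points) to splice the pieces into a single smooth family $\Lambda\colon\Mman\times I\to\RRR$ with $\Lambda_0=0$ and $\AFlow(x,\Lambda_1(x))=\dif(x)$. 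The resulting orbit-preserving isotopy $\AFlow(\cdot,\Lambda_t)\in\imShA\subset\StabIdf$ witnesses $\dif\in\StabIdf$. The non-orientable case is then reduced to the orientable one via Lemma~\ref{lm:lift_symm_diff}, which lifts $\dif$ to a symmetric orientation-preserving element of $\Stabtf\cap\DiffIdtMn$, combined with Lemma~\ref{lm:Hamilt_vf_and_prop_nonorient}.

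The hard part will be the gluing across periodic regions: near $\PT$- and $\NNT$-points the local shift function is determined only up to an integer multiple of the period function $\theta$, and the integer choices on overlapping patches must be matched. Coherence is enforced by the initial condition $H_0=\id$ (all integers equal $0$) together with the uniqueness assertions of Lemmas~\ref{lm:shift-func-on-reg} and~\ref{lm:ext_shfunc_under_homotopies}. This is precisely where the argument departs from the Morse case treated in~\cite{Maks:AGAG:2006} and requires the stronger $1$-regularity hypothesis of Lemma~\ref{lm:ext_shfunc_under_homotopies} at $\NT$-points.
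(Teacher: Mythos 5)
Your easy inclusion and the reduction to $\Dpn\cap\DiffIdMn$ are fine (the reduction really rests on Lemma~\ref{lm:intersections_with_DiffIdMn} rather than Lemma~\ref{lm:Dpn_and_ketrstong_cap_DiffIdM}, but that is a citation quibble), and so is the observation that it would suffice to show $\dif\in\imShA$. The gap is in how you propose to show it. Lemmas~\ref{lm:shift-func-on-reg} and~\ref{lm:ext_shfunc_under_homotopies} apply only to homotopies $H_t$ lying in $\EAV$, i.e.\ every intermediate map must already preserve the orbits of $\AFld$. An isotopy in $\DiffIdMn$ from $\id_{\Mman}$ to $\dif$ has no reason to preserve orbits, and ``deforming $H_t$ into an orbit-preserving family'' is exactly the statement being proved: an orbit-preserving isotopy from $\id_{\Mman}$ to $\dif$ is an isotopy inside $\Stabf$, i.e.\ already a witness that $\dif\in\StabIdf$. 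So the plan is circular. Worse, the local data you assemble (a smooth local shift function at every point) is possessed by \emph{every} element of $\Dpn$, in particular by every internal Dehn twist $\tau_{\gamma}^{m}$; yet $\tau_{\gamma}^{m}\notin\imShA$ for $m\neq 0$, since on the component $\comp{\gamma}$ any shift function for $\tau_{\gamma}^{m}$ equals $n\theta$ near one boundary critical component and $(m+n)\theta$ near the other, while $\theta\to+\infty$ at the adjacent $\ST$- and $\NT$-components, forcing $n=m+n=0$. The obstruction to gluing local shift functions is therefore a genuine global one, measured by $\pi_0\Dpn\cong\ZZZ^{\intk}$ (Theorem~\ref{th:pi0Dpn}), and ``uniqueness plus $H_0=\id$'' cannot detect it because the hypothesis $\dif\in\DiffIdMn$ never enters your gluing argument in any essential way.

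The paper's route is quite different: by Theorem~\ref{th:pi0Dpn} one may assume $\dif=\dif_{\NT}\circ\dif_{\ST}$ is a product of internal Dehn twists, and the hypothesis $\dif\in\DiffIdMn$ is then used to kill the exponents --- for the $\ST$-twists via the action on $H_1(\Mman,\singf)$ (Lemma~\ref{lm:act_Jgrp_on_H_1}, plus the double cover in the non-orientable case), and for the $\NT$-twists via Claim~\ref{clm:deform_dif_to_id_rel_nbhsingfN}, which upgrades an isotopy in $\DiffIdMn$ to one fixed near the $\NT$-points; this last step is the only place where the linearization constraint defining $\DiffMn$ does real work, ruling out twists around curves encircling a single $\NT$-point. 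To repair your argument you would have to supply these ingredients, or some equivalent mechanism that detects the Dehn-twist classes in $\pi_0\Dpn$.
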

The proof of this theorem will be given in \S\ref{sect:proof:th:Stabf_DiffIdMn_StabIdf}.

\begin{corollary}
$\pi_i\Orbffn=0$ for all $i\geq 0$.
\end{corollary}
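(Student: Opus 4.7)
The plan is to exhibit $\Orbffn$ as the base of a Serre fibration whose total space and fiber are both contractible, and then read off vanishing of all homotopy groups from the long exact sequence. There is almost no obstacle here: the corollary is essentially a bookkeeping consequence of results that have already been established or stated (Lemma \ref{lm:DMcr_contr}, Theorem \ref{th:hom-type-StabIdf}, Theorem \ref{th:Stabf_DiffIdMn_StabIdf}, and Lemma \ref{lm:partial_1}), and the real content lies in those earlier results.

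First I would set up the fibration. By Lemma \ref{lm:partial_1}(1), axiom \AxFibr\ implies that $p:\DiffMn\to\Orbfn$, $p(\dif)=\func\circ\dif^{-1}$, is a Serre fibration with fiber $\Stabfn$. Restricting $p$ to the identity component $\DiffIdMn$ yields, by Lemma \ref{lm:partial_1}(2), a Serre fibration
\begin{equation*}
  p:\DiffIdMn \longrightarrow \Orbffn
\end{equation*}
with fiber $\Stabfn\cap\DiffIdMn$ over the basepoint $\func$.

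Next I would identify the fiber and the total space. Theorem \ref{th:Stabf_DiffIdMn_StabIdf} gives
\begin{equation*}
  \Stabfn\cap\DiffIdMn \;=\; \StabIdf,
\end{equation*}
and since by hypothesis $\func$ has at least one $\ST$-point, Theorem \ref{th:hom-type-StabIdf} says $\StabIdf$ is contractible. Simultaneously, Lemma \ref{lm:DMcr_contr} (applicable because $\func$ satisfies \AxSPN\ and has an $\ST$-point) guarantees that $\DiffIdMn$ is contractible.

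Finally, the long exact sequence of homotopy groups of the fibration reads
\begin{equation*}
  \cdots \to \pi_i\StabIdf \to \pi_i\DiffIdMn \to \pi_i\Orbffn \to \pi_{i-1}\StabIdf \to \cdots,
\end{equation*}
in which the fiber and total space terms vanish for every $i\geq 1$. Hence $\pi_i\Orbffn=0$ for all $i\geq 1$. The case $i=0$ is immediate from the definition of $\Orbffn$ as the path-component of $\func$ in $\Orbfn$. This proves the corollary.
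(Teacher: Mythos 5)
Your proof is correct and follows essentially the same route as the paper: the Serre fibration $p:\DiffMn\to\Orbfn$ from Lemma~\ref{lm:partial_1}, contractibility of $\DiffIdMn$ from Lemma~\ref{lm:DMcr_contr} and of $\StabIdf$ from Theorem~\ref{th:hom-type-StabIdf}, and the identification $\Stabfn\cap\DiffIdMn=\StabIdf$ from Theorem~\ref{th:Stabf_DiffIdMn_StabIdf}. The only cosmetic difference is that you restrict the fibration to identity components before writing the long exact sequence, whereas the paper applies the sequence to the full fibration and invokes Theorem~\ref{th:Stabf_DiffIdMn_StabIdf} only at the $\pi_1$--$\pi_0$ stage; both are valid.
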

\begin{proof}
By (2) of Lemma\;\ref{lm:partial_1} the map $p:\DiffMn\to\Orbfn$ is a Serre fibration.
Moreover, $\StabIdfn=\StabIdf$ is contractible, and by Lemma\;\ref{lm:DMcr_contr} so is $\DiffIdMn$.
Hence $\pi_i\Orbfn=0$ for all $i\geq2$ and we also obtain an isomorphism
$$
\pi_1\Orbfn 
\ \stackrel{\partial_1}{\approx} \ 
\pi_0(\Stabfn\cap\DiffIdMn)
\ \stackrel{\text{Th.\ref{th:Stabf_DiffIdMn_StabIdf}}}{\approx} \
0,
$$
where $\pi_0(\Stabfn\cap\DiffIdMn)$ is the kernel of the induced homomorphism
$j_0:\pi_0\Stabfn \to \pi_0\DiffMn$ induced by the inclusion map $j:\Stabfn\subset\DiffMn$.
\end{proof}

The proof that $\Orbff$ has the weak homotopy type of a $CW$-complex of dimension $\leq 2n-1$ is similar to\;\cite{Maks:TrMath:2008}.
For the completeness we briefly recall the principal arguments.

Let $n$ be the total number of critical points of $\func$,
$$\mathcal{P}_{n}(\Int\Mman) = \{(x_1,\ldots,x_n) \ | \ x_i\in\Int\Mman, x_i\not=x_j \ \text{for} \ i\not=j  \} \; \subset \; \prod_{n} \Int\Mman,$$
and $P_n$ be the $n$-th symmetrical group of all permutations of $n$ distinct symbols.
Evidently, $P_n$ freely acts on $\mathcal{P}_{n}(\Int\Mman)$ by permutations of coordinates.
The factor space $\mathcal{F}_{n}(\Int\Mman)=\mathcal{P}_{n}(\Int\Mman)/P_n$ is the \myemph{$n$-th configuration space of the interior $\Int\Mman$}, i.e. the space on unordered $n$-tuples of mutually distinct points of $\Int\Mman$.

Then we can define the following map $k:\Orbff \to \mathcal{F}_{n}(\Int\Mman)$ associating to every $g\in\Orbff$ the set $\Sigma_{g}$ of its critical points.
It can be shown similarly to~\cite{Maks:TrMath:2008} that $k$ is a locally trivial fibration such that every connected component of a fiber is homeomorphic with $\Orbffn$ and therefore is weakly contractible.
From the exact sequence of homotopy groups of this fibration we obtain that $k$ is a monomorphism on $\pi_1$ and isomorphism on all $\pi_i$ for $i\geq2$.
Let $\mathcal{F}(\func)$ be the covering space of $\mathcal{F}_{n}(\Int\Mman)$ corresponding to the subgroup $k(\pi_1\Orbff) \subset \pi_1 \mathcal{F}_{n}(\Int\Mman)$.
Then $k$ lifts to the map $\tilde k:\Orbff\to\mathcal{F}(\func)$ which induces isomorphisms between all the corresponding homotopy groups and therefore is a homotopy equivalence.
It remains to note that $\mathcal{F}(\func)$ is a manifold (usually non-compact) and $\dim\mathcal{F}(\func)=2n$, whence it is hootopy equivalent to a $(2n-1)$-dimensional CW-complex.

\medskip
This completes Theorem\;\ref{th:hom-type-Orbits} modulo Theorems\;\ref{th:Stabf_DiffIdMn_StabIdf} and\;\ref{th:splitting_pi1Orbf}.
To prove them we have to calculate the group $\pi_0\Dpn$.
\qed

\section{Group $\pi_0\Dpn$}\label{sect:group_pi0Dpn}
In this section we calculate the group $\pi_0\Dpn$.
The exposition is similar to\;\cite[Th.\;6.2]{Maks:AGAG:2006} but we take to account $\NT$-points.

Let $\FReebf$ be the framed \KR-graph of $\func$.
An edge $e$ of $\FReebf$ will be called \myemph{external}, if it is either tangent to some $\NT$-point, or is incident either to a $\PT$- or to a $\partial$-vertex.
Otherwise, $e$ is \myemph{internal}.

Moreover, an internal edge $e$ will be called an \myemph{$\NT$-edge} (resp. an \myemph{$\ST$-edge}) if at least one of its vertexes is an $\NT$-vertex (resp. both vertexes of $e$ are $\ST$-vertexes).

Suppose that $\func:\Mman\to\Psp$ satisfies \AxBd\ and \AxSPN.
Let $\gamma$ be a regular leaf of $\partf$, so $\gamma$ is homeomorphic to $S^1$.
Denote by $\comp{\gamma}$ the connected component of $\partfreg$ containing $\gamma$.
Then there exists a Dehn twist $\tau_{\gamma}$ along $\gamma$ preserving $\partf$ and being identity outside arbitrary small neighbourhood $\Uman_{\gamma} \subset \comp{\gamma}$ of $\gamma$ consisting of full leaves of $\partf$, see\;\cite[\S6]{Maks:AGAG:2006} and Figure\;\ref{fig:dehn-twist}.
In particular, we have that $\tau_{\gamma}\in\Dpn$.
\begin{figure}[ht]
\includegraphics[height=2cm]{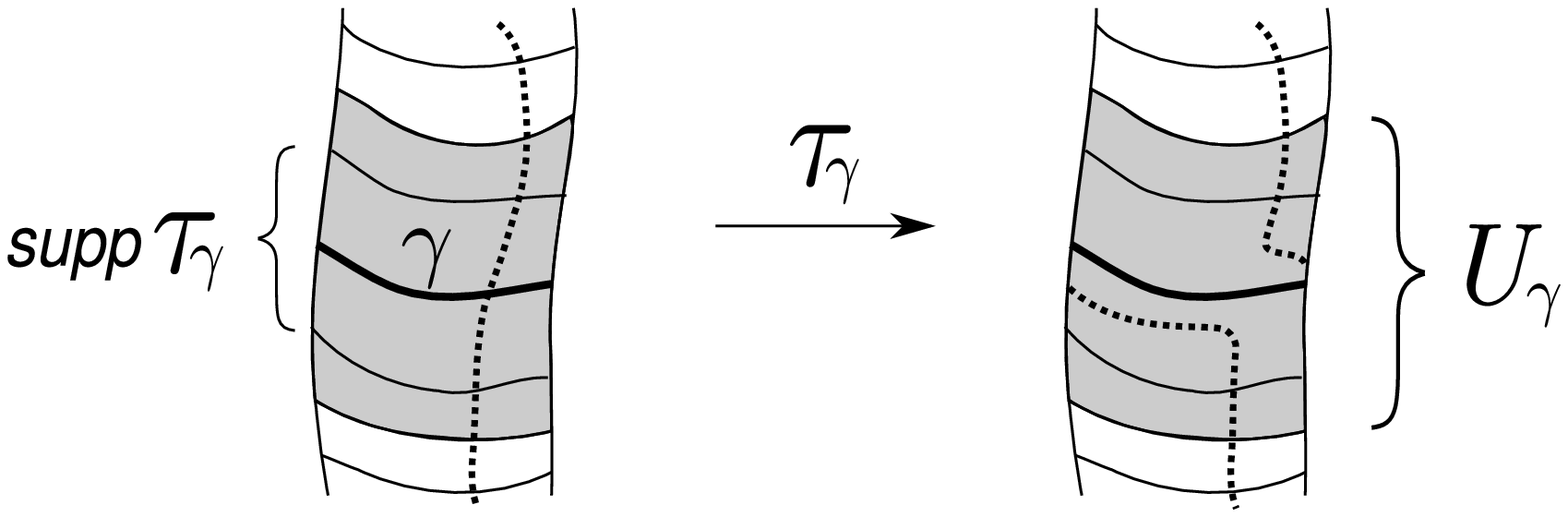}
\caption{}\protect\label{fig:dehn-twist}
\end{figure}
Notice that the image of $\comp{\gamma}$ in $\FReebf$ is a certain edge $e$.
We will say that $\gamma$ (as well as $\comp{\gamma}$) is \myemph{internal} (\myemph{external}) if so is $e$, and that $\tau_{\gamma}$ is a \myemph{twist around $e$}.

Now let $e_1,\ldots,e_{\intk}$ be all the internal edges of $\FReebf$.
For each $i$ choose any internal leaf $\gamma_{i}$ corresponding to $e_i$ and take any Dehn twist $\tau_i\in\Dpn$ along $\gamma_i$.
Put
$$
T=\mathop\cup\limits_{i=1}^{\intk}\supp\tau_i. 
$$
Let $\Jgrp=\langle \tau_1,\ldots,\tau_{\intk}\rangle \subset\Dpn$ be a subgroup generated by the internal Dehn twists.
Since $\supp\tau_i\cap\supp\tau_j=\varnothing$ for $i\not=j$, we see that $\Jgrp$ is a free abelian group with basis $\langle \tau_1,\ldots,\tau_{\intk}\rangle$, so $\Jgrp\approx \ZZZ^{\intk}$.

\begin{theorem}\label{th:pi0Dpn}{\rm c.f.\;\cite[Th.\;6.2]{Maks:AGAG:2006}.}
Suppose that $\func:\Mman\to\Psp$ satisfies axioms \AxBd\ and \AxSPN, and has at least one critical point of type $\ST$.
Then the inclusion $\zeta:\Jgrp\subset\Dpn$ is a homotopy equivalence.
In particular, we have an isomorphism
$$
\zeta_0:\Jgrp \equiv \pi_0\Jgrp \to\pi_0\Dpn,
$$
so $\pi_0\Dpn\approx\ZZZ^{\intk}$ is freely generated by the isotopy classes of internal Dehn twists.
\end{theorem}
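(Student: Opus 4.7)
The plan is to adapt the strategy of \cite[Th.\;6.2]{Maks:AGAG:2006} to accommodate $\NT$-points, in three steps: identifying the identity path-component $(\Dpn)_0$, proving that the internal Dehn twists generate all of $\pi_0\Dpn$, and establishing their $\ZZZ$-linear independence. Since $\Jgrp$ is discrete and free abelian with the chosen basis, $\zeta$ will be a homotopy equivalence once we show that $\pi_0(\zeta)$ is an isomorphism and that each path-component of $\Dpn$ is contractible.

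First, I would identify $(\Dpn)_0$ with $\StabIdf$, which is contractible by Theorem\;\ref{th:hom-type-StabIdf}. The inclusion $(\Dpn)_0\subset\StabIdf$ is immediate since $\Dpn\subset\Dpartfpl\subset\Stabf$, so any path in $\Dpn$ from $\id$ to $\dif$ is automatically $\func$-preserving. For the reverse inclusion, given $\dif\in\StabIdf$, Lemma\;\ref{lm:Dpartf_Stab_identity_comp} yields an $\infty$-isotopy $\{H_t\}\subset\Dpartfpl$ from $\id$ to $\dif$. At each $\NT$-point $z$, the path $t\mapsto\tnz(H_t)$ is continuous and starts at $\id$; using the same reasoning as in Lemma\;\ref{lm:intersections_with_DiffIdMn}, connectedness of $\Agrp_{++}$ (for $\NNT$-points) and discreteness of $\tnz(\gDAz)$ (for $\NZT$-points) force $\tnz(H_t)\in\Agrp_{++}$ resp.\ $\{\id\}$ for all $t$, hence $H_t\in\Dpn$. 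Since $\Dpn$ is a topological group, all its path-components are homeomorphic to $(\Dpn)_0$, hence contractible.

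Second, for surjectivity of $\zeta_0:\Jgrp\to\pi_0\Dpn$, I would fix mutually disjoint internal regular leaves $\gamma_1,\ldots,\gamma_{\intk}$ representing the internal edges. Given $\dif\in\Dpn$, the induced automorphism $\stongr(\dif)$ is trivial on $\FReebf$, so $\dif$ preserves every $\gamma_i$ with its orientation. Working in $\AFld$-invariant collar neighborhoods of the $\gamma_i$ and invoking Lemma\;\ref{lm:shift-func-on-reg} to produce a smooth shift function on each collar, I would read off an integer $n_i\in\ZZZ$ measuring the twisting of $\dif$ around $\gamma_i$. After composing with $\tau_1^{-n_1}\cdots\tau_{\intk}^{-n_{\intk}}$, the resulting diffeomorphism restricts to the identity on a collar of each $\gamma_i$ and hence decomposes into diffeomorphisms supported on the closures of the components of $\Mman\setminus\bigcup_i\gamma_i$. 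Each such block is a compact surface carrying a restriction of $\func$ whose critical points are of type $\ST$, $\PT$, or $\NT$, and whose boundary lies on fixed collars. Applying Theorem\;\ref{th:hom-type-StabIdf} to each block, combined with Lemma\;\ref{lm:ext_shift_func_for_PN_points} to extend shift functions across $\PT$- and $\NT$-points, one isotopes the block-restrictions to the identity through leaf-preserving diffeomorphisms in $\Dpn$, and the isotopies glue along the trivialized collars to produce an isotopy in $\Dpn$ from $\tau_1^{-n_1}\cdots\tau_{\intk}^{-n_{\intk}}\dif$ to $\id$.

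Third, injectivity of $\zeta_0$ follows from the observation that $\gamma_1,\ldots,\gamma_{\intk}$ is a system of disjoint, pairwise non-isotopic, essential simple closed curves in $\Mman$ (non-isotopic because they separate distinct edges of $\Reebf$, and essential because they are regular level curves bounded by non-trivial critical components on each side); Lickorish's theorem then implies that $[\tau_1],\ldots,[\tau_{\intk}]$ span a free abelian subgroup of $\pi_0\DiffM$, and a relation $\tau_1^{n_1}\cdots\tau_{\intk}^{n_{\intk}}\in(\Dpn)_0\subset\DiffIdM$ forces all $n_i=0$. The main obstacle will be the surjectivity step, and specifically the reduction near an $\NT$-point: because the shift map there is non-periodic with period function $\theta$ blowing up (condition \NA{} of Definition\;\ref{defn:N-point}), the direct Morse-case argument of \cite{Maks:AGAG:2006} does not go through. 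It is precisely here that condition \NB{} together with Lemma\;\ref{lm:ext_shfunc_under_homotopies} must be invoked, to guarantee that any $\dif\in\Dpn$ restricted to a neighborhood of an $\NT$-point admits a genuine $\Cinf$ shift function on the whole neighborhood, which is what permits the block-isotopies above to be assembled into a single isotopy in $\Dpn$.
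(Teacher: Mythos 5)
Your Step 3 (injectivity) contains a genuine error. You claim that $\gamma_1,\ldots,\gamma_{\intk}$ are essential, pairwise non-isotopic curves, so that the classes $[\tau_1],\ldots,[\tau_{\intk}]$ span a free abelian subgroup of $\pi_0\DiffM$. This is false: an internal $\NT$-edge of $\FReebf$ yields a curve $\gamma_i$ bounding a $2$-disk in $\Mman$ that contains a single $\NT$-point, so $\tau_i$ is isotopic to $\id_{\Mman}$ and $[\tau_i]=0$ in $\pi_0\DiffM$; likewise for $\Mman=S^2$ or $\prjplane$ \emph{every} internal curve bounds a disk or a M\"obius band. The entire point of the theorem is that such twists are nonetheless nontrivial in $\pi_0\Dpn$, so a relation $\tau_1^{m_1}\circ\cdots\circ\tau_{\intk}^{m_{\intk}}\in(\Dpn)_0=\StabIdf$ must be excluded by an argument internal to $\Dpn$, not by the mapping class group of $\Mman$. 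The paper does this via shift functions: since $\func$ has an $\ST$-point, any $\dif\in\StabIdf$ admits a unique global $\Cinf$ shift function $\afunc$ for the vector field $\AFld$; by uniqueness $\afunc$ coincides on $\Mman\setminus T$ with the canonical shift function $\sigma$ assembled from neighbourhoods of the critical components, and $\sigma\equiv0$ there because $\dif$ is fixed off $T$. Hence the flow isotopy $\dif_t(x)=\AFlow(x,t\afunc(x))$ is fixed on $\Mman\setminus T$, so each $\tau_i^{m_i}$ is isotopic to the identity \emph{relative to} the boundary of its supporting annulus, which forces $m_i=0$.

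Your surjectivity step also needs repair, although it is closer in spirit to the paper. A ``twisting integer $n_i$'' cannot be read off from a collar of $\gamma_i$ alone: a shift function on an annulus of closed leaves is defined only up to $n\theta$, and the integer is really the discrepancy between the shift functions extended into the collar from the two adjacent critical components. Moreover, composing with $\tau_1^{-n_1}\circ\cdots\circ\tau_{\intk}^{-n_{\intk}}$ does not make $\dif$ literally the identity on the collars, so the proposed block decomposition does not get off the ground without a further (and not obviously leaf-preserving) normalization. The paper sidesteps both problems by constructing a single $\Cinf$ shift function $\sigma$ on all of $\Mman\setminus T$ --- using \NB\ at $\NT$-points, Lemma~\ref{lm:ext_shift_func_for_PN_points} at $\PT$-points, and the gluing along critical components at $\ST$-points --- and then flowing with a cutoff, $H_t(x)=\AFlow(x,t\mu(x)\sigma(x))$, to isotope $\dif$ inside $\Dpn$ to a diffeomorphism supported in $T$. (Your Step 1 is essentially correct and agrees with the paper; note also that the non-orientable case requires passing to the oriented double covering, which your sketch omits.)
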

The proof is similar to\;\cite[Th.\;6.2]{Maks:AGAG:2006}.
It suffices to establish the following statement:
\begin{lemma}
{\rm1)}~Every $\dif\in\Dpn$ is isotopic in $\Dpn$ to a product of internal Dehn twists, whence $\zeta:\Jgrp\to\pi_0\Dpn$ is surjective.

{\rm2)}~If $\dif=\tau_{1}^{m_1}\circ\cdots\circ\tau_{\intk}^{m_{\intk}}\in\StabIdf$ for some $m_i\in\ZZZ$, then $m_i=0$ for all $i$ and thus $\dif=\id_{\Mman}$, whence $\zeta:\Jgrp\to\pi_0\Dpn$ is a monomorphism.
\end{lemma}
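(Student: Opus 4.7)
The plan is to adapt the strategy of the Morse-case result [Maks:AGAG:2006, Th.\,6.2], modifying the arguments around critical points so they remain valid for $\ST$-, $\PT$-, and $\NT$-points as introduced in \S\ref{sect:special-crit-points}.

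For part (1), surjectivity, I take $\dif\in\Dpn$. The first step is to isotope $\dif$ inside $\Dpn$ so that it becomes equal to $\id_\Mman$ on a fixed open neighbourhood of $\partfcr\cup\partial\Mman$ together with a fixed open neighbourhood of every external regular leaf of $\partf$. Near a critical point $z$ this rests on the germ of $\dif$ at $z$ lying in $\gimShAz$: at an $\ST$-point this is the defining identity $\gimShAz=\gDAz$; at a $\PT$- or $\NT$-point it is built into the definition of $\Dpn$. Hence $\dif$ admits a smooth shift function on a punctured neighbourhood of $z$, which by Lemma\;\ref{lm:ext_shift_func_for_PN_points} (at a $\PT$-point) and Lemma\;\ref{lm:ext_shfunc_under_homotopies} (at an $\NT$-point) extends smoothly across $z$; multiplying this shift function by a cutoff bump yields the required $\partf$-preserving isotopy. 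Analogous flow-box arguments, supplied by the special vector fields and by the local flows along boundary collars, localise $\dif$ away from the boundary components and away from regular leaves whose edge is tangent to an $\NT$-vertex or incident to a $\PT$- or $\partial$-vertex. After this reduction $\supp(\dif)$ is contained in a disjoint union of cylindrical sub-neighbourhoods $C_i$, one inside each internal regular component, and $\dif|_{C_i}$ is a leaf-preserving, leaf-orientation-preserving diffeomorphism of $S^1\times(0,1)$ which is the identity near both ends; the standard classification then shows that $\dif|_{C_i}$ is $\partf$-isotopic relative the boundary to $\tau_i^{m_i}$ for some unique $m_i\in\ZZZ$.

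For part (2), injectivity, suppose $\dif=\tau_1^{m_1}\circ\cdots\circ\tau_\intk^{m_\intk}$ lies in $\StabIdf$, so there exists an $\func$-preserving isotopy $\dif_t$ with $\dif_0=\id_\Mman$ and $\dif_1=\dif$. Fix an internal edge $e_i$, the associated leaf $\gamma_i\subset C_i$, and choose a short smooth arc $\alpha_i$ transverse to $\partf$ that crosses $\gamma_i$ once and whose endpoints lie in leaves disjoint from the supports of all chosen twists $\tau_1,\dots,\tau_\intk$. Because $\dif_t\in\Stabf$ preserves leaves of $\partf$, each endpoint of $\alpha_i$ tracks a loop in its own leaf during the isotopy, so the relative homotopy class of $\dif_t(\alpha_i)$ inside the cylinder $C_i$, measured by the signed intersection number with $\gamma_i$ (or equivalently by a lift to the universal cover of $C_i$), defines a locally constant $\ZZZ$-valued invariant of $t$. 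At $t=0$ this invariant is $0$; at $t=1$ it equals $m_i$; hence $m_i=0$. Applying this for every $i$ yields $\dif=\id_\Mman$.

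The main obstacle is the near-$\NT$-point step of (1): the germ of $\dif$ at an $\NT$-point belongs to $\gimShAzz$ by definition of $\Dpn$, but the shift function producing this germ has to be reconciled with a shift function chosen on the surrounding punctured cylindrical neighbourhood, so that they assemble into a single smooth shift function defined on a full neighbourhood of $z$. This reconciliation is exactly what Lemma\;\ref{lm:ext_shfunc_under_homotopies} ensures, relying on the non-periodicity of the shift map at an $\NT$-point (property \NA) together with the openness condition \SPECB; without axioms \NB\ and \NC, or without the uniqueness of shift functions provided by Lemma\;\ref{lm:shift-func-on-reg}, this gluing step would fail.
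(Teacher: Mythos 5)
Your part (1) is essentially the paper's argument: the paper also builds a smooth shift function for $\dif$ on the complement of the union $T$ of the supports of the internal twists (first near $\NT$-points using $\dif\in\gimShAzz$, then near the $\ST$-critical components, then extending over boundary cylinders and $\PT$-disks), and uses a cut-off of that shift function to deform $\dif$ in $\Dpn$ to a diffeomorphism supported in $T$. Two small imprecisions: at a $\PT$-point the inclusion of the germ of $\dif$ into $\gimShAz$ is not part of the definition of $\Dpn$ but comes from Lemma\;\ref{lm:P-points} ($\gimShAz=\gDpAz$ there); and the gluing across an $\NT$-point is handled by uniqueness of shift functions on the punctured neighbourhood (non-periodicity), not by Lemma\;\ref{lm:ext_shfunc_under_homotopies}, which concerns homotopies. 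You also do not treat the non-orientable case, which the paper reduces to the oriented double cover.

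Part (2) has a genuine gap. The quantity you propose is not an invariant of $t$. The algebraic intersection number of $\dif_t(\alpha_i)$ with $\gamma_i$ is indeed locally constant (the endpoints stay off $\gamma_i$), but a Dehn twist does not change the algebraic intersection of an arc with its core curve, so this number equals $\pm1$ for every $t$ and detects nothing. If instead you mean the winding of $\dif_t(\alpha_i)$ in the universal cover of $C_i$ relative to its endpoints, that class is \myemph{not} locally constant precisely because, as you note, each endpoint may track a nontrivial loop in its (circular) leaf during the isotopy; a full rotation of an endpoint's leaf shifts the relative class by the generator. This is not a technicality: for the projection $S^1\times I\to I$ the twist along a leaf \myemph{is} isotopic to the identity through leaf-preserving diffeomorphisms, exactly by rotating the leaves, and your argument as written would ``prove'' otherwise. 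Your part (2) never uses the hypothesis that $\func$ has an $\ST$-point, which is what actually forces $m_i=0$: the paper uses it to get a \myemph{unique} global shift function $\afunc$ for $\dif$ (non-periodicity of the shift map), observes via its Claim on $\Mman\setminus T$ that $\afunc\equiv0$ off $T$, and hence obtains an isotopy of $\dif$ to $\id_{\Mman}$ that is \myemph{fixed} on $\Mman\setminus T$; only then does the winding argument (twist on a cylinder isotopic to the identity rel boundary forces exponent zero) apply. To repair your proof you would have to pin down the endpoints of $\alpha_i$, e.g.\ by placing them on critical leaves adjacent to $\ST$-points and showing the isotopy cannot rotate there, which again comes down to uniqueness of shift functions.
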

\begin{proof}
First suppose that $\Mman$ is orientable.
Let $\AFld$ be a vector field on $\Mman$ satisfying assumptions of Lemma\;\ref{lm:Hamilt_vf_and_prop_orient}.

\begin{claim}\label{clm:shift_func_on_T}
For every $\dif\in\Dpn$ there exists a unique $\Cinf$ function $\sigma:\Mman\setminus T\to\RRR$ being shift function for $\dif$ with respect to $\AFld$. 
If $\dif$ is fixed on $\Mman\setminus T$, then $\sigma=0$ on $\Mman\setminus T$.
\end{claim}
\begin{proof}
a) Let $z$ be an $\NT$-point of $\func$.
Then $\dif\in\ker(\atnz)=\gimShAz$, so $\dif$ has a (unique) $\Cinf$ shift function $\sigma_{z}$ defined on some $\AFld$-invariant closed neighbourhood $\Vman_{z}$ of $z$ containing no other critical points of $\func$.
We can assume that $\Vman_{z}\cap\Vman_{z'}=\varnothing$ for $z\not=z'$.
Then the functions $\sigma_{z}$ define a unique shift function $\sigma_{\NT}$ for $\dif$ on the following set
$\Uman_{\NT}:=\mathop\cup\limits_{z\in\singfN}\Vman_{z}.$

b) Since $\func$ has at least one $\ST$-point, there exists a critical component $\crcomp$ of $\partf$ (that is a connected component of $\partfcr$) which is not a local extreme.
Let $z\in\crcomp\cap\singf$.
Then $z$ is an $\ST$-point of $\func$.
The assumption $\dif\in\Dpn$ means that $\dif\in\gimShAz$, so there exists a neighbourhood $\Vman_z$ of $z$ and a unique  $\Cinf$ function $\sigma_{z}:\Vman_z\to\RRR$ such that $\dif(x)=\AFlow(x,\sigma_z(x))$ for all $x\in\Vman_z$.
Notice that $\crcomp\setminus\singf$ is a disjoint union of intervals, these functions extend to a unique $\Cinf$ function $\sigma_{\crcomp}$ defined on some neighbourhood $\Vman(\crcomp)$ of $\crcomp$ such that $\dif(x) = \AFlow(x,\sigma_{\crcomp}(x))$ for all $x\in\Vman(\crcomp)$, see~\cite[Lm.~6.4]{Maks:AGAG:2006}.

We can assume that $\Vman(\crcomp)$ is $\AFld$-invariant, so $\partial\Vman(\crcomp)$ consists of regular leaves of $\partf$. 

Put $\Uman_{\ST}:=\cup_{\crcomp}\Vman(\crcomp)$, where $\crcomp$ runs over all critical components of $\partf$ that are not local extremes.
We can also assume that 
$$\Vman(\crcomp)\cap\Vman(\crcomp')=\Vman(\crcomp)\cap\Uman_{\NT}=\varnothing$$ for distinct critical components $\crcomp$ and $\crcomp'$.
Then the functions $\sigma_{\crcomp}$ define a unique shift function $\sigma_{\ST}$ for $\dif$ on $\Uman_{\ST}$.

c) Notice that the set $\overline{\Mman\setminus\Uman_{\ST}}$ is a union of cylinders and $2$-disks.
Moreover, every cylinder contains no critical point of $\func$, while every $2$-disk contains a unique critical point of $\func$ and this point is a local extreme of $\func$.
Let $B_1,\ldots,B_k$ be all the connected components of $\overline{\Mman\setminus\Uman_{\ST}}$ having one or the following properties: either
\begin{itemize}
 \item
$B_i$ is a cylinder such that $B_i\cap\partial\Mman\not=\varnothing$, or
 \item
$B_i$ is a $2$-disk containing a $\PT$-point of $\func$.
\end{itemize}

In particular, $B_i\cap\Uman_{\NT}=\varnothing$.
Denote $\Uman_{\partial,\PT}=\cup_{i=1}^{k} B_i$.
We claim that $\sigma_{\ST}$ extends to a $\Cinf$ function on $\Uman_{\ST}\cup\Uman_{\partial,\PT}$.

Indeed, suppose $B_i\approx S^1\times I$ is a cylinder.
Then $B_i\cap\Uman_{\ST}$ is a connected $\AFld$-invariant neighbourhood of one of the connected components of $\partial B_i$ and this neighbourhood does not contain another component of $\partial B_i$.
So we can assume that $B_i\cap\Uman_{\ST}=S^1\times[0,\eps]$.
Then the function $\sigma_{\ST}$ on $B_i\cap\Uman_{\ST}$ extends to a $\Cinf$ shift function for $\dif$ on all of $B_i$, see\;\cite[Lm.\;4.12(2)]{Maks:AGAG:2006}.
Denote this function by $\beta_i:B_i\to\RRR$.

Suppose $B_i$ is a $2$-disk containing a $\PT$-point $z$.
Then $Y_i:=B_i\cap\Uman_{\ST}$ is a neighbourhood of $\partial B_i$ and by (1) of Lemma\;\ref{lm:ext_shift_func_for_PN_points} $\sigma_{\ST}$ extends to a unique $\Cinf$ shift function $\beta_i:B_i\to\RRR$ for $\dif$ on all of $B_i$.

Denote $\Uman := \Uman_{\NT}\cup\Uman_{\ST}\cup\Uman_{\partial,\PT}$.
Then the functions 
$$
\sigma_{\NT}:\Uman_{\NT}\to\RRR,
\qquad 
\sigma_{\ST}:\Uman_{\ST}\to\RRR, 
\qquad 
\beta_i:B_i\to\RRR, \ (i=1,\ldots,\intk),
$$
define a unique $\Cinf$ shift function $\sigma:\Uman\to\RRR$ for $\dif$.
Notice that $\Mman\setminus\Uman$ is contained in the union of internal components of $\partfreg$.
Therefore we can assume that in fact $\Mman\setminus\Uman\subset T$, so $\Uman\supset\Mman\setminus T$.

If $\dif$ is fixed on $\Mman\setminus\Uman\subset T$, then it follows from uniqueness of $\sigma_{\NT}$, $\sigma_{\ST}$ and uniqueness of extensions $\beta_i$ of $\sigma_{\ST}$ to $\Uman_{\partial,\PT}$, that $\sigma\equiv0$.
\end{proof}

In order to complete statement 1) it remains to construct an isotopy of $\dif$ in $\Dpn$ to a diffeomorphism fixed on $\Mman\setminus T$.
Let $\mu:\Mman\to[0,1]$ be a $\Cinf$ function constant on leaves of $\partf$ and such that $\mu=1$ on some neighbourhood of $\Uman$.
Define the following map:
$$
H:\Mman\times I\to\Mman, \qquad 
H_t(x) = \AFlow(x, \, t \cdot \mu(x) \cdot \sigma_{\dif}(x)).
$$
Then $H$ is $\Cinf$, $H_0=\id_{\Mman}$, and $H_1=\dif$ on some neighbourhood of $\Uman$, see for details~\cite[Lemma~4.14]{Maks:AGAG:2006}.
In particular, every $H_t\in\Dpn$.
Hence the following isotopy $G_t=\dif \circ H_t^{-1}$ deforms $G_0=\dif$ in $\Dpn$ to a diffeomorphism $G_1$ fixed on some neighbourhood of $\Uman$.
In other words $\supp G_1 \subset T$, whence $G_1$ is isotopic in $\Dpn$ to a product of some internal Dehn twists.

\medskip 

2) Let $\dif=\tau_{1}^{m_1}\circ\cdots\circ\tau_{\intk}^{m_{\intk}}\in\StabIdf$.
We have to show that $m_i=0$ for all $i=1,\ldots,\intk$.
For each $i$ let $\Uman_i$ be a cylinder neighbourhood of $\gamma_i$ containing $\supp\tau_i$, see Figure\;\ref{fig:dehn-twist}.
Then $\dif|_{\Uman_i}=\tau_i^{m_i}|_{\Uman_i}$.

Since $\dif\in\StabIdf$ and $\func$ has at least one $\ST$-point, it follows from (iv) of Lemma\;\ref{lm:Hamilt_vf_and_prop_orient} that there exists a unique $\Cinf$ shift function $\afunc:\Mman\to\RRR$ for $\dif$, so $\dif(x)=\AFlow(x,\afunc(x))$ for all $x\in\Mman$.
Therefore an isotopy of $\dif$ to $\id_{\Mman}$ in $\Dpn$ can be given by $\dif_t(x)=\AFlow(x,t\afunc(x))$, $(t\in I)$.

On the other hand by Claim\;\ref{clm:shift_func_on_T} there exists a unique $\Cinf$ function $\sigma:\Mman\to\RRR$ being shift function for $\dif$ on $\Mman\setminus T$, whence $\afunc=\sigma$ on $\Mman\setminus T$.
Moreover, as $\dif$ is fixed on $\Mman\setminus T$, we have that $\afunc=\sigma=0$ on $\Mman\setminus T$.
Hence $\dif_t$ is fixed on $\Mman\setminus T$ for all $t\in I$.
This implies that $\dif|_{\Uman_i}=\tau_i^{m_i}|_{\Uman_i}$ is isotopic to $\id_{\Uman_i}$ relatively some neighbourhood of $\partial\Uman_i$.
But this is possible if and only if $m_i=0$.
This proves Theorem\;\ref{th:pi0Dpn} for orientable case.
\medskip 

Suppose $\Mman$ is non-orientable.
Let $p:\tMman\to\Mman$ be the oriented double covering of $\Mman$, and $\xi$ be $p$-equivariant involution of $\tMman$.
Then we have a function $\tfunc=\func\circ p:\tMman\to\Psp$.
Since every internal leaf $\gamma_i$ is two-sided, it follows that $p^{-1}(\gamma_i)$ consists of two connected components $\gamma_{i1}$ and $\gamma_{i2}$ being internal leafs of the foliation $\partittf$ of $\tfunc$.
Then there are Dehn twists $\tau_{i1},\tau_{i2}\in\tDpnt$ along $\gamma_{i1}$ and $\gamma_{i2}$ respectively such that $\tau_{i2}=\xi\circ\tau_{i1}\circ\xi$ and $\hat\tau_i:=\tau_{i2}\circ\tau_{i1}$ is a lifting of $\tau_{i}$.
Evidently, each $\tau_{ij}$ is internal, and by the oriented case the group $\pi_0\Dpnt$ is generated by $\tau_{ij}$ for $i=1,\ldots,\intk$ and $j=1,2$.

Consider the subgroup $\tDpnt$ of $\Dpnt$ consisting of symmetric $\dif$, i.e. $\dif\circ\xi=\xi\circ\dif$.
Then similarly to\;\cite[Claim\;6.5.1]{Maks:AGAG:2006} it can be shown that the isotopy classes of $\hat\tau_i$, $(i=1,\ldots,\intk)$ generate $\pi_0\tDpnt$, whence $\pi_0\tDpnt\approx \ZZZ^{\intk}$.
Moreover, it follows from (ix) of Lemma\;\ref{lm:Hamilt_vf_and_prop_nonorient} that there is a natural homeomorphism between $\Dpn$ and $\tDpnt$.
Hence $\pi_0\Dpn\approx \ZZZ^{\intk}$ and this group is generated by the isotopy classes of internal Dehn twists.
Theorem\;\ref{th:pi0Dpn} is completed.
\end{proof}

\subsection{Action of $\Jgrp$ on $H_1(\Mman,\singf)$.}
Let $\JgrpN$ and $\JgrpS$ be the subgroups of $\Jgrp$ generated by internal $\NT$-twists and $\ST$-twists respectively.
Evidently, $\Jgrp = \JgrpN\oplus\JgrpS$.
Notice that $\Jgrp$ naturally acts on the first relative homology group $H_1(\Mman,\singf)$ with integer coefficients. 
Let $\nu:\Jgrp\to\Aut(H_1(\Mman,\singf))$ be the corresponding homomorphism.
\begin{lemma}\label{lm:act_Jgrp_on_H_1}
If $\Mman$ is orientable, then $\ker(\nu)=\JgrpN$.
\end{lemma}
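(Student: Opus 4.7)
My plan is to analyze $\nu$ via the standard formula for the action of a Dehn twist on relative homology. For any simple closed curve $\gamma\subset\Mman\setminus\singf$, the induced map on $H_1(\Mman,\singf;\ZZZ)$ is
$$
\nu(\tau_\gamma)(\alpha) \;=\; \alpha + i(\alpha,\gamma)\cdot [\gamma],
$$
where $i(\cdot,\cdot)$ is the algebraic intersection pairing, well defined because $\gamma$ avoids $\singf$. Since the internal leaves $\gamma_1,\ldots,\gamma_{\intk}$ lie in distinct regular components of $\partf$ and hence are pairwise disjoint, an arbitrary $\dif=\tau_1^{m_1}\cdots\tau_{\intk}^{m_{\intk}}$ acts as
$$
\nu(\dif)(\alpha)-\alpha \;=\; \sum_{j=1}^{\intk} m_j\, i(\alpha,\gamma_j)\, [\gamma_j].
$$

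To prove $\JgrpN\subseteq\ker\nu$, let $e_i$ be an internal $\NT$-edge with $\NT$-endpoint $w$. Since $\prKR^{-1}(w)=\{z\}$ is a single local extreme and the nearby regular leaves are concentric circles around $z$, the vertex $w$ has valence one in $\Reebf$, and the component of $\Mman\setminus\gamma_i$ on the $\NT$-side closes up to a $2$-disk $D_i\subset\Mman$ whose unique critical point is $z$. Thus $D_i$ is a relative $2$-chain in $(\Mman,\singf)$ with relative boundary $\gamma_i$ (the zero-dimensional contribution from $z\in\singf$ is discarded), so $[\gamma_i]=0$ in $H_1(\Mman,\singf)$, and by the displayed identity $\nu(\tau_i)=\id$.

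For the reverse inclusion, decompose any $\dif\in\ker\nu$ as $\dif=\dif_N\cdot\dif_S$ with $\dif_N\in\JgrpN\subseteq\ker\nu$ and $\dif_S=\tau_{i_1}^{m_1}\cdots\tau_{i_r}^{m_r}\in\JgrpS$; then $\dif_S\in\ker\nu$ as well, so it suffices to show $\JgrpS\cap\ker\nu=0$. For each $\ST$-edge $e_{i_l}$ I construct a test relative $1$-cycle $\alpha_l$: choose a simple edge-path in $\Reebf$ that crosses $e_{i_l}$ exactly once, crosses no other internal edge of $\Reebf$, and has endpoints at vertices corresponding to critical points of $\func$, and then lift this edge-path through $\prKR$ to a smooth path in $\Mman$ transverse to all $\gamma_j$. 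A direct count gives $i(\alpha_l,\gamma_{i_s})=\delta_{ls}$, and substituting $\alpha=\alpha_l$ into the displayed formula yields $m_l\cdot[\gamma_{i_l}]=0$ in $H_1(\Mman,\singf)$ for each $l$.

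The main obstacle is the nonvanishing step: one must show $[\gamma_{i_l}]$ has infinite order in $H_1(\Mman,\singf)$ for every $\ST$-edge $e_{i_l}$. Since both endpoints of $e_{i_l}$ are $\ST$-vertices, each side of $\gamma_{i_l}$ contains saddle critical points and so neither side is a disk; combined with the orientability of $\Mman$, the long exact sequence of $(\Mman,\singf)$ and a Poincar\'e--Lefschetz duality argument produce a relative cohomology class dual to $[\gamma_{i_l}]$ that witnesses $[\gamma_{i_l}]\ne 0$. The identity $m_l\cdot[\gamma_{i_l}]=0$ then forces $m_l=0$ for every $l$, so $\dif_S=\id$ and $\ker\nu=\JgrpN$, completing the proof.
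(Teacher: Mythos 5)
Your overall strategy is the paper's: the Picard--Lefschetz formula for the twist action on $H_1(\Mman,\singf)$, triviality of the $\NT$-twists, and dual test classes detecting the exponents of the $\ST$-twists. Your treatment of the inclusion $\JgrpN\subseteq\ker(\nu)$ is correct and essentially equivalent to the paper's (the paper observes that an $\NT$-internal twist is isotopic to the identity relative to $\singf$, you observe that $[\gamma_i]=\partial[D_i]=0$; both rest on the fact that an $\NT$-vertex is a local extreme, so the curve bounds a disk with a single marked point). Your arcs $\alpha_l$ with $i(\alpha_l,\gamma_{i_s})=\delta_{ls}$ are exactly the paper's implicit use of the intersection form, and they correctly reduce the converse inclusion to the nonvanishing of the classes $[\gamma_{i_l}]$.

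The gap is in the step you yourself flag as the main obstacle, and the repair you sketch does not work. Since $\singf$ is a finite set, $H_1(\singf)=0$, so the long exact sequence of the pair shows that $H_1(\Mman)\to H_1(\Mman,\singf)$ is \emph{injective}, and $[\gamma_{i_l}]\in H_1(\Mman,\singf)$ is the image of the absolute class of $\gamma_{i_l}$. Hence if $\gamma_{i_l}$ \emph{separates} $\Mman$ --- which is automatic for every $\ST$-internal curve when $\Mman=S^2$, and is possible on any surface --- then $\gamma_{i_l}$ bounds a subsurface and $[\gamma_{i_l}]=0$ in $H_1(\Mman,\singf)$, no matter how many saddles lie on each side. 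The criterion ``neither complementary component is a disk'' governs nonvanishing in $H_1(\Mman\setminus\singf)$ (the punctured surface), not in $H_1(\Mman,\singf)$, and no Poincar\'e--Lefschetz duality argument can produce a class pairing nontrivially with a boundary. Consequently $m_l[\gamma_{i_l}]=0$ does not force $m_l=0$ for separating $\ST$-curves, and your proof of $\JgrpS\cap\ker(\nu)=0$ is incomplete precisely there. You should be aware that the paper disposes of this same point with an unproved ``Notice that $\ST$-internal curves represent linearly independent cycles in $H_1(\Mman,\singf)$,'' so you have located the genuine crux rather than a technicality; but closing it requires either a non-homological argument for the separating twists (compare the shift-function argument in the proof of Theorem~\ref{th:pi0Dpn}) or a reformulation of the invariant on which $\Jgrp$ is made to act.
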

\begin{proof}
Evidently, every $\NT$-internal Dehn twist $\tau\in\JgrpN$ is isotopic to $\id_{\Mman}$ relatively to $\singf$, so $\ker(\nu)\supset\JgrpN$.

To prove the converse inclusion consider an intersection form $\langle\cdot,\cdot\rangle$ on $H_1(\Mman,\singf)$.
Evidently, the action of $\JgrpS$ on $H_1(\Mman,\singf)$ is given by
$$
\prod_{i=1}^{a} \tau_i^{m_i} \cdot x  = x + \sum_{i=1}^{a} m_i\langle x,\gamma_i\rangle \,\gamma_i,
$$
see also\;\cite[Eq.\;(6.1)]{Maks:AGAG:2006}.
Notice that $\ST$-internal curves represent linearly independent cycles in $H_1(\Mman,\singf)$.
This implies that $\nu|_{\JgrpS}$ is a monomorphism, so $\ker(\nu)=\JgrpN$.
\end{proof}

\section{Proof of Theorem\;\ref{th:Stabf_DiffIdMn_StabIdf}}
\label{sect:proof:th:Stabf_DiffIdMn_StabIdf}
We have to show that $\Stabf\cap\DiffIdMn = \StabIdf$.
By Lemma\;\ref{lm:Dpn_and_ketrstong_cap_DiffIdM}
$\Stabf\cap\DiffIdMn = \Dpn\cap\DiffIdMn$, moreover $\Didpn = \StabIdf$.
Hence it suffices to prove the following proposition:
\begin{proposition}\label{pr:intersections_with_ker_lambda}{\rm c.f. \cite[Pr.\;8.5]{Maks:AGAG:2006}}
$\Dpn\cap\DiffIdMn = \Didpn$.
\end{proposition}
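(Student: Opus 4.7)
The inclusion $\Didpn\subset\Dpn\cap\DiffIdMn$ is immediate since $\Dpn\subset\DiffMn$ and any path in $\Dpn$ from $\id$ is in particular a path in $\DiffMn$. For the converse, let $\dif\in\Dpn\cap\DiffIdMn$. By Theorem\;\ref{th:pi0Dpn} the element $\dif$ is path-connected in $\Dpn$ to a unique product $\eta=\prod_{i=1}^{\intk}\tau_i^{m_i}$ of internal Dehn twists, and $\dif\in\Didpn$ iff every $m_i=0$. Paths in $\Dpn$ remain in $\DiffMn$, so $\eta\in\DiffIdMn$ as well, and the task reduces to showing that $\eta\in\DiffIdMn$ forces all $m_i=0$. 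Split $\Jgrp=\JgrpS\oplus\JgrpN$ and write $\eta=\eta_{\ST}\cdot\eta_{\NT}$; the factors commute because internal twists have disjoint supports. As $\DiffIdMn\subset\DiffIdM$, $\eta$ acts trivially on $H_1(\Mman,\singf;\ZZZ)$, and Lemma\;\ref{lm:act_Jgrp_on_H_1} (applied to $\Mman$ in the orientable case, and to the oriented double cover $\tMman$ in the non-orientable one via Lemma\;\ref{lm:lift_symm_diff} and the setup of \S\ref{sect:proof:th:hom-type-StabIdf}) tells us that the homological action $\nu$ is injective on $\JgrpS$. This kills the $\ST$-part, so we may assume $\eta=\eta_{\NT}=\prod_{w}\tau_w^{m_w}$, the product running over those $\NT$-vertices $w$ whose incident edge is internal.

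To eliminate each $m_w$ I perform a winding-number comparison at the corresponding $\NT$-point $z=z_w$. Fix an isotopy $H_t\in\DiffMn$ from $\id$ to $\eta_{\NT}$; by discreteness of $\singfN$ and continuity, $H_t(z)=z$ for every $t$, and by axiom \AxSPN\ together with Lemma\;\ref{lm:N-points} the derivatives $\tnz(H_t)$ lie in the contractible subgroup $\Agrp_{++}\subset\GLRp{2}$ (and in fact equal $\id$ when $z$ is an $\NZT$-point). Consequently the loop $t\mapsto\tnz(H_t)$ based at $\id$ represents the zero class in $\pi_1\GLRp{2}\cong\ZZZ$. On the other hand, the natural ``radial rotation'' isotopy $\widetilde H_t$ of $\eta_{\NT}$, obtained by rotating a small disk around each $z_{w'}$ by angle $2\pi m_{w'} t$ along the closed orbits of $\AFld$ and extended by the identity outside, fixes every critical point pointwise and satisfies $\tnz(\widetilde H_t)=R(2\pi m_w t)$, whose class in $\pi_1\GLRp{2}$ is exactly $m_w$.

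The concatenation $L_t=H_t\cdot\widetilde H_t^{-1}$ is then a loop at $\id$ in the subgroup of $\DiffM$ consisting of diffeomorphisms that fix each point of $\singf$; its derivative loop $\tnz(L_t)$ has class $0-m_w=-m_w$ in $\pi_1\GLRp{2}$. But the identity component of this subgroup coincides with $\DiffIdMcr$, which by Lemma\;\ref{lm:DMcr_contr} is \emph{contractible} thanks to the assumption that $\func$ has at least one $\ST$-point. Hence $\pi_1$ of that subgroup is zero, $L_t$ is nullhomotopic, and so $-m_w=0$, i.e.\ $m_w=0$. Running this argument at every $\NT$-vertex with an internal incident edge completes the orientable case; the non-orientable one reduces to it by lifting to $\tMman$ as in \S\ref{sect:proof:th:hom-type-StabIdf}, noting that an internal $\NT$-Dehn twist on $\Mman$ lifts to a symmetric pair of internal $\NT$-Dehn twists on $\tMman$ and that the existence of an $\ST$-point on $\Mman$ yields one on $\tMman$. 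The decisive step, and the precise reason the $\ST$-point hypothesis cannot be dropped here (cf.\ the exceptional cases (b),(c) of Theorem\;\ref{th:hom-type-StabIdf}), is the vanishing of $\pi_1\DiffIdMcr$: without it the compensating loop $L_t$ could carry non-trivial winding and the exponents $m_w$ would not be forced to vanish.
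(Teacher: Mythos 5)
Your proof is correct, and its first two stages coincide with the paper's: both reduce via Theorem\;\ref{th:pi0Dpn} to an element $\eta=\prod_i\tau_i^{m_i}$ of $\Jgrp$, and both kill the $\ST$-exponents with the homological action of Lemma\;\ref{lm:act_Jgrp_on_H_1}, passing to the oriented double cover when $\Mman$ is non-orientable. Where you genuinely diverge is the $\NT$-step. The paper takes a $1$-isotopy $H_t$ in $\DiffIdMn$ and, by a parametrized version of Palais's lemmas, first corrects it so that $\tn{z_i}(H_t)\equiv\id$ and then so that it is fixed on a whole neighbourhood of each $\NT$-point; the vanishing of the $m_i$ is then read off from the non-triviality of twists about curves encircling a marked disk. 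You instead encode $m_w$ as the winding number of the derivative loop of the explicit ``unrotation'' isotopy $\widetilde H_t$, note that the derivative loop of $H_t$ is trapped in the contractible group $\Agrp_{++}$ (or is constant at an $\NZT$-point) and hence has winding number zero, and force the difference to vanish because the quotient loop lies in $\DiffIdMcr$, whose $\pi_1$ vanishes by Lemma\;\ref{lm:DMcr_contr}. This avoids the Palais-type modifications and the mapping-class-group input entirely, at the price of the (routine) bookkeeping that $\widetilde H_t$ fixes $\singf$ pointwise but does \emph{not} lie in $\DiffMn$, and that pointwise products of loops in a topological group add in $\pi_1$. Both arguments ultimately rest on the same two ingredients: the constraint $\tnz(H_t)\in\Agrp_{++}$ (resp.\ $=\id$) built into the definition of $\DiffMn$, and the presence of an $\ST$-point, which you correctly identify as entering through the contractibility of $\DiffIdMcr$.

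Two small points worth making explicit. First, the continuity of $t\mapsto\tnz(H_t)$ needs $H_t$ to be at least a $1$-isotopy; this is automatic for a path in the $\Wr{\infty}$ topology, but should be said. Second, for the $\NT$-step the passage to $\tMman$ is superfluous: the winding-number comparison is local at each $z_w$ (the disk $B_w$ is orientable, and whether $m_w=0$ does not depend on the chosen local orientation), and Lemma\;\ref{lm:DMcr_contr} already covers non-orientable surfaces, so the double cover is needed only for the homological $\ST$-step.
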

\begin{proof}
Evidently, $\Dpn\cap\DiffIdMn \supset \Didpn$.

Conversely let $\dif\in\Dpn\cap\DiffIdMn$.
Then by Theorem\;\ref{th:pi0Dpn} $\dif$ is isotopic in $\Dpn$ to some $\dif'\in\Jgrp$, whence we can assume that $\dif\in\Jgrp$ itself.
Write $\dif=\dif_{\NT}\circ\dif_{\ST}$, where $\dif_{\NT}\in\Jgrp_{\NT}$ and $\dif_{\ST}\in\Jgrp_{\ST}$.
We have to show that $\dif_{\ST}=\dif_{\NT}=\id_{\Mman}$.

{\em Proof that $\dif_{\ST}=\id_{\Mman}$.}
If $\Mman$ is orientable, then by Lemma\;\ref{lm:act_Jgrp_on_H_1} $\dif$ trivially acts on $H_1(\Mman,\singf)$, i.e. $\dif\in\ker(\nu)=\dif_{\NT}$.
Hence $\JgrpS=\id_{\Mman}$.

Suppose $\Mman$ is non-orientable.
Then $\dif_{\ST}$ lifts to a diffeomorphism
$$\tdif_{\ST} \ \in \ \tDpnt\cap\tDiffIdtMn \ \subset \ \Dpnt\cap\DiffIdtMn$$
of the oriented double covering $\tMman$ of $\Mman$.
By the orientable case we have that $\tdif_{\ST}=\id_{\tMman}$, whence $\dif_{\ST}=\id_{\Mman}$ as well.

{\em Proof that $\dif_{\NT}=\id_{\Mman}$.}
Let $\tau_1,\ldots,\tau_{\intv}$ be the internal Dehn twists generating $\JgrpN$, so we can write 
$\dif_{\NT}=\tau_{1}^{m_1}\circ\tau_{\intv}^{m_{\intv}}$ for some $m_i\in\ZZZ$.
Then $\supp\tau_{i}$ is contained in some closed $2$-disk $B_i\subset\Mman$ which in turn contains a unique critical point $z_i$ of $\func$ being an $\NT$-point and such that $\supp\tau_{i} \cap (z_i\cup \partial B_i)=\varnothing$.
\begin{claim}\label{clm:deform_dif_to_id_rel_nbhsingfN}
There exists an isotopy $\dif_t:\Mman\to\Mman$ between $\dif_0=\id_{\Mman}$ and $\dif_1=\dif_{\NT}$ fixed on some neighbourhood of \ $\mathop\cup\limits_{i=1}^{\intv}  \ \{z_i\}$.
Hence $m_i=0$ for all $i$,and thus $\dif_{\NT}=\id_{\Mman}$.
\end{claim}
\begin{proof}
Let $\dif_t:\Mman\to\Mman$ be a $1$-isotopy between $\dif_0=\id_{\Mman}$ and $\dif_1=\dif_{\NT}$ in $\DiffIdMn$.
Then $\tn{z_i}(\dif_t)$ continuously depends on $t$ for each $i$.

\myemph{There exists another isotopy $\dif'_t$ between $\dif_0=\id_{\Mman}$ and $\dif_1=\dif_{\NT}$ in $\DiffIdMn$ such that  $\tn{z_i}(\dif'_t)=\id$ for all $t\in I$ and $i=1,\ldots,\intv$}.

Indeed, if $z_i$ is an $\NZT$-point, then the set of possible values for $\tn{z_i}(\dif_t)$ is finite, whence $\tn{z_i}(\dif_t)=\tn{z_i}(\dif_0)=\id$, so there is nothing to do.

Suppose $z_i$ is an $\NNT$-point.
Then in some local coordinates at $z_i$ in which 
$\nabla\AFld(z)=\left(\begin{smallmatrix}
 0 & \lambda \\ 0 & 0
\end{smallmatrix}\right)$
we have that 
$$
\tn{z_i}(\dif_t) = \left(\begin{smallmatrix}
 1 & \bar\lambda(t) \\ 0 & 1
\end{smallmatrix}\right),
$$
where $\bar\lambda:I\to\RRR$ is a \myemph{continuous} function with $\bar\lambda(0)=\bar\lambda(1)=0$.
Then there exists a $1$-isotopy $g_t:\Mman\to\Mman$ fixed outside some neighbourhood of $\singfN$ and such that $g_0=g_1=\id_{\Mman}$, $g_t(z_i)=z_i$, and $\tn{z_i}(g_t)=\tn{z_i}(\dif_t)$ for all $t\in I$ and $i=1,\ldots,\intv$.
Such an isotopy $(g_t)$ can be constructed by introducing a parameter in\;\cite[Lm.\;5.4]{Palais:TRAMS:1959}.

Evidently, the isotopy $\dif'_t=g_t^{-1}\circ\dif_t$ deforms $\id_{\Mman}$ to $\dif$ in $\DiffIdMn$ and satisfies $\tn{z_i}(\dif_t)=\id$.

Now similarly to the proof of\;\cite[Lm.\;5.2]{Palais:TRAMS:1959} we can change $\dif'_t$ to another isotopy $\dif''_t$ between $\id_{\Mman}$ and $\dif_{\NT}$ such that $\dif''_t$ is fixed on some neighbourhood $\Vman$ of $\mathop\cup\limits_{i=1}^{\intv}  \ \{z_i\}$ for each $t\in I$.
Then $(\dif''_t)$ satisfies the statement of Claim\;\ref{clm:deform_dif_to_id_rel_nbhsingfN}.
\end{proof}

Proposition\;\ref{pr:intersections_with_ker_lambda} and therefore Theorem\;\ref{th:Stabf_DiffIdMn_StabIdf} are completed.
\end{proof}

\section{Proof of Theorem\;\ref{th:splitting_pi1Orbf}} 
\label{sect:proof:th:splitting_pi1Orbf}
As noted above, we have to prove our theorem only for the case $\chi(\Mman)\geq0$.

Let $\{g_{\alpha}\}_{\alpha\in A}$ be any set of generators for $\JgrpId$.
For each $\alpha\in A$ let $\omega_{\alpha}:I \to \DiffIdM$ be a path such that $\omega_{\alpha}(0)=\id_{\Mman}$ and $\omega_{\alpha}(1)=g_{\alpha}$, see Figure\;\ref{fig:paths_in_diff}a).
Since $g_{\alpha}\in\JgrpId\subset\Stabf$, we see that the map 
$\nu:I\to\Orbf$ defined by $\nu(t)=\func\circ\omega_{\alpha}(t)$ is a loop, i.e. $\nu(0)=\nu(1)=\func$.

\begin{figure}[ht]
\begin{tabular}{ccc}
\includegraphics[height=2.5cm]{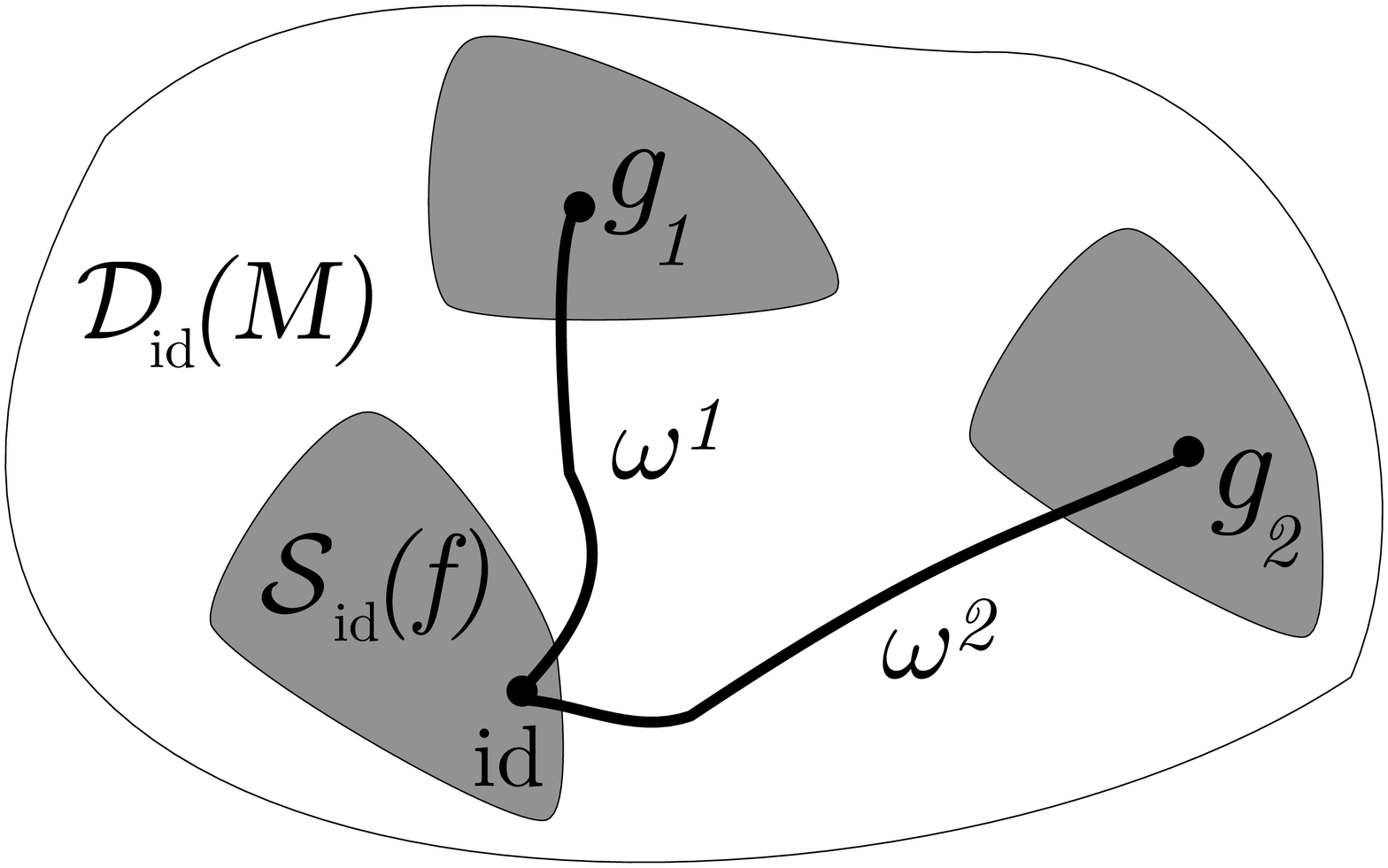}
&
\qquad \qquad 
&
\includegraphics[height=2.5cm]{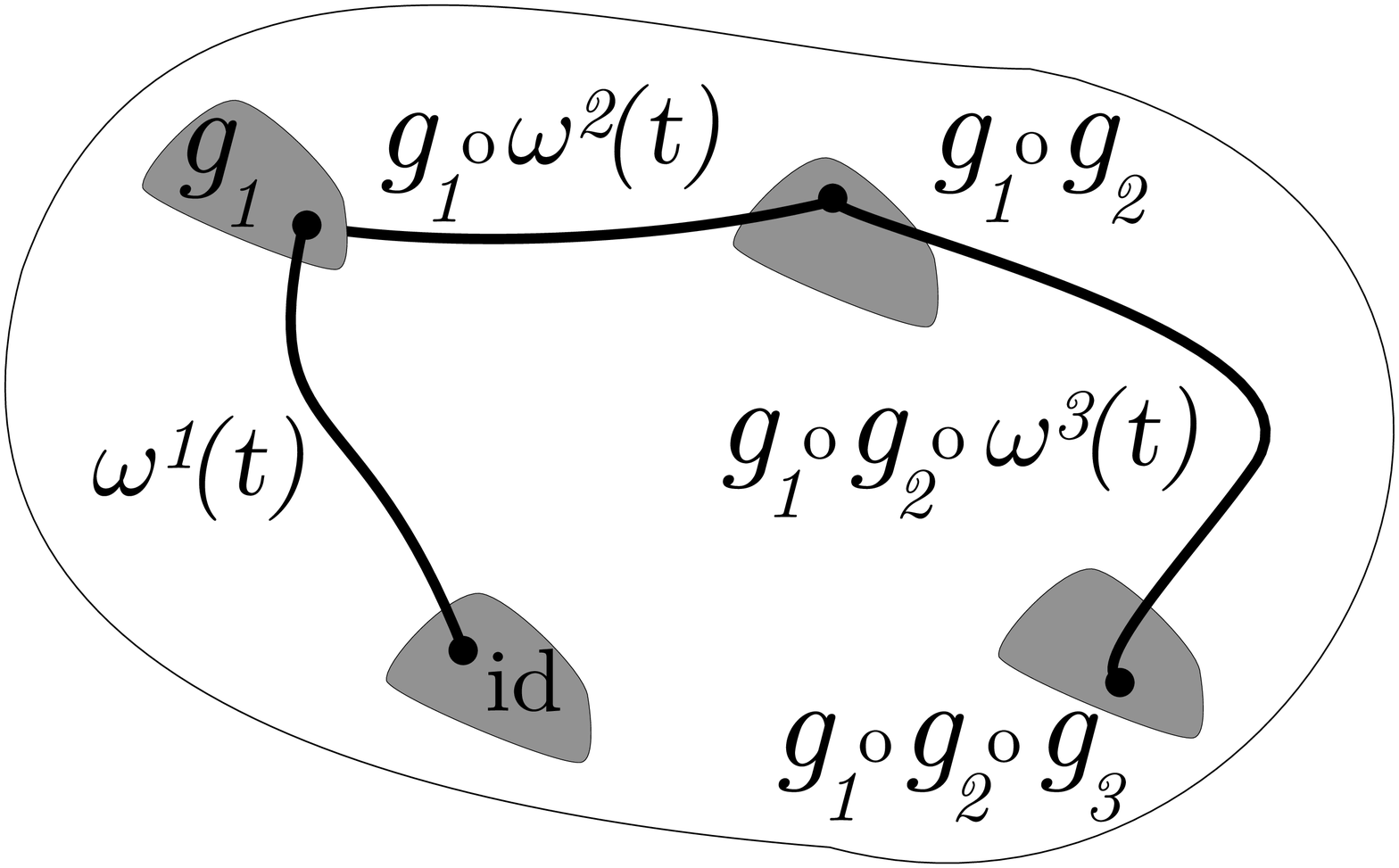} \\
a) Paths $\omega_i$
& & b) A path $\kappa_{g_1\circ g_2 \circ g_3}$
\end{tabular}
\caption{}\protect\label{fig:paths_in_diff}
\end{figure}

Let also $\fgrp$ be a free group generated by symbols $\{\hat g_{\alpha}\}_{\alpha\in A}$.
Then there exists a unique epimorphism $\eta:\fgrp\to\JgrpId$ such that $\eta(\hat g_{\alpha}) = g_{\alpha}$, and a unique homomorphism $\psi:\fgrp\to\pi_1\Orbf$ defined by $\psi(\hat g_{\alpha}) = [\func\circ \omega_{\alpha}]$ for all $\alpha\in A$.
Moreover we have the following commutative diagram:
$$
\xymatrix{
                  &  \fgrp\ar[d]^{\eta} \ar[dl]_{\psi} & \\ 
\hgrp \ar@/_/[r]_{\partial_1}  &  \JgrpId  \ar@{-->}@/_/[l]_{s}  & 
}
$$

Evidently, if $\ker\eta\subset\ker\psi$, then $\partial_1$ admits a section $s:\JgrpId\to\hgrp$, whence $\hgrp\approx\pi_1\DiffIdM\times \JgrpId$. 
Thus for the proof of Theorem\;\ref{th:splitting_pi1Orbf} we have to find conditions when $\ker\eta\subset\ker\psi$.

First we present exact formulas for $\psi$.
Let $\hat h=(\hat g_{1})^{\eps_{1}} \cdots (\hat g_{a})^{\eps_{a}} \in \fgrp$, where $\eps_{i}\in\ZZZ$.
Put $h = \eta(\hat h) = g_{1}^{\eps_{1}} \cdots g_{a}^{\eps_{a}}\in\JgrpId$ and define the following path $\kappa_{\hat\dif}:[0,a]\to\DiffIdM$ by
\begin{equation}\label{equ:path_kappa}
\kappa_{\hat\dif}(t) = 
\begin{cases}
 \omega_{1}(t)^{\eps_{1}}, & t\in[0,1], \\
 g_{1}^{\eps_{1}}\circ\omega_{2}(t-1)^{\eps_{2}}, & t\in[1,2], \\
 g_{1}^{\eps_{1}}\circ g_{2}^{\eps_{2}} \circ\omega_{3}(t-2)^{\eps_{3}}, & t\in[2,3], \\
 \cdots \cdots \cdots \cdots \\
 g_{1}^{\eps_{1}}\circ \cdots \circ g_{a-1}^{\eps_{a-1}} \circ\omega_{a}(t-a+1)^{\eps_{a}}, & t\in[a-1,a],
\end{cases}
\end{equation}
see Figure\;\ref{fig:paths_in_diff}b).
Evidently, $\kappa_{\hat\dif}(0)=\id_{\Mman}$ and $\kappa_{\hat\dif}(1)=\dif\in\JgrpId\subset\Stabf$.
Hence the map $\nu_{\hat\dif}:I\to\Orbf$ defined by $\nu_{\hat\dif}(t)=\func\circ\kappa_{\hat\dif}(t)$ is a loop.
Then it is easy to see that $\psi(\hat\dif)=[\nu_{\hat\dif}]$.

Suppose $\hat\dif\in\ker\eta$, so $\dif=\eta(\hat\dif)=\id_{\Mman}$.
Then $\kappa_{\hat\dif}$ is a loop in $\DiffIdM$.

\begin{lemma}\label{lm:suff_cond_triv_centr_ext}
Let $\{g_{\alpha}\}_{\alpha\in A}$ be a set of generators for $\JgrpId$, $\fgrp$ be a free group generated by symbols $\{\hat g_{\alpha}\}_{\alpha\in A}$, and $H=\{\hat\dif_{\beta}\}_{\beta\in B} \subset \fgrp$ be the subset whose normal closure coincides with $\ker\eta$, so 
$$\JgrpId=\langle \ \{ g_{\alpha}\}_{\alpha\in A} \ | \ \{\hat\dif_{\beta}\}_{\beta\in B} \ \rangle$$ is a presentation for $\JgrpId$.
For each $\alpha\in A$ take a path $\omega_{\alpha}:I \to \DiffIdM$ such that $\omega_{\alpha}(0)=\id_{\Mman}$ and $\omega_{\alpha}(1)=g_{\alpha}$.
Then each of the following conditions implies that $\ker\eta\subset\ker\psi$.

{\rm 1)}~For each $\beta\in B$ the loop $\kappa_{\dif_{\beta}}$ is null-homotopic in $\DiffIdM$.

{\rm 2)}~There exists a subset $\Qman\subset\Mman$ consisting of $k>\chi(\Mman)$ points and such that $\omega_{\alpha}(t)$ is fixed on $\Qman$ for all $\alpha\in A$ and $t\in I$.
\end{lemma}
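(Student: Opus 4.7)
The plan reduces everything to the elementary observation that $\ker\psi$ is a normal subgroup of $\fgrp$, so it suffices to check that each $\hat\dif_\beta \in H$ lies in $\ker\psi$; since $\ker\eta$ is by hypothesis the normal closure of $H$, this immediately gives $\ker\eta \subset \ker\psi$.

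I would first record a formal identification. For $\hat h \in \ker\eta$ we have $\kappa_{\hat h}(a) = \eta(\hat h) = \id_\Mman = \kappa_{\hat h}(0)$, so $\kappa_{\hat h}$ is a loop in $\DiffIdM$. Moreover, each prefix $g_1^{\eps_1}\cdots g_{i-1}^{\eps_{i-1}}$ appearing in~\eqref{equ:path_kappa} belongs to $\JgrpId \subset \Stabf$, so left composition with $\func$ annihilates these prefixes and $\nu_{\hat h}$ is simply $\func\circ\kappa_{\hat h}$. Thus the rule $[\kappa]\mapsto[\func\circ\kappa]$ gives a homomorphism $\pi_1\DiffIdM \to \pi_1\Orbf$ (closely related to the $p_\ast$ of Lemma~\ref{lm:partial_1}) through which $\psi|_{\ker\eta}$ factors: $\psi(\hat h)$ is the image of $[\kappa_{\hat h}]$ under this homomorphism. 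In particular $[\kappa_{\hat h}]=0$ in $\pi_1\DiffIdM$ forces $\psi(\hat h)=0$, and part~(1) follows immediately: its hypothesis gives $[\kappa_{\hat\dif_\beta}]=0$ for every $\beta\in B$, hence $\psi(\hat\dif_\beta)=0$.

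For part~(2) I would deduce the hypothesis of~(1). If every $\omega_\alpha(t)$ fixes $\Qman$ pointwise, then so do the endpoints $g_\alpha=\omega_\alpha(1)$, and consequently every $\kappa_{\hat h}(t)$ defined by~\eqref{equ:path_kappa} takes values in the subgroup of $\DiffM$ fixing $\Qman$ pointwise. Thus for $\hat h\in\ker\eta$ the loop $\kappa_{\hat h}$ lies in the identity component $\DiffId(\Mman,\Qman)$ of this subgroup. By the classical results of Earle--Eells, Earle--Schatz and Gramain --- the same body of results used in the proof of Lemma~\ref{lm:DMcr_contr} --- the space $\DiffId(\Mman,\Qman)$ is contractible whenever $|\Qman|=k>\chi(\Mman)$. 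Consequently $\kappa_{\hat h}$ is null-homotopic in $\DiffId(\Mman,\Qman)$, a fortiori in $\DiffIdM$, and the hypothesis of~(1) is verified.

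The only non-formal ingredient, and the main obstacle, is the contractibility of $\DiffId(\Mman,\Qman)$ for $k>\chi(\Mman)$; this has to be extracted from the classical literature. Note that only the surfaces with $\chi(\Mman)\geq 0$ actually require examination here, since otherwise $\pi_1\DiffIdM=0$ already and the statement of Theorem~\ref{th:splitting_pi1Orbf} is vacuous.
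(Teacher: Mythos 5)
Your proposal is correct and follows essentially the same route as the paper: part (1) is handled by observing that $\psi(\hat\dif_\beta)=[\func\circ\kappa_{\hat\dif_\beta}]$ is the image of a null-homotopic loop, and part (2) is reduced to part (1) by noting that the loops $\kappa_{\hat\dif_\beta}$ lie in $\DiffId(\Mman,\Qman)$, which is contractible for $|\Qman|=k>\chi(\Mman)$ by the classical Earle--Eells/Earle--Schatz/Gramain results. Your explicit remark that it suffices to check $H\subset\ker\psi$ because $\ker\psi$ is normal and $\ker\eta$ is the normal closure of $H$ is a detail the paper leaves implicit, and is a welcome clarification.
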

\begin{proof}
Statement 1) is trivial.

2) Let $x_1,\ldots,x_k\in \Qman$ be $k$ distinct points, $\Diff(\Mman,k)$ be the group of diffeomorphisms of $\Mman$ that fix each of these points, and $\DiffId(\Mman,k)$ be the identity path component of $\Diff(\Mman,k)$.
Then for each $\hat\dif_{\beta}$ the loop $\kappa_{\hat\dif_{\beta}}$ is contained in $\DiffId(\Mman,k)$.
Since $\chi(\Mman)<k$, it follows that $\DiffId(\Mman,k)$ is contractible, whence $\kappa_{\hat\dif_{\beta}}$ is null-homotopic in $\DiffId(\Mman,k)\subset\DiffIdM$.
Therefore $\psi(\hat\dif_{\beta}) = [\func\circ\kappa_{\hat\dif_{\beta}}]$ is null-homotopic in $\hgrp$, so $\hat\dif_{\beta}\in\ker\psi$.
\end{proof}

Thus for the proof of Theorem\;\ref{th:splitting_pi1Orbf} it suffices to show that for every surface $\Mman$ with $\chi(\Mman)\geq0$ one of the conditions (1) or (@) of Lemma\;\ref{lm:suff_cond_triv_centr_ext} is satisfied.

\begin{lemma}
Suppose $\Mman$ is one of the surfaces $S^2$, $\prjplane$, $D^2$, $\MobiusBand$ or $S^1\times I$.
Then there exists even infinite subset $\Qman\subset\Mman$ such that \myemph{every} internal Dehn twist is fixed on $\Qman$ and isotopic to $\id_{\Mman}$ relatively $\Qman$.
In particular, so does every $\dif\in\JgrpId$, whence by {\rm2)} of Lemma\;\ref{lm:suff_cond_triv_centr_ext} $\ker\eta\subset\ker\psi$.
\end{lemma}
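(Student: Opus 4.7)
My plan is to reduce the statement surface-by-surface to Smale's theorem: any diffeomorphism of a $2$-disk that is the identity near the boundary is isotopic to the identity rel boundary. The central observation, which I will establish in a first step, is that for each of the five surfaces every internal leaf $\gamma$ of $\partf$ bounds an embedded $2$-disk in $\Mman$. For $S^{2}$ this is the Jordan curve theorem. For $\prjplane$, a regular leaf is two-sided (level sets of a smooth function are locally two-sided), and every two-sided simple closed curve in $\prjplane$ is null-homotopic. For $D^{2}$, $\MobiusBand$, and $S^{1}\times I$, any non-null-homotopic simple closed curve is isotopic to a boundary component; but if an internal leaf $\gamma$ were boundary-parallel, then the regular component of $\partf$ containing $\gamma$ would have a boundary circle of $\Mman$ as one of its two sides, so the edge $e_{\gamma}$ of $\FReebf$ would be incident to a $\partial$-vertex and hence external, contradicting the assumption that $\gamma$ is internal.

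Next, by Theorem~\ref{th:pi0Dpn}, the group $\JgrpId$ is generated by finitely many internal Dehn twists $\tau_{1},\ldots,\tau_{\intk}$ along internal leaves $\gamma_{1},\ldots,\gamma_{\intk}$. I choose the representatives $\tau_{i}$ so that each is supported in a thin annular neighbourhood $\Uman_{i}$ contained in a closed $2$-disk $D_{i}\subset\Int\Mman$ (which exists by Step~1), and so that $\bigcup_{i=1}^{\intk}D_{i}$ is a compact proper subset of $\Mman$. I let $\Qman$ be any countably infinite subset of the nonempty open set $\Mman\setminus\bigcup_{i}D_{i}$, so by construction every $\tau_{i}$ is the identity on $\Qman$. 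For each $i$, the restriction $\tau_{i}|_{D_{i}}$ is a diffeomorphism of $D_{i}$ that is the identity near $\partial D_{i}$; Smale's theorem $\pi_{0}\Diff(D^{2},\partial D^{2})=1$ therefore produces an isotopy inside $\Diff(D_{i},\partial D_{i})$ from $\id_{D_{i}}$ to $\tau_{i}|_{D_{i}}$. Extending by the identity outside $D_{i}$ gives a path $\omega_{i}:I\to\DiffIdM$ with $\omega_{i}(0)=\id_{\Mman}$, $\omega_{i}(1)=\tau_{i}$, and $\omega_{i}(t)$ fixing $\Mman\setminus D_{i}\supset\Qman$ pointwise for every $t$.

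Finally, I apply Lemma~\ref{lm:suff_cond_triv_centr_ext}: taking the generators $\{g_{i}:=\tau_{i}\}_{i=1}^{\intk}$ of $\JgrpId$ together with the paths $\omega_{i}$ constructed above, condition~2) is satisfied (with $|\Qman|=\infty>\chi(\Mman)$), yielding $\ker\eta\subset\ker\psi$ and thereby the splitting of the central extension~\eqref{lm:pi1DidM_in_the_center_of_H}. The only nontrivial point in the argument is the Step~1 case analysis ruling out boundary-parallel internal leaves in $\MobiusBand$ and $S^{1}\times I$; once that combinatorial verification is in hand, the remaining steps are a soft assembly of Smale's theorem and the generating-set description of $\JgrpId$ from Theorem~\ref{th:pi0Dpn}.
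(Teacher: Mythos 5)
Your Step~1 --- that every internal leaf bounds an embedded $2$-disk --- is false for $S^1\times I$ and for $\MobiusBand$, and this is a genuine gap rather than a repairable detail. The argument you give to exclude boundary-parallel internal leaves confuses ``isotopic to a boundary circle'' with ``adjacent to the boundary in the foliation'': the annulus between such a leaf $\gamma$ and the boundary component it is parallel to may perfectly well contain critical levels, so the edge of $\FReebf$ carrying $\gamma$ need not touch a $\partial$-vertex. Concretely, take $\Mman=S^1\times I$ and a function with two interior saddles $s_1,s_2$, one interior minimum and one interior maximum (consistent with $\chi(\Mman)=0$). The edge of $\FReebf$ joining the two $\ST$-vertices $s_1$ and $s_2$ is internal by definition, yet its leaves are essential: a regular neighbourhood of the critical level of $s_1$ is a pair of pants whose upper boundary circle is homologous to $S^1\times 0$ plus the null-homotopic circle around the minimum, hence homologous to the core of the annulus. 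Such a leaf bounds no disk in $\Mman$, its twist cannot be confined to any disk $D_i$, and the reduction to Smale's theorem collapses. The same phenomenon occurs on $\MobiusBand$ with an internal leaf isotopic to $\partial\MobiusBand$. (Your treatment of $S^2$, $\prjplane$ and $D^2$, where every two-sided simple closed curve really does bound a disk, is correct.)

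The paper sidesteps the issue by choosing one global anchor $\Qman$ instead of a disk per twist: $\Qman=\partial\Mman$ for $D^2$ and $\MobiusBand$, one boundary circle $S^1\times 0$ for $S^1\times I$, and a closed disk neighbourhood of a local extreme disjoint from the supports of the twists for $S^2$ and $\prjplane$. Every internal twist is supported away from such a $\Qman$, and the complement $\Mman\setminus\Qman$ is a disk, a Möbius band, or an annulus rel one end --- in each case with trivial mapping class group relative to $\Qman$ (Alexander--Smale, Epstein) --- so each twist is isotopic to the identity rel $\Qman$ \emph{even when its core curve is essential}. To complete your argument for the annulus and the Möbius band you would need to replace your per-curve disks by this kind of anchor.
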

\begin{proof}
Suppose $\Mman$ is either $2$-disk $D^2$ or M\"obius band $\MobiusBand$, or a cylinder $S^1\times I$.
Put $\Qman=\partial\Mman$ if $\Mman$ is either $D^2$ or $\MobiusBand$, and $\Qman=S^1\times0$ if $\Mman=S^1\times I$.
Then every internal Dehn twist is fixed near $\Qman$ and is isotopic to $\id_{\Mman}$ relatively to $\Qman$, see e.g. \cite{Alexander:PNAS:1923}, \cite[Th.\;3.4 \& 5.2]{Epstein:AM:1966}, \cite{Smale:ProcAMS:1959}.

Let $\Mman$ be either $2$-sphere $S^2$ or a projective plane $\prjplane$.
In this case $\func$ always has a local extreme.
Denote this point by $z$ and let $\Qman\subset\Mman\setminus T$ be a closed neighbourhood of $z$ diffeomorphic to $2$-disk.
Then $\Mman\setminus\Qman$ is either a $2$-disk (if $\Mman=S^2$) or a M\"obius band (if $\Mman=\prjplane$).
Moreover, every internal Dehn twist $\tau$ is fixed on some neighbourhood of $\Qman$.
Hence $\tau$ is isotopic to the identity relatively $\Qman$.
\end{proof}

\begin{lemma}
Let $\Mman$ be either a $2$-torus $T^2$ or a Klein bottle $\Kleinb$.
Suppose that every internal leaf $\gamma_i$, $(i=1,\ldots,\intk)$, separates $\Mman$.
Then there exists a subset $\Qman\subset\Mman$ satisfying {\rm2)} of Lemma\;\ref{lm:suff_cond_triv_centr_ext}.
\end{lemma}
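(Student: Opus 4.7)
The plan is to verify condition~(2) of Lemma~\ref{lm:suff_cond_triv_centr_ext} with $\Qman=\{x_0\}$ a single point, which suffices because $\chi(T^2)=\chi(\Kleinb)=0$. I would first pick $x_0\in\Mman\setminus\bigcup_{i=1}^{\intk}\gamma_i$ and shrink each cylindrical support of the internal Dehn twists $\tau_i$ so that all supports are disjoint from a fixed open neighborhood of $x_0$. Then every element of $\Jgrp$, in particular every generator of $\JgrpId$, is the identity on a neighborhood of $x_0$; it remains to produce, for each generator $g$, a path $\omega_g:I\to\DiffIdM$ from $\id_\Mman$ to $g$ that also fixes $x_0$ throughout.

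By the separation hypothesis each $\gamma_i$ bounds a compact subsurface $\Sigma_i\subset\Mman$, and an Euler-characteristic count using $\chi(\Mman)=0$ shows that $\Sigma_i$ is either a disk or, only on $\Kleinb$, a M\"obius band (the latter occurring when $\gamma_i$ is the central separating circle of the Klein bottle). In the disk case, $\tau_i$ is isotopic to $\id_\Mman$ via Alexander's trick applied to the contractible group $\Diff(D^2,\partial D^2)$, yielding an isotopy supported inside a slight enlargement of $\Sigma_i$ and hence fixing $x_0$; in particular $\tau_i\in\DiffIdM$. This settles the case $\Mman=T^2$ entirely: every $\Sigma_i$ is a disk, so $\JgrpId=\Jgrp$, and the $\tau_i$ themselves serve as generators with the required paths.

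For the remaining case $\Mman=\Kleinb$ with some $\gamma_i$ bounding a M\"obius band, the twist $\tau_i$ need not lie in $\DiffIdM$, but a suitable power $\tau_i^{n_i}$ does, and one can construct an isotopy from $\tau_i^{n_i}$ to $\id_\Mman$ supported in an annular neighborhood of $\gamma_i$---still disjoint from $x_0$---using standard facts about the mapping class group of the M\"obius band. Choosing generators of $\JgrpId$ as products of the disk-bounded $\tau_j$ and the M\"obius-bounded $\tau_i^{n_i}$, each generator is paired with the corresponding locally supported isotopy fixing $x_0$. The main technical obstacle is precisely this M\"obius band case on $\Kleinb$: one must verify both that $\JgrpId$ admits such a generating set and that the isotopy of $\tau_i^{n_i}$ to $\id_\Mman$ can be arranged to have support strictly inside an annular neighborhood of $\gamma_i$, rather than being spread over the full M\"obius band where it would in principle move $x_0$ should $x_0$ have been chosen there.
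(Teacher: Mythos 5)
Your overall strategy (localize each generator's isotopy away from a fixed point $x_0$, using $\chi(T^2)=\chi(\Kleinb)=0$ so that a single marked point suffices for condition 2)) is the same in spirit as the paper's, which instead takes $\Qman$ to be either a curve parallel to $\gamma_1$ (when some $\gamma_i$ splits $\Kleinb$ into two M\"obius bands) or the complement of the maximal disks $B_i\cup\Uman_i$ (when every $\gamma_i$ bounds a disk). But there are two genuine problems. First, in the disk case your condition on $x_0$ is too weak: you only require $x_0\notin\bigcup_i\gamma_i$ and $x_0$ off the twist supports, yet the Alexander--Smale isotopy killing $\tau_i$ is supported in the whole disk $B_i\cup\Uman_i$, so it will move $x_0$ whenever $x_0$ happens to lie in the disk bounded by $\gamma_i$. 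You must choose $x_0$ in the complement of \emph{all} the disks $B_i\cup\Uman_i$; this complement is nonempty (the paper checks it is connected and contains no internal curve), but your ``hence fixing $x_0$'' is unjustified as written.

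Second, and more seriously, the step you yourself flag as the ``main technical obstacle'' is not merely unverified --- it is false. No nonzero power of a Dehn twist can be isotoped to the identity through an isotopy supported in an annular neighbourhood of the curve, since $\pi_0\Diff(S^1\times I,\partial)\cong\ZZZ$ is generated by the twist. The correct mechanism, and the one the paper relies on, is that $\pi_0\Diff(\MobiusBand,\partial)$ is trivial (Epstein), so the twist $\tau_i$ itself (no power is needed) dies via an isotopy supported in one of the two M\"obius bands cut off by a parallel curve; that isotopy necessarily sweeps across the entire M\"obius band, so the fixed point must be placed on the other side. This is exactly why the paper takes $\Qman$ to be a whole separating curve parallel to $\gamma_1$ in this case: cutting along it yields two M\"obius bands, each with trivial mapping class group rel boundary, so every internal Dehn twist is isotopic to $\id_{\Mman}$ relative to $\Qman$. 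Your argument can be repaired along these lines, but not by an annulus-supported isotopy.
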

\begin{proof}
Let $\Uman_i$ be an open neighbourhood of $\gamma_i$ diffeomorphic to $S^1\times (0,1)$ such that $\partial\Uman_i$ consists of two regular leafs of $\partf$ and $\supp\tau_i\subset\Uman_i$.
Since $\gamma_i$ separates $\Mman$, it follows that $\Mman\setminus\Uman_i$ consists of two connected components $B_i$ and $C_i$.
Moreover, as $\Mman$ is either a $2$-torus $T^2$ or a Klein bottle $\Kleinb$, we have that one of the components, say $B_1$, is either a $2$-disk or a M\"obius band.

a) Suppose $B_1$ is a M\"obius band.
Then $\Mman$ is a Klein bottle, and $C_1$ as well as $C_1\cup\Uman_1$ are M\"obius bands.
Put $\Qman=\partial\Uman_1\cap B_1$.
Then $\Qman$ is a simple closed curve ``parallel'' to $\gamma_1$ and also separating $\Mman$ into two M\"obius bands.
Moreover, every internal Dehn twist $\tau_i$ is fixed on some neighbourhood $\Qman$, and therefore it is isotopic to $\id_{\Mman}$ relatively $\Qman$.

b) Suppose that neither of $B_i$ or $C_i$ is a M\"obius band.
Then $B_i$ is a $2$-disk.
Renumbering $\gamma_i$ (if necessary) we can assume that there exists $r\in\{1,\ldots,\intk\}$ such that 
\begin{itemize}
 \item
if $i=1,\ldots,r$, then $B_i$ is not contained in any of $B_j$ for $j=1,\ldots,\intk$, and $j\not=i$, so $B_i$ is ``maximal'';
 \item
for $j=r+1,\ldots,\intk$ every $B_j$ is contained in some $B_i$ for $i=1,\ldots,r$.
\end{itemize}
Put 
\begin{equation}\label{equ:Q_M_setminus_BUi_1r}
\Qman \ := \ \mathop\cap\limits_{i=1}^{r} C_i \ = \ \mathop\cap\limits_{i=1}^{r} \Mman\setminus(B_i\cup \Uman_i) \ =  \ \Mman\setminus\mathop\cap\limits_{i=1}^{r}(B_i\cup \Uman_i).
\end{equation}
Then $\Qman$ is connected as a complement to a disjoint union of closed $2$-disks on a connected surface.
It is also easy to see that $\Qman$ does not contain any internal curve.
Indeed, suppose $\gamma_j\subset \Qman$.
Since $\gamma_j\subset\Uman_j$, we obtain from\;\eqref{equ:Q_M_setminus_BUi_1r} that $j\in\{r+1,\ldots,\intk\}$, whence $B_j \subset B_i$ for some $i=1,\ldots,r$.
On the other hand $\gamma_j$ separates $\Qman$ into two \myemph{non-empty} components $D_j$ and $D'_j$ such that one of them, say $D_j$, is contained in $B_j$.
Hence $D_j \subset B_j \subset B_i \subset \Mman\setminus\Qman$ which contradicts to the assumption that $D_j\subset \Qman$.

Thus $\Qman$ contains no internal curves and therefore every $\tau_i$ is fixed on some neighbourhood of $\Qman$.
As $\Mman\setminus \Qman$ is a union of $2$-disks, we see that $g$ is isotopic to the identity relatively $\Qman$.
Hence so does every $g\in\JgrpId$.
\end{proof}

\begin{lemma}
Let $\Mman$ be either a $2$-torus $T^2$ or a Klein bottle $\Kleinb$.
Suppose that $\gamma_1$ does not separate $\Mman$.
\begin{enumerate}
 \item[(i)]
If $\Mman$ is a $2$-torus $T^2$, then there exists a subset $\Qman$ satisfying {\rm2)} of Lemma\;\ref{lm:suff_cond_triv_centr_ext}, whence $\ker\eta\subset\ker\psi$.
 \item[(ii)]
If $\Mman$ is a Klein bottle $\Kleinb$, then $\ker\eta\subset\ker\psi$ as well.
\end{enumerate}
\end{lemma}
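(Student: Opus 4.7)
The plan is to apply condition (2) of Lemma~\ref{lm:suff_cond_triv_centr_ext} to both cases (i) and (ii), taking $\Qman=\{x_0\}$ to be a single point chosen from $\Mman\setminus T$. Such a point exists because $T=\cup_{i=1}^{\intk}\supp\tau_i$ is a finite disjoint union of compact annular neighborhoods whose complement contains, for example, every critical point of $\func$. Since $\chi(T^2)=\chi(\Kleinb)=0$, the cardinality requirement $k>\chi(\Mman)$ reduces to $k\geq 1$, which is satisfied by a single point.

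For each generator $g_\alpha$ of $\JgrpId$, write $g_\alpha=\tau_1^{n_1}\circ\cdots\circ\tau_{\intk}^{n_{\intk}}$ as a product of internal Dehn twists. Let $\tau_i^t$ denote the partial Dehn twist of real parameter $t$, defined in cylindrical coordinates on $\Uman_i$ by $(x,y)\mapsto(x,y+t\,\phi_i(x))$, where $\phi_i$ is the bump function defining $\tau_i$, and extended by the identity on $\Mman\setminus\Uman_i$. Then
$$
\omega_\alpha(s)\;=\;\tau_1^{s n_1}\circ\cdots\circ\tau_{\intk}^{s n_{\intk}},\qquad s\in I,
$$
is a path in $\DiffIdM$ from $\id$ to $g_\alpha$ whose every value is supported in $T$ and therefore fixes $x_0$. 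Hence the hypothesis of condition (2) of Lemma~\ref{lm:suff_cond_triv_centr_ext} is met in both cases.

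For case (i), this immediately gives $\ker\eta\subset\ker\psi$: the contractibility of $\DiffId(T^2,1)$ used inside the proof of the Lemma follows from the standard homotopy equivalence $\DiffId(T^2)\simeq T^2$ together with the evaluation fibration $\DiffId(T^2)\to T^2$, whose fiber at $x_0$ is therefore weakly contractible.

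For case (ii), the argument is identical, but the contractibility of $\DiffId(\Kleinb,1)$ requires an additional input. I would derive it from the evaluation fibration $\DiffId(\Kleinb,x_0)\to\DiffId(\Kleinb)\to\Kleinb$ combined with Gramain's result $\DiffId(\Kleinb)\simeq S^1$, the circle factor being realized by rotation along the fibers of the natural $S^1$-bundle structure on $\Kleinb$. The key observation is that the induced map $\pi_1\DiffId(\Kleinb)=\ZZZ\to\pi_1\Kleinb$ sends the generator to the infinite-order fiber class $b\in\pi_1\Kleinb$, hence is injective; the long exact sequence of the fibration then forces $\pi_i\DiffId(\Kleinb,1)=0$ for all $i\geq 1$, giving contractibility. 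The main obstacle will be this verification of injectivity together with the careful identification of $\DiffId(\Kleinb,1)$ as the identity component of the stabilizer inside $\DiffIdM$ rather than inside the full $\Diff(\Kleinb)$; once these points are in hand, condition (2) of Lemma~\ref{lm:suff_cond_triv_centr_ext} delivers $\ker\eta\subset\ker\psi$ exactly as in case~(i).
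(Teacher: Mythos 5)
Your proposal has a genuine gap, and it sits exactly at the point where the lemma is actually difficult: the construction of the paths $\omega_{\alpha}$. The ``partial Dehn twist'' $\tau_i^{t}$ you define by $(x,y)\mapsto(x,y+t\,\phi_i(x))$ is \emph{not} supported in $\Uman_i$ for non-integer $t$: the function $\phi_i$ defining a genuine Dehn twist must run from $0$ to a full period across the annulus, so near one boundary component of $\Uman_i$ the map $\tau_i^{t}$ restricts to a rotation by $t$ times the full angle, which is not the identity unless $t\in\ZZZ$. Hence $\tau_i^{t}$ cannot be ``extended by the identity on $\Mman\setminus\Uman_i$,'' and the path $\omega_{\alpha}(s)=\tau_1^{sn_1}\circ\cdots\circ\tau_{\intk}^{sn_{\intk}}$ is not defined. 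The failure is not merely one of construction: since $\pi_0$ of the group of diffeomorphisms of a disjoint union of annuli fixed on the boundary is free abelian on the Dehn twists, \emph{no} path in $\DiffIdM$ from $\id_{\Mman}$ to a nontrivial $g_{\alpha}=\tau_1^{n_1}\circ\cdots\circ\tau_{\intk}^{n_{\intk}}$ can consist of diffeomorphisms supported in $T$, nor can it fix all of $\Mman\setminus T$ pointwise. Any isotopy from $\id_{\Mman}$ to $g_{\alpha}$ must move points outside $T$, and the whole content of the lemma is to decide how much of the complement can nevertheless be kept fixed.

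This is where the two cases genuinely diverge, and why the paper treats them differently. For the torus one shows that every $\dif\in\JgrpId$ is isotopic to $\tau_1^{d}$ relative to a regular leaf $\Qman$ parallel to $\gamma_1$, and that $d=0$ because $\dif\simeq\id_{T^2}$; this produces paths fixing the (infinite) set $\Qman$ pointwise, so condition {\rm2)} applies. For the Klein bottle one only gets $d$ even, and the Lickorish isotopy from $\tau_1^{2}$ to $\id_{\Kleinb}$ merely keeps $\Qman$ \emph{invariant} while rotating it --- it does not fix any point of $\Qman$ --- so condition {\rm2)} is not available and the paper instead verifies condition {\rm1)} directly, using that the generator of $\pi_1\DiffId(\Kleinb)$ moves $\Qman$. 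Your case (ii) applies condition {\rm2)}, which cannot be met here. (Your side remarks that $\DiffId(T^2,1)$ and $\DiffId(\Kleinb,1)$ are weakly contractible are correct and consistent with the criterion $\chi(\Mman)<k$ already invoked in the proof of Lemma\;\ref{lm:suff_cond_triv_centr_ext}, but they are moot once the hypothesis on the paths $\omega_{\alpha}$ fails.)
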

\begin{proof}
Since all $\gamma_i$ are mutually disjoint simple closed curves on a $2$-torus or a Klein bottle, we can assume that for some $a=1,\ldots,\intk$ the curves $\gamma_1,\ldots,\gamma_{a}$ are non-separating and isotopic each other, while each of $\gamma_{a+1},\ldots,\gamma_{\intk}$ separate $\Mman$ so that one of the components of $\Mman\setminus\gamma_i$ is a $2$-disk.
It follows that $\tau_i$ is isotopic to $\tau_1$ for $i=1,\ldots,a$, and to $\id_{\Mman}$ for $i=a+1,\ldots,\intk$.

Let $\Qman$ be a regular leaf of $\partf$ contained in $\Uman_1\setminus\supp\tau_1$, see Figure\;\ref{fig:twist-klein}.
Then every $\tau_{j}$ is fixed on some neighbourhood of $\Qman$ in $\Uman_1\setminus\supp\tau_1$.

Now let $\dif\in\JgrpId$.
Thus 
$\dif=\tau_{1}^{i_1}\circ\cdots\circ\tau_{a}^{i_a}\circ\tau_{a+1}^{i_{a+1}}\circ\cdots\circ\tau_{\intk}^{i_{\intk}}$ for some $i_{j}\in\ZZZ$ and $\dif$ is isotopic to $\id_{\Mman}$.
Put $d=i_1+\cdots+i_a$.
Since $\Mman\setminus \Qman$ is a cylinder, we see that $\dif$ is isotopic to $\tau_{1}^{d}$ relatively to $\Qman$.

\medskip 

(i)~Suppose $\Mman$ is a $2$-torus $T^2$.
As $\dif$ is isotopic to $\id_{T^2}$, we have that $d=0$, whence $\tau_1^{d}$ (and thus $\dif$ itself) is isotopic to $\id_{T^2}$ relatively to $\Qman$.

\medskip 

(ii)~Let $\Mman$ be a Klein bottle $\Kleinb$.
It is well known that $\tau_1^2$ is isotopic to $\id_{\Kleinb}$, see\;\cite[Lm.\;5]{Lickorish:PCPS:1963}.
Moreover, we can assume $\Qman$ is invariant (but not fixed!) under such an isotopy, see Figure\;\ref{fig:twist-klein}.
Since $\dif$ is isotopic to $\id_{\Kleinb}$, we obtain that $d$ is even and $\dif$ is also isotopic to $\id_{\Kleinb}$ via an isotopy which leaves $\Qman$ invariant.

\begin{figure}[ht]
\centerline{\includegraphics[width=0.8\textwidth]{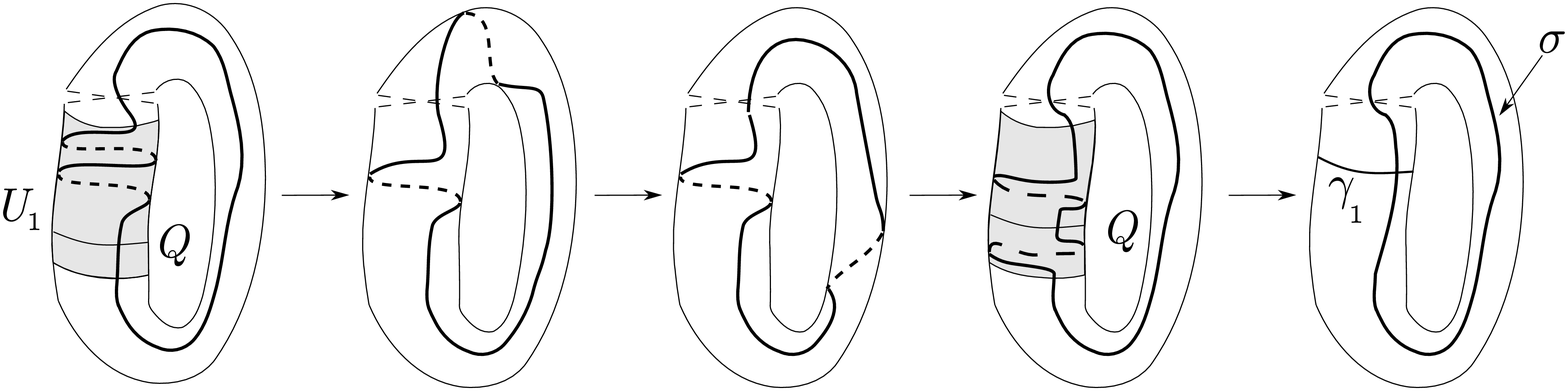}}
\caption{Isotopy of $\tau_1^2$ to $\id_{\Kleinb}$}\protect\label{fig:twist-klein}
\end{figure}

Let $\{g_{\alpha}\}_{\alpha\in A}$ be a set of generators for $\JgrpId$, $\fgrp$ be a free group generated by symbols $\{\hat g_{\alpha}\}_{\alpha\in A}$, and $H=\{\hat\dif_{\beta}\}_{\beta\in B} \subset \fgrp$ be the subset whose normal closure coincides with $\ker\eta$.
By the previous arguments for each $g_{\alpha}$ there exists a path $\omega_{\alpha}:I\to\DiffId(\Kleinb)$ between $\id_{\Kleinb}$ and $g_{\alpha}$ such that $\omega_{\alpha}(t)(\Qman)=\Qman$ for all $\alpha\in A$ and $t\in I$.

Let $\beta\in B$ and $\kappa_{\hat\dif_{\beta}}:I\to\DiffId(\Kleinb)$ be the corresponding loop in $\DiffId(\Kleinb)$.
Then $\kappa_{\hat\dif_{\beta}}(t)(\Qman)=\Qman$ for all $t\in I$ as well.
Now it is easy to see that $\kappa_{\hat\dif_{\beta}}$ is null-homotopic, so the assumption (2) of Lemma\;\ref{lm:suff_cond_triv_centr_ext} holds.

Indeed, let $\sigma\subset\Kleinb$ be a simple closed curve shown in Figure\;\ref{fig:twist-klein}.
Then $\Kleinb\setminus\sigma$ consists of two M\"obius bands.
It is well-known that $\pi_1\DiffId(\Kleinb)=\ZZZ$ and this group is generated by the isotopy which \myemph{rotates $\Kleinb$ twice along $\sigma$}, see e.g. \cite{Gramain:ASENS:1973}.
In particular, if $\kappa:I\to\DiffId(\Kleinb)$ is a loop being not null-homotopic, then $\kappa(t)(\Qman)\not=\Qman$ for some $t\in I$.
Therefore $\kappa_{\hat\dif_{\beta}}$ is null-homotopic for all $\hat\dif_{\beta}\in\ker\eta$.
\end{proof}

\bibliographystyle{amsplain}
\bibliography{a}

\end{document}